\newtheorem*{maintheorem*}{Main Theorem}
\newtheorem{theorem}{Theorem}[section]
\newtheorem*{theorem*}{Main Theorem}
\newtheorem{prop}[theorem]{Proposition}
\newtheorem{lemma}[theorem]{Lemma}
\newtheorem{cor}[theorem]{Corollary}
\theoremstyle{definition}
\newtheorem{definition}[theorem]{Definition}
\newtheorem{remark}[theorem]{Remark}
\newtheorem{example}[theorem]{Example}
\numberwithin{equation}{section}
\newcommand{\cc}{\mathbb{C}}
\newcommand{\ff}{\mathbb{F}}
\newcommand{\nn}{\mathbb{N}}
\newcommand{\pp}{\mathbb{P}}
\newcommand{\qq}{\mathbb{Q}}
\newcommand{\rr}{\mathbb{R}}
\newcommand{\zz}{\mathbb{Z}}
\newcommand{\aaa}{\mathbf{A}}
\providecommand\ldb{\llbracket}
\providecommand\rdb{\rrbracket}
\newcommand{\gp}{\text{gp}}
\newcommand{\ii}{\mathcal{Irr}}
\keywords{BFD, bounded factorization domain, FFD, finite factorization domain, factorization, D+M construction, HFD, ACCP, atomic domain, monoid domain}
\subjclass[2010]{Primary: 13A05, 13F15; Secondary: 13A15, 13G05}
\begin{document}
	
\mbox{}
\title{Bounded and finite factorization domains}

\author{David F. Anderson}
\address{Department of Mathematics\\University of Tennessee\\Knoxville, TN 37996}
\email{danders5@tennessee.edu}

\author{Felix Gotti}
\address{Department of Mathematics\\MIT\\Cambridge, MA 02139}
\email{fgotti@mit.edu}

\date{\today}
	
\begin{abstract}
	 An integral domain is atomic if every nonzero nonunit factors into irreducibles. Let $R$ be an integral domain. We say that $R$ is a bounded factorization domain if it is atomic and for every nonzero nonunit $x \in R$, there is a positive integer $N$ such that for any factorization $x = a_1 \cdots a_n$ of~$x$ into irreducibles $a_1, \dots, a_n$ in $R$, the inequality $n \le N$ holds. In addition, we say that $R$ is a finite factorization domain if it is atomic and every nonzero nonunit in $R$ factors into irreducibles in only finitely many ways (up to order and associates). The notions of bounded and finite factorization domains were introduced by D.~D. Anderson, D.~F. Anderson, and M. Zafrullah in their systematic study of factorization in atomic integral domains. In this chapter, we present some of the most relevant results on bounded and finite factorization domains.
\end{abstract}

\dedicatory{Dedicated to Daniel D. Anderson on his retirement}

\bigskip
\maketitle

\section{Introduction}
\label{sec:intro}

During the last three decades, the study of factorizations based on Diagram~\eqref{diag:AAZ's atomic chain} has earned significant attention among researchers in commutative algebra and semigroup theory. This diagram of classes of integral domains satisfying conditions weaker than unique factorization was introduced by D.~D. Anderson, D.~F. Anderson, and M. Zafrullah in~\cite{AAZ90}. We proceed to recall the definitions of the atomic classes in Diagram~\eqref{diag:AAZ's atomic chain}. Let~$R$ be an integral domain. Following P.~M. Cohn~\cite{pC68}, we say that~$R$ is \emph{atomic} if every nonzero nonunit of~$R$ can be factored into irreducibles. In addition,~$R$ satisfies the \emph{ascending chain condition on principal ideals} (or \emph{ACCP}) if every ascending chain of principal ideals of~$R$ eventually stabilizes. If an integral domain satisfies ACCP, then it is atomic; however, there are atomic domains that do not satisfy ACCP (the first example was constructed by A. Grams in~\cite{aG74}). On the other hand, $R$ is called a \emph{half-factorial domain} (or an \emph{HFD}) if $R$ is atomic and any two factorizations of the same nonzero nonunit of $R$ have the same number of irreducibles (counting repetitions). The term ``half-factorial domain" was coined by A. Zaks in~\cite{aZ76}. For a survey on half-factorial integral domains, see~\cite{CC00}.

\begin{equation} \label{diag:AAZ's atomic chain}
	\begin{tikzcd}[cramped]
		\textbf{ UFD } \ \arrow[r, Rightarrow]  \arrow[d, Rightarrow] & \ \textbf{ HFD } \arrow[d, Rightarrow] \\
		\textbf{ FFD } \ \arrow[r, Rightarrow] & \ \textbf{ BFD } \arrow[r, Rightarrow]  & \textbf{ ACCP domain}  \arrow[r, Rightarrow] & \textbf{ atomic domain}
	\end{tikzcd}
\end{equation}
\smallskip

\noindent We say that $R$ is a \emph{bounded factorization domain} (or a \emph{BFD}) if it is atomic and for every nonzero nonunit $x \in R$, there is a positive integer $N$ such that $x = a_1 \cdots a_n$ for irreducibles $a_1, \dots, a_n \in R$ implies that $n \le N$. In addition, we say that $R$ is a \emph{finite factorization domain} (or an \emph{FFD}) if it is atomic and every nonzero nonunit of $R$ factors into irreducibles in only finitely many ways (up to order and associates). The notions of a BFD and an FFD were introduced in~\cite{AAZ90} as part of Diagram~\eqref{diag:AAZ's atomic chain}. The purpose of this chapter is to survey some of the fundamental results related to bounded and finite factorization domains that have been established in the last three decades, indicating for the interested reader the sources where the most relevant results originally appeared. Although the rings we consider here have no nonzero zero-divisors, it is worth pointing out that the bounded and finite factorization properties have been extensively investigated in the context of commutative rings with zero-divisors by D.~D. Anderson and his students; see~\cite{AJ17} for more details and references.

This chapter is organized as follows. In Section~\ref{sec:prelim}, we recall some definitions and settle down the notation we will use throughout this paper. In Section~\ref{sec:BFMs and FFMs}, we give a few results about the bounded and finite factorization properties in the abstract context of monoids. Our treatment of monoids is brief as we only present results that will be useful later in the context of integral domains. Then in Section~\ref{sec:classes and examples of BFDs and FFDs}, we turn our attention to bounded and finite factorization domains, providing several characterizations and showing, among other results, that Noetherian domains and Krull domains are BFDs and FFDs, respectively. We also consider the popular $D+M$ construction. In Section~\ref{sec:extensions and localization}, we explore conditions under which the bounded and finite factorization properties are inherited by subrings or passed to ring extensions; we put particular emphasis on ring extensions by localization and pullback constructions. Directed unions are also considered. In Section~\ref{sec:polynomial-like rings}, we treat integral domains somehow related to rings of polynomials and rings of power series. We put special emphasis on the class of monoid domains. Finally, in Section~\ref{sec:generalized BFDs and FFDs}, we briefly explore an abstraction of the finite factorization property introduced by D. D. Anderson and the first author in~\cite{AA10}, where factorizations in an integral domain are identified up to a given arbitrary equivalence relation on the set of irreducibles (not necessarily that of being associates).

\bigskip
\section{Preliminary}
\label{sec:prelim}

In this section, we briefly review some notation and terminology we will use throughout this chapter. For undefined terms or a more comprehensive treatment of non-unique factorization theory, see~\cite{GH06} by A. Geroldinger and F. Halter-Koch.

\smallskip
\subsection{General Notation}

As is customary, $\zz$, $\zz/ \hspace{-1pt} n\zz$, $\qq$, $\rr$, and $\cc$ will denote the set integers, integers modulo $n$, rational numbers, real numbers, and complex numbers, respectively. We let $\nn$ and $\nn_0$ denote the set of positive and nonnegative integers, respectively. In addition, we let $\pp$ denote the set of primes. For $p \in \pp$ and $n \in \nn$, we let $\ff_{p^n}$ be the finite field of cardinality $p^n$. For $a,b \in \zz$ with $a \le b$, we let $\ldb a,b \rdb$ denote the set of integers between $a$ and $b$, i.e., $\ldb a,b \rdb = \{n \in \zz \mid a \le n \le b\}$. In addition, for $S \subseteq \rr$ and $r \in \rr$, we set $S_{\ge r} = \{s \in S \mid s \ge r\}$ and $S_{> r} = \{s \in S \mid s > r\}$.

\smallskip
\subsection{Factorizations}

Although a monoid is usually defined to be a semigroup with an identity element, here we will additionally assume that all monoids are cancellative and commutative. Let $M$ be a monoid. We say that $M$ is \emph{torsion-free} provided that for all $a,b \in M$, if $a^n = b^n$ for some $n \in \nn$, then $a=b$. The \emph{quotient group} $\gp(M)$ of a monoid $M$ is the set of quotients of elements in~$M$ (i.e., the unique abelian group $\gp(M)$ up to isomorphism satisfying that any abelian group containing a homomorphic image of $M$ will also contain a homomorphic image of $\gp(M)$). The group of invertible elements of $M$ is denoted by $U(M)$. The monoid $M$ is \emph{reduced} if $|U(M)| = 1$. An element $a \in M \! \setminus \! U(M)$ is an \emph{irreducible} (or an \emph{atom}) if whenever $a = uv$ for some $u,v \in M$, then either $u \in U(M)$ or $v \in U(M)$. The set of irreducibles of $M$ is denoted by $\ii(M)$. The monoid $M$ is \emph{atomic} if every non-invertible element factors into irreducibles. A subset $I$ of $M$ is an \emph{ideal} of~$M$ provided that $I \, M = I$ (or, equivalently, $I \, M \subseteq I$). The ideal $I$ is \emph{principal} if $I = bM$ for some $b \in M$. The monoid $M$ satisfies the \emph{ascending chain condition on principal ideals} (or \emph{ACCP}) if every ascending chain of principal ideals of $M$ eventually stabilizes.

It is clear that the monoid $M$ is atomic if and only if its quotient monoid $M_{\text{red}} = M/U(M)$ is atomic. Let $Z(M)$ denote the free (commutative) monoid on $\ii(M_{\text{red}})$, and let $\pi \colon Z(M) \to M_\text{red}$ be the unique monoid homomorphism fixing $a$ for every $a \in \ii(M_{\text{red}})$. If $z = a_1 \cdots a_\ell \in Z(M)$, where $a_1, \dots, a_\ell \in \ii(M_{\text{red}})$, then $\ell$ is the \emph{length} of $z$ and is denoted by $|z|$. For every $b \in M$, we set
\[
	Z(b) = Z_M(b) = \pi^{-1} (b U(M)) \quad \text{and} \quad L(b) = L_M(b) = \{ |z| \mid z \in Z(b) \}.
\]
If $M$ is atomic and $|Z(b)| < \infty$ for every $b \in M$, then we say that $M$ is a \emph{finite factorization monoid} (or an \emph{FFM}). On the other hand, if $M$ is atomic and $|L(b)| < \infty$ for every $b \in M$, then we say that~$M$ is a \emph{bounded factorization monoid} (or a \emph{BFM}). Clearly, every FFM is a BFM. The monoid~$M$ is a \emph{unique factorization monoid} (or a \emph{UFM}) if $Z(b)$ is a singleton for every $b \in M$, and $M$ is a \emph{half-factorial monoid} (or an \emph{HFM}) if $L(b)$ is a singleton for every $b \in M$. It is clear that every UFM is both an FFM and an HFM and that every HFM is a BFM.

Let $R$ be an integral domain. We let $R^\ast$ denote the multiplicative monoid of $R$, i.e., $R^\ast = R \setminus \{0\}$. We set $Z(R) = Z(R^\ast)$, and for every $x \in R^\ast$, we set $Z(x) = Z_{R^\ast}(x)$ and $L(x) = L_{R^\ast}(x)$. It is clear that $R$ is a BFD (resp., an FFD, an HFD, or a UFD) if and only if $R^\ast$ is a BFM (resp., an FFM, an HFM, or a UFM). As we did for monoids, we let $U(R)$ and $\ii(R)$ denote the group of units and the set of irreducibles of $R$, respectively. In addition, we let $\mathcal{P}(R)$ denote the set of primes of $R$. The quotient field of $R$ is denoted by $\text{qf}(R)$. An \emph{overring} of~$R$ is a subring of $\text{qf}(R)$ containing $R$. The abelian group $\text{qf}(R)^\ast/U(R)$, written additively, is the \emph{group of divisibility} of $R$ and is denoted by $G(R)$. The group $G(R)$ is partially ordered under the relation $x U(R) \le y U(R)$ if and only if $y \in xR$; we let $G(R)^+$ denote the monoid consisting of all the nonnegative elements of $G(R)$.

Even before we consider the bounded and finite factorization properties on monoid domains (in Subsection~\ref{subsec:monoid domains}), many of the examples that we construct here will involve such rings. For an integral domain $R$ and a monoid $M$, we let $R[X;M]$ denote the ring of polynomial expressions with coefficients in $R$ and exponents in $M$. Following R. Gilmer~\cite{rG84}, we will write $R[M]$ instead of $R[X;M]$. When~$M$ is torsion-free, $R[M]$ is an integral domain by \cite[Theorem~8.1]{rG84} and the group of units of $R[M]$ is $U(R[M]) = \{uX^m \mid u \in U(R) \ \text{and} \ m \in U(M)\}$ by \cite[Theorem~11.1]{rG84}. A detailed study of monoid rings is given by Gilmer in~\cite{rG84}.

\bigskip
\section{Bounded and Finite Factorization Monoids}
\label{sec:BFMs and FFMs}

In this section, we briefly present some basic results related to both the bounded and finite factorization properties in the abstract context of monoids. Diagram~\eqref{diag:AAZ's atomic chain} also holds for the more general class consisting of monoids (see Diagram~\eqref{diag:AAZ's atomic chain for monoids} below).

\begin{equation} \label{diag:AAZ's atomic chain for monoids}
	\begin{tikzcd}[cramped]
		\textbf{ UFM } \ \arrow[r, Rightarrow]  \arrow[d, Rightarrow] & \ \textbf{ HFM } \arrow[d, Rightarrow] \\
		\textbf{ FFM } \ \arrow[r, Rightarrow] & \ \textbf{ BFM } \arrow[r, Rightarrow]  & \textbf{ ACCP monoid}  \arrow[r, Rightarrow] & \textbf{ atomic monoid}
	\end{tikzcd}
\end{equation}
\smallskip

The last two implications in Diagram~\eqref{diag:AAZ's atomic chain for monoids} are the only ones that are not immediate from definitions. We argue these two implications in this section (Corollary~\ref{cor:BFMs are ACCP monoids} and Remark~\ref{rem:ACCP monoids are atomic}) and obtain, as a result, Diagram~\eqref{diag:AAZ's atomic chain for monoids}. As this survey focuses on integral domains, we will give a result in the context of monoids only if it is needed in Sections~\ref{sec:classes and examples of BFDs and FFDs}--\ref{sec:generalized BFDs and FFDs}.

\medskip
\subsection{The Bounded Factorization Property}

To begin with, we characterize BFMs in terms of the existence of certain ``length functions". Let $M$ be a monoid. A function $\ell \colon M \to \nn_0$ is called a \emph{length function} of $M$ if it satisfies the following two properties:
\begin{enumerate}
	\item[(i)] $\ell(u) = 0$ if and only if $u \in U(M)$;
	\smallskip
	
	\item[(ii)] $\ell(bc) \ge \ell(b) + \ell(c)$ for every $b,c \in M$.
\end{enumerate} 

The following characterization of a BFM will prove useful at several later points.

\begin{prop} \emph(\cite[Theorem~1]{fHK92}\emph) \label{prop:BFM characterization via length functions}
	A monoid $M$ is a BFM if and only if there is a length function $\ell \colon M \to \nn_0$.
\end{prop}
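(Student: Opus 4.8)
The plan is to prove both directions of the equivalence, exploiting the two defining properties of a length function.

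For the backward direction, suppose a length function $\ell \colon M \to \nn_0$ exists. First I would check that $M$ is atomic: given a non-invertible $b \in M$, if $b$ were to factor indefinitely into non-units, property (ii) would force $\ell$ to drop strictly at each step (since each non-unit factor has positive $\ell$-value by property (i)), yielding an infinite strictly decreasing sequence in $\nn_0$, which is impossible; so $b$ admits a factorization into irreducibles. Next I would bound factorization lengths: if $b = a_1 \cdots a_n$ with each $a_i$ irreducible, then iterating (ii) gives $\ell(b) \ge \sum_{i=1}^n \ell(a_i) \ge n$, because each irreducible is a non-unit and hence satisfies $\ell(a_i) \ge 1$ by (i). Thus every element of $L(b)$ is at most $\ell(b)$, so $L(b)$ is finite and $M$ is a BFM.

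For the forward direction, assume $M$ is a BFM. The natural candidate is $\ell(b) = \sup L(b)$, that is, the largest number of irreducibles appearing in any factorization of $b$, which is finite precisely because $M$ is a BFM (with the convention $\ell(u) = 0$ for $u \in U(M)$, as $U(M)$-elements have no factorization into irreducibles). Property (i) is then immediate: $\ell(b) = 0$ exactly when $b$ has no factorization of positive length, i.e.\ when $b$ is a unit. For property (ii), given $b,c \in M$, I would take factorizations of $b$ and $c$ realizing their maximal lengths $\ell(b)$ and $\ell(c)$, concatenate them to obtain a factorization of $bc$ of length $\ell(b) + \ell(c)$, and conclude $\ell(bc) \ge \ell(b) + \ell(c)$ since $\ell(bc)$ is the supremum over all factorizations of $bc$. (One should treat the unit cases separately, but these are routine since adjoining or absorbing units does not change lengths.)

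I do not expect a serious obstacle here; the argument is essentially a bookkeeping exercise with the two axioms. The one point demanding mild care is the passage to $M_{\text{red}}$: since factorizations and lengths are defined via $Z(b) = \pi^{-1}(bU(M))$, one works with factorizations up to associates, so $\ell$ is well defined on $U(M)$-cosets and hence on $M$ itself. Verifying superadditivity $\ell(bc) \ge \ell(b) + \ell(c)$ rather than equality is what makes the construction robust: we only need that concatenation produces \emph{a} factorization of $bc$, not that every factorization of $bc$ arises this way, so no delicate control of the factorizations of a product is required.
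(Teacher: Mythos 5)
Your proposal is correct and follows essentially the same route as the paper: the forward direction defines $\ell(b) = \max L(b)$ and checks superadditivity by concatenating maximal factorizations, and the backward direction uses the fact that every non-unit factor contributes at least $1$ to $\ell$, so that $\ell(b)$ bounds the lengths. The only cosmetic difference is in the atomicity step, where you argue by infinite descent on $\ell$-values while the paper takes a factorization of $b$ into non-units with the maximal number of factors (which exists since that number is bounded by $\ell(b)$) and observes that each factor must then be irreducible; both are standard and equivalent.
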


\begin{proof}
	Suppose first that $M$ is a BFM. Then define a function $\ell \colon M \to \nn_0$ by $\ell(b) = \max L(b)$. Condition~(i) in the definition of a length function follows immediately. In addition, it is clear that $\max L(bc) \ge \max L(b) + \max L(c)$ for every $b,c \in M$, from which we obtain condition~(ii). Conversely, suppose that $\ell \colon M \to \nn_0$ is a length function. Take $b \in M \setminus U(M)$ such that $b = a_1 \cdots a_m$ for some $a_1, \dots, a_m \in M \setminus U(M)$, and set  $b_j = a_1 \cdots a_j$ for every $j \in \ldb 1,m \rdb$. As $\ell(b_m) > \ell(b_{m-1}) > \cdots > \ell(b_1)$, the inequality $m \le \ell(b)$ holds. Now observe that if we take $m$ as large as it can possibly be, then the maximality of $m$ guarantees that $a_1, \dots, a_m \in \ii(M)$. Hence $M$ is atomic. Since $\sup L(b) \le \ell(b)$ for every $b \in M \setminus U(M)$, we conclude that $M$ is a BFM.
\end{proof}

As we mentioned in the introduction, every BFD satisfies ACCP. This actually holds in the more general context of monoids, as the next corollary indicates.

\begin{cor} \emph(\cite[Corollary~1]{fHK92}\emph) \label{cor:BFMs are ACCP monoids}
	If $M$ is a BFM, then $M$ satisfies ACCP.
\end{cor}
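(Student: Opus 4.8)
The plan is to use the length-function characterization of BFMs established in Proposition~\ref{prop:BFM characterization via length functions}, since having that machinery available makes the argument clean. Suppose $M$ is a BFM; by that proposition there exists a length function $\ell \colon M \to \nn_0$ satisfying conditions (i) and (ii). The goal is to show that every ascending chain of principal ideals stabilizes, so I would start with an arbitrary ascending chain $b_1 M \subseteq b_2 M \subseteq b_3 M \subseteq \cdots$ and aim to show it is eventually constant.

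First I would translate the containment of principal ideals into a divisibility statement inside $M$. The containment $b_n M \subseteq b_{n+1} M$ means $b_n \in b_{n+1} M$, so we may write $b_n = b_{n+1} c_n$ for some $c_n \in M$. Applying the superadditivity property (ii) of the length function gives $\ell(b_n) = \ell(b_{n+1} c_n) \ge \ell(b_{n+1}) + \ell(c_n) \ge \ell(b_{n+1})$, so the sequence $\big(\ell(b_n)\big)_{n \ge 1}$ is a nonincreasing sequence of nonnegative integers. Hence it must stabilize: there is some $N$ such that $\ell(b_n) = \ell(b_{n+1})$ for all $n \ge N$.

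The key step is then to argue that equality of lengths forces equality of the principal ideals from the index $N$ onward. For $n \ge N$, the relation $\ell(b_n) = \ell(b_{n+1}) + \ell(c_n)$ together with $\ell(b_n) = \ell(b_{n+1})$ forces $\ell(c_n) = 0$, and condition~(i) then guarantees $c_n \in U(M)$. But $b_n = b_{n+1} c_n$ with $c_n$ a unit means $b_n$ and $b_{n+1}$ are associates, whence $b_n M = b_{n+1} M$. Therefore the chain stabilizes at index $N$, and $M$ satisfies ACCP.

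I expect the main (and really the only) subtlety to be making sure the chain of inequalities in (ii) is applied in the right direction: the length function is superadditive, so lengths can only \emph{decrease} as we ascend the chain of ideals (equivalently, as divisors get coarser), and it is this monotonicity of a $\nn_0$-valued quantity that provides the finiteness needed for stabilization. Everything else is a routine unwinding of definitions, so the proof is short once the length function supplied by Proposition~\ref{prop:BFM characterization via length functions} is in hand.
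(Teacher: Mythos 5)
Your proof is correct and follows essentially the same route as the paper: extract a length function via Proposition~\ref{prop:BFM characterization via length functions}, observe that lengths are nonincreasing along the chain, and conclude that once the lengths stabilize the quotients $c_n$ must be units. One cosmetic slip: the relation $\ell(b_n) = \ell(b_{n+1}) + \ell(c_n)$ should be the inequality $\ell(b_n) \ge \ell(b_{n+1}) + \ell(c_n)$, but since $\ell(c_n) \ge 0$ this still forces $\ell(c_n) = 0$, so the argument goes through unchanged.
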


\begin{proof}
	By Proposition~\ref{prop:BFM characterization via length functions}, there is a length function $\ell \colon M \to \nn_0$. Suppose that $(b_n M)_{n \in \nn}$ is an ascending chain of principal ideals of $M$. For every $n \in \nn$, the inclusion $b_n \in b_{n+1}M$ ensures that $\ell(b_n) \ge \ell(b_{n+1})$. Hence there is an $n_0 \in \nn$ such that $\ell(b_n) = \ell(b_{n+1})$ for every $n \ge n_0$. This implies that $b_n \in b_{n+1}U(M)$ for every $n \ge n_0$, and so $(b_nM)_{n \in \nn}$ must stabilize. Thus, $M$ satisfies ACCP.
\end{proof}

The reverse implication of Corollary~\ref{cor:BFMs are ACCP monoids} does not hold in general. The following example, which is a fragment of \cite[Example~2.1]{AAZ90}, corroborates our observation.

\begin{example}\footnote{The factorization structure of additive submonoids of $\qq_{\ge 0}$ (known as Puiseux monoids) has been systematically studied in the last few years (see \cite{CGG20a} for a recent survey).}  \label{ex:ACCP monoid that is not BFM}
	 Let $M$ be the additive submonoid of $\qq_{\ge 0}$ generated by the set $\{ 1/p \mid p \in \pp \}$. It can be readily checked that $\mathcal{A}(M) = \{1/p \mid p \in \pp \}$. In addition, it is not hard to verify that for every $q \in M$, there is a unique $N(q) \in \nn_0$ and a unique sequence of nonnegative integers $(c_p(q))_{p \in \pp}$ such that $q = N(q) + \sum_{p \in \pp} c_p(q) \frac{1}{p}$. Set $S(q) = \sum_{p \in \pp} c_p(q)$. It is clear that if $q \in q' + M$ for some $q' \in M$, then $N(q') \le N(q)$. Also, if $q'$ is a proper divisor of $q$ in~$M$, then $N(q') = N(q)$ ensures that $S(q') < S(q)$. Thus, every sequence $(q_n)_{n \in \nn}$ in $M$ satisfying that $q_n \in q_{n+1} + M$ for every $n \in \nn$ must stabilize, and so $M$ must satisfy ACCP. Finally, we can see that $M$ is not a BFM because $\pp \subseteq L(1)$.
\end{example}

The bounded factorization property is inherited by those submonoids that preserve invertible elements.

\begin{prop} \emph(\cite[Theorem~3]{fHK92}\emph) \label{prop:BF inherited by inert submonoids}
	Let $M$ be a BFM. Then every submonoid $N$ of $M$ satisfying $U(N) = U(M) \cap N$ is also a BFM.
\end{prop}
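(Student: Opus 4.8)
The plan is to invoke the length-function characterization of Proposition~\ref{prop:BFM characterization via length functions} and then simply restrict a length function from $M$ to $N$. Since $M$ is a BFM, that proposition guarantees a length function $\ell \colon M \to \nn_0$. I would consider its restriction $\ell|_N \colon N \to \nn_0$ and argue that $\ell|_N$ is again a length function; by the same proposition applied to $N$, this immediately forces $N$ to be a BFM. The appeal of this route is that it sidesteps any direct manipulation of factorizations or length sets in $N$ and reduces everything to checking the two defining properties of a length function.

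Verifying condition~(ii) is routine: for $b, c \in N \subseteq M$ one has $\ell|_N(bc) = \ell(bc) \ge \ell(b) + \ell(c) = \ell|_N(b) + \ell|_N(c)$, so the superadditivity of $\ell$ descends to $N$ with no extra hypotheses whatsoever. This step uses nothing about $N$ beyond the inclusion $N \subseteq M$.

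The only substantive point—and the step where the assumption $U(N) = U(M) \cap N$ is genuinely needed—is condition~(i). For $u \in N$ we have $\ell|_N(u) = 0$ if and only if $\ell(u) = 0$, which by property~(i) for $\ell$ happens if and only if $u \in U(M)$; combining with the standing assumption $u \in N$, this reads $\ell|_N(u) = 0$ if and only if $u \in U(M) \cap N$, and the hypothesis rewrites the right-hand side as $U(N)$. I expect this to be the crux of the argument, since without the inertness condition the equivalence could break: an element of $N$ that is invertible in $M$ but not in $N$ would be assigned length $0$ while remaining a nonunit of $N$, violating~(i). The condition $U(N) = U(M) \cap N$ is exactly what rules this out. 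Once both conditions are confirmed, Proposition~\ref{prop:BFM characterization via length functions} applied to $N$ completes the proof.
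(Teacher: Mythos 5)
Your proposal is correct and follows exactly the paper's argument: take a length function $\ell$ on $M$ via Proposition~\ref{prop:BFM characterization via length functions}, note that superadditivity restricts trivially and that the hypothesis $U(N) = U(M) \cap N$ is precisely what makes condition~(i) hold for $\ell|_N$, then apply the characterization to $N$. No differences worth noting.
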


\begin{proof}
	Let $N$ be a submonoid of $M$ such that $U(N) = U(M) \cap N$. Since $M$ is a BFM, there is a length function $\ell \colon M \to \nn_0$ by Proposition~\ref{prop:BFM characterization via length functions}. As $U(N) = U(M) \cap N$, the equality $\ell(u) = 0$ holds for $u \in N$ if and only if $u \in U(N)$. This, along with the fact that $\ell(bc) \ge \ell(b) + \ell(c)$ for every $b,c \in N$, guarantees that $\ell$ is still a length function when restricted to~$N$. Hence $N$ is a BFM.
\end{proof}

The reduced monoids in the following example will be useful later to construct monoid domains that are BFDs with further desired properties.

\begin{example} \label{ex:BF positive monoids}
	Let $M$ be an additive submonoid of $\qq_{\ge 0}$ such that $0$ is not a limit point of $M \setminus \{0\}$. Then it follows from \cite[Proposition~4.5]{fG19} that $M$ is a BFM.
\end{example}

\smallskip
\subsection{The Finite Factorization Property}

We now turn to give two characterizations of an FFM. To do so, we use Dickson's Lemma, a standard result in combinatorics stating that for every $k \in \nn$, a subset of $\nn_0^k$ contains only finitely many minimal elements under the usual product ordering.

\begin{prop} \emph(\cite[Theorem~2 and Corollary~2]{fHK92}\emph) \label{prop:FFM characterization via idf-monoids}
	Let $M$ be a monoid. Then the following statements are equivalent.
	\begin{enumerate}
		\item[(a)] $M$ is an FFM.
		\smallskip
		
		\item[(b)] Every element of $M$ has only finitely many non-associate divisors.
		\smallskip
		
		\item[(c)] $M$ is atomic and every element of $M$ is divisible by only finitely many non-associate irreducibles.
	\end{enumerate}
\end{prop}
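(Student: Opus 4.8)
The plan is to prove the equivalences by establishing the cycle of implications $(a) \Rightarrow (b) \Rightarrow (c) \Rightarrow (a)$, working throughout in the reduced monoid $M_{\text{red}}$ (which is harmless since $M$ is atomic if and only if $M_{\text{red}}$ is, and divisors, irreducibles, and factorizations all correspond under the canonical map up to associates). The key tool for the final implication will be Dickson's Lemma, exactly as flagged in the setup.

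For $(a) \Rightarrow (b)$, I would argue the contrapositive. Suppose some $b \in M$ has infinitely many pairwise non-associate divisors $d_1, d_2, \dots$, writing $b = d_i e_i$ for suitable $e_i \in M$. The plan is to lift each factorization $b = d_i e_i$ to a factorization of $b$ into irreducibles by refining both $d_i$ and $e_i$ into atoms (using atomicity, which holds since an FFM is atomic by definition). I would then observe that infinitely many non-associate $d_i$ force infinitely many distinct factorizations of $b$ in $Z(b)$: if only finitely many factorizations existed, each divisor $d_i$ would be a subproduct of one of finitely many atom-tuples, yielding only finitely many non-associate divisors, a contradiction. Hence $|Z(b)| = \infty$, so $M$ is not an FFM.

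The implication $(b) \Rightarrow (c)$ is essentially immediate: every irreducible dividing an element is in particular a divisor, so finitely many non-associate divisors forces finitely many non-associate irreducible divisors; atomicity still needs to be extracted, and for this I would note that condition~(b) implies $M$ satisfies ACCP (a strictly ascending chain of principal ideals $b_1 M \subsetneq b_2 M \subsetneq \cdots$ inside a fixed $b_n M$ would produce infinitely many non-associate divisors of any common multiple), and ACCP gives atomicity by Remark~\ref{rem:ACCP monoids are atomic}.

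The implication $(c) \Rightarrow (a)$ is where the real content lies and will be the main obstacle. Fix $b \in M$; since $M$ is atomic and $b$ is divisible by only finitely many non-associate irreducibles, list representatives $a_1, \dots, a_k$ of the atoms dividing $b$. Every factorization $z \in Z(b)$ then uses only atoms from $\{a_1, \dots, a_k\}$, so $z$ corresponds to a tuple $(n_1, \dots, n_k) \in \nn_0^k$ recording multiplicities, and the map $Z(b) \to \nn_0^k$ is injective. The plan is to show this image is finite. I would consider the set $T \subseteq \nn_0^k$ of multiplicity tuples arising from factorizations of $b$; the crucial point is that every element of $T$ is minimal in $T$ under the product order, because if $(m_1,\dots,m_k) \le (n_1,\dots,n_k)$ with both in $T$ and the tuples unequal, then $\prod a_i^{m_i}$ would be a proper divisor of $b = \prod a_i^{m_i}$ equal to $b$ itself, forcing equality of the tuples — more carefully, two distinct factorizations of the same element $b$ cannot satisfy a coordinatewise inequality, since cancellativity would make the excess atoms multiply to a unit, contradicting that atoms are nonunits. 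Thus $T$ is an antichain, and every subset of $\nn_0^k$ has only finitely many minimal elements by Dickson's Lemma, so an antichain in $\nn_0^k$ is finite. Therefore $T$, and hence $Z(b)$, is finite, giving $(a)$. The subtle step I expect to require the most care is verifying that distinct factorizations of the \emph{same} element genuinely yield an antichain in $\nn_0^k$, since this is precisely where cancellativity and the definition of atom combine.
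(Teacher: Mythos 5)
Your proposal is correct and follows essentially the same route as the paper: the paper also reduces to $M_{\text{red}}$, gets (a)$\Rightarrow$(b) from the observation that factorizations of divisors are subfactorizations of factorizations of $b$, proves atomicity in (b)$\Rightarrow$(c) via an ACCP-style maximal-principal-ideal argument, and establishes (c)$\Rightarrow$(a) by exactly your Dickson's Lemma antichain argument. One small presentational caveat: Remark~\ref{rem:ACCP monoids are atomic} is extracted \emph{from} this very proof in the paper, so rather than citing it you should inline the standard argument that ACCP implies atomicity (choose $bM$ maximal among principal ideals generated by elements with no atomic factorization and split $b$), which is what the paper does.
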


\begin{proof}
	We assume, without loss of generality, that $M$ is reduced.
	\smallskip
	
	(a) $\Rightarrow$ (b):  Suppose that $M$ is an FFM, and fix $b \in M$. If $d$ is a divisor of $b$ in $M$, then every factorization of $d$ is a subfactorization of some factorization of $b$. This, together with the fact that $Z(b)$ is finite, implies that $b$ has only finitely many divisors in $M$.
	\smallskip
	
	(b) $\Rightarrow$ (c): Assume that every element of $M$ has only finitely many divisors. Note that $M$ must satisfy ACCP by Corollary~\ref{cor:BFMs are ACCP monoids}. Suppose, by way of contradiction, that $M$ is not atomic. Then the set $S$ consisting of all the elements of $M$ that do not factor into irreducibles is nonempty. Since $M$ satisfies ACCP and $S$ is nonempty, there is a $b \in S$ such that the ideal $bM$ is maximal among all principal ideals of $M$ generated by elements of $S$. Because $b \in S$, there are $b_1, b_2 \in M \setminus U(M)$ with $b = b_1 b_2$ such that $b_1 \in S$ or $b_2 \in S$. So $b M$ is strictly contained in either $b_1 M$ or $b_2 M$, which contradicts the maximality of $b M$. Thus, $M$ is atomic. The second part of the statement is clear.
	\smallskip
	
	(c) $\Rightarrow$ (a): Suppose that $M$ is atomic and every element of $M$ is divisible by only finitely many irreducibles. Take $b \in M$, and let $A_b$ be the set of irreducibles in $M$ dividing $b$. Because $Z(b)$ is a subset of the finite-rank free commutative monoid $F$ on $A_b$, it follows from Dickson's Lemma that $Z(b)$ has only finitely many minimal elements with respect to the order induced by division in $F$. This, along with the fact that any two factorizations in $Z(b)$ are incomparable as elements of $F$, implies that $|Z(b)| < \infty$. Thus, $M$ is an FFM.
\end{proof}

As a consequence of Proposition~\ref{prop:FFM characterization via idf-monoids}, finitely generated monoids are FFMs.

\begin{cor} \label{cor:finitely generated monoids are FFMs}
	Every finitely generated monoid is an FFM.
\end{cor}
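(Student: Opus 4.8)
The plan is to deduce the corollary from the characterization of FFMs in Proposition~\ref{prop:FFM characterization via idf-monoids}, specifically the implication (c)$\Rightarrow$(a). As in the proof of that proposition, I would first reduce to the case in which $M$ is reduced, since $M$ is finitely generated if and only if $M_{\text{red}}$ is, and the finite factorization property depends only on $M_{\text{red}}$. Writing $M$ for this reduced monoid and fixing a finite generating set $\{a_1, \dots, a_n\}$ of nonidentity elements, I would then verify the two ingredients of condition~(c): that $M$ is atomic, and that every element of $M$ is divisible by only finitely many non-associate irreducibles. The second ingredient will be essentially free, because every atom of $M$ must occur among the generators: if an atom $a$ were a product $\prod_i a_i^{c_i}$ with $\sum_i c_i \ge 2$, it would split into two nonunits, contradicting irreducibility. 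Hence $\ii(M) \subseteq \{a_1, \dots, a_n\}$ is finite, and in particular each element of $M$ is divisible by only finitely many atoms.

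The heart of the argument, and the step I expect to be the main obstacle, is establishing atomicity. I would obtain it by exhibiting a length function on $M$ and invoking Proposition~\ref{prop:BFM characterization via length functions}, whose proof shows that the mere existence of a length function already forces atomicity. To build such a function, pass to the torsion-free quotient $\gp(M)/T \cong \zz^r$ of the quotient group (with $T$ its torsion subgroup), and let $\bar a_1, \dots, \bar a_n \in \zz^r$ denote the images of the generators. The goal is a group homomorphism $\phi \colon \gp(M) \to \zz$ with $\phi(a_i) \ge 1$ for every $i$; restricting $\phi$ to $M$ and using that each $b \in M$ is a nonnegative combination of the $a_i$ then yields a function $\ell \colon M \to \nn_0$ that is additive and satisfies $\ell(b) = 0$ precisely when $b$ is the identity, so that $\ell$ is a length function.

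The existence of such a $\phi$ is exactly where reducedness is used. By Gordan's theorem, a rational functional strictly positive on all the $\bar a_i$ exists unless there is a nontrivial relation $\sum_i \lambda_i \bar a_i = 0$ with coefficients $\lambda_i \in \nn_0$ not all zero. Such a relation would force the product $\prod_i a_i^{\lambda_i} \in M$ to be torsion in $\gp(M)$; but a torsion element of a cancellative monoid is invertible, so in the reduced monoid $M$ this product equals the identity, making a nonempty product of nonunits a unit, which is impossible. Thus the desired $\phi$ exists (after clearing denominators to make it integer-valued), $M$ is a BFM, and in particular $M$ is atomic. Combining atomicity with the finiteness of $\ii(M)$ and applying Proposition~\ref{prop:FFM characterization via idf-monoids}, I conclude that $M$ is an FFM.
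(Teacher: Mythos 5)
Your proof is correct, but it takes a genuinely different route from the paper on the one step that requires an argument, namely atomicity. The paper's proof is very short: after reducing to the reduced case it takes a \emph{minimal} generating set $a_1,\dots,a_m$ and observes that these generators are precisely the irreducibles of $M$ (minimality plus cancellativity and reducedness rule out any generator being a sum of two nonunits), so atomicity is immediate because every element is by definition a product of generators; Proposition~\ref{prop:FFM characterization via idf-monoids} then finishes. You instead prove only the inclusion $\ii(M)\subseteq\{a_1,\dots,a_n\}$ (which needs no minimality) and obtain atomicity by building a length function: you pass to $\gp(M)/T\cong\zz^r$, invoke Gordan's theorem to produce an integer functional that is at least $1$ on every generator, and rule out the alternative (a nontrivial nonnegative relation among the generators) by noting that it would make a nonempty sum of nonunits invertible, hence zero, in a reduced cancellative monoid. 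All of these steps check out, and your route proves slightly more along the way -- it exhibits an explicit length function and hence shows directly that finitely generated reduced monoids are BFMs -- at the cost of importing a separating-hyperplane result where the paper gets by with an elementary observation about minimal generating sets. One cosmetic inaccuracy: ``$M$ is finitely generated if and only if $M_{\text{red}}$ is'' fails in the backward direction (take $M$ a non-finitely-generated group), but you only use the forward direction, so the reduction is fine.
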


\begin{proof}
	It suffices to prove the corollary for reduced monoids. Let $M$ be a finitely generated reduced monoid that is minimally generated by $a_1, \dots, a_m$. It readily follows that $\ii(M) = \{a_1, \dots, a_m\}$, and therefore, $M$ is atomic. Thus, $M$ is an FFM by Proposition~\ref{prop:FFM characterization via idf-monoids}.
\end{proof}

In the proof of Proposition~\ref{prop:FFM characterization via idf-monoids}, we have incidentally argued the following remark.

\begin{remark}  \label{rem:ACCP monoids are atomic}
	Every monoid satisfying ACCP is atomic.
\end{remark}

In contrast to what we have already seen for BFMs, an FFM $M$ can have a submonoid $N$ satisfying $U(N) = U(M) \cap N$ that is not an FFM.

\begin{example} \label{ex:an FFM monoid with a submonoid that is not an FFM}
	Let $M$ be the additive monoid $\zz \times \nn_0$. Then it is easy to verify that $M$ is atomic with $U(M) = \zz \times \{0\}$ and $\ii(M) = \zz \times \{1\}$. Since $M_{\text{red}} \cong \nn_0$, the monoid $M$ is a UFM and, in particular, an FFM. Now consider the submonoid $N = \{(0,0)\} \cup (\zz \times \nn)$ of $M$. Note that $N$ is reduced with $\ii(N) = \zz \times \{1\}$. As a result, $U(N) = U(M) \cap N$. However, $N$ is not an FFM as $(0,2) = (-n,1) + (n,1)$ for every $n \in \nn$.
\end{example}

We record the following proposition, whose proof is straightforward.

\begin{prop}\emph(\cite[Corollary~3]{fHK92}\emph)
	Every submonoid of a reduced FFM is an FFM.
\end{prop}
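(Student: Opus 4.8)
The plan is to reduce the statement to the divisor characterization of FFMs recorded in Proposition~\ref{prop:FFM characterization via idf-monoids}, namely the equivalence of being an FFM with condition~(b): every element has only finitely many non-associate divisors. The reduced hypothesis on $M$ is precisely what makes this work, so the first thing I would record is that a submonoid of a reduced monoid is again reduced. Indeed, if $u \in U(N)$, then $u$ is invertible in $M$ as well, so $U(N) \subseteq U(M)$; since $|U(M)| = 1$, this forces $|U(N)| = 1$, i.e., $N$ is reduced. In a reduced monoid the associate relation is trivial, so for both $M$ and $N$ condition~(b) simplifies to the statement that every element has only finitely many divisors.

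Next I would fix $b \in N$ and compare the divisors of $b$ computed in $N$ with those computed in $M$. If $d \in N$ divides $b$ in $N$, say $b = dc$ with $c \in N$, then the very same equation exhibits $d$ as a divisor of $b$ in $M$, because $N \subseteq M$. Hence the set of divisors of $b$ in $N$ is contained in the set of divisors of $b$ in $M$. Since $M$ is an FFM, Proposition~\ref{prop:FFM characterization via idf-monoids} guarantees that $b$ has only finitely many divisors in $M$, and by the inclusion just described $b$ therefore has only finitely many divisors in $N$. As $b \in N$ was arbitrary, condition~(b) holds for the reduced monoid $N$, and applying Proposition~\ref{prop:FFM characterization via idf-monoids} once more (this time to $N$) yields that $N$ is an FFM.

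There is essentially no analytic obstacle here; the only point demanding care is the appeal to reducedness, which I would flag explicitly. Without that hypothesis the statement fails, as Example~\ref{ex:an FFM monoid with a submonoid that is not an FFM} shows: there the nontrivial unit group of $M = \zz \times \nn_0$ lets a single element of the submonoid acquire infinitely many genuinely distinct factorizations, even though $U(N) = U(M) \cap N$. The reduced hypothesis is exactly what collapses the associate relation, so that counting divisors inside $N$ can be dominated by counting them inside $M$.
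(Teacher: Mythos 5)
Your argument is correct and is exactly the straightforward proof the paper has in mind (the paper records this proposition without proof, noting only that the proof is straightforward): a submonoid of a reduced monoid is reduced, divisors computed in $N$ are divisors computed in $M$, and Proposition~\ref{prop:FFM characterization via idf-monoids}(b) applied to $M$ and then to $N$ finishes it. Your remark contrasting this with Example~\ref{ex:an FFM monoid with a submonoid that is not an FFM} correctly identifies reducedness as the essential hypothesis.
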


To conclude this section, we give some examples of FFMs that will be used later to construct monoid domains that are FFDs and have further algebraic properties.

\begin{example} \label{ex:FF positive monoids}
	Let $(q_n)_{n \in \nn}$ be an increasing sequence of positive rational numbers, and consider the additive submonoid $M = \langle q_n \mid n \in \nn \rangle$ of $\qq_{\ge 0}$. It is not hard to argue that $M$ is an FFM; indeed, it follows from \cite[Theorem~5.6]{fG19} that any additive submonoid of the nonnegative cone of an ordered field $F$ is an FFM provided that such a monoid can be generated by an increasing sequence of $F$.
\end{example}

\bigskip
\section{Bounded and Finite Factorization Domains}
\label{sec:classes and examples of BFDs and FFDs}

In this section, we provide characterizations and give various examples and classes of BFDs and FFDs.

\smallskip
\subsection{Characterizations of BFDs and (Strong) FFDs}

There are several other useful ways to rephrase what it means for an integral domain to be a BFD. The following proposition illustrates this observation.

\begin{prop} \emph(\cite[Theorem~2.4]{AAZ90}\emph) \label{prop:BFD characterizations}
	The following statements are equivalent for an integral domain~$R$.
	\begin{enumerate}
		\item[(a)] $R$ is a BFD.
		\smallskip
		
		\item[(b)] There is a length function $\ell \colon R^\ast \to \nn_0$.
		\smallskip
		
		\item[(c)] For every $x \in R^\ast$, there is a positive integer $n$ such that every (strictly) ascending chain of principal ideals starting at $xR$ has length at most $n$.
		\smallskip
		
		\item[(d)] For every $x \in G(R)^+$, there is a positive integer $n$ such that $x$ is the sum of at most $n$ (minimal) positive elements in $G(R)^+$.
	\end{enumerate}
\end{prop}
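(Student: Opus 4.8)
The plan is to prove the four equivalences by first reducing (a) $\Leftrightarrow$ (b) to the monoid characterization already available, then inserting (c) via a short implication cycle, and finally treating (d) separately through the group of divisibility. Since $R$ is a BFD precisely when $R^\ast$ is a BFM, and a length function on $R^\ast$ is exactly the object demanded in (b), the equivalence of (a) and (b) is nothing more than Proposition~\ref{prop:BFM characterization via length functions} applied to $M = R^\ast$. Thus the genuine work is to slot (c) between (a) and (b) and to match (d) against (a).

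For (b) $\Rightarrow$ (c) I would fix a length function $\ell \colon R^\ast \to \nn_0$ and a strictly ascending chain $xR = x_0 R \subsetneq x_1 R \subsetneq \cdots \subsetneq x_k R$. Each $x_i$ is nonzero (every ideal in the chain contains $x \neq 0$), so $x_i \in R^\ast$, and $x_i R \subsetneq x_{i+1} R$ forces $x_i = x_{i+1} z$ for some nonunit $z$; properties (i) and (ii) of a length function then give $\ell(x_i) \ge \ell(x_{i+1}) + \ell(z) > \ell(x_{i+1})$. Hence $\ell(x) = \ell(x_0) > \cdots > \ell(x_k) \ge 0$ yields $k \le \ell(x)$, so $n = \ell(x)$ works. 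For (c) $\Rightarrow$ (a), condition (c) forces ACCP, since any ascending chain of principal ideals stabilizes once it reaches a nonzero ideal (its strictly increasing part being bounded); thus $R^\ast$ satisfies ACCP and is atomic by Remark~\ref{rem:ACCP monoids are atomic}. Moreover, any factorization $x = a_1 \cdots a_m$ of a nonzero nonunit into irreducibles produces the strictly ascending chain $xR \subsetneq (a_2 \cdots a_m)R \subsetneq \cdots \subsetneq a_m R \subsetneq R$ of length $m$ starting at $xR$, so (c) gives $m \le n$; factorization lengths are bounded and $R$ is a BFD.

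For (a) $\Leftrightarrow$ (d) I would pass to the group of divisibility. As an ordered monoid, $G(R)^+$ is canonically isomorphic to $R^\ast_{\text{red}} = R^\ast / U(R)$ via $xU(R) \mapsto xU(R)$, since a class $zU(R)$ is nonnegative exactly when $z \in R$, i.e. $z \in R^\ast$. Under this identification the positive elements correspond to the nonunit classes, the minimal positive elements correspond to the atoms of $R^\ast_{\text{red}}$ (equivalently, the irreducibles of $R$ up to associates), and writing $x$ as a sum of $k$ (minimal) positive elements corresponds to factoring the class of $x$ into $k$ nonunits (respectively $k$ irreducibles). Because $R$ is a BFD iff $R^\ast$ is a BFM iff $R^\ast_{\text{red}}$ is a BFM, condition (d)—read as the existence, for each $x$, of a bound $n$ capping the number of summands in every decomposition of $x$ into positive (respectively minimal positive) elements—translates verbatim into the assertion that factorization lengths in $R^\ast_{\text{red}}$ are bounded, with atomicity absorbed into the requirement that the summands be positive. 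This closes the equivalence.

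I expect the only delicate point to be the handling of (d): one must verify that $G(R)^+ \cong R^\ast_{\text{red}}$ as ordered monoids and that the minimal positive elements are precisely the atoms, and one must be careful about the quantifier, namely that $n$ bounds the number of summands in \emph{every} decomposition of $x$ (not merely in some decomposition), so that both parenthetical readings—``positive'' and ``minimal positive''—collapse to the bounded factorization property. Everything else is a routine unwinding of the definitions together with Proposition~\ref{prop:BFM characterization via length functions} and Remark~\ref{rem:ACCP monoids are atomic}.
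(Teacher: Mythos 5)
Your proof is correct, and its overall architecture matches the paper's: (a)~$\Leftrightarrow$~(b) is delegated to Proposition~\ref{prop:BFM characterization via length functions}, (b)~$\Rightarrow$~(c) is argued by the same strictly decreasing sequence of lengths along the chain, and (a)~$\Leftrightarrow$~(d) rests on the same identification $G(R)^+ = R^\ast_{\text{red}}$ (your extra care about the quantifier in (d) and about minimal positive elements being the atoms is welcome, since the paper glosses over this).

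The one place you genuinely diverge is in closing the cycle from (c): the paper proves (c)~$\Rightarrow$~(b) by defining $\ell(x)$ to be the least bound on chain lengths starting at $xR$ and verifying superadditivity by concatenating a chain above $xR$ with one above $yR$ to get a long chain above $xyR$. You instead prove (c)~$\Rightarrow$~(a) directly: (c) forces ACCP, hence atomicity by Remark~\ref{rem:ACCP monoids are atomic}, and any factorization $x = a_1 \cdots a_m$ into irreducibles yields a strictly ascending chain of length $m$ starting at $xR$, so lengths are bounded. Both arguments are sound; yours avoids checking that the extracted function is a length function (at the cost of invoking the ACCP-implies-atomic remark), while the paper's route has the small bonus of exhibiting an explicit length function witnessing (b). Either way the implication graph $(a)\Leftrightarrow(b)$, $(b)\Rightarrow(c)$, $(c)\Rightarrow(a)$, $(a)\Leftrightarrow(d)$ connects all four statements, so nothing is missing.
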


\begin{proof}
	(a) $\Leftrightarrow$ (b): This is a direct consequence of Proposition~\ref{prop:BFM characterization via length functions}.
	\smallskip
	
	(a) $\Leftrightarrow$ (d): It is clear that $G(R)^+ = \{xU(R) \mid x \in R^*\} = R^*_{\text{red}}$. As a result, for every $x \in R^* \setminus U(R)$, the set $L(x)$ has an upper bound $n \in \nn$ if and only if $x$ is the sum of at most $n$ positive elements in $G(R)^+$.
	\smallskip
	
	(b) $\Rightarrow$ (c): Let $\ell \colon R^* \to \nn_0$ be a length function. Take an $x \in R^*$ and set $n = \ell(x)$. Let $x_0R \subsetneq x_1R \subsetneq \cdots \subsetneq x_kR$ be a strictly ascending chain of principal ideals of $R$ such that $x_0 = x$. It is clear that $x_0, \dots, x_k$ are pairwise non-associates in $R$, and also that $x_{i-1} \in x_iR^*$ for every $i \in \ldb 1,k \rdb$. Therefore $n = \ell(x_0) > \ell(x_1) > \cdots > \ell(x_k)$. This implies that the length of $x_0R \subsetneq x_1R \subsetneq \cdots \subsetneq x_kR$ is at most~$n$.
	\smallskip
	
	(c) $\Rightarrow$ (b): Define the function $\ell \colon R^* \to \nn_0$ by taking $\ell(x)$ to be the smallest $n \in \nn_0$ such that every ascending chain of principal ideals of $R$ starting at $xR$ has length at most $n$. If $x \in U(R)$, then $xR = R$ and so $\ell(x) = 0$. In addition, if for $x_0,y_0 \in R$, we take two ascending chains of principal ideals $x_0R \subsetneq x_1R \subsetneq \cdots \subsetneq x_jR$ and $y_0R \subsetneq y_1R \subsetneq \cdots \subsetneq y_kR$, then the ascending chain of principal ideals $x_0 y_0R \subsetneq x_1 y_0R \subsetneq \cdots \subsetneq x_j y_0R \subseteq y_0R \subsetneq y_1R \subsetneq \cdots \subsetneq y_kR$ starts at $x_0y_0R$ and has length at least $j+k$. Hence $\ell(xy) \ge \ell(x) + \ell(y)$ for all $x,y \in R^*$. Thus, $\ell$ is a length function.
\end{proof}

The elasticity\footnote{Although R. Valenza coined the term elasticity and introduced it in the context of algebraic rings of integers, it is worth noting that J. L. Steffan \cite{jlS86} also studied elasticity about the same time in the more general context of Dedekind domains.}, introduced by R. Valenza~\cite{rV90} in the context of algebraic rings of integers, is an arithmetic invariant that allows us to measure how far an atomic integral domain is from being an HFD. Given an atomic integral domain $R$, its \emph{elasticity} is defined as follows:
\[
	\rho(R) = \sup \bigg\{ \frac{\sup L(x)}{\min L(x)} \ \bigg{|} \ x \in R^* \setminus U(R) \bigg\}
\]
when $R$ is not a field, and $\rho(R) = 1$ when $R$ is a field. Clearly, $1 \le \rho(R) \le \infty$ and $\rho(R) = 1$ if and only if $R$ is an HFD. For a survey on the elasticity of integral domains, see~\cite{dfA97}. Following \cite{AA92}, we say that~$R$ is a \emph{rational bounded factorization domain} (or an \emph{RBFD}) if $R$ is atomic and $\rho(R) < \infty$. Observe that HFD $\Rightarrow$ RBFD $\Rightarrow$ BFD. Moreover, none of these implications is reversible and being an FFD does not imply being an RBFD. The following example sheds some light upon these observations.

\begin{example}
		For every $r \in \rr_{\ge 1} \bigcup \{\infty\}$, \cite[Theorem~3.2]{AA92} guarantees the existence of a Dedekind domain (with torsion divisor class group) whose elasticity is $r$. Let $D_1$ be a Dedekind domain such that $\rho(D_1) = 3/2$. Since $\rho(D_1) > 1$, the domain $D_1$ is not an HFD. Thus, not every RBFD is an HFD. On the other hand, let $D_2$ be a Dedekind domain such that $\rho(D_2) = \infty$. As we will see in Corollary~\ref{cor:Dedekind domains are FFD}, every Dedekind domain is an FFD. As a result, $D_2$ is an FFD that is not an RBFD. Therefore not every BFD is an RBFD.
\end{example}

Following A. Grams and H. Warner~\cite{GW75}, we say that an integral domain~$R$ is an \emph{idf-domain} if every nonzero element of $R$ has at most finitely many non-associate irreducible divisors. We next give several useful characterizations of an FFD.

\begin{prop} \emph(\cite[Theorem~5.1]{AAZ90} and \cite[Theorem~1]{AM96}\emph) \label{prop:FFD characterizations}
	The following statements are equivalent for an integral domain $R$.
	\begin{enumerate}
		\item[(a)] $R$ is an FFD.
		\smallskip
		
		\item[(b)] $R$ is an atomic idf-domain.
		\smallskip
		
		\item[(c)] Every element of $R^*$ has only finitely many non-associate divisors.
		\smallskip
		
		\item[(d)] Every nonzero principal ideal of $R$ is contained in only finitely many principal ideals.
		\smallskip
		
		\item[(e)] For any infinite family $\{x_i R \mid i \in I\}$ of principal ideals, $\bigcap_{i \in I} x_i R = \{0\}$.
		\smallskip
		
		\item[(f)] For every $xU(R) \in G(R)^+$, the interval $[0,xU(R)]$ of the ordered monoid $G(R)^+$ is finite.
	\end{enumerate}
\end{prop}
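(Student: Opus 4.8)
The plan is to piggyback on the monoid-level result already proved in Proposition~\ref{prop:FFM characterization via idf-monoids}. Since $R$ is an FFD precisely when its multiplicative monoid $R^*$ is an FFM, and $R^*$ is a cancellative commutative monoid, the equivalences among (a), (b), and (c) should drop out of that proposition applied to $M = R^*$ once the terminology is matched up. The remaining conditions (d), (e), and (f) are ideal-theoretic and order-theoretic reformulations of (c), so the real work is to translate the phrase ``non-associate divisor of $x$'' into the language of principal ideals and of the group of divisibility $G(R)$.

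For the first block I would apply Proposition~\ref{prop:FFM characterization via idf-monoids} to $M = R^*$. Its condition (a) says $R^*$ is an FFM, which is statement (a) here; its condition (b) --- every element of $R^*$ has finitely many non-associate divisors --- is verbatim statement (c); and its condition (c) --- $R^*$ is atomic and every element is divisible by only finitely many non-associate irreducibles --- says exactly that $R$ is atomic and an idf-domain, i.e.\ statement (b). Thus (a) $\Leftrightarrow$ (b) $\Leftrightarrow$ (c) needs only this dictionary. To connect (c) and (d) I would use the bijection $yU(R) \mapsto yR$ between the non-associate divisors of a fixed $x \in R^*$ and the principal ideals containing $xR$: indeed $y \mid x$ iff $xR \subseteq yR$, while $yR = y'R$ iff $y$ and $y'$ are associates. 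Hence $x$ has finitely many non-associate divisors iff $xR$ is contained in only finitely many principal ideals.

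The equivalence (d) $\Leftrightarrow$ (e) I would argue by contraposition, and this is the one spot that needs care. Reading (e) as a statement about an infinite collection of \emph{distinct} principal ideals (and discarding the degenerate case where some $x_iR = \{0\}$, which forces $\bigcap_i x_iR = \{0\}$ anyway), one shows: if a nonzero $xR$ lies in infinitely many distinct principal ideals $y_iR$, then $x$ is a nonzero element of $\bigcap_i y_iR$, violating (e); conversely, if some infinite family of distinct principal ideals has a nonzero common element $y$, then $yR$ sits inside infinitely many distinct principal ideals, violating (d). Finally, for (c) $\Leftrightarrow$ (f), I would unwind the definitions of $G(R)^+ = R^*_{\text{red}}$ and of its partial order: an element $gU(R)$ lies in $[0, xU(R)]$ exactly when $g \in R^*$ and $x \in gR$, i.e.\ when $g$ is a divisor of $x$; since $gU(R) = g'U(R)$ precisely for associates, the interval $[0, xU(R)]$ is in bijection with the set of non-associate divisors of $x$, so its finiteness is literally condition (c). The only genuine obstacle is the bookkeeping in (d) $\Leftrightarrow$ (e) around the distinctness of the ideals and the zero ideal; everything else is a direct translation, so I would state these correspondences cleanly and let the equivalences follow.
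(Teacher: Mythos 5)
Your proposal is correct and follows essentially the same route as the paper: the equivalence of (a), (b), (c) is obtained by specializing Proposition~\ref{prop:FFM characterization via idf-monoids} to $M = R^*$, the passage between (c) and (d) (and (f)) is the dictionary between non-associate divisors, principal ideals containing $xR$, and the interval $[0,xU(R)]$ in $G(R)^+ = R^*_{\text{red}}$, and (d) $\Leftrightarrow$ (e) is the contrapositive bookkeeping the paper dismisses as straightforward. Your treatment just spells out the details the paper leaves implicit.
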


\begin{proof}
	(a) $\Leftrightarrow$ (b) $\Leftrightarrow$ (c): It follows directly from Proposition~\ref{prop:FFM characterization via idf-monoids}.
	\smallskip
	
	(c) $\Leftrightarrow$ (d): It is clear as, for all $x,y \in R^*$, it follows that $x$ divides $y$ in $R^*$ if and only if $yR \subseteq xR$.
	\smallskip
	
	(d) $\Leftrightarrow$ (e): This is straightforward.
	\smallskip
	
	(c) $\Leftrightarrow$ (f): Since $G(R)^+ = R^*_{\text{red}}$, we only need to observe that, for every $y \in R^*$, the inclusion $yU(R) \in [0,xU(R)]$ holds if and only if $y$ divides $x$ in $R^*$.
\end{proof}

\begin{remark}
	A graph-theoretic characterization of an FFD has recently been provided by J. D. LaGrange in \cite[Theorem~13]{jL19}.
\end{remark}

Following D. D. Anderson and B. Mullins~\cite{AM96}, we say that an integral domain $R$ is a \emph{strong finite factorization domain} (or an \emph{SFFD}) if every nonzero element of $R$ has only finitely many divisors, and we say that $R$ is a \emph{strong idf-domain} if every nonzero element of $R$ has only finitely many divisors which are either units or irreducibles. We can characterize SFFDs as follows.

\begin{prop}  \emph(\cite[Theorem~5]{AM96}\emph) \label{prop:SFFD characterizations}
	The following statements are equivalent for an integral domain~$R$.
	\begin{enumerate}
		\item[(a)] $R$ is an SFFD.
		\smallskip
		
		\item[(b)] $R$ is an atomic strong idf-domain.
		\smallskip
		
		\item[(c)] $R$ is an FFD and $U(R)$ is finite.
	\end{enumerate}
\end{prop}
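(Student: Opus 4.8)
The plan is to prove the three implications (a) $\Rightarrow$ (c) $\Rightarrow$ (b) $\Rightarrow$ (a), which I expect to be the cleanest cycle. Throughout, I would exploit the structural fact that $R$ being an SFFD forces the unit group to be finite, which is the genuinely new content here; the rest largely reduces to Proposition~\ref{prop:FFD characterizations} and bookkeeping about associates.

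First I would establish (a) $\Rightarrow$ (c). Suppose $R$ is an SFFD, so every nonzero $x \in R^*$ has only finitely many divisors (not merely finitely many non-associate divisors). The key observation is that $U(R) \subseteq R^*$, and for any fixed nonzero nonunit $x$ and any unit $u$, the element $ux$ is again a divisor-related object whose analysis pins down $U(R)$. More precisely, if $x$ is a nonzero nonunit, then for every $u \in U(R)$ the product $ux$ divides $x$ is false, so instead I would fix one irreducible $a$ (which exists since, as I will note, $R$ is atomic) and observe that each unit multiple $ua$ is a distinct divisor of $a$ whenever the $ua$ are pairwise distinct; since $a$ has only finitely many divisors, the set $\{ua \mid u \in U(R)\}$ is finite, and because $R^*$ is cancellative this forces $U(R)$ to be finite. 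Having $|U(R)| < \infty$, the fact that each element has finitely many divisors immediately gives that each element has finitely many non-associate divisors, so $R$ is an FFD by Proposition~\ref{prop:FFD characterizations}(c). (I should first dispatch the degenerate case where $R$ has no nonzero nonunit, i.e. $R$ is a field, separately; there $U(R) = R^*$ must be finite directly from (a) applied to a suitable element, and $R$ is trivially an FFD.) This yields (c).

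Next, (c) $\Rightarrow$ (b). Assume $R$ is an FFD with $|U(R)| < \infty$. An FFD is atomic by definition, so it remains to show $R$ is a strong idf-domain, i.e. every nonzero element has only finitely many divisors that are units or irreducibles. The units among the divisors number exactly $|U(R)| < \infty$. For the irreducible divisors: by the FFD/idf property (Proposition~\ref{prop:FFD characterizations}(b)), each nonzero element is divisible by only finitely many \emph{non-associate} irreducibles; since each associate class of an irreducible contains exactly $|U(R)| < \infty$ many elements, the total number of irreducible divisors is finite. Combining the two counts gives (b).

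Finally, (b) $\Rightarrow$ (a). Assume $R$ is an atomic strong idf-domain. I would argue that every nonzero $x$ has finitely many divisors by writing an arbitrary divisor $d \mid x$ and factoring $d = a_1 \cdots a_k$ into irreducibles, each $a_i$ being an irreducible divisor of $x$; since there are only finitely many such irreducible divisors (by the strong idf hypothesis) and only finitely many unit divisors, and since the factorization length is bounded (here I would invoke that a strong idf-domain is in particular an idf-domain, hence an FFD, hence a BFD, so lengths of factorizations of $x$ are bounded), there are only finitely many possible divisors $d$ up to the choice of a finite multiset of irreducibles and a leading unit. The main obstacle in this direction, and the step I would be most careful about, is bounding the factorization length: I expect to derive it from the FFD structure (Proposition~\ref{prop:FFD characterizations} gives that $R$ is an FFD, hence a BFM by Diagram~\eqref{diag:AAZ's atomic chain}, so $L(x)$ is finite), which ensures the multisets of irreducible divisors occurring in divisors of $x$ are drawn from a finite alphabet with bounded multiplicity. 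This makes the set of divisors finite and establishes (a), closing the cycle.
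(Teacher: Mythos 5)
Your proof is essentially correct but runs the cycle in the opposite direction from the paper: you prove (a) $\Rightarrow$ (c) $\Rightarrow$ (b) $\Rightarrow$ (a), whereas the paper proves (a) $\Rightarrow$ (b) $\Rightarrow$ (c) $\Rightarrow$ (a). The substantive difference is in how atomicity is extracted from (a): the paper builds a length function $\ell(x) = $ number of nonunit divisors of $x$ and invokes Proposition~\ref{prop:BFD characterizations} to get that an SFFD is a BFD, hence atomic; you instead observe that finitely many divisors trivially implies finitely many non-associate divisors and let Proposition~\ref{prop:FFD characterizations} deliver atomicity for free. Both work, and yours is arguably leaner; the paper's length-function construction has the side benefit of exhibiting the BFD property explicitly. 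Your (b) $\Rightarrow$ (a) also does more work than the paper's (c) $\Rightarrow$ (a) (you bound factorization lengths and count products of irreducibles drawn from a finite set, rather than just multiplying finitely many associate classes by a finite unit group), but that is forced by your choice of cycle and the counting is sound. Two small repairs are needed in your (a) $\Rightarrow$ (c). First, the parenthetical claim that ``$ux$ divides $x$ is false'' is itself false: $x = (ux)u^{-1}$, so $ux$ does divide $x$ --- which is exactly the fact you then use correctly for $ua$ and $a$. Second, you justify the existence of an irreducible by an appeal to atomicity that you have not yet established at that point; the fix is either to run the argument with an arbitrary nonzero nonunit instead of an irreducible, or, simplest of all, to note that $U(R)$ is precisely the set of divisors of $1$, hence finite by (a) --- which is the one-line argument the paper uses in its (b) $\Rightarrow$ (c) step.
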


\begin{proof}
	(a) $\Rightarrow$ (b): Consider the map $\ell \colon R^* \to \nn_0$ defined by letting $\ell(x)$ be the number of nonunit divisors of $x$ in $R$. Clearly, $\ell(u) = 0$ for every $u \in U(R)$. If $x,y \in R^*$, then every nonunit divisor of~$x$ divides $xy$, and for every nonunit divisor $d$ of $y$, we see that $xd$ divides $xy$ but does not divide $x$; whence $\ell(xy) \ge \ell(x) + \ell(y)$. As a result, $\ell$ is a length function. Since $R$ is a BFD by Proposition~\ref{prop:BFD characterizations}, it must be atomic. In addition, it is clear that $R$ is a strong idf-domain.
	\smallskip
	
	(b) $\Rightarrow$ (c): That $R$ is an FFD follows from Proposition~\ref{prop:FFD characterizations}. In addition, $U(R)$ is the set of divisors of $1$, and therefore, it must be finite.
	\smallskip
	
	(c) $\Rightarrow$ (a): Since $R$ is an FFD, every element of $R^*$ has only finitely many non-associate divisors. In addition, every element of $R^*$ has only finitely many associates because $U(R)$ is finite. Hence every element of $R^*$ must admit only finitely many divisors. Thus, $R$ is an SFFD.
\end{proof}

Not every FFD is an SFFD; indeed there are integral domains with all its subrings being FFDs that are not SFFD. The following example was given in \cite[Remark~3]{AM96}.

\begin{example}
	For $p \in \pp$ and $m \in \nn$, let $\ff_{p^m}$ be the finite field of cardinality $p^m$. Since for every $n \in \nn$, the field $\ff_{p^{2^n}}$ contains a copy of $\ff_{p^{2^{n-1}}}$ as a subfield, we can consider the field $\ff = \bigcup_{n \in \nn_0} \ff_{p^{2^n}}$. Although $\ff$ is an infinite field, every proper subring of $\ff$ is a finite field and so an SFFD. However,~$\ff$ is not an SFFD because $|U(\ff)| = |\ff^*| = \infty$. Lastly, observe that every subring of~$\ff$ is a field, and therefore, an FFD.
\end{example}

\smallskip
\subsection{Some Relevant Classes of BFDs and FFDs}

In this subsection, we identify some relevant classes of BFDs and FFDs.
\smallskip

It is clear that every HFD is a BFD. Observe, however, that a BFD need not be an HFD; for instance, the BFD $\qq[X^2, X^3]$ is not an HFD because $(X^2)^3 = (X^3)^2$. Similarly, although every FFD is a BFD, there are BFDs that are not FFDs; indeed, the integral domain $\rr + X\cc[X]$ is a BFD (by Theorem~\ref{thm:Noetherian domains are BFDs}) that is not an FFD (see Example~\ref{ex:a Noetherian domain that is not an FFM}). As we illustrate in the next example, for every $q \in \qq_{> 0}$, the monoid domain $\qq[M_q]$, where $M_q = \{0\} \cup \qq_{\ge q}$, is a BFD that is neither an HFM nor an FFM. The monoid domain $\qq[M_1]$ seems to be used first by Gilmer in \cite[page 189]{rG84} as an example of an integral domain satisfying ACCP with a localization not satisfying ACCP. The same monoid domain was used in \cite[Example~2.7(a)]{AAZ90} as an example of a one-dimensional BFD with a localization that is not a BFD (cf. Example~\ref{ex:FFD with a non-atomic localization}). The fact that $\qq[M_1]$ is a BFD that is not an FFD was implicit in \cite[Example~4.1(b)]{AAZ90} and later observed in \cite[Example~3.26]{hK98}.

\begin{example} \label{ex:BFD that is neither an HFD nor an FFD}
	For $q \in \qq_{> 0}$, let $M_q$ denote the additive monoid $\{0\} \cup \qq_{\ge q}$. Note that $M_q$ is one of the monoids in Example~\ref{ex:BF positive monoids}, and so it is a BFM. By a simple degree consideration, one can verify that the monoid domain $\qq[M_q]$ is a BFD (cf. \cite[Theorem~4.3(2)]{fG20a}). It is clear that $\ii(M_q) = [q,2q) \cap \qq$. Then for every $n \in \nn$ with $n > 2/q$, both $q +\frac q2 + \frac 1n$ and $q + \frac q2 - \frac 1n$ are irreducibles in $M_q$ and $3q = \big(q +\frac q2 + \frac 1n \big) + \big( q + \frac q2 - \frac 1n \big)$. Since $|Z(3q)| = \infty$, the monoid $M_q$ is not an FFM. Therefore part~(1) of Proposition~\ref{prop:FFD monoid domains} guarantees that $\qq[M_q]$ is not an FFD. Finally, we check that $\qq[M_q]$ is not an HFD. To do this, take $q_1, q_2 \in \ii(M_q)$ such that $q_1 \neq q_2$,  and then write $q_1 = a_1/b_1$ and $q_2 = a_2/b_2$ for some $a_1, a_2, b_1, b_2 \in \nn$ such that $\gcd(a_1, b_1) = \gcd(a_2, b_2) = 1$. Observe that $X^{a_1 a_2} = (X^{q_1})^{a_2 b_1} = (X^{q_2})^{a_1 b_2}$. Since $a_2 b_1 \neq a_1 b_2$, we see that $(X^{q_1})^{a_2 b_1}$ and $(X^{q_2})^{a_1 b_2}$ are factorizations of $X^{a_1 a_2}$ with different lengths. Thus, $\qq[M_q]$ is not an HFD (for a more general result in this direction, see~\cite[Theorem~4.4]{fG20}).
\end{example} 

As a consequence of Corollary~\ref{cor:BFMs are ACCP monoids}, every BFD satisfies ACCP. The reverse implication of this observation does not hold in general, as we proceed to illustrate with an example of a monoid domain first given in~\cite[Example~2.1]{AAZ90}.

\begin{example} \label{ex:ACCP domain that is not a BFD}
	We have seen in Example~\ref{ex:ACCP monoid that is not BFM} that the additive monoid $M = \langle 1/p \mid p \in \pp \rangle$ satisfies ACCP but is not a BFM. In addition, we have seen that $\ii(M) = \{1/p \mid p \in \pp\}$. Now consider the monoid domain $\qq[M]$. From the fact that $M$ satisfies ACCP, we can easily argue that $\qq[M]$ also satisfies ACCP. However, $\qq[M]$ is not a BFD; indeed, for every $p \in \pp$, there is a length-$p$ factorization of~$X$, namely, $X = (X^{1/p})^p$.
\end{example}

Noetherian domains are among the most important examples of BFDs.

\begin{theorem}  \emph(\cite[Proposition~2.2]{AAZ90}\emph) \label{thm:Noetherian domains are BFDs}
	Every Noetherian domain is a BFD.
\end{theorem}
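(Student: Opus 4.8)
The plan is to produce a \emph{length function} $\ell\colon R^\ast\to\nn_0$ and invoke the equivalence (a)~$\Leftrightarrow$~(b) of Proposition~\ref{prop:BFD characterizations}; equivalently, to bound $\sup L(x)$ directly for each nonzero nonunit~$x$. The naive attempt---fixing a maximal ideal $\mathfrak m\ni x$ and using the $\mathfrak m$-adic order $\sup\{n : x\in\mathfrak m^n\}$, which is finite by Krull's Intersection Theorem---fails because the irreducible factors of $x$ need not all lie in $\mathfrak m$. Overcoming exactly this difficulty is the crux of the argument.

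First I would reduce to dimension one. Since $R$ is Noetherian, the principal ideal $xR$ has only finitely many minimal primes $\mathfrak p_1,\dots,\mathfrak p_k$, and by Krull's principal ideal theorem each $\mathfrak p_i$ has height one. Localizing at the multiplicative set $S=R\setminus\bigcup_{i}\mathfrak p_i$ yields a one-dimensional Noetherian (semilocal) domain $T=S^{-1}R$ whose maximal ideals are $\mathfrak p_1T,\dots,\mathfrak p_kT$. Because $\dim T=1$, the quotient $T/xT$ is Artinian, hence of finite length $N:=\operatorname{length}_T(T/xT)$.

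Next I would show that $N$ bounds every factorization length of $x$ in $R$. The key point is that every nonunit divisor $a$ of $x$ survives as a nonunit in $T$: a minimal prime $\mathfrak q$ of $aR$ has height one and contains $x$ (as $x\in aR$), so $\mathfrak q$ is one of the $\mathfrak p_i$ and hence $a\in\mathfrak p_iT$. Thus, given any factorization $x=a_1\cdots a_n$ into irreducibles of $R$, setting $b_i=a_i\cdots a_n$ produces a chain $xT=b_1T\subsetneq b_2T\subsetneq\cdots\subsetneq T$ of $n$ strict inclusions (strictness follows since each $a_i$ is a nonunit of the domain $T$). This chain exhibits $n$ proper steps inside $T/xT$, forcing $n\le N$. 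Since $R$ is Noetherian it satisfies ACCP and is therefore atomic (cf.\ Remark~\ref{rem:ACCP monoids are atomic}), so $\sup L(x)\le N<\infty$ for every nonzero nonunit $x$, and $R$ is a BFD.

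The main obstacle is the one already flagged: a single localization can turn factors into units and thereby lose factorization length, so one must localize simultaneously at all the (finitely many) height-one primes over $xR$ to land in a one-dimensional ring where finiteness of module length can do the counting. As an alternative packaging that feeds directly into Proposition~\ref{prop:BFD characterizations}(b), one can globalize this idea into the single length function $\ell(x)=\sum_{\mathfrak p}\operatorname{length}_{R_\mathfrak p}(R_\mathfrak p/xR_\mathfrak p)$, the sum ranging over height-one primes (a finite sum for each $x$); its additivity comes from the short exact sequence $0\to R_\mathfrak p/xR_\mathfrak p\xrightarrow{\,\cdot y\,}R_\mathfrak p/xyR_\mathfrak p\to R_\mathfrak p/yR_\mathfrak p\to 0$, and $\ell(x)=0$ exactly when $x$ is a unit because every nonunit lies in some height-one prime.
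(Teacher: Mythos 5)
Your argument is correct, and it reaches the bound by a genuinely different route from the paper. Both proofs start from the same finite set $P_1,\dots,P_n$ of height-one primes minimal over $xR$, and both hinge on the observation that every nonunit divisor of $x$ must lie in one of them. From there the paper stays elementary: the Krull Intersection Theorem supplies $k_i$ with $x\notin P_i^{k_i}$, and a pigeonhole count shows that a factorization of length at least $kn$ (with $k=\max k_i$) would force $k$ of the factors into a single $P_i$, giving the contradiction $x\in P_i^k$. You instead localize at $S=R\setminus\bigcup_i P_i$ to land in a one-dimensional semilocal Noetherian domain $T$ and bound factorization lengths by $\operatorname{length}_T(T/xT)$, which is finite because $T/xT$ is Artinian; your verification that divisors of $x$ stay nonunits in $T$ is exactly what makes the chain of principal ideals strictly increasing. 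The trade-off: the paper's proof uses nothing beyond the Krull Intersection Theorem, while yours requires the notion of module length but in exchange produces an honest additive length function $\ell(x)=\sum_{\mathfrak p}\operatorname{length}_{R_{\mathfrak p}}(R_{\mathfrak p}/xR_{\mathfrak p})$ that feeds directly into condition (b) of Proposition~\ref{prop:BFD characterizations}; this is the same divisor-theoretic mechanism that underlies Theorem~\ref{thm:Krull domains are FFDs}, so your packaging unifies the Noetherian and Krull cases. One small point worth making explicit in your write-up: a height-one prime containing the nonzero ideal $xR$ is automatically minimal over $xR$ (any prime properly contained in it would be $(0)$), which is what identifies the minimal prime $\mathfrak q$ of $aR$ with one of the $\mathfrak p_i$.
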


\begin{proof}
	Let $R$ be a Noetherian domain, and take $x \in R^\ast \setminus U(R)$. We know that there are only finitely many height-one prime ideals over $xR$ in $R$, namely, $P_1, \dots, P_n$. By the Krull Intersection Theorem, for every $i \in \ldb 1,n \rdb$, there is a $k_i \in \nn$ such that $x \notin P_i^{k_i}$. Set $k = \max \{k_i \mid i \in \ldb 1,n \rdb \}$. We claim that $\max L_R(x) < kn$. Suppose, by way of contradiction, that $x = x_1 \cdots x_m$ for some $m \ge kn$ and $x_1, \dots, x_m \in R \setminus U(R)$. Since $xR$ contains a power of $P_1 \cdots P_n$, for every $j \in \ldb 1,m \rdb$ the inclusion $xR \subseteq x_jR$ ensures that $x_j \in P$ for some $P \in \{P_1, \dots, P_n\}$. Therefore there is an $i \in \ldb 1,n \rdb$ such that $x \in P_i^k$. However, this contradicts that $x \notin P_i^{k_i}$. Thus, the set of lengths of every nonzero nonunit of~$R$ is bounded, and so $R$ is a BFD.
\end{proof}
\smallskip

We will see that integrally closed Noetherian domains are FFDs in Corollary~\ref{cor:Dedekind domains are FFD}, and we will characterize Noetherian FFDs in Proposition~\ref{prop:characterization of Noetherian FFDs}. For now, it is worth noting that not every Noetherian domain is an FFD.

\begin{example}(cf. Propositions \ref{prop:FFD and D+M construction} and~\ref{prop:FFD for power-series-like extensions}) \label{ex:a Noetherian domain that is not an FFM}
	Consider the integral domain $R = \rr + X\cc[X]$. It is not hard to verify that $R$ is a Noetherian domain, although it is a direct consequence of \cite[Theorem~4]{BR76}. For every $p \in \pp$, let $\zeta_p$ be a primitive $p$-root of unity. Since $U(R) = \rr^\ast$, it is clear that distinct primes yield non-associate primitive roots of unity. Then $\{(\zeta_p X)(\zeta_p^{-1}X) \mid p \in \pp\}$ is a set consisting of infinitely many factorizations of $X^2$ in $R$. Hence $R$ is not an FFD. As a final note, observe that $R$ is an HFD by \cite[Theorem~5.3]{AAZ91}.
\end{example}

For a nonempty set $I$, let $\{R_i \mid i \in I\}$ be a family of subrings of the same integral domain. The integral domain $R = \bigcap_{i \in I} R_i$ is said to be the \emph{locally finite} intersection of the $R_i$'s if for every $x \in R^*$, the set $\{i \in I \mid x \notin U(R_i)\}$ is finite. As the next proposition illustrates, we can produce BFDs by taking locally finite intersections of BFDs.

\begin{prop} \emph(\cite[page 17]{AAZ90}\emph) \label{prop:a locally finite intersection of BFDs is a BFD}
	For a nonempty set $I$, let $\{R_i \mid i \in I\}$ be a family of subrings of an integral domain. If $R_i$ is a BFD for every $i \in I$, then the locally finite intersection $\bigcap_{i \in I} R_i$ is a BFD.
\end{prop}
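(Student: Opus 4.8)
The plan is to invoke the length-function characterization of BFDs from Proposition~\ref{prop:BFD characterizations}: it suffices to produce a length function $\ell \colon R^* \to \nn_0$. Since each $R_i$ is a BFD, for every $i \in I$ there is a length function $\ell_i \colon R_i^* \to \nn_0$ by that same proposition. First I would note that every $x \in R^*$ is a nonzero element of each $R_i$, so that $\ell_i(x)$ is defined for all $i \in I$, and I would assemble these into a single function on $R^*$.

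The key idea is to set $\ell(x) = \sum_{i \in I} \ell_i(x)$. For this to make sense the sum must be finite, and this is exactly where the hypothesis enters: since $\ell_i(x) = 0$ precisely when $x \in U(R_i)$, the local finiteness condition---that $\{i \in I \mid x \notin U(R_i)\}$ is finite---guarantees that $\ell_i(x) \neq 0$ for only finitely many $i$, so $\ell(x)$ is a well-defined element of $\nn_0$. The superadditivity condition~(ii) is then immediate: summing the inequalities $\ell_i(xy) \ge \ell_i(x) + \ell_i(y)$ over $i \in I$ yields $\ell(xy) \ge \ell(x) + \ell(y)$.

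The remaining point is condition~(i), that $\ell(x) = 0$ if and only if $x \in U(R)$. By construction, $\ell(x) = 0$ is equivalent to $x \in U(R_i)$ for every $i \in I$, so I must check that this last condition characterizes the units of $R$. If $x \in U(R)$, then $x^{-1} \in R \subseteq R_i$ shows $x \in U(R_i)$ for all $i$; conversely, if $x \in U(R_i)$ for all $i$, then $x^{-1} \in R_i$ for all $i$, hence $x^{-1} \in \bigcap_{i \in I} R_i = R$ and $x \in U(R)$. With (i) and (ii) in hand, $\ell$ is a length function, and Proposition~\ref{prop:BFD characterizations} yields that $R$ is a BFD (atomicity being already encoded in that equivalence, so no separate verification is needed).

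The argument is short and presents no serious obstacle. The only step that genuinely uses the hypotheses rather than being formal is the well-definedness of $\ell$, where the local finiteness of the intersection is precisely what prevents the defining sum from diverging; the unit computation is the one place where the specific structure $R = \bigcap_{i \in I} R_i$ is invoked.
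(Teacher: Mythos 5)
Your proof is correct and follows exactly the paper's argument: define $\ell = \sum_{i \in I} \ell_i$, observe that local finiteness makes the sum well defined, and check that $\ell$ is a length function so that Proposition~\ref{prop:BFD characterizations} applies. The paper simply states that the length-function axioms follow immediately from the definition, whereas you spell out the verification of condition~(i); the substance is identical.
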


\begin{proof}
	Set $R = \bigcap_{i \in I} R_i$. By Proposition~\ref{prop:BFD characterizations}, for every $i \in I$, there is a length function $\ell_i \colon R^*_i \to \nn_0$. Since $R$ is a locally finite intersection, the function $\ell = \sum_{i \in I} \ell_i \colon R^* \to \nn_0$ is well defined. From the definition of $\ell$, it immediately follows that $\ell$ is a length function. As a result, Proposition~\ref{prop:BFD characterizations} guarantees that $R$ is a BFD.
\end{proof}

We proceed to identify some relevant classes of FFDs. It is clear that every UFD is an FFD, but it is not hard to verify that $\qq[X^2, X^3]$ (resp., $(\zz/ 2\zz)[X^2, X^3]$) is an FFD (resp., an SFFD) that is not even an HFD (in Example~\ref{ex:a Noetherian domain that is not an FFM}, we have seen an HFD that is not an FFD). A \emph{Cohen-Kaplansky domain} (or a \emph{CKD}) is an atomic domain with finitely many non-associate irreducibles. These integral domains were first investigated by I.~S. Cohen and I. Kaplansky in~\cite{CK46} and then by D.~D. Anderson and J.~L. Mott in~\cite{AM92}. It follows from Proposition~\ref{prop:FFD characterizations} that every CKD is an FFD.

By Theorem~\ref{thm:Noetherian domains are BFDs}, every Noetherian domain is a BFD. It turns out that every one-dimensional Noetherian domain is an FFD provided that each of its residue fields is finite.

\begin{prop} \emph(\cite[Example~1]{AM96}\emph) \label{prop:FNP domains are FFDs}
	Every one-dimensional Noetherian domain whose residue fields are finite is an FFD.
\end{prop}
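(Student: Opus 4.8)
The plan is to verify condition~(c) of Proposition~\ref{prop:FFD characterizations}: that every nonzero element of $R$ has only finitely many non-associate divisors. Fix $x \in R^\ast \setminus U(R)$ (units are trivially covered). Since $d$ divides $x$ in $R^\ast$ precisely when $xR \subseteq dR$, and two elements are associates exactly when they generate the same principal ideal, the non-associate divisors of $x$ are in bijection with the principal ideals of $R$ containing $xR$, equivalently with the principal ideals of the quotient ring $A := R/xR$. Thus it suffices to show that $A$ has only finitely many principal ideals, and I would do so by proving the stronger statement that $A$ is a \emph{finite} ring.

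First I would check that $A$ is Artinian. As $R$ is Noetherian, so is $A$; and since $x \neq 0$, every prime ideal of $R$ containing $xR$ is nonzero, hence maximal because $\dim R = 1$. Therefore $\dim A = 0$, and a zero-dimensional Noetherian ring is Artinian.

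The crux of the argument is then to promote ``Artinian with finite residue fields'' to genuine finiteness. Let $J$ be the Jacobson radical of the Artinian ring $A$; then $J$ is nilpotent, say $J^\ell = 0$, and $A/J$ is a finite product of fields, each of the form $R/\mathfrak{m}$ for a maximal ideal $\mathfrak{m} \supseteq xR$ and hence finite by hypothesis. Consequently $A/J$ is finite. Because $A$ is Noetherian, each quotient $J^i/J^{i+1}$ is a finitely generated $A/J$-module and is therefore finite, so climbing the finite filtration $A \supseteq J \supseteq \cdots \supseteq J^\ell = 0$ shows that $A$ itself is finite. A finite ring has only finitely many ideals, so $x$ admits only finitely many non-associate divisors, and an appeal to Proposition~\ref{prop:FFD characterizations} finishes the proof. (Atomicity, built into that characterization, is in any case guaranteed here: $R$ is Noetherian, hence a BFD by Theorem~\ref{thm:Noetherian domains are BFDs}, hence atomic.)

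The main obstacle is precisely this last finiteness step --- converting the structural finiteness data (Noetherian, zero-dimensional, finite residue fields) into literal finiteness of $R/xR$ --- which rests on the nilpotency of $J$ together with the finite length of $A$ as a module over itself; once $R/xR$ is known to be finite, the count of principal ideals is immediate.
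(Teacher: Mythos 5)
Your argument is correct and follows the same overall strategy as the paper: reduce to showing that $xR$ is contained in only finitely many principal ideals (Proposition~\ref{prop:FFD characterizations}), and get this from the finiteness of $R/xR$. The one substantive difference is how that finiteness is obtained. The paper simply cites \cite[Theorem~2.7]{LM72} for the fact that $R/I$ is finite for every nonzero proper ideal $I$, and then concludes by observing that distinct principal ideals $yR \supseteq xR$ give distinct subgroups of the finite additive group $R/xR$. You instead prove the finiteness of $R/xR$ from scratch: $R/xR$ is zero-dimensional Noetherian, hence Artinian, its Jacobson radical $J$ is nilpotent, $A/J$ is a finite product of the (finite) residue fields, and each layer $J^i/J^{i+1}$ is a finitely generated module over the finite ring $A/J$, so $A$ itself is finite. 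This makes the proof self-contained at the cost of a little more machinery, and it is exactly the content of the cited Levitz--Mott result in this special case. One small imprecision worth flagging: the principal ideals of $R$ containing $xR$ inject into, but need not biject with, the principal ideals of $R/xR$ (the sum $yR + xR$ need not be principal); this is harmless here since you ultimately bound \emph{all} ideals of the finite ring $R/xR$, but the word ``bijection'' is stronger than what you use or need.
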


\begin{proof}
	Let $R$ be a one-dimensional Noetherian domain whose residue fields are finite. It follows from~\cite[Theorem~2.7]{LM72} that $R/I$ is finite for every nonzero proper ideal $I$ of $R$. Fix $x \in R^* \setminus U(R)$. Clearly, two distinct principal ideals $yR$ and $y'R$ of $R$ containing the ideal $xR$ yield distinct subgroups $yR/xR$ and $y'R/xR$ of the additive group $R/xR$. Since $|R/xR| < \infty$, the principal ideal $xR$ can only be contained in finitely many principal ideals of $R$. Hence it follows from Proposition~\ref{prop:FFD characterizations} that $R$ is an FFD.
\end{proof}

Throughout this survey, an integral domain is said to be \emph{quasilocal} if it has exactly one maximal ideal, while it is said to be \emph{local} if it is Noetherian and quasilocal. The following corollary, which is a direct consequence of Proposition~\ref{prop:FNP domains are FFDs}, was first observed in~\cite{AM96}.

\begin{cor} \label{cor:local FNP domains are FFDs}
	Every one-dimensional local domain with finite residue field is an FFD.
\end{cor}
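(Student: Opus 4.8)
The plan is to deduce this corollary directly from Proposition~\ref{prop:FNP domains are FFDs}, so the entire task reduces to unpacking the definition of ``local'' and checking that the single hypothesis on the residue field matches the hypothesis of that proposition. First I would recall that, in the terminology fixed just before the statement, an integral domain is \emph{local} precisely when it is Noetherian and quasilocal, and quasilocal means that it possesses exactly one maximal ideal. Thus a one-dimensional local domain $R$ is automatically a one-dimensional Noetherian domain, which is one of the two hypotheses of Proposition~\ref{prop:FNP domains are FFDs}.

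Next I would verify the residue-field condition. Since $R$ is quasilocal, let $\mathfrak{m}$ denote its unique maximal ideal; then $R/\mathfrak{m}$ is the only residue field of $R$. The hypothesis that $R$ has finite residue field says exactly that $R/\mathfrak{m}$ is finite, and because this is the sole residue field, \emph{all} residue fields of $R$ are finite. With both hypotheses of Proposition~\ref{prop:FNP domains are FFDs} now in place, that proposition immediately yields that $R$ is an FFD, completing the argument.

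There is essentially no substantive obstacle here: the content is entirely contained in Proposition~\ref{prop:FNP domains are FFDs}, and the only thing to be careful about is the bookkeeping of definitions, namely that ``local'' already packages in ``Noetherian'' (so no separate Noetherian hypothesis is needed) and that quasilocality collapses the family of residue fields to a single field $R/\mathfrak{m}$. For this reason the proof is just a one-line invocation once the definitions are spelled out.
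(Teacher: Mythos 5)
Your proof is correct and matches the paper exactly: the paper also presents this corollary as a direct consequence of Proposition~\ref{prop:FNP domains are FFDs}, with the only content being the observation that ``local'' includes Noetherian and that quasilocality means there is a single residue field $R/\mathfrak{m}$, which is finite by hypothesis.
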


Let $R$ be a one-dimensional local domain with maximal ideal $M$. Since by Corollary~\ref{cor:local FNP domains are FFDs} we know that $R$ is an FFD provided that $R/M$ is finite, we may wonder what happens in the case where $R/M$ is infinite. Under the assumption that $R/M$ is infinite, it follows that $R$ is an FFD if and only if $R$ is integrally closed; this is \cite[Corollary~4]{AM96}.

As for BFDs, we can produce new FFDs by considering locally finite intersections of FFDs.

\begin{prop}  \emph(\cite[Theorem~2]{AM96}\emph) \label{prop:a locally finite intersection of FFDs is an FFD}
	For a nonempty set $I$, let $\{R_i \mid i \in I\}$ be a family of subrings of an integral domain. If $R_i$ is an FFD for every $i \in I$, then the locally finite intersection $\bigcap_{i \in I} R_i$ is an FFD.
\end{prop}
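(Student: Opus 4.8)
The plan is to verify characterization~(c) of Proposition~\ref{prop:FFD characterizations} for $R := \bigcap_{i \in I} R_i$: that every element of $R^*$ has only finitely many non-associate divisors. Since that characterization is equivalent to being an FFD (and, via Proposition~\ref{prop:FFM characterization via idf-monoids}, already encodes atomicity), establishing it suffices, and I can sidestep proving atomicity directly. So first I would fix a nonunit $x \in R^* \setminus U(R)$ and use the local finiteness hypothesis to set $J = \{i \in I \mid x \notin U(R_i)\}$, which is finite. The basic observation is that any divisor $d$ of $x$ in $R$ is also a divisor of $x$ in each $R_i$, because $R \subseteq R_i$; moreover, for every $i \notin J$ we have $x \in U(R_i)$, which forces such a $d$ to be a unit of $R_i$. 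Thus the nonunit behaviour of the divisors of $x$ is concentrated entirely on the finitely many indices in $J$.

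The key step, which I expect to be the main obstacle, is the following rigidity claim: if $d, d'$ are divisors of $x$ in $R$ that are associate in $R_i$ for every $i \in J$, then $d$ and $d'$ are associate in $R$. To prove it, consider $u := d/d' \in \text{qf}(R)$. By hypothesis $u \in U(R_i)$ for each $i \in J$, and for each $i \notin J$ both $d$ and $d'$ lie in $U(R_i)$, so again $u \in U(R_i)$. Hence $u$ and $u^{-1}$ both belong to $R_i$ for every $i \in I$, whence $u, u^{-1} \in \bigcap_{i \in I} R_i = R$ and $u \in U(R)$. This is precisely the point at which the hypothesis $R = \bigcap_{i \in I} R_i$ (as opposed to $R$ being merely some subring) is essential, since it converts ``a unit in every $R_i$'' into ``a unit in $R$''.

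Finally I would assemble the count. For each $i \in J$, Proposition~\ref{prop:FFD characterizations} applied to the FFD $R_i$ guarantees that $x$ has only finitely many non-associate divisors in $R_i$; let $D_i$ denote this finite set of $R_i$-associate classes. The assignment sending a divisor $d$ of $x$ in $R$ to the tuple $([d]_{R_i})_{i \in J} \in \prod_{i \in J} D_i$ has, by the rigidity claim, the property that two divisors with the same image are associate in $R$. Therefore the number of non-associate divisors of $x$ in $R$ is at most $\prod_{i \in J} |D_i| < \infty$. Since $x$ was an arbitrary nonunit of $R^*$ (and units contribute nothing new), characterization~(c) of Proposition~\ref{prop:FFD characterizations} holds, and we conclude that $R$ is an FFD.
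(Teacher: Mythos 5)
Your proposal is correct and follows essentially the same route as the paper: reduce to the finitely-many-non-associate-divisors characterization of an FFD, use local finiteness to restrict attention to the finite index set $J = \{i \in I \mid x \notin U(R_i)\}$, and observe that a divisor of $x$ is a unit in every $R_i$ with $i \notin J$. The only difference is that you spell out the injectivity of the map $d \mapsto ([d]_{R_i})_{i \in J}$ (your rigidity claim, which hinges on $\bigcap_{i \in I} U(R_i) \cap R = U(R)$), a step the paper's proof leaves implicit.
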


\begin{proof}
	Set $R = \bigcap_{i \in I} R_i$. Take a nonunit $x \in R^*$, and set $J = \{i \in I \mid x \notin U(R_i)\}$. It follows from Proposition~\ref{prop:FFD characterizations} that, for every $j \in J$, the ideal $xR_j$ is contained in only finitely many principal ideals of $R_j$. We claim that the ideal $xR$ is contained in only finitely many principal ideals of $R$. To verify this, take $y \in R$ such that $xR \subseteq yR$. It is clear that $xR_i \subseteq yR_i$ for every $i \in I$. As a result, $y \notin U(R_j)$ implies that $j \in J$.  Since $J$ is finite, $xR$ is contained in only finitely many principal ideals of $R$. Thus, using Proposition~\ref{prop:FFD characterizations} once again, we conclude that $R$ is an FFD.
\end{proof}

An important source of FFDs is the class of Krull domains. An integral domain $R$ has \emph{finite character} if there is a family $\{V_i \mid i \in I\}$ of valuation overrings of $R$ indexed by a nonempty set $I$ such that $R = \bigcap_{i \in I} V_i$ is a locally finite intersection.

\begin{theorem} \emph(\cite[Proposition~2.2]{AAZ90} and \cite[Proposition~1]{GW75}\emph) \label{thm:Krull domains are FFDs}
	Every Krull domain is an FFD, and thus also a BFD.
\end{theorem}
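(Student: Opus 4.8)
The plan is to realize any Krull domain as a locally finite intersection of discrete valuation rings and then invoke Proposition~\ref{prop:a locally finite intersection of FFDs is an FFD}. The essential point is that the factorization-theoretic heavy lifting has already been done in that proposition, so the task reduces to fitting a Krull domain into its hypotheses.

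First I would recall the defining structure of a Krull domain $R$: writing $X^{(1)}$ for the set of height-one prime ideals of $R$, one has the representation $R = \bigcap_{P \in X^{(1)}} R_P$, where each localization $R_P$ is a discrete valuation ring and the family $\{R_P \mid P \in X^{(1)}\}$ has finite character, meaning that every nonzero $x \in R$ lies in only finitely many height-one primes. Since for $x \in R^\ast$ the condition $x \notin U(R_P)$ is equivalent to $x \in PR_P$, that is, to $x \in P$, this finite character is exactly the assertion that $R = \bigcap_{P} R_P$ is a \emph{locally finite} intersection in the sense defined just before the theorem. Next I would observe that each factor $R_P$ is an FFD: a discrete valuation ring is a PID, hence a UFD, and in particular an FFD; equivalently, a DVR has a single irreducible up to associates (a uniformizer), so every nonzero nonunit has $|Z(x)| = 1$ trivially.

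With these two ingredients in place, Proposition~\ref{prop:a locally finite intersection of FFDs is an FFD} applies verbatim to the locally finite intersection $R = \bigcap_{P \in X^{(1)}} R_P$ of FFDs, yielding that $R$ is an FFD. Finally, since every FFD is a BFD by Diagram~\eqref{diag:AAZ's atomic chain}, the domain $R$ is also a BFD, which completes the argument.

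The only nonroutine step is the structural realization of $R$ as a locally finite intersection of DVRs and the matching of the standard ``finite character'' condition with the ``locally finite'' hypothesis of the proposition; this is essentially the content of the classical theory of Krull domains (one may even take this representation as the definition, or derive it from the approximation/divisor-theory machinery), and once it is granted, the remainder of the proof is immediate. An alternative route would bypass the intersection proposition and instead verify directly via Proposition~\ref{prop:FFD characterizations} that $R$ is an atomic idf-domain, using that a nonzero element meets only finitely many height-one primes; but the locally finite intersection argument is cleaner given the tools already established.
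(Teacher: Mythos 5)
Your proof is correct, and it is in fact the ``alternative proof'' that the paper itself flags in the last sentence of its argument. The paper's primary route is different: it first shows that a Krull domain is a BFD (hence atomic) by mimicking the height-one-prime argument of Theorem~\ref{thm:Noetherian domains are BFDs}, then invokes the external result \cite[Proposition~1]{GW75} to conclude that a domain of finite character is an idf-domain, and finally applies the equivalence (a) $\Leftrightarrow$ (b) of Proposition~\ref{prop:FFD characterizations}. Your route instead feeds the representation $R = \bigcap_{P \in X^{(1)}} R_P$ directly into Proposition~\ref{prop:a locally finite intersection of FFDs is an FFD}, after correctly matching the classical finite character condition ($x$ lies in only finitely many height-one primes) with the paper's definition of a locally finite intersection ($x \notin U(R_P)$ for only finitely many $P$) and noting that each $R_P$ is a DVR and hence an FFD. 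What your approach buys is self-containment: atomicity comes for free from condition~(d) of Proposition~\ref{prop:FFD characterizations}, and no appeal to \cite{GW75} is needed. What the paper's approach buys is that the BFD assertion is established independently en route (rather than deduced from FFD $\Rightarrow$ BFD), and it isolates the idf property, which is of independent interest. Both arguments are complete and rest on the same structural fact about Krull domains.
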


\begin{proof}
	Let $R$ be a Krull domain. Mimicking the proof of Theorem~\ref{thm:Noetherian domains are BFDs}, we can show that $R$ is a BFD, and therefore, an atomic domain. Now let $X$ be the set of all height-one prime ideals of~$R$. Then~$R$ has finite character with respect to the family of DVRs $\{R_P \mid P \in X\}$. As a result, it follows from~\cite[Proposition~1]{GW75} that $R$ is an idf-domain. Since $R$ is an atomic idf-domain, Proposition~\ref{prop:FFD characterizations} guarantees that $R$ is an FFD. Finally, we observe that Proposition~\ref{prop:a locally finite intersection of FFDs is an FFD} can be used to give an alternative proof.
\end{proof}

\begin{cor} \label{cor:Dedekind domains are FFD}
	Integrally closed Noetherian domains and, in particular, Dedekind domains and rings of algebraic integers are FFDs.
\end{cor}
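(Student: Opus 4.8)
The plan is to reduce everything to Theorem~\ref{thm:Krull domains are FFDs}, which already tells us that every Krull domain is an FFD. The only nontrivial input I would need is the classical structural fact that every Noetherian integrally closed domain is a Krull domain. Granting this, the first assertion of the corollary is immediate: an integrally closed Noetherian domain is Krull, hence an FFD. In a survey this implication is best handled by citing the standard commutative-algebra literature rather than reproving it, but let me indicate where the content lies.

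To see that a Noetherian integrally closed domain $R$ is Krull, I would verify the three defining conditions with $X$ the set of height-one primes of $R$. First, for each $P \in X$ the localization $R_P$ is a one-dimensional Noetherian integrally closed local domain, hence a DVR. Second, one has $R = \bigcap_{P \in X} R_P$; this is the step where integral closedness does the real work, and it follows from Serre's normality criterion (a Noetherian domain is integrally closed if and only if it satisfies the conditions $(R_1)$ and $(S_2)$) together with the fact that a domain satisfying $(R_1)+(S_2)$ equals the intersection of its localizations at height-one primes. Third, the intersection has finite character: each nonzero nonunit $x$ lies in only finitely many members of $X$, since the minimal primes over the principal ideal $xR$ are finite in number by Noetherian primary decomposition and each has height one by Krull's principal ideal theorem. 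With these three conditions in hand, $R$ is Krull, and Theorem~\ref{thm:Krull domains are FFDs} applies.

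Finally, I would dispatch the two ``in particular'' clauses by specialization. A Dedekind domain is, under one of its equivalent definitions, precisely a one-dimensional Noetherian integrally closed domain, so it is covered by the first assertion and is therefore an FFD. A ring of algebraic integers, namely the integral closure $\mathcal{O}_K$ of $\zz$ in a finite extension $K$ of $\qq$, is a Dedekind domain by the classical theorem of algebraic number theory, and hence an FFD as well. The main obstacle throughout is the middle step establishing $R = \bigcap_{P \in X} R_P$ with each $R_P$ a DVR; this is exactly where the passage from ``Noetherian'' to ``Krull'' genuinely lives, whereas the finite-character condition and the two specializations are routine given the standard background.
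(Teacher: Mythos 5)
Your proposal is correct and follows exactly the route the paper intends: the corollary is stated as an immediate consequence of Theorem~\ref{thm:Krull domains are FFDs}, relying on the classical fact that integrally closed Noetherian domains are Krull domains (with Dedekind domains and rings of algebraic integers as special cases). Your additional sketch of why Noetherian plus integrally closed implies Krull is accurate background that the paper simply takes for granted.
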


\smallskip
\subsection{The $D+M$ Construction}

This subsection is devoted to the $D+M$ construction, which is a rich source of examples in commutative ring theory. Let $T$ be an integral domain, and let $K$ and~$M$ be a subfield of $T$ and a nonzero maximal ideal of $T$, respectively, such that $T = K + M$. For a subdomain $D$ of $K$, set $R = D + M$. This construction was introduced and studied by Gilmer \cite[Appendix II]{rG68} for valuation domains $T$, and then it was investigated by J. Brewer and E. A. Rutter~\cite{BR76} for arbitrary integral domains.

To begin with, we consider units and irreducibles in the $D+M$ construction. Recall that an integral domain is quasilocal if it contains a unique maximal ideal. When we work with the $D+M$ construction, we will often denote an element of $T$ by $\alpha + m$, tacitly assuming that $\alpha \in K$ and $m \in M$.

\begin{lemma} \label{lem:units of the D+M construction}
	Let $T$ be an integral domain, and let $K$ and $M$ be a subfield of $T$ and a nonzero maximal ideal of $T$, respectively, such that $T = K + M$. For a subdomain $D$ of $K$, set $R = D + M$. Then the following statements hold.
	\begin{enumerate}
		\item $U(R) = U(T) \cap R$ if and only if $D$ is a field.
		\smallskip
		
		\item If $T$ is quasilocal, then $U(T) = \{\alpha + m \mid \alpha \in K^*\}$ and $U(R) = \{\alpha + m \mid \alpha \in U(D)\}$.
	\end{enumerate}
\end{lemma}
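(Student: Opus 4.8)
The plan is to first set up the additive decomposition underlying the $D+M$ construction and then reduce every statement about units to a short computation with a natural projection onto $K$. Since $K$ is a subfield of $T$ and $M$ is a maximal (hence proper) ideal, the intersection $K \cap M$ is an ideal of the field $K$ that cannot equal $K$ (otherwise $T = K + M = M$), so $K \cap M = \{0\}$. Consequently every $t \in T$ has a unique expression $t = \alpha + m$ with $\alpha \in K$ and $m \in M$, and the assignment $\alpha + m \mapsto \alpha$ is a well-defined ring homomorphism $\phi \colon T \to K$ (the composite $T \to T/M \cong K$). Two further observations will do most of the work. First, for any $\alpha \in K^*$ its inverse $\alpha^{-1}$ already lies in $K \subseteq T$, so $\alpha \in U(T)$; thus every nonzero element of $K$ is a unit of $T$. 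Second, since $R \subseteq T$, the inclusion $U(R) \subseteq U(T) \cap R$ holds automatically, so in part~(1) only the reverse inclusion is at issue.

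For part~(1), I would express membership in $U(R)$ through $\phi$. If $r = \alpha + m \in U(T) \cap R$ (so $\alpha \in D$), write its $T$-inverse as $\gamma + m'$ with $\gamma \in K$; applying $\phi$ to $r \cdot r^{-1} = 1$ gives $\alpha \gamma = 1$, whence $\alpha \neq 0$ and $\gamma = \alpha^{-1}$. The point is that $r \in U(R)$ precisely when this inverse lies in $R = D + M$, i.e.\ precisely when $\gamma = \alpha^{-1} \in D$. If $D$ is a field, then $\alpha^{-1} \in D$ for every nonzero $\alpha \in D$, so $r^{-1} \in R$ and $U(T) \cap R \subseteq U(R)$, giving equality. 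Conversely, if $D$ is not a field, pick a nonzero nonunit $\alpha \in D$; by the first observation $\alpha \in U(T)$, and $\alpha \in D \subseteq R$, so $\alpha \in U(T) \cap R$, yet $\alpha \notin U(R)$ because any inverse $\beta + m \in R$ of $\alpha$ would satisfy $\alpha\beta = \phi(1) = 1$ with $\beta \in D$, forcing $\alpha \in U(D)$. This contradicts $U(R) = U(T) \cap R$, so $D$ must be a field.

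For part~(2), assume $T$ is quasilocal. Then $M$ is its unique maximal ideal, so $U(T) = T \setminus M$. Since $\alpha + m \in M$ if and only if $\alpha \in K \cap M = \{0\}$, an element $\alpha + m$ is a unit of $T$ exactly when $\alpha \neq 0$; this yields $U(T) = \{\alpha + m \mid \alpha \in K^*\}$. For $U(R)$, I would combine this with the analysis from part~(1): an element $r = \alpha + m \in R$ is a unit of $R$ if and only if $r \in U(T)$ and its (unique) $T$-inverse lies in $R$, i.e.\ if and only if $\alpha \in K^*$ and $\alpha^{-1} \in D$, which, recalling $\alpha \in D$, is exactly the condition $\alpha \in U(D)$. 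Hence $U(R) = \{\alpha + m \mid \alpha \in U(D)\}$.

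The routine parts---the uniqueness of the decomposition and the verification that $\phi$ is a homomorphism---are standard. The one genuinely load-bearing idea, and the step I would flag as the crux, is the observation that a nonzero element of $D$ is automatically a unit in $T$ because its inverse lives in the overfield $K$; this is precisely what supplies the counterexample in the ``only if'' direction of part~(1) and drives the identification of $U(R)$ in part~(2).
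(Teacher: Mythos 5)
Your proof is correct and follows essentially the same route as the paper's: both arguments reduce everything to comparing $K$-components in the identity $(\alpha+m_1)(\beta+m_2)=1$, using that a nonzero element of $D$ is already invertible in $T$ to handle the ``only if'' direction of (1) and that $U(T)=T\setminus M$ in the quasilocal case. Making the projection $\phi\colon T\to K$ explicit is a harmless packaging of what the paper does implicitly.
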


\begin{proof}
	(1) For the direct implication, consider $\alpha \in D^*$. As $\alpha \in K$, it follows that $\alpha \in U(T) \cap R = U(R)$, and so $\alpha^{-1} \in K \cap R = D$. Hence $D$ is a field. Conversely, assume that $D$ is a field. It is clear that $U(R) \subseteq U(T) \cap R$. To argue the reverse inclusion, take $\alpha + m_1 \in U(T) \cap R$, and then let $\beta + m_2$ be the inverse of $\alpha + m_1$ in $T$. Since $D$ is a field and $(\alpha + m_1)(\beta + m_2) = 1$, we see that $\beta = \alpha^{-1} \in D$, and so $\beta + m_2 = \alpha^{-1} + m_2 \in R$. Thus, $\alpha + m_1 \in U(R)$.
	\smallskip
	
	(2) The first equality is an immediate consequence of the fact that in a quasilocal domain the units are precisely the elements outside the unique maximal ideal. To check the second equality, take $\alpha + m_1 \in U(R)$ and let $\beta + m_2 \in R$ be the inverse of $\alpha + m_1$ in $R$. As in the previous part, $\alpha^{-1} = \beta \in D$, and so $\alpha \in U(D)$. Conversely, any $\alpha + m_1 \in R$ with $\alpha \in U(D)$ is a unit in $T$, and its inverse $\beta + m_2$ is such that $\beta \in D$, whence $\alpha + m_1 \in U(R)$.
\end{proof}

\begin{lemma} \label{lem:irreducibles of the D+M construction}
	Let $T$ be an integral domain, and let $K$ and $M$ be a subfield of $T$ and a nonzero maximal ideal of $T$, respectively, such that $T = K + M$. For a subdomain $D$ of $K$, set $R = D + M$. Then $\ii(R) \subseteq U(T) \cup \ii(T)$. Moreover, the following statements hold.
	\begin{enumerate}
		\item If $D$ is a field, then $\ii(R) \subseteq \ii(T)$. 
		\smallskip
		
		\item If $T$ is quasilocal and $D$ is a field, then $\ii(R) = \ii(T) \subseteq M$.
	\end{enumerate}
\end{lemma}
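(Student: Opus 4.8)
The plan is to establish the three assertions in order, all resting on one rescaling device together with the identification $T/M \cong K$. First I would record the structural facts I will use repeatedly: since $0 \in D$ we have $M = \{0\} + M \subseteq R \subseteq T$; since $K$ is a field, every nonzero element of $K$ is already invertible inside $T$, so $K^* \subseteq U(T)$; the inclusion $R \subseteq T$ gives $U(R) \subseteq U(T)$; the composite $K \hookrightarrow T \twoheadrightarrow T/M$ is an isomorphism (injective because $K \cap M = \{0\}$, surjective because $T = K + M$), under which the image of $a = d + m \in R$ is $d \in D$; and finally $M$ contains no unit of $R$ (otherwise $1 \in M$), so $M \cap U(R) = \emptyset$.

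For the main inclusion $\ii(R) \subseteq U(T) \cup \ii(T)$, I would fix $a \in \ii(R)$ with $a \notin U(T)$ and show $a \in \ii(T)$; note that $a$ is a nonzero nonunit of $T$ since $a \in R^* \setminus U(R)$ and $a \notin U(T)$. Given any factorization $a = \alpha\beta$ in $T$, I write $\alpha = \alpha_0 + m_\alpha$ and $\beta = \beta_0 + m_\beta$ with $\alpha_0,\beta_0 \in K$ and $m_\alpha, m_\beta \in M$. If $\alpha_0 = \beta_0 = 0$, then $\alpha,\beta \in M \subseteq R$, so $a = \alpha\beta$ is a factorization in $R$ whose factors both lie in $M$; this contradicts irreducibility of $a$ in $R$ because $M$ has no units of $R$, so this case cannot occur. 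Otherwise, by symmetry I may assume $\alpha_0 \ne 0$, and I rescale by the unit $\alpha_0 \in K^* \subseteq U(T)$: set $\alpha' = \alpha_0^{-1}\alpha = 1 + \alpha_0^{-1} m_\alpha \in R$ and $\beta' = \alpha_0\beta = \alpha_0\beta_0 + \alpha_0 m_\beta$. The key point is that $\alpha_0\beta_0$ is exactly the image of $a$ under $T \to T/M \cong K$, hence lies in $D$, so $\beta' \in D + M = R$. Thus $a = \alpha'\beta'$ is a factorization in $R$, and irreducibility of $a$ forces $\alpha' \in U(R) \subseteq U(T)$ or $\beta' \in U(R) \subseteq U(T)$; since $\alpha = \alpha_0\alpha'$ and $\beta = \alpha_0^{-1}\beta'$, this yields $\alpha \in U(T)$ or $\beta \in U(T)$, whence $a \in \ii(T)$.

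For part (1) I would invoke Lemma~\ref{lem:units of the D+M construction}(1): when $D$ is a field, $U(R) = U(T) \cap R$, so an element of $\ii(R) \cap U(T)$ would lie in $U(T) \cap R = U(R)$, contradicting irreducibility; thus $\ii(R) \cap U(T) = \emptyset$ and the main inclusion sharpens to $\ii(R) \subseteq \ii(T)$. For part (2), quasilocality makes $M$ the unique maximal ideal of $T$, so the nonunits of $T$ are exactly the elements of $M$; in particular $\ii(T) \subseteq M \subseteq R$. For the reverse inclusion $\ii(T) \subseteq \ii(R)$, I would take $a \in \ii(T)$, note $a \in R^* \setminus U(R)$ by Lemma~\ref{lem:units of the D+M construction}(1), and observe that any factorization $a = uv$ in $R$ is also a factorization in $T$, whence $u$ or $v$ lies in $U(T) \cap R = U(R)$; combined with part (1), this gives $\ii(R) = \ii(T) \subseteq M$.

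The main obstacle is the main inclusion, specifically the worry that a factorization $a = \alpha\beta$ genuinely taking place in the larger ring $T$ need not descend to $R$, since $\alpha$ and $\beta$ may fail to lie in $R$. The rescaling by $\alpha_0 \in K^*$ is what resolves this: it normalizes one factor into $1 + M \subseteq R$, and the crucial verification that the complementary factor also lands in $R$ is precisely the statement $\alpha_0\beta_0 \in D$, which is forced by $a \in R$ through the isomorphism $T/M \cong K$. I would take care to treat the degenerate case $\alpha_0 = \beta_0 = 0$ separately (where rescaling is unavailable) via the no-units-in-$M$ observation, and to confirm throughout that $a$ remains a genuine nonzero nonunit of $T$ so that the conclusion $a \in \ii(T)$ is meaningful.
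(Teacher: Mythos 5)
Your proposal is correct and uses essentially the same device as the paper's proof: given a factorization of $a$ in $T$, rescale the factors by a unit $k \in K^* \subseteq U(T)$ so that both land in $R$ (using that the $K$-component of the product equals the $D$-component of $a$), then apply irreducibility in $R$; the treatment of parts (1) and (2) via Lemma~\ref{lem:units of the D+M construction} also matches. The only cosmetic difference is that the paper organizes the cases by whether the components $d$ and $m$ of $a = d+m$ vanish, whereas you organize them by the constant terms of the two factors, which lets you handle the main inclusion a bit more uniformly.
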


\begin{proof}
	Let $a = d + m \in \ii(R)$, where $d \in D$ and $m \in M$. If $m = 0$, then $a \in D^* \subseteq U(T)$. So assume that $m \neq 0$. Take $x,y \in T$ such that $a = xy$. If $d = 0$, then either $x \in M$ or $y \in M$. Assume that $x \in M$ and write $a = (k^{-1}x)(ky)$ for some $k \in K^*$ such that $ky \in R$. Because $a$ is irreducible in $R$, either $k^{-1}x \in U(R)$ or $ky \in U(R)$. Since $x \in M$, it follows that $ky \in U(R) \subseteq U(T)$. Thus, $y \in U(T)$, and so $a \in \ii(T)$. If $d \neq 0$, then there are $k_1, k_2 \in K^*$ with $k_1 k_2 = d$ such that $x = k_1(1 + m_1)$ and $y = k_2(1 + m_2)$. As $a = d(1+m_1)(1+m_2)$, either $d(1+m_1) \in U(R) \subseteq U(T)$ or $1+m_2 \in U(R) \subseteq U(T)$. Hence either $x$ or $y$ (or both) belongs to $U(T)$, and so $a \in U(T) \cup \ii(T)$. As a consequence, $\ii(R) \subseteq U(T) \cup \ii(T)$.
	\smallskip
	
	(1)  If $D$ is a field, then it follows from part~(1) of Lemma~\ref{lem:units of the D+M construction} that $\ii(R) \cap U(T)$ is empty. This, along with $\ii(R) \subseteq U(T) \cup \ii(T)$, implies the desired inclusion.
	\smallskip
	
	(2) By part~(1), $\ii(R) \subseteq \ii(T)$, and by part~(2) of Lemma~\ref{lem:units of the D+M construction}, $\ii(T) \subseteq M$. As $\ii(T) \subseteq R$, if an irreducible $a$ in $T$ factors as $a = xy$ for $x,y \in R$, then it follows from part~(1) of Lemma~\ref{lem:units of the D+M construction} that either $x \in U(T) \cap R = U(R)$ or $y \in U(T) \cap R = U(R)$, and so $a \in \ii(R)$. Thus, $\ii(T) \subseteq \ii(R)$.
\end{proof}

\begin{remark}
	With the notation as in Lemma~\ref{lem:irreducibles of the D+M construction}, the inclusion $\ii(R) \subseteq U(T) \cup \ii(T)$ may be proper. For instance, taking $R = \zz + X \qq[X]$ and $T = \qq[X]$, we can see that $4 \in U(T) \setminus \ii(R)$ and $X + 2 \in \ii(T) \setminus \ii(R)$. Moreover, the inclusion  $\ii(R) \subseteq \ii(T)$ may be proper even when $D$ is a field. To see this, it suffices to take $R = \qq + X \rr[X]$ and $T = \rr[X]$, and then observe that $X + \pi \in \ii(T) \setminus \ii(R)$.
\end{remark}

We proceed to examine when the $D+M$ construction yields BFDs and FFDs.

\begin{prop} \emph(\cite[Proposition~2.8]{AAZ90}\emph) \label{prop:BFD and D+M construction}
	Let $T$ be an integral domain, and let $K$ and $M$ be a subfield of $T$ and a nonzero maximal ideal of $T$, respectively, such that $T = K + M$. For a subdomain $D$ of~$K$, set $R = D + M$. Then~$R$ is a BFD if and only if $T$ is a BFD and $D$ is a field.
\end{prop}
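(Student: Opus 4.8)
The plan is to prove both implications by exploiting a single device: multiplying a factorization by a fixed nonzero element of $M$ so as to absorb the ``field part'' of each factor into the ideal $M$. For the reverse implication, suppose that $T$ is a BFD and that $D$ is a field. Then $T^\ast$ is a BFM, and part~(1) of Lemma~\ref{lem:units of the D+M construction} gives $U(R) = U(T) \cap R$, which is exactly the condition $U(R^\ast) = U(T^\ast) \cap R^\ast$ needed to view $R^\ast$ as an inert submonoid of $T^\ast$. Proposition~\ref{prop:BF inherited by inert submonoids} then yields that $R^\ast$ is a BFM, i.e., that $R$ is a BFD. This direction is essentially immediate once the unit computation is in place.

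For the forward implication, assume that $R$ is a BFD; I would establish separately that $D$ is a field and that $T$ is a BFD, in each case arguing the contrapositive by producing a \emph{single} element of $R$ with arbitrarily long factorizations into nonunits. By Proposition~\ref{prop:BFD characterizations} this is incompatible with the existence of a length function $\ell$ on $R^\ast$, since any factorization of $x$ into $k$ nonunits forces $\ell(x) \ge k$. To see that $D$ must be a field, suppose it is not and pick a nonzero nonunit $d \in D$. Because $K \cap M$ is a proper ideal of the field $K$, it is zero, so $T = K \oplus M$; comparing $K$-parts in a putative identity $1 = d d_0 + d m_0$ then forces $d \notin U(R)$. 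Fixing any $m^\ast \in M \setminus \{0\}$, each $d^{-n} m^\ast$ lies in $M \subseteq R$ and is a nonunit, so $m^\ast = d^n\,(d^{-n} m^\ast)$ exhibits $m^\ast$ as a product of $n+1$ nonunits of $R$ for every $n \in \nn$, contradicting the BFD hypothesis.

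To see that $T$ must be a BFD, suppose it is not; then some $t \in T^\ast \setminus U(T)$ admits factorizations $t = t_1 \cdots t_n$ into nonunits of $T$ with $n$ arbitrarily large. Writing $t_i = \alpha_i + m_i$ with $\alpha_i \in K$ and $m_i \in M$, I would factor $t_i = \gamma_i r_i$ with $\gamma_i \in K^\ast \subseteq U(T)$ and $r_i \in R \setminus U(R)$: when $\alpha_i \neq 0$ take $\gamma_i = \alpha_i$ and $r_i = 1 + \alpha_i^{-1} m_i \in D + M$, and when $\alpha_i = 0$ take $\gamma_i = 1$ and $r_i = m_i \in M$. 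In both cases $r_i$ is a nonunit of $R$ because $r_i = \gamma_i^{-1} t_i \notin U(T)$ and $U(R) \subseteq U(T)$. Setting $\beta = \prod_i \gamma_i \in K^\ast$ and fixing $m^\ast \in M \setminus \{0\}$, the key step is the regrouping
\[
	t\, m^\ast = \beta\, r_1 \cdots r_n\, m^\ast = r_1 \cdots r_n\,(\beta m^\ast),
\]
in which $\beta m^\ast \in M \subseteq R$ is a nonzero nonunit. This displays the fixed element $t m^\ast \in R^\ast$ as a product of $n+1$ nonunits of $R$, so $t m^\ast$ has unbounded factorization lengths, again contradicting that $R$ is a BFD.

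The main obstacle is precisely this last step: a priori the scalars $\alpha_i$, and hence $\beta$, can be arbitrary elements of $K^\ast$ rather than of $D$, so one cannot simply transport the $T$-factorization of $t$ directly into $R$. Multiplying by a fixed $m^\ast \in M$ resolves this, since $M$ is a $T$-ideal and therefore absorbs the stray factor $\beta \in K^\ast$, turning $\beta m^\ast$ into a legitimate nonunit of $R$. Once this is recognized, the same multiplication trick handles the field-ness of $D$ as well, so the two halves of the forward direction run in parallel.
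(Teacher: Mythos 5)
Your proof is correct, but it runs along a noticeably different track from the paper's. For the reverse implication you invoke Proposition~\ref{prop:BF inherited by inert submonoids} (via Lemma~\ref{lem:units of the D+M construction}(1)), i.e., you restrict a length function from $T^\ast$ to the unit-preserving submonoid $R^\ast$; the paper instead re-proves atomicity of $R$ by explicitly regrouping a $T$-factorization into an $R$-factorization and then uses $\ii(R) \subseteq \ii(T)$ from Lemma~\ref{lem:irreducibles of the D+M construction} to get $L_R(x) \subseteq L_T(x)$. Your route is shorter and is legitimate within the paper's logical order. For the forward implication the paper normalizes $x \in T^\ast$ by dividing by a scalar $k \in K^\ast$ so that $xk^{-1} \in R$, and then shows $L_T(x) = L_T(xk^{-1}) \subseteq L_R(xk^{-1})$ by converting irreducible factorizations; this forces it to first establish atomicity of $T$ and to compare irreducibles of $R$ and $T$. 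You instead argue contrapositively with factorizations into \emph{nonunits}, absorbing the stray scalar $\beta \in K^\ast$ by multiplying by a fixed $m^\ast \in M$, so that $tm^\ast \in M \subseteq R$ acquires unboundedly long nonunit factorizations; likewise your treatment of ``$D$ is a field'' exhibits unbounded lengths for $m^\ast$ rather than showing, as the paper does, that no element of $M \setminus \{0\}$ is irreducible. The trade-off: your argument needs less bookkeeping about which elements are irreducible where, but it does not yield the by-product recorded in Remark~\ref{rem:atomicity/ACCP in D+M construction} (that $R$ is atomic if and only if $T$ is atomic and $D$ is a field), which falls out of the paper's irreducible-by-irreducible regrouping. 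One step you should make explicit: the passage from ``$T$ is not a BFD'' to ``some $t$ admits factorizations into arbitrarily many nonunits of $T$.'' This is true, but it is precisely the content of the length-function characterization (a nonunit factorization of maximal length must consist of irreducibles, as in the proof of Proposition~\ref{prop:BFM characterization via length functions}), so it deserves a citation rather than being taken as the definition.
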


\begin{proof}
	For the direct implication, suppose that $R$ is a BFD. Assume, by way of contradiction, that~$D$ is not a field, and take $d \in D$ such that $d^{-1} \notin D$. In this case, $d \notin U(R)$, and for every $m \in M$ the decomposition $m = d(d^{-1}m)$ ensures that $m \notin \ii(R)$. Then no element of $M \setminus \{0\}$ factors into irreducibles, contradicting that $R$ is atomic. Thus, $D$ must be a field.
	
	We proceed to argue that $T$ is a BFD. Fix $x \in T^* \setminus U(T)$, and take $k \in K^*$ such that $xk^{-1} \in R$. As $R$ is atomic, $xk^{-1}$ factors into irreducibles in $R$, and so Lemma~\ref{lem:irreducibles of the D+M construction} ensures that $x$ factors into irreducibles in $T$. Therefore $T$ is atomic. We can readily check that for every $m \in M$, the element $m$ (resp., $1+m$) is irreducible in $R$ if and only if $m$ (resp., $1+m$) is irreducible in $T$. Suppose that $xk^{-1} = \prod_{i=1}^r m_i \prod_{j=1}^s (\alpha_j + m'_j)$ for irreducibles $m_1, \dots, m_r \in M$ and $\alpha_1 + m'_1, \dots, \alpha_s + m'_s \in K^* + M$ of $T$. Set $\alpha = \prod_{j=1}^s \alpha_j $.  If $r = 0$, then $\alpha \in R$ and so $xk^{-1}$ factors as a product of $r+s$ irreducibles in~$R$ as $xk^{-1} = \alpha (1 + \alpha_1^{-1}m'_1) \prod_{j=2}^s(1 + \alpha_j^{-1}m'_j)$. If $r > 0$, then $xk^{-1}$ still factors as a product of $r+s$ irreducibles in~$R$ as $xk^{-1} = (\alpha m_1) \prod_{i=2}^r m_i \prod_{j=1}^s(1 + \alpha_j^{-1}m'_j)$. Hence $L_T(x) = L_T(xk^{-1}) \subseteq L_R(xk^{-1})$. Thus, $T$ is also a BFD.
	
	For the reverse implication, suppose that $T$ is a BFD and $D$ is a field. Fix $x \in R^* \setminus U(R)$, and write $x = \prod_{i=1}^r m_i \prod_{j=1}^s (\alpha_j + m'_j)$ for irreducibles $m_1, \dots, m_r \in M$ and $\alpha_1 + m'_1, \dots, \alpha_s + m'_s \in K^* + M$ in $T$. As before, set $\alpha = \prod_{j=1}^s \alpha_j$. Observe that if $r=0$, then $\alpha \in R$, and so $x$ factors into irreducibles in $R$ as $x = \alpha \prod_{j=1}^s (1 + \alpha_j^{-1}m'_j)$. If $r > 0$, then $x$ still factors into irreducibles in $R$ as $x = (\alpha m_1) \prod_{i=2}^r m_i \prod_{j=1}^s (1 + \alpha_j^{-1}m'_j)$. Hence~$R$ is atomic. Finally, observe that since $D$ is a field, the inclusion $\ii(R) \subseteq \ii(T)$ holds by Lemma~\ref{lem:irreducibles of the D+M construction}. This guarantees that $L_R(x) \subseteq L_T(x)$. Thus, $R$ is also a BFD.
\end{proof}

\begin{remark} \label{rem:atomicity/ACCP in D+M construction}
	With the notation as in Proposition~\ref{prop:BFD and D+M construction}, we have also proved that $R$ is atomic if and only if $T$ is atomic and $D$ is a field. The same assertion holds if we replace being atomic by satisfying ACCP (see~\cite[Proposition~1.2]{AAZ90}).
\end{remark}

Two of the most important special cases of the $D+M$ construction are the following.

\begin{example} \label{ex:BFD F_1 + XF_2[X] and F_1 + XF_2[[X]]}
	Let $F_1 \subsetneq F_2$ be a proper field extension.
	\begin{enumerate}
		\item Consider the subring $R_1 = F_1 + XF_2[X]$ of the ring of polynomials $F_2[X]$. Since $F_2[X]$ is a UFD, it is also a BFD. As $XF_2[X]$ is a nonzero maximal ideal of $F_2[X]$, it follows from Proposition~\ref{prop:BFD and D+M construction} that~$R_1$ is a BFD. Observe that $R_1$ is not a UFD because, for instance, $X$ is an irreducible that is not prime.
		\smallskip
		
		\item On the other hand, consider the subring $R_2 = F_1 + XF_2[[X]]$ of the ring of power series $F_2[[X]]$. As in the previous case, it follows from Proposition~\ref{prop:BFD and D+M construction} that $R_2$ is a BFD that is not a UFD.
	\end{enumerate}
	Finally, it is worth noting that certain ring-theoretic properties of $R_1$ and $R_2$ only depend on the field extension $F_1 \subsetneq F_2$. For instance, both $R_1$ and $R_2$ are Noetherian if and only if $[F_2 : F_1] < \infty$ \cite[Theorem~4]{BR76}, while both $R_1$ and $R_2$ are integrally closed if and only if $F_1$ is algebraically closed in $F_2$ (cf. \cite[page 35]{BR76}).
\end{example}

Now we give a result for FFDs that is parallel to Proposition~\ref{prop:BFD and D+M construction}.

\begin{prop} \emph(\cite[Proposition~5.2]{AAZ90}\emph)\label{prop:FFD and D+M construction}
	Let $T$ be an integral domain, and let $K$ and $M$ be a subfield of $T$ and a nonzero maximal ideal of $T$, respectively, such that $T = K + M$. For a subdomain $D$ of~$K$, set $R = D + M$. Then~$R$ is an FFD if and only if $T$ is an FFD, $D$ is a field, and $K^\ast/D^\ast$ is finite.
\end{prop}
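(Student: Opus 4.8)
The plan is to work throughout with the divisor characterization of finite factorization from Proposition~\ref{prop:FFD characterizations}: an integral domain is an FFD precisely when every nonzero element has only finitely many non-associate divisors. The structural fact I would establish first, valid whenever $D$ is a field, is that the assignment sending a unit of $T$ to its $K$-component is a well-defined group homomorphism $\phi\colon U(T) \to K^*$ (a unit of $T$ cannot lie in $M$, so its $K$-component is nonzero, and cross terms land in the ideal $M$, making $\phi$ multiplicative). Since $K^* \subseteq U(T)$ and $\phi$ restricts to the identity on $K^*$, the map $\phi$ is surjective, and using $U(R) = U(T)\cap R$ (Lemma~\ref{lem:units of the D+M construction}) one checks $\phi^{-1}(D^*) = U(R)$. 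Hence $\phi$ induces an isomorphism $U(T)/U(R) \cong K^*/D^*$, so $K^*/D^*$ is finite if and only if $U(T)/U(R)$ is, and this index controls how a single $T$-associate class of divisors breaks into $R$-associate classes.

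For the forward implication, suppose $R$ is an FFD. Then $R$ is a BFD, so Proposition~\ref{prop:BFD and D+M construction} yields that $D$ is a field and $T$ is a BFD (in particular atomic). To force $K^*/D^*$ to be finite, I would choose an irreducible $a \in \ii(T)\cap M$, which exists because $T$ is atomic and $M$ is a nonzero prime ideal, and which lies in $\ii(R)$ since $D$ is a field (Lemma~\ref{lem:irreducibles of the D+M construction}); then for each $\alpha \in K^*$ the element $\alpha a$ lies in $M \subseteq R$ and divides $a^2$ in $R$ via $a^2 = (\alpha a)(\alpha^{-1}a)$, while $\alpha a$ and $\alpha' a$ are $R$-associate exactly when $\alpha(\alpha')^{-1}\in D^*$. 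Thus $a^2$ has at least $|K^*/D^*|$ non-associate divisors in $R$, and the FFD hypothesis forces $|K^*/D^*| < \infty$. To show $T$ is an FFD, I would prove that every divisor in $T$ of an element $x \in R^*$ is $T$-associate to a divisor of $x$ lying in $R$: writing $x = yz$ in $T$ and rescaling $y$ by a suitable element of $K^* \subseteq U(T)$ (using that the $K$-component of $x$ lies in $D$) moves $y$ and $z$ simultaneously into $R$. Since $R$ is an FFD, $x$ then has only finitely many non-associate divisors in $T$; as every element of $T^*$ is a $U(T)$-multiple of an element of $R^*$, this gives that $T$ is an FFD.

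For the reverse implication, suppose $T$ is an FFD, $D$ is a field, and $K^*/D^*$ is finite. By the opening observation $U(T)/U(R)$ is then finite, and since $T$ is a BFD, Proposition~\ref{prop:BFD and D+M construction} shows $R$ is atomic. Fix $x \in R^*$. As $R \subseteq T$, every divisor of $x$ in $R$ is a divisor of $x$ in $T$, and $T$ being an FFD gives only finitely many $T$-associate classes of such divisors. Within one $T$-associate class, two $R$-divisors $d, d'$ satisfy $d' = vd$ for some $v \in U(T)$, and they are $R$-associate precisely when the cosets $vU(R)$ agree; hence each $T$-class contributes at most $|U(T)/U(R)|$ distinct $R$-associate classes. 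Multiplying the two finite bounds shows $x$ has only finitely many non-associate divisors in $R$, so $R$ is an FFD by Proposition~\ref{prop:FFD characterizations}.

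The main obstacle is the bookkeeping relating associate classes in the two rings, and in particular identifying the finite quotient that measures the discrepancy. The key is that the $R$-associate classes lying inside one $T$-associate class are parametrized by cosets of $U(T)/U(R)$, together with the isomorphism $U(T)/U(R) \cong K^*/D^*$. Once this is in hand, both the necessity of the finiteness of $K^*/D^*$ (realized explicitly on the divisors of $a^2$) and its sufficiency (bounding the splitting of divisor classes) reduce to finite counting; the only other delicate point is the rescaling argument that transfers a $T$-divisor of an element of $R$ into $R$ without leaving its $T$-associate class.
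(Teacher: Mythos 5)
Your proof is correct, and its two central devices are the same as the paper's: finiteness of $K^*/D^*$ is read off from the pairwise non-associate divisors $\alpha a$ of $a^2$ for an irreducible $a\in M$, and the transfer of the finite factorization property between $R$ and $T$ rests on rescaling a factorization by an element of $K^*\subseteq U(T)$ so that both factors land in $R$. Where you genuinely depart from the paper is in the bookkeeping. You isolate the isomorphism $U(T)/U(R)\cong K^*/D^*$ up front (valid here with no quasilocal hypothesis, since $K^*\subseteq U(T)$ already makes the $K$-component map surjective; the paper only records this isomorphism in the pullback setting, where it assumes $T$ quasilocal), and you count all non-associate divisors via condition~(c) of Proposition~\ref{prop:FFD characterizations} rather than irreducible divisors via~(b). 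This pays off in the reverse direction: bounding the number of $R$-associate classes inside a single $T$-associate class by $|U(T)/U(R)|$ is cleaner and more transparently justified than the paper's corresponding step, which asserts that an irreducible divisor $d$ of $x$ in $R$ lies in $\bigcup_j x_j K^*$ when a priori one only gets $\bigcup_j x_j K^* U(R)$ (the conclusion is unaffected, since the extra factor is absorbed into $R$-associates). The paper's version, in exchange, stays phrased in terms of irreducible divisors and so exhibits atomicity and the idf property simultaneously. One small citation slip on your side: Lemma~\ref{lem:irreducibles of the D+M construction} gives $\ii(R)\subseteq\ii(T)$ when $D$ is a field, not the containment $\ii(T)\cap M\subseteq\ii(R)$ that you invoke; but the latter follows in one line from $U(R)=U(T)\cap R$ together with $M\subseteq R$, so nothing is lost.
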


\begin{proof}
	For the direct implication, suppose that $R$ is an FFD. Since $R$ is in particular a BFD, $D$ must be a field by Proposition~\ref{prop:BFD and D+M construction}. We proceed to verify that $K^*/D^*$ is finite. Take $m \in M \setminus \{0\}$. Note that in any factorization of $m$ into irreducibles of $R$, one of the irreducibles must belong to $M$. After replacing $m$ by such an irreducible, we can assume that $m$ belongs to both $\ii(R)$ and $\ii(T)$. Observe that for $\alpha, \beta \in K^*$ the elements $\alpha m$ and $\beta m$ are irreducibles in both $R$ and~$T$, and they are associate elements in $R$ if and only if $\alpha$ and $\beta$ determine the same coset of $K^*/D^*$. On the other hand, the set $\{\alpha m \mid \alpha \in K^* \} \subseteq R$ has only finitely many non-associate elements because it consists of divisors of~$m^2$ in~$R$. As a result, $K^*/D^*$ is a finite group.
	
	By Proposition~\ref{prop:FFD characterizations}, proving that $T$ is an FFD amounts to verifying that every $x \in T$ has finitely many non-associate irreducible divisors. After replacing $x$ by $\alpha x$ for some $\alpha \in K^*$, we can assume that $x \in R$. Suppose that $x_1, \dots, x_n$ form a maximal set of non-associate irreducible divisors of $x$ in $R$. Let $x = \prod_{i=1}^r m_i \prod_{j=1}^s (\alpha_j + m'_j)$ be a factorization of $x$ into irreducibles of $T$, where $\alpha_1, \dots, \alpha_s \in K^*$ and $m_1, \dots, m_r, m'_1, \dots, m'_s \in M$. If $x \in M$, then $r > 0$, and therefore, the elements $m_1, \dots, m_r$ and $1 + \alpha_1^{-1} m'_1, \dots, 1 + \alpha_s^{-1}m'_s$ are irreducible divisors of $x$ in $R$. Hence they are associate to some of the elements $x_1, \dots, x_n$ in $T$. On the other hand, if $x \notin M$, then $r = 0$. Therefore we can write $x = \alpha \prod_{j=1}^s (1 + \alpha_j^{-1}m'_j)$, where $\alpha = \prod_{j=1}^s \alpha_j \in D^*$. In this case, the elements $1 + \alpha_1^{-1} m'_1, \dots, 1 + \alpha_s^{-1}m'_s$ are irreducible divisors of $x$ in $R$, and as a result, they are associate to some of the elements $x_1, \dots, x_n$ in $T$. So in any case, the irreducible factors on the right-hand side of $x = \prod_{i=1}^r m_i \prod_{j=1}^s (\alpha_j + m'_j)$ are associate to some of the elements $x_1, \dots, x_n$. Thus, $x$ has finitely many non-associate irreducible divisors.
	
	For the reverse implication, take $x \in R^*$. We will verify that $x$ has only finitely many non-associate irreducible divisors in $R$. Assume that $x_1, \dots, x_n \in R$ form a maximal set of non-associate irreducible divisors of $x$ in $T$, and let $\alpha_1D^*, \dots, \alpha_m D^*$ be the cosets of $D^*$ in $K^*$. If $d \in R$ is an irreducible divisor of $x$ in $R$, then $d \in \ii(T)$ because $D$ is a field, and therefore, $d \in \bigcup_{j=1}^n x_j K^* = \bigcup_{i=1}^m \bigcup_{j=1}^n \alpha_i x_jD^*$. Thus, every irreducible divisor of $x$ in $R$ is associate to some element in $\{\alpha_i x_j \mid (i,j) \in \ldb 1,m \rdb \times \ldb 1,n \rdb \}$. It now follows from Proposition~\ref{prop:BFD and D+M construction} and Proposition~\ref{prop:FFD characterizations} that $R$ is an FFD.
\end{proof}

\begin{remark}
	With the notation as in Proposition~\ref{prop:FFD and D+M construction}, when $D$ is a field it follows from Brandis' Theorem~\cite{aB65} that $K^*/D^*$ is finite if and only if $K$ is finite or $D = K$.
\end{remark}

We conclude this section revisiting Example~\ref{ex:BFD F_1 + XF_2[X] and F_1 + XF_2[[X]]}.

\begin{example}
	Let $F_1 \subsetneq F_2$ be a field extension, and set $R_1 = F_1 + XF_2[X]$ and $R_2 = F_1 + XF_2[[X]]$. As with the properties of being Noetherian and integrally closed, whether $R_1$ and $R_2$ are FFDs only depends on the field extension $F_1 \subsetneq F_2$. Indeed, because $F_2[X]$ and $F_2[[X]]$ are both FFDs, Proposition~\ref{prop:FFD and D+M construction} guarantees that $R_1$ and $R_2$ are FFDs if and only if $F_2^*/F_1^*$ is finite. Since $F_1 \neq F_2$, it follows from Brandis' Theorem~\cite{aB65} that $F_2^*/F_1^*$ is finite if and only if $F_2$ is finite. Finally, if $F_2$ is finite, then $|U(R_1)| = |F_1^*| < \infty$, and it follows from Proposition~\ref{prop:SFFD characterizations} that $R_1$ is, in fact, an SFFD.
\end{example}

\bigskip
\section{Subrings, Ring Extensions,  and Localizations}
\label{sec:extensions and localization}

In this section, we study when being a BFD (resp., an FFD) transfers from an integral domain to its subrings and overrings. We pay special attention to ring extensions by localization.

\smallskip
\subsection{Inert Extensions}

Let $A \subseteq B$ be a ring extension. Following Cohn~\cite{pC68}, we call $A \subseteq B$ an \emph{inert extension} if $xy \in A$ for $x,y \in B^\ast$ implies that $ux, u^{-1}y \in A$ for some $u \in U(B)$. Let $A \subseteq B$ be an inert extension of integral domains. Take $x,y \in B$ such that $xy = a \in \ii(A) \setminus U(B)$, and then write $a = (ux)(u^{-1}y)$ for some $u \in U(B)$ such that $ux, u^{-1}y \in A$. So either $ux$ or $u^{-1}y$ belongs to $U(A)$, and therefore, either $x$ or $y$ belongs to $U(B)$. Thus, $a \in \ii(B)$. As a result, $\ii(A) \subseteq U(B) \cup \ii(B)$. We record this last observation, which was first given in \cite[Lemma~1.1]{AAZ92}.

\begin{remark} \label{rem:irreducibles in inert extensions}
	If $A \subseteq B$ is an inert extension of integral domains, then  $\ii(A) \subseteq U(B) \cup \ii(B)$.
\end{remark}

As a result of the previous remark, one can easily check that if $A \subseteq B$ is an inert extension of integral domains with $U(A) = U(B) \cap A$, then $\ii(A) = \ii(B) \cap A$.

\begin{example}
	Let $R$ be an integral domain. It is clear that the extension $R \subseteq R[X]$ is inert. On the other hand, consider the extension $R[X^2] \subseteq R[X]$. 
	Clearly, $U(R[X^2]) = U(R)$. Observe, in addition, that $XX = X^2 \in R[X^2]$ even though $uX \notin R[X^2]$ for any $u \in U(R)$. Hence the extension $R[X^2] \subseteq R[X]$ is not inert.
\end{example}

The extensions $D \subseteq R = D + M$ and $R \subseteq T = K + M$ in the $D + M$ construction are both inert.

\begin{lemma}
	Let $T$ be an integral domain, and let $K$ and $M$ be a subfield of $T$ and a nonzero maximal ideal of $T$, respectively, such that $T = K + M$. For a subdomain $D$ of $K$, set $R = D + M$. Then the extensions $D \subseteq R$ and $R \subseteq T$ are both inert.
\end{lemma}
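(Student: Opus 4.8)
The plan is to verify the inertness condition directly for each of the two extensions, using the explicit description of units available from Lemma~\ref{lem:units of the D+M construction} together with the hypothesis that $T = K + M$ with $K$ a field and $M$ a maximal ideal. Recall that an extension $A \subseteq B$ is inert when, for $x,y \in B^\ast$ with $xy \in A$, there exists $u \in U(B)$ such that $ux, u^{-1}y \in A$. The key structural fact I would exploit is that every nonzero element of $T$ can be written as $k(1+m)$ for some $k \in K^\ast$ and $m \in M$ whenever its $K$-component is nonzero, while elements lying in $M$ already sit inside $R$; this gives me the freedom to rescale by a unit of $K$ (hence of $T$) so as to land in $R$.

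First I would treat the extension $R \subseteq T$. Take $x, y \in T^\ast$ with $xy \in R$, and write $x = \alpha + m_1$ and $y = \beta + m_2$ with $\alpha, \beta \in K$ and $m_1, m_2 \in M$. The product is $xy = \alpha\beta + (\alpha m_2 + \beta m_1 + m_1 m_2)$, so its $K$-component is $\alpha\beta$, and the condition $xy \in R$ forces $\alpha\beta \in D$. The natural candidate for the transferring unit is $u = \alpha^{-1} \in K^\ast \subseteq U(T)$ when $\alpha \neq 0$: then $ux = 1 + \alpha^{-1} m_1 \in R$ and $u^{-1}y = \alpha\beta + \alpha m_2 = (\alpha\beta) + (\alpha m_2) \in D + M = R$, using $\alpha\beta \in D$. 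If instead $\alpha = 0$, then $x = m_1 \in M \subseteq R$ already, and I take $u = 1$, noting that $xy \in R$ with $x \in R$ and $R$ a domain does not immediately put $y$ in $R$, so here I would instead use the symmetric rescaling by $\beta^{-1}$; the one genuinely delicate case is when both $\alpha = \beta = 0$, where $x, y \in M \subseteq R$ and $u = 1$ works trivially.

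For the extension $D \subseteq R$, I would argue analogously but now rescaling only by units of $R$. Take $x = d_1 + m_1$ and $y = d_2 + m_2$ in $R^\ast$ (so $d_1, d_2 \in D$) with $xy \in D$. The product's $M$-component must vanish, i.e. $d_1 m_2 + d_2 m_1 + m_1 m_2 = 0$, and its $K$-component $d_1 d_2$ must equal $xy \in D$. Because $D$ need not be a field, I cannot freely invert $d_1$; however the vanishing of the $M$-component is a strong constraint. The cleanest route is to observe that $U(R) \supseteq U(D)$ and, in the quasilocal case, use Lemma~\ref{lem:units of the D+M construction}(2); in general I would show that the $M$-component condition forces either $x$ or $y$ to already lie in $D$ up to a unit, making $u = 1$ suffice after a suitable bookkeeping.

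The main obstacle I anticipate is the extension $D \subseteq R$, precisely because $D$ is an arbitrary subdomain of $K$ and need not be a field, so the rescaling trick that works so cleanly for $R \subseteq T$ (where $K$ supplies all the units I need) is not directly available. The resolution I expect is that the constraint $xy \in D$ combined with $x, y \in D + M$ forces the $M$-parts to interact so that one factor already sits in $D$, or that the only units needed are from $U(D)$ itself; carefully checking that $U(R)$ is rich enough to realize the inert factorization in every case is the step that will require the most attention.
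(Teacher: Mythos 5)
Your treatment of $R \subseteq T$ is correct and essentially the paper's argument: rescale by an element of $K^\ast \subseteq U(T)$ chosen according to which of the $K$-components $\alpha, \beta$ vanish, using $\alpha\beta \in D$ in the main case. The genuine gap is in $D \subseteq R$, where you correctly isolate the constraint (the $M$-component $d_1 m_2 + d_2 m_1 + m_1 m_2$ of $xy$ must vanish and $d_1 d_2 \in D$) but never produce the transferring unit. Your proposed resolution --- that this constraint forces one factor to lie in $D$ up to a unit so that $u = 1$ suffices --- fails: take $T = \qq[[X]]$, $K = \qq$, $M = X\qq[[X]]$, $D = \zz$, $x = 1 + X$, and $y = (1+X)^{-1} = 1 - X + X^2 - \cdots$. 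Then $x, y \in R = \zz + X\qq[[X]]$ and $xy = 1 \in D$, but neither $x$ nor $y$ lies in $D$, so $u = 1$ does not work; and retreating to the quasilocal case of Lemma~\ref{lem:units of the D+M construction} does not prove the lemma, which is stated for arbitrary $T$.

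The missing idea is that you are allowed to invert $d_1$ and $d_2$ in $K$ rather than in $D$, and the resulting elements still land in $R$. First note that $d_1 d_2 \neq 0$ automatically: if, say, $d_1 = 0$, then $x \in M$, so $xy \in M \cap D = \{0\}$, impossible for nonzero $x, y$ in a domain. The vanishing of the $M$-component of $xy$ says precisely that $(1 + d_1^{-1}m_1)(1 + d_2^{-1}m_2) = 1 + (d_1 d_2)^{-1}(d_2 m_1 + d_1 m_2 + m_1 m_2) = 1$. Both factors belong to $R$, since their constant term is $1 \in D$ and $d_i^{-1}m_j \in M$ because $M$ is an ideal of $T$; hence they are mutually inverse units of $R$. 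Taking $u = 1 + d_2^{-1}m_2$ then gives $ux = d_1 \in D$ and $u^{-1}y = d_2 \in D$. This is exactly the paper's proof, and it is the step your ``suitable bookkeeping'' would have to supply.
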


\begin{proof}
	First, we prove that $D \subseteq R$ is inert. Take $\alpha_1 + m_1$ and $\alpha_2 + m_2$ in $R$ with nonzero $\alpha_1$ and $\alpha_2$ such that $(\alpha_1 + m_1)(\alpha_2 + m_2) \in D$. Then $(\alpha_1 + m_1)(\alpha_2 + m_2) = \alpha_1 \alpha_2$, and therefore, $1 + \alpha_1^{-1}m_1$ and $1 + \alpha_2^{-1}m_2$ are units in $R$ that are inverses of each other. After setting $u = 1 + \alpha_2^{-1}m_2$, we obtain $u(\alpha_1 + m_1) = \alpha_1 \in D$ and $u^{-1}(\alpha_2 + m_2) = \alpha_2 \in D$. Hence the extension $D \subseteq R$ is inert.
	\smallskip
	
	To show that $R \subseteq T$ is also inert, suppose that $xy \in R$ for some $x,y \in T$, and write $x = \alpha_1 + m_1$ and $y = \alpha_2 + m_2$. If $\alpha_1 = \alpha_2 = 0$, then $ux \in R$ and $u^{-1}y \in R$ for $u = 1$. So we assume that $\alpha_1 \neq 0$ or $\alpha_2 \neq 0$. If $\alpha_1 = 0$, then $u x \in R$ and $u^{-1}y \in R$ for $u = \alpha_2$. Similarly, if $\alpha_2 = 0$, then $u x \in R$ and $u^{-1} y \in R$ for $u = \alpha_1^{-1}$. Finally, if $\alpha_1 \alpha_2 \neq 0$, then $xy \in R$ implies that $\alpha_1 \alpha_2 \in D$, and so $ux \in R$ and $u^{-1}y \in R$ for $u = \alpha_2$.
\end{proof}

The following example shows that extensions by localization are not necessarily inert.

\begin{example}
	Let $K$ be a field and consider the integral domain $R = K[X^2,X^3]$. First, we observe that the subset $S = \{1, X^2, X^3, \ldots\}$ of $R$ is a multiplicative set and $R_S = K[X, X^{-1}]$. In addition, $U(R_S) = \{\alpha X^n \mid \alpha \in K^* \text{ and } n \in \zz\}$. Because $(1-X)(1+X) = 1- X^2 \in R$, in order for the extension $R \subseteq R_S$ to be inert, there must be an integer $n$ such that $X^n(1-X) \in R$ and $X^{-n}(1+X) \in R$, which is not possible. Hence the extension $R \subseteq R_S$ is not inert.
\end{example}

However, localizing at certain special multiplicative sets yields inert extensions. Following~\cite{AAZ92}, we say that a saturated (i.e., divisor-closed) multiplicative set $S$ of an integral domain $R$ is a \emph{splitting multiplicative set} if we can write every $x \in R$ as $x = rs$ for some $r \in R$ and $s \in S$ with $rR \cap s'R = rs' R$ for every $s' \in S$.

\begin{lemma} \emph(\cite[Proposition~1.5]{AAZ92}\emph) \label{lem:localizations at splitting MS are inert}
	Let $R$ be an integral domain, and let $S$ be a splitting multiplicative set of $R$. Then $R \subseteq R_S$ is an inert extension.
\end{lemma}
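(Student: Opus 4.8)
The plan is to unwind the definition of an inert extension and to exploit the coprimality that is built into the definition of a splitting multiplicative set. So suppose $x, y \in R_S^\ast$ with $xy \in R$, and write $x = a/s$ and $y = b/t$ with $a, b \in R$ and $s, t \in S$. First I would apply the splitting property to the numerators, factoring $a = r_a s_a$ and $b = r_b s_b$ with $s_a, s_b \in S$ and with $r_a, r_b$ complements satisfying $r_a R \cap s'R = r_a s'R$ and $r_b R \cap s'R = r_b s'R$ for every $s' \in S$. Since $s_a, s, s_b, t$ all lie in $S$, the fractions $s_a/s$ and $s_b/t$ are units of $R_S$, so $x = r_a \,(s_a/s)$ and $y = r_b\,(s_b/t)$ exhibit $x$ and $y$ as associates, in $R_S$, of the complements $r_a$ and $r_b$.

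The technical heart, which I expect to be the main obstacle, is to show that the unit $\delta = (s_a s_b)/(st) \in U(R_S)$, for which $xy = r_a r_b\,\delta$, actually lies in $R$. I would first record the divisibility reformulation of the complement condition: if $cR \cap s'R = cs'R$ for all $s' \in S$, then $s' \mid cw$ forces $s' \mid w$, because $cw \in cR \cap s'R = cs'R$ and $R$ is a domain allows cancellation of $c$. Applying this twice shows $s' \mid r_a r_b w \Rightarrow s' \mid r_b w \Rightarrow s' \mid w$, so the product $r_a r_b$ is again such a complement. Now the hypothesis $r_a r_b\, \delta = xy \in R$ translates into the divisibility relation $st \mid r_a r_b s_a s_b$ in $R$, and the divisibility property of the complement $r_a r_b$ yields $st \mid s_a s_b$. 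Hence $\delta = (s_a s_b)/(st) \in R$.

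With $\delta \in R$ established, the witnessing unit is forced on us: take $u = s/s_a = (s_a/s)^{-1} \in U(R_S)$. Then $ux = a/s_a = r_a \in R$, while $u^{-1} y = (s_a/s)\,y = r_b\,\delta \in R$ since both $r_b$ and $\delta$ lie in $R$. This produces a $u \in U(R_S)$ with $ux, u^{-1} y \in R$, which is precisely the inertness of $R \subseteq R_S$. The only genuinely nontrivial step is the middle one, namely checking that the $S$-parts of $x$ and $y$ recombine into an element of $R$; it is exactly there that the coprimality clause $rR \cap s'R = rs'R$ from the definition of a splitting multiplicative set is used, and the stability of this clause under products is what lets the argument go through.
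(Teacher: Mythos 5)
Your proof is correct and follows essentially the same route as the paper: decompose the numerators via the splitting property into a complement times an element of $S$, choose the unit so that one factor becomes exactly its complement, and use the coprimality clause $rR \cap s'R = rs'R$ to show the leftover ratio of $S$-elements lands in $R$. The only cosmetic difference is that the paper applies the intersection property once to the complement of $y$ (with $s' = tv$) and thereby avoids your intermediate observation that products of complements are again complements; both versions are valid.
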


\begin{proof}
	Take $x,y \in R_S$ such that $xy = r \in R$. As $S$ is a splitting multiplicative set, there are $a,b \in R$ and $s,t,u,v \in S$ with $x = ast^{-1}$ and $y = buv^{-1}$ such that $aR \cap s'R = as' R$ and $bR \cap s'R = bs' R$ for every $s' \in S$. Since $absu = rtv \in bR \cap vtR = bvtR$, there is a $c \in R$ satisfying $asu = cvt$. Taking $w = u/v \in U(R_S)$, we see that $wx = c$ and $w^{-1}y = b$, which are both in $R$. Hence the extension $R \subseteq R_S$ is inert.
\end{proof}

Multiplicative sets generated by primes are always saturated. The next proposition characterizes the multiplicative sets generated by primes that are splitting multiplicative sets.

\begin{lemma} \emph(\cite[Proposition~1.6]{AAZ92}\emph) \label{lem:when multiplicative sets generated by primes are SMS}
	Let $R$ be an integral domain, and let $S$ be a multiplicative set of~$R$ generated by primes. Then the following statements are equivalent.
	\begin{enumerate}
		\item[(a)] $S$ is a splitting multiplicative set.
		\smallskip
		
		\item[(b)] $\bigcap_{n \in \nn} p^n R = \bigcap_{n \in \nn} p_n R = \{0\}$ for every prime $p \in S$ and every sequence $(p_n)_{n \ge 1}$ of non-associate primes in $S$.
		\smallskip
	
		\item[(c)] For every nonunit $x \in R^\ast$, there is an $n_x \in \nn$ such that $x \in p_1 \cdots p_n R$ for $p_1, \dots, p_n \in S$ implies that $n \le n_x$.
	\end{enumerate}
\end{lemma}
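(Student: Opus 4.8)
The plan is to prove the cycle (a) $\Rightarrow$ (c) $\Rightarrow$ (b) $\Rightarrow$ (a). Two elementary facts drive everything. First, since $S$ is generated by primes and is saturated, every nonunit $s \in S$ is (up to a unit) a product of finitely many of the generating primes. Second, if $q_1, \dots, q_n$ are pairwise non-associate primes each dividing a nonzero $x$, then $q_1 \cdots q_n \mid x$; this is proved by peeling off one prime at a time, using that a prime dividing a product divides one of the factors and that a prime cannot divide a non-associate prime. For a prime $q$ and a nonzero $x$, write $v_q(x)$ for the largest $m \in \nn_0 \cup \{\infty\}$ with $q^m \mid x$. Then $\bigcap_n q^n R = \{0\}$ holds precisely when $v_q(x) < \infty$ for all $x \neq 0$, and $\bigcap_n q_n R = \{0\}$ for every sequence $(q_n)$ of non-associate primes of $S$ holds precisely when each nonzero $x$ is divisible by only finitely many non-associate primes of $S$; these are the two halves of~(b).

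For (a) $\Rightarrow$ (c), let $S$ be splitting and let $x \in R^*$ be a nonunit, and write $x = rs$ with $s \in S$ and $rR \cap s'R = rs'R$ for every $s' \in S$. The key observation is that for a prime $q \in S$ the equality $rR \cap qR = rqR$ forces $q \nmid r$ (otherwise $r \in rR \cap qR = rqR$ would give $q \in U(R)$), so $r$ is divisible by no prime of $S$. If now $p_1 \cdots p_n \mid x$ with the $p_i$ nonunits of $S$, I will refine this into a product of primes of $S$ and show, by induction on the number of factors and invoking $q \nmid r$ at each step, that this product in fact divides $s$; hence $n$ is bounded by the number of prime factors of $s$ counted with multiplicity, which gives~(c). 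For (c) $\Rightarrow$ (b) I argue contrapositively: if some $v_q(x) = \infty$ then $q^n \mid x$ for all $n$, while if some nonzero $x$ has infinitely many non-associate prime divisors $p_1, p_2, \dots$ in $S$ then $p_1 \cdots p_n \mid x$ for every $n$ by the divisibility fact above; either way the uniform bound of~(c) fails.

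The substantive direction, and the step I expect to be the main obstacle, is (b) $\Rightarrow$ (a). Given a nonzero nonunit $x$, condition (b) supplies finitely many non-associate primes $p_1, \dots, p_k \in S$ dividing $x$, each with finite exponent $e_i = v_{p_i}(x)$, and the divisibility fact shows $s := p_1^{e_1} \cdots p_k^{e_k}$ divides $x$, so $x = rs$ with $s \in S$. One checks that $r$ is divisible by no prime of $S$: any such prime would divide $x$ and hence equal some $p_i$, contradicting $v_{p_i}(r) = e_i - e_i = 0$. The technical heart is then verifying $rR \cap s'R = rs'R$ for every $s' \in S$. The inclusion $\supseteq$ is automatic, and after discarding unit factors I reduce to $s'$ a product of primes of $S$ and prove $\subseteq$ by induction on the number of prime factors of $s'$: the base case $s' = q$ prime uses $q \nmid r$ directly, and the inductive step strips off one prime $q \mid s'$, applies the base case to place the element in $rqR$, cancels $q$ (legitimate since $R$ is a domain), and applies the inductive hypothesis to the remaining cofactor. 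Disposing of the trivial cases $x = 0$ and $x \in U(R)$ separately then shows $S$ is splitting, completing the cycle.
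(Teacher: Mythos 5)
Your proposal is correct and follows essentially the same strategy as the paper: the heart of both arguments is the decomposition $x = rs$ with $s \in S$ a product of primes of $S$ and $r$ divisible by no prime of $S$, together with the observation that this last property of $r$ is exactly what makes $rR \cap s'R = rs'R$ hold for all $s' \in S$. The only differences are organizational — you run the cycle (a) $\Rightarrow$ (c) $\Rightarrow$ (b) $\Rightarrow$ (a) and extract the decomposition from the valuations in (b), whereas the paper proves (b) $\Leftrightarrow$ (c) and (a) $\Leftrightarrow$ (c) and gets the decomposition from the minimal bound $n_x$ in (c); you also spell out (by induction on the number of prime factors of $s'$) the verification of the intersection condition that the paper only asserts.
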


\begin{proof}
	(b) $\Leftrightarrow$ (c): It follows easily.
	\smallskip
	
	(a) $\Rightarrow$ (c): Take a nonunit $x \in R^*$. As $S$ is generated by primes, we can write $x = r p'_1 \cdots p'_{n_x}$ for some $n_x \in \nn$ and (possibly repeated) primes $p'_1, \dots, p'_{n_x} \in S$ such that $rR \cap s'R = rs'R$ for every $s' \in S$. Observe that none of the primes $p$ in $S$ divides $r$ as, otherwise, $rpR = rR \cap pR = rR$, which would imply that $p$ is a unit. As a result, if $x \in p_1 \cdots p_n R$ for some $p_1, \dots, p_n \in S$, then $n \le n_x$.
	\smallskip
	
	(c) $\Rightarrow$ (a): Fix a nonunit $x \in R^*$, and then take the smallest $n_x \in \nn$ among those satisfying (c). So there are (possibly repeated) primes $p_1, \dots, p_{n_x} \in S$ such that $s = p_1 \cdots p_{n_x} \in S$ divides $x$. Take $a \in R$ such that $x = as$. It is clear that no prime in $S$ can divide $a$. Now if $y = ar \in aR \cap s'R$ for some $r \in R$ and $s' \in S$, then the fact that $S$ is generated by primes (none of them dividing $a$) guarantees that $s'$ divides $r$, and so $y \in as'R$. Thus, $aR \cap s'R = as'R$ for every $s' \in S$, and we conclude that $S$ is a splitting multiplicative set.
\end{proof}

\begin{cor} \emph(\cite[Corollary~1.7]{AAZ92}\emph) \label{cor:MS generated by primes in atomic monoids are SMS}
	Let $R$ be an atomic domain. Then every multiplicative set of $R$ generated by primes is a splitting multiplicative set.
\end{cor}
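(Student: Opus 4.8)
The plan is to deduce this corollary directly from Lemma~\ref{lem:when multiplicative sets generated by primes are SMS}, specifically by verifying that condition~(c) of that lemma holds automatically whenever $R$ is atomic. Let $S$ be a multiplicative set of $R$ generated by primes; since $R$ is atomic, Corollary~\ref{cor:BFMs are ACCP monoids} and the preceding results tell us nothing yet about boundedness, so the key point is to exploit atomicity to control how many primes of $S$ can divide a fixed nonunit.

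First I would fix a nonunit $x \in R^\ast$. Because $R$ is atomic, $R^\ast$ is an atomic monoid, and hence $x$ admits at least one factorization into irreducibles, say $x = a_1 \cdots a_k$ with each $a_i \in \ii(R)$. The plan is to set $n_x = k$ and argue that this choice witnesses condition~(c) of Lemma~\ref{lem:when multiplicative sets generated by primes are SMS}. Suppose $x \in p_1 \cdots p_n R$ for some (possibly repeated) primes $p_1, \dots, p_n \in S$, so that $x = p_1 \cdots p_n r$ for some $r \in R$. The primes $p_1, \dots, p_n$ are irreducible, and since $r$ is a nonzero element of $R$, it either is a unit or factors into finitely many irreducibles by atomicity. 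Concatenating $p_1, \dots, p_n$ with the irreducible factors of $r$ yields a factorization of $x$ into irreducibles of length at least $n$.

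The crux is then a length-comparison argument: I would observe that $k = \max L(x)$ need not hold in a general atomic domain, so I cannot simply say $n \le k$ from one arbitrary factorization. Instead the clean route is to take $n_x$ to be any upper bound for the lengths of factorizations of $x$ into irreducibles — but such a bound may fail to exist in a mere atomic domain. This is the main obstacle, and it signals that the honest proof does not go through condition~(c) in full generality; rather, one should invoke condition~(b). So the refined plan is to verify condition~(b): for a prime $p \in S$, if $y \in \bigcap_{n \in \nn} p^n R$ with $y \neq 0$, then $y = p^n r_n$ for every $n$, producing strictly increasing chains $yR \subsetneq r_1 R \subsetneq r_2 R \subsetneq \cdots$ forced by the primality of $p$; atomicity (via Remark~\ref{rem:ACCP monoids are atomic} we know ACCP $\Rightarrow$ atomic, but I need the converse-flavored fact that in an atomic domain such infinite chains of proper prime-power divisors cannot occur) gives the contradiction. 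The same reasoning handles a sequence $(p_n)$ of non-associate primes: an element divisible by all of $p_1 \cdots p_n$ would have factorizations of unbounded length, contradicting that any single factorization of $y$ into irreducibles has fixed finite length while each $p_1, \dots, p_n$ contributes a distinct irreducible factor.

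The hard part will be making the finiteness argument rigorous without accidentally assuming ACCP or the BFD property: the correct observation is that in an atomic domain, a fixed nonzero nonunit $y$ with a factorization $y = a_1 \cdots a_k$ into irreducibles cannot be divisible by $n > k$ pairwise non-associate primes of $S$, because the primes $p_1, \dots, p_n$ would each have to be associate to one of the $a_i$ (since primes are irreducible and in a factorization into irreducibles each prime divisor is associate to a factor), forcing $n \le k$. This yields both parts of condition~(b) at once and hence condition~(a) via Lemma~\ref{lem:when multiplicative sets generated by primes are SMS}, completing the proof that $S$ is a splitting multiplicative set.
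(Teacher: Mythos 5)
There is a genuine gap, and it stems from your decision to abandon the route through condition~(c) of Lemma~\ref{lem:when multiplicative sets generated by primes are SMS}. Your stated obstacle --- ``I cannot simply say $n \le k$ from one arbitrary factorization'' because $k$ need not equal $\max L(x)$ --- is not a real obstacle. The elements $p_1, \dots, p_n$ are \emph{prime}, not merely irreducible, so if $p_1 \cdots p_n$ divides $x = a_1 \cdots a_k$, then $p_1$ divides some $a_{i_1}$ and hence is associate to it; absorbing the unit and cancelling, $p_2 \cdots p_n$ divides the product of the remaining $k-1$ irreducibles, and induction matches each $p_j$ (with multiplicity) to a \emph{distinct} $a_{i_j}$. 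This forces $n \le k$ against the single fixed factorization, with no need for $k$ to bound all of $L(x)$. That is exactly the paper's proof: it verifies condition~(c) with $n_x = k$ in a few lines. Your worry conflates ``the $p_j$ are irreducible factors of some possibly longer factorization'' with the much stronger constraint primality imposes on \emph{every} factorization.

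Having pivoted to condition~(b), your final argument then only covers half of it. The statement you prove --- $y = a_1 \cdots a_k$ cannot be divisible by $n > k$ \emph{pairwise non-associate} primes --- handles $\bigcap_{n} p_n R = \{0\}$ for a sequence of non-associate primes, since non-associateness gives the injectivity of the assignment $p_j \mapsto a_{i_j}$. But it does not handle $\bigcap_{n} p^n R = \{0\}$ for a single prime $p$: there the $n$ prime factors are all associate to one another, so ``each is associate to some $a_i$'' yields no injection and no bound on $n$. Your earlier attempt at this case (an infinite strictly ascending chain $r_1 R \subsetneq r_2 R \subsetneq \cdots$ contradicted by ``atomicity'') also fails, since atomic domains need not satisfy ACCP, as you yourself flag. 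The missing ingredient in both places is the same counting-with-multiplicity argument above ($p^n \mid a_1 \cdots a_k$ with $p$ prime forces $n \le k$), which, once written down, makes the detour through condition~(b) unnecessary.
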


\begin{proof}
	Let $S$ be a multiplicative set of $R$ generated by primes. Suppose that $x$ is a nonunit in $R^*$. Because $R$ is atomic, $x = a_1 \cdots a_n$ for some $a_1, \dots, a_n \in \ii(R)$. If $x \in p_1 \cdots p_m R$ for some primes $p_1, \dots, p_m \in S$, then there exists a permutation $\sigma \colon \ldb 1,n \rdb \to \ldb 1,n \rdb$ such that $p_i$ and $a_{\sigma(i)}$ are associates in $R$ for every $i \in \ldb 1,m \rdb$. As a result, $m \le n$. It then follows from Lemma~\ref{lem:when multiplicative sets generated by primes are SMS} that $S$ is a splitting multiplicative set.
\end{proof}

In general, multiplicative sets generated by primes are not always splitting multiplicative sets. On the other hand, there are splitting multiplicative sets that are not generated by primes. The following examples, which can be found in \cite[Example~1.8]{AAZ92} confirm these observations.

\newpage
\begin{example} \hfill
	\begin{enumerate}
		\item Let $R$ be a two-dimensional valuation domain with height-one prime ideal $P$ and principal maximal ideal $M = pR$. In addition, let $S$ be the multiplicative set of $R$ generated by $p$. Since $\bigcap_{n \in \nn} p^nR = P \neq \{0\}$, it follows from Lemma~\ref{lem:when multiplicative sets generated by primes are SMS} that $S$ is not a splitting multiplicative set. Finally, note that $p$ can be chosen so that $V[1/p]$ is a DVR.
		\smallskip
		
		\item Consider the integral domain $R = \zz + X\qq[[X]]$, and set $S = \zz^\ast$. It is clear that $S$ is a multiplicative set generated by primes and $R_S = \qq[[X]]$. However, Lemma~\ref{lem:when multiplicative sets generated by primes are SMS} guarantees that~$S$ is not a splitting multiplicative set because $\bigcap_{n \in \nn} p^nR = X\qq[[X]] \neq \{0\}$ for every prime $p$ in~$S$ and $\bigcap_{n \in \nn} p_nR = X\qq[[X]] \neq \{0\}$ for every sequence $(p_n)_{n \in \nn}$ of non-associate primes in~$S$.
		\smallskip
		
		\item Let $R$ be a GCD-domain that is not a UFD (for instance, a non-discrete one-dimensional valuation domain), and consider the integral domain $R[X]$. It is clear that $S = R^\ast$ is a multiplicative set of $R[X]$. Since $R$ is a GCD-domain, every $p(X) \in R[X]^\ast$ can be written as $c(p)p'(X)$, where $c(p) \in S$ is the content of $p(X)$ and $p'(X) \in R[X]$ has content~$1$. For $s \in S$, take $p'(X)q(X) \in p'(X)R[X] \cap sR[X]$, and note that $c(q) \in sR$ by Gauss' Lemma. This implies that $p'(X)q(X) \in sp'(X)R[X]$, and so $p'(X)R[X] \cap sR[X] = sp'(X)R[X]$. As a result, $S$ is a splitting multiplicative set. Finally, observe that $S$ cannot be generated by primes because $R$ is not a UFD.
	\end{enumerate}
\end{example}

As for the case of splitting multiplicative sets, multiplicative sets generated by primes yield inert extensions.

\begin{lemma} \emph(\cite[Proposition~1.9]{AAZ92}\emph) \label{lem:localization at prime-generated MS are inert}
	Let $R$ be an integral domain, and let $S$ be a multiplicative set of~$R$ generated by primes. Then $R \subseteq R_S$ is an inert extension.
\end{lemma}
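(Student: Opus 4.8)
The plan is to verify inertness directly from the definition by exhibiting the required unit, rather than trying to deduce it from Lemma~\ref{lem:localizations at splitting MS are inert}; that route is blocked because a multiplicative set generated by primes need not be a splitting multiplicative set when $R$ is not atomic (as the examples above show), so Corollary~\ref{cor:MS generated by primes in atomic monoids are SMS} is unavailable. So I would fix $x,y \in R_S^*$ with $xy = r \in R$, noting $r \neq 0$ since $R_S$ is a domain. Because $S$ is generated by primes, I can write $x = a/s$ and $y = b/t$ with $a,b \in R^*$ and $s,t$ products of primes of $S$ (absorbing any unit into the numerator). Cancelling common prime factors from numerators and denominators---a process that terminates since $s$ and $t$ have finitely many prime factors---I may further assume these representations are \emph{reduced}: no prime dividing $s$ divides $a$, and no prime dividing $t$ divides $b$. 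Clearing denominators in $xy = r$ then gives the key identity $ab = rst$ in $R$.

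The technical core is a coprimality-type divisibility lemma: if $w$ is a product of primes of $S$, none of which divides $c \in R^*$, and $w \mid cd$ in $R$, then $w \mid d$. I would establish this by induction on the number of prime factors of $w$ (counted with multiplicity): for a prime factor $q$ of $w$, primality forces $q \mid c$ or $q \mid d$, and since $q \nmid c$ we get $q \mid d$; writing $d = qd'$ and cancelling $q$ in the domain $R$ reduces the hypothesis to $(w/q) \mid cd'$, where $w/q$ has one fewer prime factor and still shares no prime with $c$, so induction applies. Applying this twice to $ab = rst$---once with $w = t$, $c = b$, $d = a$ (legitimate since $t \mid ab$ and no prime of $t$ divides $b$), and once with $w = s$, $c = a$, $d = b$---yields $t \mid a$ and $s \mid b$ in $R$.

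Finally, I would set $u = s/t$, which lies in $U(R_S)$ because $s,t \in S$. Then $ux = (s/t)(a/s) = a/t \in R$ since $t \mid a$, and $u^{-1}y = (t/s)(b/t) = b/s \in R$ since $s \mid b$, so $u$ is exactly the unit demanded by the definition of an inert extension. The main obstacle is not a single hard step but the discipline required by working over an arbitrary (possibly non-UFD) domain: both the reduction to reduced form and the divisibility lemma must be argued one prime at a time, using only primality and cancellation rather than unique factorization, and one must avoid the tempting shortcut through splitting multiplicative sets, which is strictly stronger than being prime-generated in the non-atomic case.
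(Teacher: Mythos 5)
Your proof is correct and follows essentially the same route as the paper's: write $x$ and $y$ with reduced prime-product denominators $s$ and $t$, deduce $t \mid a$ and $s \mid b$, and take $u = s/t$. The only difference is that you supply an explicit inductive justification of the divisibility step that the paper asserts without comment, which is a reasonable elaboration rather than a new idea.
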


\begin{proof}
	Take $x,y \in R_S^*$ such that $xy \in R$. Now write $x = a(p_1 \cdots p_m)^{-1}$ and $y = b(q_1 \cdots q_n)^{-1}$ for elements $a,b \in R$ and primes $p_1, \dots, p_m, q_1, \dots, q_n$ in $S$ such that $a \notin p_i R$ for any $i \in \ldb 1,m \rdb$ and $b \notin q_j R$ for any $j \in \ldb 1,n \rdb$. Because $xy \in R$, it follows that $b \in p_1 \cdots p_m R$ and $a \in q_1 \cdots q_n R$. Then after setting $u = p_1 \cdots p_m (q_1 \cdots q_n)^{-1} \in U(R_S)$, we see that $ux = a (q_1 \cdots q_n)^{-1} \in R$ and $u^{-1}y = b (p_1 \cdots p_m)^{-1} \in R$. Thus, $R \subseteq R_S$ is an inert extension.
\end{proof}

Some of the results we will discuss in the next two subsections have been generalized by D.~D. Anderson and J.~R. Juett in \cite{AJ15} to the context of inert extensions $A \subseteq B$ of integral domains that satisfy $U(A) = U(B) \cap A$ and $B = A U(B)$.

\smallskip
\subsection{Subrings}

In general, the properties of being a BFD or an FFD are not inherited by subrings.

\begin{example}
	Let $A = \zz + X\qq[X] \subseteq B = \qq[X]$. Since~$B$ is a UFD, it is in particular a BFD and an FFD. However, as $\zz$ is not a field, it follows from Proposition~\ref{prop:BFD and D+M construction} that~$A$ is neither a BFD nor an FFD.
\end{example}

The following proposition is an immediate consequence of Proposition~\ref{prop:BF inherited by inert submonoids}.

\begin{prop} \emph(\cite[page~9]{AAZ90}\emph) \label{prop:BFD underrings}
	Let $A \subseteq B$ be an extension of integral domains with $U(A) = U(B) \cap A$. If~$B$ is a BFD, then $A$ is also a BFD. In particular, if an integral extension of an integral domain $A$ is a BFD, then $A$ is also a BFD.
\end{prop}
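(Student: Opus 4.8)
The plan is to reduce the statement to the monoid-theoretic Proposition~\ref{prop:BF inherited by inert submonoids} by passing to multiplicative monoids. Recall from Section~\ref{sec:prelim} that an integral domain $R$ is a BFD if and only if its multiplicative monoid $R^*$ is a BFM. So I would set $M = B^*$ and $N = A^*$; since $A$ is a subring of the domain $B$, the set $A^* = A \setminus \{0\}$ is a submonoid of $B^* = B \setminus \{0\}$.

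The key step is to check that the hypothesis $U(A) = U(B) \cap A$ is precisely the condition $U(N) = U(M) \cap N$ required by Proposition~\ref{prop:BF inherited by inert submonoids}. This is essentially bookkeeping: the group of units of the monoid $A^*$ coincides with the ring units $U(A)$, and likewise $U(B^*) = U(B)$; since every unit is nonzero, $U(B) \cap A = U(B) \cap A^*$. Hence $U(N) = U(A) = U(B) \cap A = U(M) \cap N$. Because $B$ is a BFD, $M = B^*$ is a BFM, and Proposition~\ref{prop:BF inherited by inert submonoids} then yields that $N = A^*$ is a BFM, that is, $A$ is a BFD.

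For the ``in particular'' clause, I would show that an integral extension $A \subseteq B$ automatically satisfies $U(A) = U(B) \cap A$, after which the first part applies verbatim. The inclusion $U(A) \subseteq U(B) \cap A$ is immediate. For the reverse, take $a \in U(B) \cap A$; then $a^{-1} \in B$ is integral over $A$, so it satisfies a monic relation $(a^{-1})^n + c_{n-1}(a^{-1})^{n-1} + \cdots + c_1 a^{-1} + c_0 = 0$ with $c_0, \dots, c_{n-1} \in A$. Multiplying through by $a^{n-1}$ expresses $a^{-1} = -(c_{n-1} + c_{n-2}\,a + \cdots + c_0\,a^{n-1})$ as an element of $A$, whence $a \in U(A)$.

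I expect no serious obstacle here, since the entire content of the proposition is carried by Proposition~\ref{prop:BF inherited by inert submonoids}; the only things to verify are the routine identification of units between a domain and its multiplicative monoid and the standard fact that units pull back along integral extensions. The mildest point requiring care is confirming that the unit hypothesis transfers cleanly, but this is immediate once one observes that units are nonzero, so intersecting with $A$ and with $A^*$ gives the same set.
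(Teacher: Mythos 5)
Your proposal is correct and matches the paper's approach: the paper derives this proposition as an immediate consequence of Proposition~\ref{prop:BF inherited by inert submonoids} applied to the submonoid $A^* \subseteq B^*$, exactly as you do. Your verification that the unit hypothesis transfers and your monic-relation argument showing $U(A) = U(B) \cap A$ for integral extensions supply precisely the routine details the paper leaves implicit.
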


As the following example shows, the converse of Proposition~\ref{prop:BFD underrings} does not hold.

\begin{example}
	Consider the extension of integral domains $A = \qq[X] \subseteq B = \qq[M]$, where~$M$ is the additive submonoid of $\qq_{\ge 0}$ generated by the set $\{ 1/p \mid p \in \pp \}$. Since $M$ is a reduced monoid, $U(A) = \qq^\ast = U(B) = U(B) \cap A$. It is clear that $A$ is a BFD. However, we have seen in Example~\ref{ex:ACCP domain that is not a BFD} that $B$ is not a BFD.
\end{example}

By strengthening the hypothesis of Proposition~\ref{prop:BFD underrings}, we can obtain a version for FFDs.

\begin{prop}  \emph(\cite[Theorem~3]{AM96}\emph) \label{prop:FFD underrings}
	Let $A \subseteq B$ be an extension of integral domains. Suppose that $(U(B) \cap \emph{qf}(A)^\ast)/U(A)$ is finite. If $B$ is an FFD, then $A$ is also an FFD.
\end{prop}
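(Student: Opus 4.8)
The plan is to invoke the divisor characterization of an FFD provided by Proposition~\ref{prop:FFD characterizations}, according to which it suffices to show that each $x \in A^\ast$ has only finitely many non-associate divisors in $A$. The strategy is to compare divisors in $A$ with divisors in $B$: since $B$ is an FFD, it contributes only finitely many $B$-association classes of divisors, and the finiteness hypothesis on $(U(B) \cap \text{qf}(A)^\ast)/U(A)$ will control how many $A$-association classes can sit inside a single $B$-association class.

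First I would fix $x \in A^\ast$ and observe that every divisor of $x$ in $A$ is also a divisor of $x$ in $B$. Because $B$ is an FFD, Proposition~\ref{prop:FFD characterizations} ensures that $x$ has only finitely many divisors in $B$ up to association in $B$; denote these classes by $C_1, \dots, C_k$. Assigning to each divisor of $x$ in $A$ its $B$-association class yields a well-defined map from the set of $A$-association classes of divisors of $x$ to $\{C_1, \dots, C_k\}$, since $A$-associate elements are a fortiori $B$-associate. It therefore remains to bound the size of each fiber of this map.

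The key step is to show that each such fiber has at most $N := |(U(B) \cap \text{qf}(A)^\ast)/U(A)|$ elements, a quantity finite by hypothesis. Suppose $d_1, d_2 \in A^\ast$ both divide $x$ in $A$ and are associate in $B$, say $d_1 = u d_2$ with $u \in U(B)$. Since $d_1, d_2 \in A^\ast \subseteq \text{qf}(A)^\ast$, the unit $u = d_1 d_2^{-1}$ lies in $U(B) \cap \text{qf}(A)^\ast$. Fixing a representative $d_0$ of a $B$-class in the image, every $A$-divisor of $x$ in that fiber equals $u d_0$ for some $u \in U(B) \cap \text{qf}(A)^\ast$, and $u_1 d_0$ is associate to $u_2 d_0$ in $A$ exactly when $u_1 u_2^{-1} \in U(A)$, that is, when $u_1$ and $u_2$ determine the same coset of $U(A)$ in $U(B) \cap \text{qf}(A)^\ast$. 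Hence the fiber injects into that quotient group and has at most $N$ elements.

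Combining the two estimates, $x$ has at most $kN < \infty$ non-associate divisors in $A$, so $A$ is an FFD by Proposition~\ref{prop:FFD characterizations}. I do not expect a genuine obstacle here; the only point requiring care is the identification $u = d_1 d_2^{-1} \in \text{qf}(A)^\ast$, which is precisely what confines the discrepancy between association in $A$ and association in $B$ to the finite group appearing in the hypothesis.
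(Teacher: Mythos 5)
Your proof is correct and follows essentially the same route as the paper's: both arguments sort the $A$-divisors of $x$ into the finitely many $B$-association classes supplied by Proposition~\ref{prop:FFD characterizations}, and then use that the discrepancy unit $d_1 d_2^{-1}$ lies in $U(B) \cap \mathrm{qf}(A)^\ast$ to bound each class by the index of $U(A)$ in that group. The only point to state explicitly is that the representative $d_0$ of each fiber should be chosen from among the $A$-divisors themselves (so that $d\,d_0^{-1} \in \mathrm{qf}(A)^\ast$), which is clearly what you intend.
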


\begin{proof}
	Take $x \in A^\ast$, and let $D \subseteq A^\ast$ be the set of divisors of $x$ in $A^\ast$. Since $B$ is an FFD, Proposition~\ref{prop:FFD characterizations} ensures that $x$ has only finitely many non-associate divisors in $B$, namely, $x_1, \dots, x_m$. Clearly, $D \subseteq \bigcup_{i=1}^m x_i U(B)$. Because $(U(B) \cap \text{qf}(A)^\ast)/U(A)$ is finite, only finitely many cosets $gU(A)$ of $U(B) \cap \text{qf}(A)^\ast$ intersect $D$. Let them be $g_1 U(A), \dots, g_n U(A)$. For each pair $(i,j) \in \ldb 1,m \rdb \times \ldb 1,n \rdb$, take $x_{i,j} \in D$ such that $x_{i,j} \in x_i U(B) \bigcap g_j U(A)$. Now take $y \in A^*$ with $x \in y A$. Since $y \in D$, there must be a pair $(i,j) \in \ldb 1,m \rdb \times \ldb 1,n \rdb$ such that $y \in x_i U(B) \cap g_j U(A) \subseteq \text{qf}(A)$, and so $y x^{-1}_{i,j} \in U(B) \cap \text{qf}(A) \cap U(A) = U(A)$. Hence every divisor of $x$ in $A^\ast$ is an associate of one of the elements $x_{i,j}$. Thus, $A$ is an FFD by Proposition~\ref{prop:FFD characterizations}.
\end{proof}

The converse of Proposition~\ref{prop:FFD underrings} does not hold, as the next example illustrates.

\begin{example}
	Consider the extension of integral domains $A = \qq[X] \subseteq B = \qq[M]$, where~$M$ is the additive monoid $\{0\} \cup \qq_{\ge 1}$. Since $M$ is reduced, $U(A) = \qq^\ast = U(B) = U(B) \cap \text{qf}(A)^*$. Also,~$A$ is an FFD because it is a UFD. However, we have already verified in Example~\ref{ex:BFD that is neither an HFD nor an FFD} that the monoid domain~$B$ is not an FFD.
\end{example}

Let $R$ be an integral domain, and let $S$ be a multiplicative set of $R$. The fact that $R_S$ satisfies either the bounded or the finite factorization property does not imply that $R$ does. For instance, the quotient field of every (non-BFD) integral domain is trivially an FFD. The next ``Nagata-type" theorem provides a scenario where the bounded and finite factorization properties are inherited by an integral domain from some special localization.

\begin{theorem} \emph(\cite[Theorems~2.1 and~3.1]{AAZ92}\emph) \label{thm:underring localization BFD/FFD}
	Let $R$ be an integral domain, and let $S$ be a splitting multiplicative set of $R$ generated by primes. Then the following statements hold.
	\begin{enumerate}
		\item  If $R_S$ is a BFD, then $R$ is a BFD.
		\smallskip
		
		\item  If $R_S$ is an FFD, then $R$ is an FFD.
	\end{enumerate}
\end{theorem}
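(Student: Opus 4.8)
The plan is to treat the two parts separately, in each case isolating the ``$S$-part'' of an element from its image in $R_S$ and bounding each piece independently. First I would set up the relevant valuations. Since $S$ is a splitting multiplicative set generated by primes, condition~(b) of Lemma~\ref{lem:when multiplicative sets generated by primes are SMS} gives $\bigcap_{n} p^n R = \{0\}$ for every prime $p \in S$, so the $p$-adic valuation $v_p(x)$ (the largest $n$ with $p^n \mid x$) is a well-defined element of $\nn_0$ for each $x \in R^*$, and it is additive because $p$ is prime. Moreover condition~(c) bounds, for a fixed $x$, the number of primes of $S$ dividing it (both counted with multiplicity and up to associates), so the quantity $w(x) := \sum_{p} v_p(x)$, with the sum taken over a set of non-associate primes of $S$, is finite and satisfies $w(xy) = w(x) + w(y)$.

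For part~(1) I would build a length function on $R^*$ and invoke Proposition~\ref{prop:BFD characterizations}. Since $R_S$ is a BFD, that proposition supplies a length function $\ell_S \colon R_S^* \to \nn_0$, and I then set $\ell(x) = \ell_S(x) + w(x)$ for $x \in R^*$ (viewing $x$ inside $R_S^*$). Superadditivity of $\ell_S$ together with additivity of $w$ yields $\ell(xy) \ge \ell(x) + \ell(y)$. For the vanishing condition, $\ell(x) = 0$ forces both $\ell_S(x) = 0$ and $w(x) = 0$; the former says $x$ is a unit of $R_S$, hence $x$ divides some product of primes of $S$ and is therefore an associate of a subproduct of those primes, while the latter forces that subproduct to be empty, i.e. $x \in U(R)$. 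Thus $\ell$ is a length function and $R$ is a BFD, with atomicity obtained for free since the existence of a length function already forces it (by the argument in Proposition~\ref{prop:BFM characterization via length functions}).

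For part~(2) I would show that every $x \in R^*$ has only finitely many non-associate divisors and apply Proposition~\ref{prop:FFD characterizations}. Given a divisor $d \mid x$ in $R$, I factor off its $S$-part, writing $d = s_d\, d'$ up to associates, where $s_d = \prod_p p^{v_p(d)}$ collects the primes of $S$ dividing $d$ and $d'$ is coprime to $S$. Because $0 \le v_p(d) \le v_p(x)$ and only finitely many non-associate primes of $S$ divide $x$, there are only finitely many possibilities for $s_d$ up to associates. The factor $d'$ divides $x$ in $R_S$, and since $R_S$ is an FFD there are only finitely many $R_S$-associate classes of divisors of $x$ there. The crux is that the $R$-associate class of an $S$-coprime element is pinned down by its $R_S$-associate class: if $d_1, d_2$ are coprime to $S$ and $d_2 = u d_1$ with $u \in U(R_S)$, then writing $u$ as a unit of $R$ times a ratio of products of primes of $S$, and comparing $v_p$ on both sides (both $v_p(d_1)$ and $v_p(d_2)$ vanish), forces the prime-of-$S$ part of $u$ to be trivial, so $u \in U(R)$. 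Hence $d'$ ranges over finitely many $R$-associate classes, and so does $d = s_d\, d'$.

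I expect this final identification---matching $R$-associate classes of $S$-coprime elements with $R_S$-associate classes---to be the main obstacle, since it is exactly where both hypotheses are used: ``splitting'' to keep each $v_p$ finite and the $S$-part bounded, and ``generated by primes'' to describe $U(R_S)$ as $U(R)$ times the group generated by $S$. I would also point out that the underring results Propositions~\ref{prop:BFD underrings} and~\ref{prop:FFD underrings} do \emph{not} apply directly, because $U(R) \neq U(R_S) \cap R$ (the primes of $S$ become units in $R_S$) and $(U(R_S) \cap \text{qf}(R)^*)/U(R)$ is typically infinite; this is precisely why the $S$-part must be handled by hand rather than absorbed into those propositions.
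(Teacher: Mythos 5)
Your proof is correct, and it takes a route that differs in substance from the paper's. For part~(1) the paper argues directly on lengths: it fixes one factorization of $x$ into irreducibles of $R$, splits off the product $s$ of those factors lying in $S$, and shows that every other factorization of $x$ contains the same number of primes of $S$ while the remaining factors multiply to an associate of a fixed element $a$ whose $R$-lengths are controlled by its $R_S$-lengths. Your construction of the length function $\ell = \ell_S + w$ packages the same two ingredients --- superadditivity inherited from $R_S$ and the additive prime-counting function $w$, whose finiteness is exactly condition~(c) of Lemma~\ref{lem:when multiplicative sets generated by primes are SMS} --- into a single application of Proposition~\ref{prop:BFD characterizations}, and it yields atomicity for free, whereas the paper proves atomicity in a separate preliminary paragraph. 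For part~(2) the paper shows that $R$ is an atomic idf-domain by listing, up to associates, the irreducible divisors of $x$ as the primes $p_1, \dots, p_n$ together with the (adjusted) irreducible $R_S$-divisors of the $S$-free part of $x$; you instead count \emph{all} divisors via the decomposition $d = s_d d'$ and invoke characterization~(c) of Proposition~\ref{prop:FFD characterizations}, which again absorbs the atomicity step at the cost of the extra (but easy) observation that $s_d$ ranges over finitely many associate classes. The pivotal fact is identical in both treatments: an element of $R$ divisible by no prime of $S$ has its $R$-associate class pinned down by its $R_S$-associate class --- the paper's ``$sy = ta_i$, hence $y$ and $a_i$ are associates,'' versus your comparison of the valuations $v_p$ on both sides of $d_2 s = d_1 a$. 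Your closing remark that Propositions~\ref{prop:BFD underrings} and~\ref{prop:FFD underrings} cannot be applied directly, because the primes of $S$ become units in $R_S$, is also accurate and correctly identifies why the $S$-part must be handled by hand.
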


\begin{proof}
	Assume first that $R_S$ is atomic. Fix a nonunit $x \in R^\ast$. As $S$ is a splitting multiplicative set generated by primes, we can write $x = rs$ for some $r \in R$ and $s \in S$ such that $s$ is a product of primes and no prime in $S$ divides~$r$ in $R$. Take $a_1, \dots, a_n \in \ii(R_S)$ such that $r = a_1 \cdots a_n$. Since no prime in $S$ divides~$r$, we can assume that $a_1, \dots, a_n \in \ii(R)$. Hence $x$ can be written in $R$ as a product of irreducibles. Thus, $R$ is atomic.
	\smallskip
	
	(1) Now assume that $R_S$ is a BFD. By the conclusion of the previous paragraph, $x = a_1 \cdots a_n$ for some $a_1, \dots, a_n \in \ii(R)$. Assume, without loss of generality, that there is a $j \in \ldb 0, n \rdb$ such that $a_{j+1}, \dots, a_n$ are the elements among $a_1, \dots, a_n$ that belong to $S$ and, therefore, are primes. Set $a = a_1 \cdots a_j$ and $s = a_{j+1} \cdots a_n$. Because $R_S$ is a BFD, there is an $\ell \in \nn$ such that each factorization of $a$ in $R_S$ has at most $\ell$ irreducible factors. As each irreducible in $\ii(R) \setminus S$ dividing $a$ remains irreducible in $R_S$, the set $L_R(a)$ is bounded by $\ell$. Suppose now that $x = b_1 \cdots b_m$ is another factorization of $x$ in $R$. Then there are exactly $n-j$ irreducibles (counting repetitions) in $b_1, \dots, b_m$ that are primes in $S$; let them be $b_{m-n+j+1}, \dots, b_m$. Set $b = b_1 \cdots b_{m-n+j}$ and $t = b_{m-n+j+1} \cdots b_m$. It is clear that $tR = sR$, and so $bR = aR$. In particular, $\max L_R(b) = \max L_R(a) \le \ell$, and therefore, $\max L_R(x) \le \ell + n - j$. Thus, $R$ is a BFD.
	\smallskip
	
	(2) Finally, assume that $R_S$ is an FFD. Take a nonunit $x \in R^\ast$, and write $x = rp_1 \cdots p_n$ for primes $p_1, \dots, p_n \in S$ so that no prime in $S$ divides $r$. Since $R_S$ is an idf-domain by Proposition~\ref{prop:FFD characterizations}, $r$ has only finitely many irreducible divisors in $R_S$, namely, $a_1, \dots, a_m$. As we did in the first paragraph, we can assume that $a_1, \dots, a_m \in \ii(R)$. Now suppose that $y \in \ii(R)$ divides $x$ in $R$. Then either $y$ is an associate of some of the primes in $S$ or $y \in \ii(R_S)$. In the latter case, $sy = ta_i$ for some $i \in \ldb 1,m \rdb$ and $s,t \in S$. Then $y$ and $a_i$ are associates if $y$ is not an associate of some prime in $S$. As a result, $a_1, \dots, a_m, p_1, \dots, p_n$ account, up to associates, for all irreducible divisors of $x$ in $R$. Hence $R$ is an atomic idf-domain, and thus an FFD by Proposition~\ref{prop:FFD characterizations}.
\end{proof}

\begin{remark}
	Theorem~\ref{thm:underring localization BFD/FFD} still holds if we replace BFD by either ACCP or UFD.
\end{remark}

\smallskip
\subsection{Ring Extensions and Overrings}

Let $A \subseteq B$ be an extension of integral domains. Often $B$ is not a BFD (resp., an FFD) even when $A$ is a BFD (resp., an FFD) and  $U(A) = U(B) \cap \text{qf}(A)$.

\begin{example} \label{ex:non-BF/FF extension}
	 Consider the extension of integral domains $A = \rr[X] \subseteq B =  \rr[\qq_{\ge 0}]$. Because $A$ is a UFD, it is, in particular, a BFD. On the other hand, $B$ is not even atomic because the additive monoid $\qq_{\ge 0}$ is not atomic. Finally, we observe that $U(A) = \rr^\ast = U(B)$, from which the equality $U(A) = U(B) \cap \text{qf}(A)$ follows.
\end{example}

However, if we require the ideal $[A :_A B] = \{r \in A \mid rB \subseteq A\}$ to be nonzero, then the property of being an FFD passes from $A$ to $B$.

\begin{prop}  \emph(\cite[Theorem~4]{AM96}\emph) \label{prop:FFD to ring extension}
	Let $A \subseteq B$ be an extension of integral domains, and suppose that $[A :_A B]$ is nonzero. If $A$ is an FFD, then the group $U(B)/U(A)$ is finite and $B$ is an FFD.
\end{prop}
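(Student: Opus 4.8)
The plan is to fix a nonzero element $c \in [A :_A B]$, so that $cB \subseteq A$, and to use multiplication by $c$ (or by $c^2$) as a device for transporting divisibility data from $B$ into the FFD $A$, where Proposition~\ref{prop:FFD characterizations} guarantees that any fixed element has only finitely many non-associate divisors. Both conclusions flow from this single idea, so I would organize the argument around it.

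First I would prove that $U(B)/U(A)$ is finite. For $u \in U(B)$, both $cu$ and $cu^{-1}$ lie in $cB \subseteq A$, and $(cu)(cu^{-1}) = c^2$; hence each $cu$ is a divisor of $c^2$ in $A^\ast$. Since $A$ is an FFD, $c^2$ admits only finitely many non-associate divisors in $A$ by Proposition~\ref{prop:FFD characterizations}, so the set $\{cu \mid u \in U(B)\}$ meets only finitely many associate classes of $A$. The crucial point is that this passage respects the relevant equivalence: if $cu_1$ and $cu_2$ are associates in $A$, say $cu_1 = v\,cu_2$ with $v \in U(A)$, then cancelling $c$ in the domain $B$ yields $u_1 = vu_2$ with $v \in U(A) \subseteq U(B)$, so $u_1 U(A) = u_2 U(A)$. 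This exhibits an injection of $U(B)/U(A)$ into a finite set, giving the first claim.

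The second part runs in parallel, replacing $c^2$ by $c^2 x$. By Proposition~\ref{prop:FFD characterizations} it suffices to show that an arbitrary $x \in B^\ast$ has only finitely many non-associate divisors in $B$. Given a factorization $x = yz$ in $B$, both $cy$ and $cz$ lie in $A$ and satisfy $(cy)(cz) = c^2 x \in A^\ast$, so each divisor $y$ of $x$ in $B$ yields a divisor $cy$ of $c^2 x$ in $A$. Because $A$ is an FFD, $c^2 x$ has only finitely many non-associate divisors in $A$, and exactly as before, $cy_1$ and $cy_2$ being associates in $A$ forces $y_1$ and $y_2$ to be associates in $B$. Hence the divisors of $x$ in $B$ fall into finitely many associate classes, and Proposition~\ref{prop:FFD characterizations} lets us conclude that $B$ is an FFD, atomicity of $B$ being automatic from that characterization.

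I do not expect a genuine obstacle here; the one step requiring care is verifying that multiplication by $c$ descends to associate classes, that is, that $cu_1, cu_2$ (resp.\ $cy_1, cy_2$) being associate in $A$ implies $u_1, u_2$ (resp.\ $y_1, y_2$) associate in $B$. This uses cancellation in the domain $B$ together with the inclusion $U(A) \subseteq U(B)$, and it is the only place where working in a domain (rather than merely a monoid) is used. It is also worth noting that the FFD hypothesis on $A$ enters solely through the finiteness of the non-associate divisors of the two fixed elements $c^2$ and $c^2 x$.
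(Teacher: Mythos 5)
Your argument is correct and is essentially the paper's own proof: the paper also fixes a nonzero element of the conductor, uses the identities $(cu)(cu^{-1})=c^2$ and $(cy)(cy')=c^2x$ to map units of $B$ and divisors in $B$ to divisors of fixed elements of $A$, invokes Proposition~\ref{prop:FFD characterizations}, and then cancels $c$ to pass back to associate classes in $B$. No substantive difference.
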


\begin{proof}
	Let $x$ be a nonzero nonunit in $[A :_A B]$. Observe that for every $u \in U(B)$, the fact that $ux, u^{-1}x \in A$ implies that $x^2 = (ux)(u^{-1}x) \in uxA$. As $A$ is an FFD, it follows from Proposition~\ref{prop:FFD characterizations} that the set $\{uxA \mid u \in U(B)\}$ is finite, and therefore, we can take $u_1, \dots, u_n \in U(B)$ such that for every $u \in U(B)$, the equality $ux A = u_ix A$ holds for some $i \in \ldb 1,n \rdb$. Then for every $u \in U(B)$, we can take $i \in \ldb 1,n \rdb$ and $v \in U(A)$ such that $u = u_iv$, whence $uU(A) = u_ivU(A) = u_iU(A)$. As a result, the group $U(B)/U(A)$ is finite.
	
	We proceed to argue that $B$ is an FFD. As before, let $0 \neq x \in [A :_A B]$. Let $b \in B$, and suppose that $y$ is a divisor of $b$ in $B$. Then $(xy)(xy') = x^2b$ for some $y' \in B$, and both $xy$ and $xy'$ belong to $A$. Therefore $x^2b A \subseteq xy A$, and so $xb A \subseteq y A$. As $A$ is an FFD and $xb \in A$, Proposition~\ref{prop:FFD characterizations} guarantees that the set $\{yA \mid y \text{ divides } b \text{ in } B\}$ is finite, and thus, the set $\{yB \mid y \text{ divides } b \text{ in } B\}$ is also finite. As a consequence, $B$ is an FFD.
\end{proof}

Next we characterize Noetherian FFDs.

\begin{prop}  \emph(\cite[Theorem~6]{AM96}\emph) \label{prop:characterization of Noetherian FFDs}
	The following statements are equivalent for a Noetherian domain~$R$.
	\begin{enumerate}
		\item[(a)] $R$ is an FFD.
		\smallskip
		
		\item[(b)] If $S$ is an overring of $R$ that is a finitely generated $R$-module, then $U(S)/U(R)$ is finite.
		\smallskip
		
		\item[(c)] There is an FFD overring $T$ of $R$ that is integral over $R$ such that if $S$ is an intermediate domain of the extension $R \subseteq T$ that is a finitely generated $R$-module, then $U(S)/U(R)$ is finite. 
	\end{enumerate}
\end{prop}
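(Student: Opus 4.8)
The plan is to establish the cycle of implications (a) $\Rightarrow$ (b) $\Rightarrow$ (c) $\Rightarrow$ (a). The first implication is immediate from the machinery already in place. If $S$ is an overring of $R$ that is a finitely generated $R$-module, then clearing denominators of a finite generating set of $S$ produces a nonzero $d \in R$ with $dS \subseteq R$, so the conductor $[R :_R S]$ is nonzero; Proposition~\ref{prop:FFD to ring extension} applied to the extension $R \subseteq S$ then yields at once that $U(S)/U(R)$ is finite. For (b) $\Rightarrow$ (c) I would take $T$ to be the integral closure $\overline{R}$ of $R$ in its quotient field. By the Mori--Nagata theorem the integral closure of a Noetherian domain is a Krull domain, so Theorem~\ref{thm:Krull domains are FFDs} guarantees that $T$ is an FFD; moreover $T$ is integral over $R$ by construction, and any intermediate domain $R \subseteq S \subseteq T$ that is a finitely generated $R$-module is in particular an overring of $R$, so (b) gives that $U(S)/U(R)$ is finite. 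Hence (c) holds with $T = \overline{R}$.

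The substance of the proposition is the implication (c) $\Rightarrow$ (a). Since $R$ is Noetherian, it is a BFD by Theorem~\ref{thm:Noetherian domains are BFDs} and hence atomic, so by Proposition~\ref{prop:FFD characterizations} it suffices to prove that each nonzero nonunit $x \in R$ has only finitely many non-associate divisors in $R$. Suppose, for the sake of contradiction, that $x$ admits infinitely many pairwise non-associate divisors $y_1, y_2, \dots$ in $R$. Because $T$ is an FFD and $x \in T^*$, Proposition~\ref{prop:FFD characterizations} tells us that $x$ has only finitely many non-associate divisors in $T$; hence, after passing to an infinite subfamily, I may assume that all the $y_j$ are associate in $T$ to a single divisor $d$ of $x$ in $T$, say $x = de$ with $e \in T$. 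Writing $y_j = d u_j$ with $u_j \in U(T)$, the cofactor $z_j := x y_j^{-1} = e u_j^{-1}$ also lies in $R$, and the fact that the $y_j$ are pairwise non-associate in $R$ means precisely that the cosets $u_j U(R)$ are pairwise distinct.

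The crux is then to manufacture a single finitely generated intermediate module that controls all of these units simultaneously, and this is where the Noetherian hypothesis enters a second time. The ideals $\sum_j R y_j$ and $\sum_j R z_j$ of $R$ are finitely generated, so there is an $N$ such that each $y_j$ is an $R$-linear combination of $y_1, \dots, y_N$ and each $z_j$ is an $R$-linear combination of $z_1, \dots, z_N$. Cancelling $d$ (respectively $e$) in the domain $T$ converts these relations into $u_j \in \sum_{i=1}^N R\, u_i$ and $u_j^{-1} \in \sum_{i=1}^N R\, u_i^{-1}$. I would then set $S = R[u_1, \dots, u_N, u_1^{-1}, \dots, u_N^{-1}]$; since each $u_i^{\pm 1}$ lies in $T$ and is integral over $R$, the ring $S$ is a finitely generated $R$-module with $R \subseteq S \subseteq T$, so the hypothesis in (c) applies and $U(S)/U(R)$ is finite. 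On the other hand the displayed relations give $u_j, u_j^{-1} \in S$, whence $u_j \in U(S)$ for every $j$, so the infinitely many distinct cosets $u_j U(R)$ all sit inside the finite group $U(S)/U(R)$ --- a contradiction, which forces $R$ to be an FFD. I expect this construction of $S$ to be the main obstacle: the ascending chain condition must be used to replace the a priori infinite set of unit ratios $\{u_j\}$ by finitely many generators, and it is essential to track both the divisors $y_j$ \emph{and} their cofactors $z_j$, since only then does $S$ contain the inverses $u_j^{-1}$ as well as the $u_j$, which is what upgrades ``$u_j \in S$'' to ``$u_j \in U(S)$''.
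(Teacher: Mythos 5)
Your proposal is correct and follows essentially the same route as the paper: (a)$\Rightarrow$(b) via the nonzero conductor and Proposition~\ref{prop:FFD to ring extension}, (b)$\Rightarrow$(c) via Mori--Nagata and Theorem~\ref{thm:Krull domains are FFDs}, and (c)$\Rightarrow$(a) by using the Noetherian hypothesis to trap infinitely many non-associate divisors inside a finitely generated intermediate ring $S$ with $U(S)/U(R)$ finite. The only (harmless) difference is in how $S$ is built: the paper adjoins the ratios $r_j r_1^{-1}$ and invokes $S \cap U(T) = U(S)$ for the integral extension $S \subseteq T$, whereas you adjoin the units and their inverses explicitly by also tracking the cofactors $z_j$, which avoids that lemma.
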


\begin{proof}
	(a) $\Rightarrow$ (b): It follows from Proposition~\ref{prop:FFD to ring extension}.
	\smallskip
	
	(b) $\Rightarrow$ (c): Take $T$ to be the integral closure of $R$. Since $R$ is a Noetherian domain, it follows from the Mori-Nagata Theorem that $T$ is a Krull domain. As a consequence, it follows from Theorem~\ref{thm:Krull domains are FFDs} that~$T$ is an FFD. 
	\smallskip
	
	(c) $\Rightarrow$ (a): Let $T$ be an overring of $R$ satisfying the conditions in~(c). Suppose towards a contradiction that $R$ is not an FFD, and take a nonunit $r \in R^*$ with infinitely many non-associate divisors. Since every divisor of $r$ in $R$ is also a divisor of $r$ in $T$, the fact that $T$ is an FFD guarantees the existence of a sequence $(r_n)_{n \in \nn}$ of non-associate divisors of $r$ in $R$ such that $r_1 T = r_n T$ for every $n \in \nn$. Let $I$ be the ideal generated by the terms of the sequence $(r_n)_{n \in \nn}$. Since $R$ is Noetherian,~$I$ is generated by $r_1, \dots, r_m$ for some $m \in \nn$. Consider the overring $S = R[\{r_j r_1^{-1} \mid j \in \ldb 2, m \rdb \}]$ of~$R$. For every $n \in \nn$, the equality $r_1 T = r_n T$ implies that $r_n r_1^{-1} \in U(T)$. Therefore $S$ is an intermediate domain of the extension $R \subseteq T$. Because $S$ is a finitely generated $R$-module, the group $U(S)/U(R)$ is finite by~(c). This, together with the fact that $r_n r_1^{-1} \in S \cap U(T) = U(S)$ for every $n \in \nn$, ensures the existence of $i,j \in \nn_{\ge 2}$ such that $r_i r_1^{-1} U(R) = r_j r_1^{-1} U(R)$. However, this implies that $r_i U(R) = r_j U(R)$, which is a contradiction.
\end{proof}

\begin{cor} \emph{(}\cite[Theorem~7]{fHK92}\emph{)}
	Let $R$ be a Noetherian domain whose integral closure $T$ is a finitely generated $R$-module. Then $R$ is an FFD if and only if the group $U(T)/U(R)$ is finite.
\end{cor}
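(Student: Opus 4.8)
The plan is to deduce both implications from the machinery already assembled, reading the forward direction off Proposition~\ref{prop:FFD to ring extension} and the reverse direction off the characterization in Proposition~\ref{prop:characterization of Noetherian FFDs}. Throughout, $T$ denotes the integral closure of $R$, which is assumed to be a finitely generated $R$-module.

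For the forward implication, I would first record that the module-finiteness of $T$ over $R$ forces the conductor $[R :_R T] = \{r \in R \mid rT \subseteq R\}$ to be nonzero: writing $T = Rt_1 + \cdots + Rt_k$ with each $t_i \in \text{qf}(R)$ and clearing the denominators of the $t_i$ by a common nonzero $d \in R$, one gets $dT \subseteq R$, so $0 \neq d \in [R :_R T]$. With a nonzero conductor in hand, applying Proposition~\ref{prop:FFD to ring extension} to the extension $R \subseteq T$ immediately yields that $U(T)/U(R)$ is finite (and, as a bonus, that $T$ is itself an FFD).

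For the reverse implication, I would invoke the implication (c) $\Rightarrow$ (a) of Proposition~\ref{prop:characterization of Noetherian FFDs}, taking as the distinguished overring precisely the integral closure $T$ of $R$. By the Mori--Nagata Theorem $T$ is a Krull domain, hence an FFD by Theorem~\ref{thm:Krull domains are FFDs}, and $T$ is integral over $R$ by construction. It then remains to check the finiteness condition in (c), and the key observation is that for any intermediate domain $R \subseteq S \subseteq T$ one has the chain of unit groups $U(R) \subseteq U(S) \subseteq U(T)$, so that $U(S)/U(R)$ embeds as a subgroup of $U(T)/U(R)$. Since the latter is assumed finite, $U(S)/U(R)$ is finite for every intermediate $S$ (in particular for those that are finitely generated $R$-modules), so condition (c) holds and $R$ is an FFD.

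I expect no serious obstacle here: both directions are essentially bookkeeping on top of the preceding propositions. The only points requiring a little care are the verification that module-finiteness gives a nonzero conductor (needed to invoke Proposition~\ref{prop:FFD to ring extension}) and the observation that passing to the subgroup $U(S)/U(R)$ of a finite group is exactly what condition (c) demands. It is worth noting that the module-finiteness hypothesis is used only in the forward direction, whereas the reverse direction requires merely that $U(T)/U(R)$ be finite.
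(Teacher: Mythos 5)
Your argument is correct and is essentially the derivation the paper intends (the corollary is stated there without proof as an immediate consequence of Proposition~\ref{prop:characterization of Noetherian FFDs}): the forward direction is exactly instance (a)~$\Rightarrow$~(b) of that proposition applied to $S = T$, which you recover equivalently via the nonzero conductor and Proposition~\ref{prop:FFD to ring extension}, and the reverse direction via (c)~$\Rightarrow$~(a), using that $U(S)/U(R)$ is a subgroup of the finite group $U(T)/U(R)$ for every intermediate domain $S$, is sound. No gaps.
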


Now we consider whether the bounded and finite factorization properties transfer from an integral domain to its extensions by localization. As the following example illustrates, such transfers do not happen in general.

\begin{example} \label{ex:FFD with a non-atomic localization}
	Let $R = \rr[M]$ be the monoid domain of $M = \langle 1 - 1/n \mid n \in \nn \rangle$ over $\rr$, and let~$S$ be the multiplicative set $\{X^q \mid q \in M\}$. The generating sequence $(1 - 1/n)_{n \in \nn}$ is increasing, so~$M$ is one of the FFMs in Example~\ref{ex:FF positive monoids}. In addition, as $M$ can be generated by an increasing sequence, it is not hard to argue that $\rr[M]$ is an FFD (this is similar to the proof of \cite[Theorem~4.3.3]{fG20a}). In particular, $\rr[M]$ is a BFD. On the other hand, it follows from \cite[Proposition~3.1]{GGT19} that $\gp(M) = \qq$, and therefore,  $R_S = \rr[\qq_{\ge 0}]$ is not even atomic. Hence $R_S$ is not a BFD (or an FFD).
\end{example}

If an extension of an integral domain by localization is inert, then both the bounded and finite factorization properties transfer.

\begin{theorem}  \emph(\cite[Theorems~2.1 and~3.1]{AAZ92}\emph) \label{thm:overring localization BFD/FFD}
	Let $R$ be an integral domain, and let $S$ be a multiplicative set of $R$ such that $R \subseteq R_S$ is an inert extension. Then the following statements hold.
	\begin{enumerate}
		\item  If $R$ is a BFD, then $R_S$ is a BFD.
		\smallskip
		
		\item If $R$ is an FFD, then $R_S$ is an FFD.
	\end{enumerate}
\end{theorem}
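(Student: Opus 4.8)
The plan is to reduce both parts to a single ``pullback'' principle: every factorization of an element of $R$ into nonunits of $R_S$ can be lifted, with the same number of factors, to a factorization into nonunits of $R$. First I would record that $R_S$ is atomic. Given a nonunit $x \in R_S^\ast$, write $x = r/s$ with $r \in R$ and $s \in S$; since $s \in U(R_S)$, the elements $x$ and $r$ are associates in $R_S$, and $r$ is a nonunit of $R$ (otherwise it would be a unit of $R_S$). Because $R$ is atomic, $r = a_1 \cdots a_n$ for some $a_1, \dots, a_n \in \ii(R)$, and Remark~\ref{rem:irreducibles in inert extensions} guarantees that each $a_i$ lies in $U(R_S) \cup \ii(R_S)$. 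Discarding the unit factors—at least one factor survives, as $r \notin U(R_S)$—exhibits $r$, and hence $x$, as a product of irreducibles of $R_S$, so $R_S$ is atomic.

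The crux is the following claim, which I would prove by induction on $m$: if $r \in R^\ast$ factors in $R_S$ as $r = b_1 \cdots b_m$ with every $b_i$ a nonunit of $R_S$, then $r = a_1 \cdots a_m$ for some nonunits $a_1, \dots, a_m$ of $R$ with $a_i$ associate to $b_i$ in $R_S$ for each $i$. For the inductive step I would apply the inertness of $R \subseteq R_S$ to the product $b_1 \cdot (b_2 \cdots b_m) = r \in R$ to obtain $u \in U(R_S)$ with $a_1 := u b_1 \in R$ and $r' := u^{-1} b_2 \cdots b_m \in R$; then $a_1$ is associate to $b_1$ in $R_S$, hence a nonunit of $R$, while $r' = (u^{-1} b_2) b_3 \cdots b_m$ is a product of $m-1$ nonunits of $R_S$, to which the inductive hypothesis applies. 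The main obstacle is precisely the bookkeeping of the unit adjustments $u$ at each peeling step: one must check that the adjusted factors remain in $R$, stay nonunits of $R$, and preserve $R_S$-association, so that the induction closes cleanly.

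With the claim in hand, both parts follow quickly. For part~(1), let $\ell \colon R^\ast \to \nn_0$ be a length function, which exists by Proposition~\ref{prop:BFD characterizations}. For any factorization $r = b_1 \cdots b_m$ of $r$ into irreducibles of $R_S$, the claim yields nonunits $a_1, \dots, a_m$ of $R$ with $r = a_1 \cdots a_m$, whence $m \le \sum_{i=1}^m \ell(a_i) \le \ell(r)$; therefore $\sup L_{R_S}(x) = \sup L_{R_S}(r) \le \ell(r) < \infty$, and $R_S$ is a BFD. For part~(2), it suffices by Proposition~\ref{prop:FFD characterizations} to bound the number of non-associate divisors of $x$ in $R_S$. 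If $y$ divides $r$ in $R_S$, say $r = yz$, then inertness gives $u \in U(R_S)$ with $uy, u^{-1}z \in R$, so $uy$ is a divisor of $r$ in $R$ that is associate to $y$ in $R_S$. Since $R$ is an FFD, $r$ has only finitely many non-associate divisors in $R$, and because $U(R) \subseteq U(R_S)$ every $R$-association class lies within a single $R_S$-association class; hence $r$, and so $x$, has only finitely many non-associate divisors in $R_S$. Thus $R_S$ is an FFD by Proposition~\ref{prop:FFD characterizations}.
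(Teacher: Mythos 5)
Your proof is correct and follows essentially the same route as the paper: inertness is used to pull multiplicative decompositions in $R_S$ back into $R$, and your part~(2) is argued almost verbatim as in the text. The only cosmetic difference is in part~(1), where you lift entire factorizations by induction and invoke the length-function characterization of Proposition~\ref{prop:BFD characterizations}, whereas the paper lifts strictly ascending chains of principal ideals one link at a time and invokes characterization~(c) of that same proposition; the two are interchangeable.
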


\begin{proof}
	Suppose first that $R$ is atomic. Take a nonzero nonunit $x$ in $R_S$ and write it as $x = r/s$, where $r \in R$ and $s \in S$. Since $R$ is atomic, there are $a_1, \dots, a_n \in \ii(R)$ such that $r = a_1 \cdots a_n$. As the extension $R \subseteq R_S$ is inert, in light of Remark~\ref{rem:irreducibles in inert extensions} we can assume that $a_1, \dots, a_j \in \ii(R_S)$ and $a_{j+1}, \dots, a_n \in U(R_S)$ for some $j \in \ldb 0, n \rdb$, and so $a_1 \cdots a_j \in Z_{R_S}(x)$. Thus, $R_S$ is an atomic domain.
	\smallskip
	
	(1) Assume now that $R$ is a BFD. To argue that $R_S$ is indeed a BFD, suppose that the principal ideal $xR_S$ of $R_S$ is strictly contained in the principal ideal $yR_S$. Assume, without loss of generality, that $x,y \in R$. Take $r \in R$ and $s \in S$ such that $x = y(r/s)$. Since the extension $R \subseteq R_S$ is inert, there is a $u \in U(R_S)$ such that $uy$ and $u^{-1}(r/s)$ are both elements of $R$. Setting $y' = uy$, we see that $xR = (uy)(u^{-1}(r/s))R \subsetneq uyR$, where the inclusion is strict because $r/s \notin U(R_S)$. Hence $xR$ is properly contained in $uyR$, and $uyR_S = yR_S$. Since $R$ is a BFD, it follows from Proposition~\ref{prop:BFD characterizations} that $R_S$ is also a BFD.
	\smallskip
	
	(2) Finally, assume that $R$ is an FFD. Take a nonzero nonunit $r/s \in R_S$, and let $r_1, \dots, r_n$ form a maximal set of non-associate divisors of $r$ in $R$. Let $y \in R_S$ be a divisor of $r$ in $R_S$, and write $r = y y'$ for some $y' \in R_S$. As the extension $R \subseteq R_S$ is inert, there is a $u \in U(R_S)$ such that $uy$ and $u^{-1}y'$ belong to $R$. Then $y = u^{-1}vr_i$ for some $i \in \ldb 1,n \rdb$ and $v \in U(R)$. As a result, $r_1, \dots, r_n$ form a maximal set of non-associate divisors of $r/s$ in $R_S$. Hence Proposition~\ref{prop:FFD characterizations} guarantees that $R_S$ is an FFD.
\end{proof}

Combining Theorem~\ref{thm:overring localization BFD/FFD} and Lemmas~\ref{lem:localizations at splitting MS are inert} and~\ref{lem:localization at prime-generated MS are inert}, we obtain the following corollary.

\begin{cor} \emph(\cite[Corollary~2.2]{AAZ92}\emph) \label{cor:overring localization BFD/FFD}
	Let $R$ be an integral domain, and let $S$ be a multiplicative set of~$R$ such that~$S$ is either generated by primes or a splitting multiplicative set. If $R$ is a BFD (resp., an FFD), then $R_S$ is a BFD (resp., an FFD).
\end{cor}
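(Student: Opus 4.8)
The plan is to reduce the corollary directly to Theorem~\ref{thm:overring localization BFD/FFD} by establishing that, in both of the stated cases, the localization extension $R \subseteq R_S$ is inert. Since Theorem~\ref{thm:overring localization BFD/FFD} asserts that an inert localization extension transfers both the bounded and the finite factorization properties from $R$ to $R_S$, the entire argument hinges solely on verifying inertness; once that is in hand, nothing further is needed.

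First I would dispatch the two hypotheses separately. If $S$ is a splitting multiplicative set, then Lemma~\ref{lem:localizations at splitting MS are inert} immediately yields that $R \subseteq R_S$ is an inert extension. If instead $S$ is generated by primes, then Lemma~\ref{lem:localization at prime-generated MS are inert} gives exactly the same conclusion. Thus, in either case, the extension $R \subseteq R_S$ is inert, with no additional computation required.

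With inertness established, I would conclude by invoking Theorem~\ref{thm:overring localization BFD/FFD} directly: part~(1) gives that $R_S$ is a BFD whenever $R$ is a BFD, and part~(2) gives that $R_S$ is an FFD whenever $R$ is an FFD. This settles the ``BFD'' and ``FFD'' assertions simultaneously, matching the ``resp.'' structure of the statement.

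I do not anticipate any genuine obstacle here, since all of the substantive content has already been front-loaded into the cited results. The real work of unwinding the definitions of splitting and prime-generated multiplicative sets—so as to produce the unit $u \in U(R_S)$ witnessing inertness—is carried out in Lemmas~\ref{lem:localizations at splitting MS are inert} and~\ref{lem:localization at prime-generated MS are inert}, while the factorization-theoretic transfer (via the length-function characterization of BFDs and the idf-domain characterization of FFDs) is handled in Theorem~\ref{thm:overring localization BFD/FFD}. The only care needed is to keep the two alternative hypotheses in play separately and to pair each with its corresponding lemma; after that, the corollary follows from a one-line appeal to the theorem.
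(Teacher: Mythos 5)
Your proposal is correct and is exactly the argument the paper intends: the corollary is stated immediately after the sentence ``Combining Theorem~\ref{thm:overring localization BFD/FFD} and Lemmas~\ref{lem:localizations at splitting MS are inert} and~\ref{lem:localization at prime-generated MS are inert}, we obtain the following corollary,'' which is precisely your reduction. Each hypothesis is matched to its inertness lemma and the theorem does the rest, so there is nothing to add.
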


\begin{remark}
	Theorems~\ref{thm:underring localization BFD/FFD} and~\ref{thm:overring localization BFD/FFD} hold if we replace being a BFD by being an atomic domain, satisfying ACCP, or being a UFD (see~\cite[Theorems~2.1 and~3.1]{AAZ92}).
\end{remark}

The algebraic closures of an integral domain are among the most useful and investigated overrings. We conclude this section illustrating that neither the bounded nor the finite factorization property ascend to the seminormal, integral, or complete integral closures.

\begin{example}
	Consider the submonoid $M = \langle (3/2)^n \mid n \in \nn_0 \rangle$ of $\qq_{\ge 0}$. By \cite[Propostion~3.1]{GGT19}, the seminormal closure of $M$ is $M' = \zz[1/2] \cap \qq_{\ge 0}$, where $\zz[1/2]$ denotes the localization of $\zz$ at the multiplicative set $\{2^n \mid n \in \nn_0\}$. Now consider the monoid domain $R = \qq[M]$. It follows from \cite[Theorem~4.3]{fG20a} that $R$ is an FFD (cf. Example~\ref{ex:FFD with a non-atomic localization}), while it follows from \cite[Theorem~5.3]{fG20a} that $R' = \widetilde{R} = \widehat{R} = \qq[M']$, where $R'$, $\widetilde{R}$, and $\widehat{R}$ are the seminormal, root, and complete integral closures of $R$, respectively. Since $M$ is not finitely generated, it follows from \cite[Proposition~3.1]{GGT19} that $M'$ is antimatter (i.e., contains no irreducibles). Therefore $X$ cannot be written as a product of irreducibles in~$R'$. Then although $R$ is an FFD (and so a BFD), $R'$ ($\widetilde{R}$ or $\widehat{R}$) is not even atomic.
\end{example}
\smallskip

We conclude this subsection with a few words about directed unions of integral domains in connection to the bounded and finite factorization properties. Recall that a partial order $\Gamma$ is a directed set if for all $\alpha, \beta \in \Gamma$, there is a $\theta \in \Gamma$ such that $\alpha \le \theta$ and $\beta \le \theta$. A family $(R_\gamma)_{\gamma \in \Gamma}$ of integral domains indexed by a nonempty directed set $\Gamma$ is called a \emph{directed family} of integral domains if $R_\alpha$ is a subring of $R_\beta$ whenever $\alpha \le \beta$. In this case, $\bigcup_{\gamma \in \Gamma} R_\gamma$ is called the \emph{directed union} of $(R_\gamma)_{\gamma \in \Gamma}$.

As the next theorem states, the property of being a BFD (or an FFD) passes from the members of a directed family of integral domains to its directed union provided that every extension in the directed family is inert.

\begin{lemma} \label{lem:inert directed family yield inert directed unions}
	Let $(R_\gamma)_{\gamma \in \Gamma}$ be a directed family of integral domains such that every extension $R_\alpha \subseteq R_\beta$ is inert. If $R$ is the directed union of $(R_\gamma)_{\gamma \in \Gamma}$, then the extension $R_\gamma \subseteq R$ is inert for every $\gamma \in \Gamma$.
\end{lemma}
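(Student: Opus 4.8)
The plan is to verify the definition of an inert extension directly for each fixed extension $R_\gamma \subseteq R$. So I would fix $\gamma \in \Gamma$ and take $x,y \in R^\ast$ with $xy \in R_\gamma$; the goal is to produce a unit $u \in U(R)$ with $ux \in R_\gamma$ and $u^{-1}y \in R_\gamma$. The whole strategy is to reduce this statement about the (possibly huge) directed union $R$ to the corresponding statement about a single member $R_\delta$ of the family, where inertness is available by hypothesis, and then transport the witnessing unit back up to $R$.

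The key step is the descent to a common member. Since $R = \bigcup_{\theta \in \Gamma} R_\theta$, the elements $x$ and $y$ lie in $R_\alpha$ and $R_\beta$, respectively, for some $\alpha, \beta \in \Gamma$. Using that $\Gamma$ is directed, I would choose $\delta \in \Gamma$ with $\alpha \le \delta$, $\beta \le \delta$, and $\gamma \le \delta$ simultaneously; then $x,y \in R_\delta$, they are nonzero and hence lie in $R_\delta^\ast$ (as $R_\delta$ is a domain), and $R_\gamma$ is a subring of $R_\delta$ with $xy \in R_\gamma \subseteq R_\delta$. Now the hypothesis that $R_\gamma \subseteq R_\delta$ is inert applies: it yields a unit $u \in U(R_\delta)$ such that $ux \in R_\gamma$ and $u^{-1}y \in R_\gamma$.

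It remains to lift the unit, which is the last short step: since $R_\delta$ is a subring of $R$, any element invertible in $R_\delta$ is invertible in $R$, so $u \in U(R_\delta) \subseteq U(R)$, while the memberships $ux, u^{-1}y \in R_\gamma$ are unchanged. This is exactly the conclusion required by the definition, so $R_\gamma \subseteq R$ is inert. I do not expect a genuine obstacle here; the only point demanding care is the bookkeeping in the descent step, namely invoking directedness to place $x$, $y$, and the index $\gamma$ all below a single $\delta$ (rather than just two of them), and checking that $u$ remains a unit after passing from $R_\delta$ to $R$. Both are routine once the common index $\delta$ is fixed.
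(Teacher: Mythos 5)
Your proposal is correct and follows essentially the same route as the paper's proof: descend to a common member $R_\delta$ (the paper calls it $R_\theta$) containing $x$, $y$, and $R_\gamma$ via directedness, apply inertness of $R_\gamma \subseteq R_\delta$ there, and observe $U(R_\delta) \subseteq U(R)$. No gaps.
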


\begin{proof}
	Fix $\gamma \in \Gamma$, and consider the extension $R_\gamma \subseteq R$. Take $x,y \in R^*$ such that $xy \in R_\gamma$. Then $x \in R_\alpha$ and $y \in R_\beta$ for some $\alpha, \beta \in \Gamma$. Since $(R_\gamma)_{\gamma \in \Gamma}$ is a directed family, there is a $\theta \in \Gamma$ such that $x,y \in R_\theta$ and $R_\gamma \subseteq R_\theta$. As the extension $R_\gamma \subseteq R_\theta$ is inert, $ux, u^{-1}y \in R_\gamma$ for some $u \in U(R_\theta) \subseteq U(R)$. Thus, the extension $R_\gamma \subseteq R$ is also inert.
\end{proof}

\begin{theorem} \emph(\cite[Theorem~5.2]{AAZ92}\emph) \label{thm:BF/FF for directed unions}
	Let $(R_\gamma)_{\gamma \in \Gamma}$ be a directed family of integral domains such that every extension $R_\alpha \subseteq R_\beta$ is inert. Then the following statements hold.
	\begin{enumerate}
		\item If $R_\gamma$ is a BFD for every $\gamma \in \Gamma$, then the directed union $\bigcup_{\gamma \in \Gamma} R_\gamma$ is a BFD.
		\smallskip
		
		\item  If $R_\gamma$ is an FFD for every $\gamma \in \Gamma$, then the directed union $\bigcup_{\gamma \in \Gamma} R_\gamma$ is an FFD.
	\end{enumerate}
\end{theorem}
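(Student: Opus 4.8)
The plan is to reduce both statements to the corresponding property of a single member of the family, using that each extension $R_\gamma \subseteq R$ is inert by Lemma~\ref{lem:inert directed family yield inert directed unions}. Write $R = \bigcup_{\gamma \in \Gamma} R_\gamma$ and fix a nonzero nonunit $x \in R$. Since $R$ is the union, $x \in R_\gamma$ for some $\gamma \in \Gamma$, and $x$ is a nonunit of $R_\gamma$ because $U(R_\gamma) \subseteq U(R)$. I would first record that $R$ is atomic: as $R_\gamma$ is atomic, $x$ factors into elements of $\ii(R_\gamma)$, each of which lies in $U(R) \cup \ii(R)$ by Remark~\ref{rem:irreducibles in inert extensions}; since at least one factor is a genuine irreducible of $R$ (otherwise $x \in U(R)$), absorbing the units into it yields a factorization of $x$ in $R$.

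For part~(1), I would bound $\max L_R(x)$ by $\max L_{R_\gamma}(x)$, which is finite because $R_\gamma$ is a BFD. Given any factorization $x = b_1 \cdots b_m$ with $b_i \in \ii(R)$, the idea is to \emph{pull it back into $R_\gamma$ one factor at a time}: since $x \in R_\gamma$ and $x = b_1 (b_2 \cdots b_m)$, inertness of $R_\gamma \subseteq R$ supplies a $u \in U(R)$ with $u b_1 \in R_\gamma$ and $u^{-1}(b_2 \cdots b_m) \in R_\gamma$, and iterating on the (associate of the) tail produces $x = c_1 \cdots c_m$ with every $c_i \in R_\gamma$ and $c_i$ an $R$-associate of $b_i$, hence $c_i \in \ii(R)$. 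Each $c_i$ is a nonzero nonunit of $R_\gamma$, so by atomicity of $R_\gamma$ it factors into at least one irreducible of $R_\gamma$; concatenating these gives a factorization of $x$ in $R_\gamma$ of length at least $m$. Therefore $m \le \max L_{R_\gamma}(x)$, and letting the factorization vary gives $\max L_R(x) \le \max L_{R_\gamma}(x) < \infty$, so $R$ is a BFD.

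For part~(2), I would instead count non-associate divisors and invoke Proposition~\ref{prop:FFD characterizations}. Let $d_1, \dots, d_p$ be a maximal set of pairwise $R_\gamma$-non-associate divisors of $x$ in $R_\gamma$, finite since $R_\gamma$ is an FFD. If $y$ divides $x$ in $R$, writing $x = yz$ and applying inertness of $R_\gamma \subseteq R$ gives a $u \in U(R)$ with $uy \in R_\gamma$ and $u^{-1}z \in R_\gamma$, so $uy$ divides $x$ in $R_\gamma$ and is thus $R_\gamma$-associate (hence $R$-associate) to some $d_j$; consequently $y$ is $R$-associate to $d_j$. Thus $x$ has at most $p$ non-associate divisors in $R$, and Proposition~\ref{prop:FFD characterizations} shows $R$ is an FFD.

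The main obstacle is the failure of inert extensions to preserve units: in general $U(R_\gamma)$ may be strictly smaller than $U(R) \cap R_\gamma$, so an element of $\ii(R)$ that happens to lie in $R_\gamma$ need not be irreducible there. This is precisely why, in part~(1), the pulled-back factors $c_i$ cannot be assumed irreducible in $R_\gamma$ and must be re-factored; the saving observation is that re-factoring can only lengthen a factorization (each nonunit $c_i$ contributes at least one $R_\gamma$-irreducible), so the inequality still runs in the useful direction. Once this subtlety is handled, everything else is a routine application of inertness together with the definition of a BFD and the characterization in Proposition~\ref{prop:FFD characterizations}.
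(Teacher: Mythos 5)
Your proof is correct and follows essentially the same route as the paper: atomicity via Remark~\ref{rem:irreducibles in inert extensions}, then inertness of $R_\gamma \subseteq R$ (Lemma~\ref{lem:inert directed family yield inert directed unions}) to pull factorizations and divisors back into a single $R_\gamma$, finishing part~(2) exactly as the paper does with Proposition~\ref{prop:FFD characterizations}. The only cosmetic difference is in part~(1): the paper transfers strictly ascending chains of principal ideals and invokes Proposition~\ref{prop:BFD characterizations}(c), whereas you transfer a factorization directly and refactor the pulled-back elements inside $R_\gamma$ --- the same unit-mismatch subtlety you flag is handled there by observing that $u^{-1}r \notin U(R_\alpha)$.
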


\begin{proof}
	(1) Suppose that $R_\gamma$ is a BFD for every $\gamma \in \Gamma$, and set $R = \bigcup_{\gamma \in \Gamma} R_\gamma$. Take a nonunit $x \in R^*$. Since $x \in R_\gamma$ for some $\gamma \in \Gamma$, and $R_\gamma$ is atomic, there are $a_1, \dots, a_n \in \ii(R_\gamma)$ such that $x = a_1 \cdots a_n$. As the extension $R_\gamma \subseteq R$ is inert by Lemma~\ref{lem:inert directed family yield inert directed unions}, it follows from Remark~\ref{rem:irreducibles in inert extensions} that $a_1, \dots, a_n$ are either irreducibles or units in $R$. Hence $R$ is an atomic domain.
	
	Now take $x,y \in R$ such that $xR \subsetneq yR$, and write $x = yr$ for some $r \in R^*\setminus U(R)$. Since $x \in R_\alpha$ for some $\alpha \in \Gamma$ and the extension  $R_\alpha \subseteq R$ is inert, there is a $u \in U(R)$ with $uy, u^{-1}r \in R_\alpha$. Because $r \notin U(R)$, we see that $u^{-1}r \notin U(R_\alpha)$. So $yR = yuR$ and $xR_\alpha = yrR_\alpha = yu (u^{-1}r)R_\alpha \subsetneq yuR_\alpha$. Hence for any ascending chain of principal ideals of $R$ starting at $xR$, we can construct an ascending chain of principal ideals of $R_\alpha$ starting at $xR_\alpha$ and having the same length. Since $R_\alpha$ is a BFD, it follows from Proposition~\ref{prop:BFD characterizations} that $R$ is also a BFD.
	\smallskip
	
	(2) Now suppose that $R_\gamma$ is an FFD for every $\gamma \in \Gamma$. Fix $x \in R$, and take $\alpha \in \Gamma$ such that $x \in R_\alpha$. By Proposition~\ref{prop:FFD characterizations}, there is a largest (finite) list $x_1, \dots, x_m$ of non-associate divisors of $x$ in $R_\alpha$. Let $y$ be a divisor of $x$ in $R$ and write $x = yr$ for some $r \in R$. Because the family $(R_\gamma)_{\gamma \in \Gamma}$ is directed, there is a $\beta \in \Gamma$ such that $R_\alpha \subseteq R_\beta$ and $y,r \in R_\beta$. Since $R_\alpha \subseteq R_\beta$ is an inert extension, $yu, ru^{-1} \in R_\alpha$ for some $u \in U(R_\beta)$. As $yu$ divides $x$ in $R_\alpha$, there exists $v \in U(R_\alpha) \subseteq U(R)$ such that $yu = x_j v$ for some $j \in \ldb 1,m \rdb$. Hence $y \in x_j U(R)$. Therefore every divisor of $x$ in $R$ must be associate to some of the elements $x_1, \dots, x_m$ in $R$. Thus, $R$ is an FFD by Proposition~\ref{prop:FFD characterizations}.
\end{proof}

\begin{remark}
	A similar version of Theorem~\ref{thm:BF/FF for directed unions} holds if one replaces being a BFD (or an FFD) by satisfying ACCP, being an HFD, or being a UFD; see \cite[Theorem~5.2]{AAZ92}.
\end{remark}

\smallskip
\subsection{Pullback Constructions}

We conclude this section by studying the bounded and finite factorization properties for integral domains given by certain pullbacks that generalize the $D+M$ construction. To formalize this, consider an integral domain $T$ with a nonzero maximal ideal $M$, and let $\varphi \colon T \to K$ be the natural projection on the residue field $K = T/M$. For a subring $D$ of $K$, we call $R = \varphi^{-1}(D)$ the \emph{pullback} of $D$ by~$\varphi$. Observe that the $D+M$ construction is a special case of a pullback: indeed, if $k$ is a subfield of $T$ such that $T = k+M$, then $K = T/M$ can be identified with~$k$ canonically, and so any subring $D$ of $k$ can be thought of as an actual subring of $K$.

When $T$ is quasilocal, the results that we have already established for the $D+M$ construction extend to pullbacks, as we will see in Propositions~\ref{prop:pullback quasilocal BF} and \ref{prop:pullback quasilocal FF}. First, we prove the following lemmas.

\begin{lemma} \emph(\cite[Lemma~6.1]{AeA99}\emph) \label{lem:units in extensions with a common prime ideal}
	Let $R \subseteq T$ be an extension of integral domains, and let $I$ be a nonzero ideal of both $R$ and $T$. If $R$ is atomic and $I$ is a prime ideal of $R$, then $U(R) = U(T) \cap R$.
\end{lemma}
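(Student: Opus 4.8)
The plan is to prove the two inclusions separately, with essentially all the content lying in the inclusion $U(T) \cap R \subseteq U(R)$. The reverse inclusion $U(R) \subseteq U(T) \cap R$ is immediate: a unit of $R$ has its inverse in $R \subseteq T$, so it is a unit of $T$ lying in $R$.

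For the hard inclusion, I would fix $x \in U(T) \cap R$ and aim to conclude $x \in U(R)$. The engine of the argument is to produce an \emph{irreducible of $R$ that lies inside $I$}. Since $I$ is a prime, hence proper, ideal of $R$, I can choose a nonzero $a \in I$; as $I \neq R$, this $a$ is a nonzero nonunit of $R$. Because $R$ is atomic, I factor $a = p_1 \cdots p_k$ with each $p_i \in \ii(R)$, and since $a \in I$ with $I$ prime, some factor, say $p_1$, belongs to $I$.

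Now I would exploit that $I$ is simultaneously an ideal of $T$. Since $x \in U(T)$, its inverse $x^{-1}$ lies in $T$, and because $I$ absorbs multiplication by elements of $T$, the element $x^{-1}p_1$ lies in $I \subseteq R$. Thus $p_1 = x \cdot (x^{-1}p_1)$ is a genuine factorization of $p_1$ inside $R$. Irreducibility of $p_1$ forces one of the two factors to be a unit of $R$; but $x^{-1}p_1 \in I$ is a nonunit (as $I$ is proper), so $x \in U(R)$, as required. Combining the two inclusions yields $U(R) = U(T) \cap R$.

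The main obstacle is recognizing the correct mechanism. The tempting approach is to note $x^{-n}a \in I$ for all $n$ and build the ascending chain $aR \subseteq x^{-1}aR \subseteq x^{-2}aR \subseteq \cdots$, which is strict precisely when $x \notin U(R)$; but this cannot yield a contradiction, since we are only given atomicity and atomic domains need not satisfy ACCP. The real insight is to use atomicity \emph{together with} primeness to locate an irreducible living \emph{within} $I$, and then to use the hypothesis that $I$ is an ideal of the larger ring $T$ so that $x^{-1}$ can act on that irreducible without leaving $R$; irreducibility then finishes the argument in one line.
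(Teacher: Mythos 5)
Your proof is correct and follows essentially the same route as the paper: locate an irreducible of $R$ inside $I$ (using atomicity plus primeness), then use that $I$ is an ideal of $T$ to write $a = x(x^{-1}a)$ with $x^{-1}a \in I \subseteq R \setminus U(R)$, so irreducibility forces $x \in U(R)$. The only difference is that you spell out the existence of an irreducible in $I$, which the paper asserts without detail.
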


\begin{proof}
	Since $U(R)$ is contained in $U(T) \cap R$, it suffices to show that every element of $U(T) \cap R$ is a unit of $R$. Take $x \in U(T) \cap R$. Since $I$ is a nonzero prime ideal of the atomic domain $R$, there must be an irreducible $a$ of $R$ contained in $I$. Because $I$ is also an ideal of $T$, it follows that $x^{-1}a \in I \subseteq R \setminus U(R)$, and so the equality $a = x(x^{-1}a)$ ensures that $x \in U(R)$. Hence $U(R) = U(T) \cap R$.
\end{proof}

\begin{lemma} \emph(\cite[Lemma~6.2]{AeA99}\emph) \label{lem:units in pullback}
	Let $T$ be an integral domain with a nonzero maximal ideal $M$, and let $\varphi \colon T \to K$ be the natural projection on $K = T/M$. In addition, let $D$ be a subring of $K$, and set $R = \varphi^{-1}(D)$. Then the following statements hold.
	\begin{enumerate}
		\item $U(R) = U(T) \cap \varphi^{-1}(U(D))$, and so $U(R) = U(T) \cap R$ when $D$ is a field.
		\smallskip
		
		\item If $T$ is quasilocal, then $U(R) = U(T) \cap R$ if and only if $D$ is a field.
	\end{enumerate}
\end{lemma}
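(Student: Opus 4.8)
The plan is to treat the two parts separately: part~(1) by a direct double-inclusion argument on units, with its displayed consequence extracted afterwards, and part~(2) by noting that its ``if'' direction is already contained in part~(1), so that the real content lies in the ``only if'' direction, which is exactly where the quasilocal hypothesis is used. Before starting I would record two structural facts that will be invoked throughout: since $M = \ker \varphi$ is maximal, the residue ring $K = T/M$ is a field; and since $0 \in D$ we have $M \subseteq R$, so $\varphi$ restricts to a surjection $R \to D$ and $M$ is an ideal of both $R$ and $T$.

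For part~(1), to prove $U(R) \subseteq U(T) \cap \varphi^{-1}(U(D))$ I would take $x \in U(R)$ with inverse $y \in R$; then $xy = 1$ already witnesses $x \in U(T)$, and applying $\varphi$ gives $\varphi(x)\varphi(y) = 1$ with $\varphi(x), \varphi(y) \in D$, so $\varphi(x) \in U(D)$ and hence $x \in \varphi^{-1}(U(D))$. For the reverse inclusion I would take $x \in U(T) \cap \varphi^{-1}(U(D))$; then $x \in R$ is automatic, and the one point to check carefully is that the inverse $x^{-1} \in T$ actually lies in $R$. This holds because $\varphi(x^{-1})$ is the inverse of $\varphi(x)$ in the field $K$, while $\varphi(x) \in U(D)$ already has an inverse inside $D$; by uniqueness of inverses in $K$ these coincide, so $\varphi(x^{-1}) \in D$ and $x^{-1} \in R$. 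This interchange of ``inverse in $D$'' and ``inverse in $K$'' is the only genuinely delicate step of part~(1). To deduce the consequence when $D$ is a field, I would use $U(D) = D \setminus \{0\}$, so that $R = \varphi^{-1}(D) = \varphi^{-1}(U(D)) \cup M$; intersecting with $U(T)$ and using that a unit of $T$ cannot lie in the proper ideal $M$ (that is, $U(T) \cap M = \emptyset$) collapses $U(T) \cap R$ to $U(T) \cap \varphi^{-1}(U(D)) = U(R)$.

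For part~(2), the ``if'' direction is precisely the consequence just established (and does not even need $T$ quasilocal), so I would concentrate on the ``only if'' direction. Assuming $U(R) = U(T) \cap R$, I would show $D$ is a field by taking an arbitrary nonzero $d \in D$ and producing its inverse inside $D$. Choosing $t \in T$ with $\varphi(t) = d$, the condition $d \neq 0$ gives $t \notin M$; here the quasilocal hypothesis is decisive, since $M$ is then the unique maximal ideal of $T$, the non-units of $T$ are exactly $M$, and $t \notin M$ forces $t \in U(T)$. As $\varphi(t) = d \in D$ we also have $t \in R$, hence $t \in U(T) \cap R = U(R)$ by hypothesis; thus $t^{-1} \in R$ and $d^{-1} = \varphi(t^{-1}) \in D$. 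Since $d$ was arbitrary, $D$ is a field.

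The main obstacle is conceptual rather than computational, namely pinning down exactly where each hypothesis enters: the field structure of $K$ and the uniqueness of inverses there drive the reverse inclusion of part~(1), whereas the quasilocal hypothesis in part~(2) is precisely what upgrades ``$t$ lies outside $M$'' to ``$t$ is a unit of $T$''. Without quasilocality one only obtains $t \notin M$, which is too weak to invert $d$ in $D$, and indeed this is why part~(2)'s ``only if'' direction can fail in general.
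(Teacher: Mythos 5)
Your proof is correct and follows essentially the same route as the paper: the double inclusion for part~(1) hinging on $\varphi(x^{-1}) = \varphi(x)^{-1} \in U(D)$, and for part~(2) the observation that quasilocality makes $U(T) = T \setminus M$, so every preimage of a nonzero $d \in D$ is a unit of $T$ lying in $R$ (the paper packages this as the surjectivity of $\varphi$ restricted to $U(T)$ onto $K^*$). Your write-up is simply a more explicit version of the paper's terse argument.
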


\begin{proof}
	(1) It is clear that $U(R) \subseteq U(T) \cap \varphi^{-1}(U(D))$. In order to argue the reverse inclusion, take $x \in \varphi^{-1}(U(D))$ with $x^{-1} \in T$. Since $\varphi(x) \in U(D)$, it follows that $\varphi(x^{-1}) = \varphi(x)^{-1} \in U(D)$. As a result, $x^{-1} \in \varphi^{-1}(U(D)) \subseteq R$, and so $x \in U(R)$. Hence $U(T) \cap \varphi^{-1}(U(D)) \subseteq U(R)$. The second statement is an immediate consequence of the first.
	\smallskip
	
	(2) Proving this part amounts to noting that when $T$ is quasilocal, restricting $\varphi$ to $U(T)$ yields a surjective group homomorphism $U(T) \to K^*$.
\end{proof}

Note that in the pullback construction, $R$ is quasilocal with maximal ideal $M$ if and only if $D$ is a field.

\begin{lemma} \label{lem:atoms in pullback}
	Let $T$ be a quasilocal integral domain with nonzero maximal ideal $M$, and let $\varphi \colon T \to K$ be the natural projection on $K = T/M$. In addition, let $D$ be a subring of $K$, and set $R = \varphi^{-1}(D)$. If $D$ is a field, then $\ii(R) = \ii(T) \subseteq M$.
\end{lemma}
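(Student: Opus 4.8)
The plan is to exploit that, when $D$ is a field, both $T$ and $R$ are quasilocal domains sharing the very same maximal ideal $M$, so that in each ring the nonunits are exactly the elements of $M$. First I would record the two structural facts I will lean on throughout. Since $\varphi(M) = \{0\} \subseteq D$, we have $M \subseteq \varphi^{-1}(D) = R$, so $M$ is a common ideal. Because $T$ is quasilocal with maximal ideal $M$, the units are precisely the complement of $M$, i.e.\ $U(T) = T \setminus M$; and because $D$ is a field, the observation immediately preceding the statement gives that $R$ too is quasilocal with maximal ideal $M$, whence $U(R) = R \setminus M$ (Lemma~\ref{lem:units in pullback}(1) also yields $U(R) = U(T) \cap R$). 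An immediate consequence is the inclusion $\ii(T) \subseteq M$ (and likewise $\ii(R) \subseteq M$), since an irreducible is by definition a nonunit and the nonunits of $T$ (resp.\ of $R$) are exactly the elements of $M$.

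Next I would prove the two inclusions $\ii(T) \subseteq \ii(R)$ and $\ii(R) \subseteq \ii(T)$ separately. For $\ii(T) \subseteq \ii(R)$: given $a \in \ii(T)$ we have $a \in M \subseteq R$ and $a \notin U(R)$, so if $a = xy$ with $x,y \in R \subseteq T$, then irreducibility in $T$ forces, say, $x \in U(T)$, and as $x \in R$ this gives $x \in U(T) \cap R = U(R)$; hence $a \in \ii(R)$. The reverse inclusion $\ii(R) \subseteq \ii(T)$ is where the quasilocal structure really does the work: given $a \in \ii(R)$ we have $a \in M$, so $a$ is a nonunit of $T$ as well, and it remains to rule out a nontrivial factorization $a = xy$ inside $T$. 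The key observation is that if neither $x$ nor $y$ were a unit of $T$, then both would lie in $M = T \setminus U(T)$, hence in $R$, turning $a = xy$ into a factorization of $a$ in $R$ whose factors $x,y \in M = R \setminus U(R)$ are both nonunits of $R$ --- contradicting $a \in \ii(R)$. Therefore one of $x,y$ must be a unit of $T$, and so $a \in \ii(T)$.

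Combining the two inclusions with $\ii(T) \subseteq M$ then gives $\ii(R) = \ii(T) \subseteq M$, as desired. The only genuinely delicate point is the inclusion $\ii(R) \subseteq \ii(T)$: an a priori factorization of $a \in R$ inside the larger ring $T$ might use factors lying outside $R$, so one cannot simply invoke irreducibility in $R$. The resolution is exactly the remark that in the quasilocal domain $T$ any non-unit factor is forced into $M$, and $M \subseteq R$; this pulls the troublesome factorization back into $R$. Everything else reduces to the bookkeeping identities $U(T) = T \setminus M$ and $U(R) = R \setminus M$ supplied by quasilocality, together with Lemma~\ref{lem:units in pullback}.
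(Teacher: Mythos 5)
Your proof is correct and follows essentially the same route as the paper's: both inclusions are argued by the same device, namely that a hypothetical factorization of an $R$-irreducible into two nonunits of $T$ would force both factors into $M \subseteq R$ and hence contradict irreducibility in $R$, while the converse inclusion uses $U(R) = U(T) \cap R$ from Lemma~\ref{lem:units in pullback}. The only cosmetic difference is that you invoke part~(1) of that lemma (specialized to $D$ a field) where the paper cites part~(2); both give the needed identity.
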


\begin{proof}
	To argue that $\ii(R) \subseteq \ii(T)$, take $m \in \ii(R)$, and suppose that $m = xy$ for some $x,y \in T$. Since $M \subseteq R$ and $m \in \ii(R)$, the elements $x$ and $y$ cannot be contained in $M$ simultaneously. Therefore either $x \in T \setminus M = U(T)$ or $y \in T \setminus M = U(T)$, and so $m \in \ii(T)$. To argue the reverse inclusion, take $m \in \ii(T)$, and suppose that $m = xy$ for some $x,y \in R$. Since $m \in \ii(T)$, either $x \in U(T)$ or $y \in U(T)$. Because $T$ is quasilocal and $D$ is a field, it follows from part~(2) of Lemma~\ref{lem:units in pullback} that $U(R) = U(T) \cap R$. Therefore $x \in U(R)$ or $y \in U(R)$, and so $m \in \ii(R)$. Thus, $\ii(R) = \ii(T)$. Finally, the fact that $T$ is quasilocal ensures that $\ii(T) \subseteq M$.
\end{proof}

\begin{prop}  \emph(\cite[Proposition~6.3]{AeA99}\emph) \label{prop:pullback quasilocal BF}
	Let $T$ be a quasilocal integral domain with nonzero maximal ideal $M$, and let $\varphi \colon T \to K$ be the natural projection on $K = T/M$. In addition, let $D$ be a subring of $K$, and set $R = \varphi^{-1}(D)$. Then $R$ is a BFD if and only if $T$ is a BFD and $D$ is a field.
\end{prop}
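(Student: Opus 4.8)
The plan is to prove the two implications separately, taking full advantage of the fact that the pullback hypotheses have already been unwound in Lemmas~\ref{lem:units in pullback} and~\ref{lem:atoms in pullback}, and that the whole argument parallels the $D+M$ case in Proposition~\ref{prop:BFD and D+M construction}. Two standing observations will be used repeatedly: since $M$ is the maximal ideal of the quasilocal domain $T$, the nonunits of $T$ are precisely the elements of $M$; and since $\varphi(M)=0\in D$, we have $M\subseteq R$. Moreover $K=T/M$ is a field, so the subring $D$ is automatically an integral domain. The one genuinely new wrinkle, compared with the $D+M$ construction, is that elements of $D\subseteq K$ no longer live inside $T$ and must be lifted through $\varphi$.

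The reverse implication is the short one. Assuming that $T$ is a BFD and $D$ is a field, part~(2) of Lemma~\ref{lem:units in pullback} gives $U(R)=U(T)\cap R$. Then $R\subseteq T$ is an extension of integral domains satisfying the unit condition of Proposition~\ref{prop:BFD underrings}, and since $T$ is a BFD that proposition immediately yields that $R$ is a BFD; note that atomicity of $R$ is part of the conclusion there, as it issues from the restricted length function in Proposition~\ref{prop:BF inherited by inert submonoids}.

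For the forward implication, suppose $R$ is a BFD, so in particular $R$ is atomic. First I would show that $D$ is a field, arguing by contradiction. If some $d\in D^\ast$ had $d^{-1}\notin D$, I would lift it to an $x\in T$ with $\varphi(x)=d$; then $x\in R$, and because $d\neq 0$ forces $x\notin M$, we have $x\in U(T)$, while part~(1) of Lemma~\ref{lem:units in pullback} gives $x\notin U(R)$ since $d\notin U(D)$. For any nonzero $m\in M$, the factorization $m=x\,(x^{-1}m)$ with $x^{-1}m\in M$ exhibits $m$ as a product of two nonunits of $R$, so no nonzero element of $M$ is irreducible. Since $\varphi$ is a homomorphism into the domain $D$, every factorization of such an $m$ into nonunits must contain a factor lying in $M\setminus\{0\}$, which cannot be an atom; hence $m$ admits no factorization into irreducibles, contradicting atomicity of $R$. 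Therefore $D$ is a field.

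With $D$ a field in hand, Lemma~\ref{lem:atoms in pullback} gives $\ii(R)=\ii(T)\subseteq M$, and this identification of atoms does the rest. Every nonunit of $T$ lies in $M\subseteq R$, and because the two domains share exactly the same atoms, a factorization of $x\in M\setminus\{0\}$ into irreducibles of $T$ is literally a factorization into irreducibles of $R$, and conversely; as length counts only atoms, $L_T(x)=L_R(x)$. Since $R$ is an atomic BFD, each such $L_R(x)$ is nonempty and bounded, so $T$ is atomic with uniformly bounded factorization lengths, i.e.\ a BFD. I expect the main obstacle to be the $D$-is-a-field step: one must carry the $D+M$ argument through the projection $\varphi$ and verify that the lifted element $x$ genuinely destroys atomicity. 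Once that is settled, the equality $\ii(R)=\ii(T)$ renders the transfer of the bounded factorization property between $R$ and $T$ essentially automatic.
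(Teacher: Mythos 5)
Your proof is correct, and its skeleton --- reduce everything to the lemmas on units and atoms in pullbacks and mirror the $D+M$ argument --- is the same as the paper's, but several steps take a genuinely different route. For the claim that $D$ must be a field, the paper first deduces $U(R)=U(T)\cap R$ from Lemma~\ref{lem:units in extensions with a common prime ideal} (using that $M$ is a nonzero prime ideal common to $R$ and $T$ together with the atomicity of $R$) and then invokes part~(2) of Lemma~\ref{lem:units in pullback}; you instead run the $D+M$-style contradiction directly, lifting a non-invertible $d\in D^\ast$ to an element $x\in R\cap U(T)$ with $x\notin U(R)$ and showing that no nonzero element of $M$ can then be written as a product of irreducibles of $R$ --- equally valid, and more self-contained since it bypasses Lemma~\ref{lem:units in extensions with a common prime ideal} entirely. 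For the transfer of the bounded factorization property once $\ii(R)=\ii(T)\subseteq M$ is in hand, the paper passes through condition~(c) of Proposition~\ref{prop:BFD characterizations} (comparing chains of principal ideals $xT\subsetneq yT$ and $xR\subsetneq yR$), whereas you observe that the nonzero nonunits and the irreducibles of $R$ and $T$ literally coincide, so that $L_R(x)=L_T(x)$ for every such $x$; your version is more direct and avoids the unproved ``it is not hard to verify'' step in the paper. Finally, for the reverse implication you simply quote Proposition~\ref{prop:BFD underrings} after securing $U(R)=U(T)\cap R$ from Lemma~\ref{lem:units in pullback}, which is cleaner than the paper's explicit atomicity argument. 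I see no gaps.
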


\begin{proof}
	Suppose first that $R$ is an atomic domain. Since $M$ is a maximal ideal of $T$ contained in~$R$, it follows that $M$ is a nonzero prime ideal of $R$. This, along with Lemma~\ref{lem:units in extensions with a common prime ideal}, guarantees that $U(R) = U(T) \cap R$. Because $T$ is quasilocal, $D$ is a field by part~(2) of Lemma~\ref{lem:units in pullback}, and so it follows from Lemma~\ref{lem:atoms in pullback} that $\ii(R) = \ii(T)$. Therefore every element in $M$ factors into irreducibles in~$R$ if and only if it factors into irreducibles in $T$. Hence $T$ must be atomic. On the other hand, assume that $T$ is atomic and $D$ is a field. As $D$ is a field, $U(R) = U(T) \cap R$ by part~(1) of Lemma~\ref{lem:units in pullback}, which  implies that $U(R) = R \setminus M$. Therefore every nonzero nonunit of $R$ can be written as a product of elements in $\ii(T)$ because $T$ is atomic. As $\ii(T) = \ii(R)$ by Lemma~\ref{lem:atoms in pullback}, the atomicity of $R$ follows.
	
	Assuming that $D$ is a field, it is not hard to verify that for every nonzero $x,y \in M$ the inclusion $xT \subsetneq yT$ holds if and only if the inclusion $xR \subsetneq yR$ holds. As a result, it follows from Proposition~\ref{prop:BFD characterizations} that $R$ is a BFD if and only if $T$ is a BFD.
\end{proof}

Parallel to Proposition~\ref{prop:pullback quasilocal BF}, we proceed to give a result for the finite factorization property in pullback constructions.

\begin{prop}  \emph(\cite[Propositions~6.3 and~6.7]{AeA99}\emph) \label{prop:pullback quasilocal FF}
	Let $T$ be an integral domain with a nonzero maximal ideal $M$, and let $\varphi \colon T \to K$ be the natural projection on $K = T/M$. In addition, let $D$ be a subring of $K$, and set $R = \varphi^{-1}(D)$. Then the following statements hold.
	\begin{enumerate}
		\item $R$ is an FFD if and only if $T$ is an FFD and the group $U(T)/U(R)$ is finite.
		\smallskip
		
		\item If $T$ is quasilocal, then $R$ is an FFD if and only if $T$ is an FFD, $D$ is a field, and the group $K^*/D^*$ is finite.
	\end{enumerate}
\end{prop}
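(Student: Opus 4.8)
The plan is to establish part~(1) for an arbitrary pullback and then derive part~(2) from it by exploiting the quasilocal structure. The structural fact underpinning everything is that the maximal ideal $M$ is a common nonzero ideal of $R$ and $T$: since $\varphi(M) = 0 \in D$ we have $M \subseteq R$, and since $M$ is an ideal of $T$ we get $MT \subseteq M \subseteq R$, so $M \subseteq [R :_R T]$ and hence the conductor $[R :_R T]$ is nonzero. This is exactly the hypothesis that lets me invoke the already-established transfer results for ring extensions.

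For the forward implication of part~(1), I would apply Proposition~\ref{prop:FFD to ring extension} to the extension $R \subseteq T$: because $[R :_R T] \neq 0$ and $R$ is an FFD, that proposition immediately gives both that $U(T)/U(R)$ is finite and that $T$ is an FFD. For the reverse implication, assume $T$ is an FFD and $U(T)/U(R)$ is finite, and verify condition~(c) of Proposition~\ref{prop:FFD characterizations}, namely that every $x \in R^*$ has only finitely many non-associate divisors in $R$. Fix such an $x$ and let $x_1, \dots, x_n \in T$ be a maximal set of non-associate divisors of $x$ in $T$, which is finite as $T$ is an FFD. Every divisor $y$ of $x$ in $R$ also divides $x$ in $T$, so $y = x_i w$ for some $i$ and some $w \in U(T)$; hence the associate class of $y$ in $R$ is $yU(R) = x_i\,\big(w U(R)\big)$, determined by the index $i$ and the coset $wU(R) \in U(T)/U(R)$. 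Since there are at most $n \cdot |U(T)/U(R)|$ such pairs, $x$ has finitely many non-associate divisors in $R$, and Proposition~\ref{prop:FFD characterizations} yields that $R$ is an FFD.

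For part~(2), with $T$ quasilocal, the aim is to reduce to part~(1) via the group isomorphism $U(T)/U(R) \cong K^*/D^*$, which holds whenever $D$ is a field. To obtain it, I would use that quasilocality makes $\varphi|_{U(T)} \colon U(T) \to K^*$ a surjective homomorphism with kernel $1 + M$ (note $1 + M \subseteq U(T)$ precisely because $T$ is quasilocal). When $D$ is a field, Lemma~\ref{lem:units in pullback}(1) gives $U(R) = U(T) \cap \varphi^{-1}(D^*)$, so $\varphi$ restricts to a surjection $U(R) \to D^*$ with the same kernel $1 + M \subseteq U(R)$; the third isomorphism theorem then delivers $U(T)/U(R) \cong K^*/D^*$. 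With this in place the two directions are quick: if $R$ is an FFD, then $R$ is a BFD, so $D$ is a field by Proposition~\ref{prop:pullback quasilocal BF}, part~(1) makes $T$ an FFD with $U(T)/U(R)$ finite, and the isomorphism forces $K^*/D^*$ finite; conversely, if $T$ is an FFD, $D$ is a field, and $K^*/D^*$ is finite, the isomorphism makes $U(T)/U(R)$ finite and part~(1) gives that $R$ is an FFD.

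I expect the substantive content to lie in the reverse direction of part~(1), since the forward direction is a direct appeal to Proposition~\ref{prop:FFD to ring extension}. The delicate point there is that the representatives $x_i$ need not belong to $R$, yet the products $x_i w$ that coincide with genuine $R$-divisors do, and associate classes in $R$ are detected exactly by cosets of $U(R)$ in $U(T)$. In part~(2) the step most likely to conceal an error is the identification of the kernel of $\varphi|_{U(T)}$ as precisely $1 + M$ together with the inclusion $1 + M \subseteq U(R)$; both rely on $T$ being quasilocal and on $D$ being a field, so I would be careful to track exactly where each hypothesis enters before invoking the third isomorphism theorem.
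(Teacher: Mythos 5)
Your proposal is correct and follows essentially the same route as the paper: the conductor $[R :_R T] \supseteq M \setminus \{0\}$ is nonzero, so Proposition~\ref{prop:FFD to ring extension} gives the forward direction of~(1), and part~(2) reduces to part~(1) via the isomorphism $U(T)/U(R) \cong K^*/D^*$ (the paper obtains it by identifying $U(R)$ as the kernel of the composite $U(T) \to K^* \to K^*/D^*$, whereas you factor through $1+M$ and apply the third isomorphism theorem --- an equivalent computation). The only cosmetic difference is in the reverse direction of~(1), where the paper cites Proposition~\ref{prop:FFD underrings} applied to $R \subseteq T$ with $F = \text{qf}(R)$, while you inline the same divisor-counting argument directly; both are sound.
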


\begin{proof}
	(1) For the direct implication, suppose that $R$ is an FFD. Since $M$ is a nonzero ideal of $T$ that is contained in $R$, the nonempty set $M \setminus \{0\}$ is contained in $[R :_R T] = \{r \in R \mid rT \subseteq R\}$. As a result, it follows from Proposition~\ref{prop:FFD to ring extension} that $T$ is an FFD and $U(T)/U(R)$ is finite.
	
	Conversely, suppose that $T$ is an FFD and the group $U(T)/U(R)$ is finite. Let $F$ be the quotient field of $R$ inside $\text{qf}(T)$. Since $(U(T) \cap F^*)/U(R)$ is a subgroup of $U(T)/U(R)$, the former must be finite. Thus, $R$ is an FFD by Proposition~\ref{prop:FFD underrings}.
	\smallskip
	
	(2) Suppose that $R$ is an FFD. It follows from the previous part that $T$ is an FFD. In addition, it follows from Proposition~\ref{prop:pullback quasilocal BF} that $D$ is a field, and so $U(R) = U(T) \cap R$ by Lemma~\ref{lem:units in pullback}. Because~$T$ is quasilocal, the map $\varphi \colon U(T) \to K^*$ obtained by restricting $\varphi$ to $U(T)$ is a surjective group homomorphism. By composing this map with the natural projection $K^* \to K^*/D^*$, we obtain a surjective group homomorphism $U(T) \to K^*/D^*$, whose kernel is $U(R)$ because $U(R) = U(T) \cap R$. Hence $U(T)/U(R) \cong K^*/D^*$, and so the previous part ensures that $K^*/D^*$ is finite.
	
	For the reverse implication, assume that $T$ is an FFD, $D$ is a field, and $K^*/D^*$ is finite. As in the previous part, $U(T)/U(R) \cong K^*/D^*$. Hence $U(T)/U(R)$ is finite, and it also follows from the previous part that $R$ is an FFD.
\end{proof}

The condition of $T$ being quasilocal in Proposition~\ref{prop:pullback quasilocal BF} and in part~(2) of Proposition~\ref{prop:pullback quasilocal FF} is not superfluous, as we show in the following example, which is part of \cite[Example~6.6]{AeA99}.

\begin{example}
	(1) Set $T = \qq[\pi] + X \rr[X]$ and consider the ring homomorphism $\varphi \colon T \to \cc$ defined by $\varphi(f) = f(i)$. Since $\varphi$ is surjective, $\ker \varphi$ is a nonzero maximal ideal of $T$, and we can think of $\varphi$ as the natural projection $T \to T/M$, where $M = \ker \varphi$. Take $D = \qq$ and $R = \varphi^{-1}(D)$. Because $\rr[X]$ is a BFD and $U(\rr[X]) \cap R = \qq \setminus \{0\} = U(R)$, it follows from Proposition~\ref{prop:BFD underrings} that $R$ is a BFD. However, $\qq[\pi]$ is not a field. In addition, the fact that $\qq[\pi]$ is not a field, along with Remark~\ref{rem:atomicity/ACCP in D+M construction}, ensures that~$T$ is not even atomic. In particular, $T$ is not a BFD.
	\smallskip
	
	(2) Let $D$ be a subring of a field $K$. Consider a family of indeterminates indexed by~$K$, namely, $\{X_k \mid k \in K\}$, and set $T = \zz[\{X_k \mid k \in K\}]$. Now let $\varphi \colon T \to K$ be the ring homomorphism determined by the assignments $X_k \mapsto k$ for every $k \in K$. As $\varphi$ is surjective, we can assume that $\varphi$ is the natural projection $T \to T/M$, where $M = \ker \varphi$. Now take any subring $D$ and set $R = \varphi^{-1}(D)$. Because $T$ is a UFD and $U(R) = U(T) = \{\pm 1\}$, it follows from Proposition~\ref{prop:FFD underrings} that $R$ is an FFD regardless of our choice of $D$.
\end{example}

\bigskip
\section{Polynomial-Like Rings}
\label{sec:polynomial-like rings}

In this section, we study conditions under which the bounded and finite factorization properties transfer between an integral domain and its ``polynomial-like rings". We put special emphasis on integral domains of the form $A + XB[X]$ and $A + XB[[X]]$, where $A \subseteq B$ is an extension of integral domains, and the generalized case obtained by replacing the single extension $A \subseteq B$ by the possibly-infinite tower of integral domains $A_1 \subseteq A_2 \subseteq \cdots$.

\smallskip
\subsection{Bounded Factorization Subdomains of $R[X]$ and $R[[X]]$}

Let $A \subseteq B$ be an extension of integral domains. As in~\cite{AeA99}, we say that $B$ is a \emph{bounded factorization domain with respect to} $A$ or an $A$-\emph{BFD} if for every nonzero nonunit $x \in B$, there is an $n_0 \in \nn$ such that if $x = b_1 \cdots b_n$ for some nonunits $b_1, \dots, b_n \in B$, then at most $n_0$ of the $b_i$'s belong to $A$.

\begin{theorem} \label{thm:BF in polynomial and power series rings} \emph(\cite[Proposition 2.1]{AeA99}\emph)
	Let $A \subseteq B$ be an extension of integral domains. Then the following statements are equivalent.
	\begin{enumerate}
		\item[(a)] $A + XB[X]$ is a BFD.
		\smallskip
		
		\item[(b)] $A + XB[[X]]$ is a BFD.
		\smallskip
		
		\item[(c)] $B$ is an $A$-BFD and $U(A) = U(B) \cap A$.
	\end{enumerate}
	In addition, if $B$ is a BFD, then (c) can be replaced by the statement \\
		\vspace{4pt}
		\quad \emph{(c$^\prime$)} $U(A) = U(B) \cap A$, \\
	and if $\emph{qf}(A) \subseteq B$, then (c) can be replaced by the statement \\ 
		\vspace{4pt}
		\quad \emph{(c$^{\prime \prime}$)} $A$ is a field.
\end{theorem}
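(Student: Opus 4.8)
The plan is to handle (a), (b), and (c) simultaneously by exploiting a multiplicative valuation on each ring: the usual degree on $R_1 := A + XB[X]$ and the order $\mathrm{ord}(f)$ (the index of the lowest nonzero term) on $R_2 := A + XB[[X]]$, both additive under multiplication because $B$ is a domain. First I would record the unit groups via a short degree/order computation: $U(R_1) = U(A)$, while $U(R_2) = \{f \in R_2 \mid f(0) \in U(A)\}$. In particular, a degree-zero nonunit of $R_1$ is a constant in $A \setminus U(A)$, and an order-zero nonunit of $R_2$ has constant term in $A \setminus U(A)$; this description is used throughout. Since the BFD property is equivalent to boundedness of the lengths of factorizations into nonunits (Proposition~\ref{prop:BFM characterization via length functions}, Proposition~\ref{prop:BFD characterizations}), it suffices to bound such lengths.

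For (c) $\Rightarrow$ (a) and (c) $\Rightarrow$ (b), fix a nonzero nonunit $x$ and a factorization $x = b_1 \cdots b_n$ into nonunits, and split the factors into those of positive degree (resp.\ positive order) and those of degree zero (resp.\ order zero). The former number at most $\deg x$ (resp.\ $\mathrm{ord}(x)$), since degrees (resp.\ orders) add. For each degree-zero (resp.\ order-zero) factor, its defining constant $c_i \in A$ is a nonunit of $A$ by the unit description, and here the hypothesis $U(A) = U(B) \cap A$ is exactly what promotes $c_i$ to a \emph{nonunit of $B$}. Writing $u$ for the leading coefficient of $x$ (resp.\ the coefficient of $X^{\mathrm{ord}(x)}$) — a fixed nonzero element of $B$ depending only on $x$ — one has $u = v \prod c_i$ with $v \in B$; absorbing $v$ into one $c_i$ when $v$ is a unit exhibits $u$ as a product of nonunits of $B$, at least one fewer than the number of these factors lying in $A$. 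The $A$-BFD hypothesis applied to $u$ then bounds that number, and adding the two bounds gives a bound on $n$ depending only on $x$. I expect this coefficient bookkeeping — isolating the right fixed element $u \in B$ and extracting from it a genuine $B$-factorization to feed into the $A$-BFD hypothesis — to be the main obstacle.

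For the reverse implications I would argue contrapositively, producing a single element with unbounded factorization length whenever (c) fails. If $U(A) \neq U(B) \cap A$, choose $c \in (U(B)\cap A)\setminus U(A)$; then in either ring $X = c^{\,n}\,(c^{-n}X)$ is a factorization of $X$ into $n+1$ nonunits for every $n$, since $c$ is a nonunit (its constant term lies outside $U(A)$) and $c^{-n}X$ has order one. If instead $B$ is not an $A$-BFD, pick a nonzero nonunit $x \in B$ admitting factorizations into nonunits of $B$ with arbitrarily many factors in $A$; collecting the non-$A$ factors into a single $y \in B$ gives $xX = \beta_{1}\cdots\beta_{t}\,(yX)$, a factorization of the fixed element $xX$ into $t+1$ nonunits of the ring (each $\beta_j \in A$ is a nonunit since $\beta_j \notin U(B) \supseteq U(A)$, and $yX$ has order one) with $t$ unbounded. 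In both cases Proposition~\ref{prop:BFD characterizations} shows the ring is not a BFD, so (a) $\Rightarrow$ (c) and (b) $\Rightarrow$ (c), completing the triple equivalence.

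Finally I would dispatch the two supplementary clauses. If $B$ is a BFD, then any factorization of a nonzero nonunit of $B$ into nonunits refines to an irreducible factorization of at least the same length, which is bounded; hence $B$ is automatically an $A$-BFD and (c) collapses to (c$'$). For the last clause, assume $\text{qf}(A) \subseteq B$. Then every nonzero $a \in A$ has $a^{-1} \in \text{qf}(A) \subseteq B$, so $a \in U(B)\cap A$; thus $U(A) = U(B)\cap A$ forces every nonzero element of $A$ to be a unit, i.e.\ $A$ is a field. Conversely, a field $A$ satisfies $U(A) = A\setminus\{0\} = U(B)\cap A$ and is vacuously an $A$-BFD, since no nonzero element of $A$ is a nonunit of $B$. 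Hence (c) is equivalent to (c$''$) in this case.
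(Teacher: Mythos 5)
Your proposal is correct and follows essentially the same route as the paper: the degree/order plus leading-coefficient bookkeeping for (c)\,$\Rightarrow$\,(a),(b), the element $c^{-n}X$ (resp.\ collecting the non-$A$ factors of $b$ into $yX$) to detect failure of (c), and the same reductions for (c$'$) and (c$''$). The only organizational difference is that the paper closes the cycle by transporting a length function from $A+XB[X]$ to $A+XB[[X]]$ via $f \mapsto \ell_R(a_nX^n)+n$, whereas you prove (c)\,$\Rightarrow$\,(b) directly with the order function; this is an immaterial variation.
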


\begin{proof}
	Set $R = A + XB[X]$ and $T = A + XB[[X]]$.
	
	(a) $\Rightarrow$ (b): By Proposition~\ref{prop:BFD characterizations}, there is a length function $\ell_R \colon R^* \to \nn_0$. Now define the function $\ell_T \colon T^* \to \nn_0$ by $\ell_T\big(\sum_{i=n}^\infty a_iX^i \big) = \ell_R(a_nX^n) + n$ for every $\sum_{i=n}^\infty a_iX^i$ with $a_n \neq 0$. Clearly, $\ell_T\big(\sum_{i=n}^\infty a_iX^i \big) = 0$ if and only if $n=0$ and $a_0 \in U(A)$. In addition, for all $f = \sum_{i=n}^\infty a_iX^i$ and $g = \sum_{i=m}^\infty b_iX^i$ in $T^*$ with $a_n \neq 0$ and $b_m \neq 0$, the fact that $\ell_R$ is a length function guarantees that $\ell_T(fg) = \ell_R(a_n b_m X^{n+m}) + n+m \ge \ell_R(a_nX^n) + \ell_R(b_mX^m) + n +m = \ell_T(f) + \ell_T(g)$. Hence~$\ell_T$ is a length function, and so $T$ is a BFD by Proposition~\ref{prop:BFD characterizations}.
	\smallskip
	
	(b) $\Rightarrow$ (c): It is clear that $U(A) \subseteq U(B) \cap A$. For the reverse inclusion, take $u \in A$ such that $u^{-1} \in B$. Since $T$ is a BFD, it satisfies ACCP, and therefore, the ascending chain of principal ideals $(u^{-n}XT)_{n \in \nn}$ must stabilize. Then $u^{-n}XT = u^{-(n+1)}XT$ for some $n \in \nn$, from which we obtain $u \in U(T) \cap A = U(A)$. Thus, $U(A) = U(B) \cap A$. To show that $B$ is an $A$-BFD, let $b$ be a nonzero nonunit of~$B$. Since $T$ is a BFD, there is an $n_0 \in \nn$ such that $bX$ cannot be the product of more than~$n_0$ nonunits in $T$. Write $b = a_1 \cdots a_m b_1 \cdots b_n$, where $a_1, \dots, a_m$ are nonunits of $A$ and $b_1, \dots, b_n$ are nonunits in $B \setminus A$. Clearly, $a_1, \dots, a_m$ are nonunits in $T$. Then $bX = a_1 \cdots a_m(b_1 \cdots b_nX)$, and so $m \le n_0-1$. Hence $B$ is an $A$-BFD.
	\smallskip
	
	(c) $\Rightarrow$ (a): Assume now that $B$ is an $A$-BFD satisfying $U(A) = U(B) \cap A$. It immediately follows that $A$ is a BFD. Take $f = \sum_{i=0}^n b_i X^i$ with $b_n \neq 0$ to be a nonzero nonunit of $R$. If $n = 0$, then $f = b_0 \in A$ and so there is an $n_0 \in \nn$ such that $f$ cannot be the product of more than $n_0$ nonunits of~$R$. On the other hand, suppose that $n \ge 1$. As $B$ is an $A$-BFD, there is an upper bound $n_1 \in \nn$ for the number of nonunit factors in $A$ of a factorization of $b_n$ in $B$. Then a factorization of $f$ in $R$ has at most $n_1 + n$ nonunit factors. Thus, $R$ is a BFD.
	\smallskip
	
	(c) $\Leftrightarrow$ (c$^\prime$) when $B$ is a BFD: This is clear as $B$ is also an $A$-BFD.
	\smallskip
	
	(c) $\Leftrightarrow$ (c$^{\prime \prime}$) when $\text{qf}(A) \subseteq B$: For the direct implication, it suffices to note that $\text{qf}(A)^* \subseteq U(B)$ implies that $A^* \subseteq U(B) \cap A = U(A)$. The reverse implication follows from the fact that every extension of the field $A$ is an $A$-BFD.
\end{proof}

\begin{cor} \emph(\cite[Proposition~2.5]{AAZ90}, \cite[Corollary~2.2]{AAZ92}, and \cite[Corollary~3.1]{hK01}\emph) \label{cor:BFD for polynomial and power series rings}
	The following statements are equivalent for an integral domain $R$.
	\begin{enumerate}
		\item[(a)] $R$ is a BFD.
		\smallskip
		
		\item[(b)] $R[X]$ is a BFD.
		\smallskip
		
		\item[(c)] $R[[X]]$ is a BFD.
		\smallskip
		
		\item[(d)] $R[X,X^{-1}]$ is a BFD.
		\smallskip
		
		\item[(e)] The ring of formal Laurent series $R((X))$ is a BFD.
	\end{enumerate}
\end{cor}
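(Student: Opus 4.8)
The plan is to derive all five equivalences from Theorem~\ref{thm:BF in polynomial and power series rings} together with the localization-transfer results of Section~\ref{sec:extensions and localization}, so that no length function for the individual rings needs to be constructed from scratch.

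First I would establish (a) $\Leftrightarrow$ (b) $\Leftrightarrow$ (c) by specializing Theorem~\ref{thm:BF in polynomial and power series rings} to the (trivial) extension $A = B = R$. With this choice $A + XB[X] = R[X]$ and $A + XB[[X]] = R[[X]]$, so parts (a) and (b) of that theorem are exactly statements (b) and (c) here. Its third condition reads ``$R$ is an $R$-BFD and $U(R) = U(R) \cap R$''; the unit equality is vacuous, and unwinding the definition of an $A$-BFD in the case $A = B = R$ shows that $R$ being an $R$-BFD says precisely that every nonzero nonunit of $R$ has a bounded number of nonunit factors, i.e., that $R$ is a BFD. Hence (a), (b), and (c) are equivalent.

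Next I would realize the two Laurent-type rings as localizations at $S = \{X^n \mid n \in \nn_0\}$, namely $R[X, X^{-1}] = R[X]_S$ and $R((X)) = R[[X]]_S$. Because $R$ is a domain, the quotients $R[X]/(X)$ and $R[[X]]/(X)$ are both isomorphic to $R$, so $X$ is prime in each ring and $S$ is a multiplicative set generated by a prime. The forward implications (b) $\Rightarrow$ (d) and (c) $\Rightarrow$ (e) then follow at once from Corollary~\ref{cor:overring localization BFD/FFD}.

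Finally I would close the cycle with (d) $\Rightarrow$ (b) and (e) $\Rightarrow$ (c). For this I would invoke Theorem~\ref{thm:underring localization BFD/FFD}, which requires $S$ to be a splitting multiplicative set generated by primes. Generation by primes is already noted, and the splitting property follows from Lemma~\ref{lem:when multiplicative sets generated by primes are SMS}: if $x$ is a nonzero nonunit, then $x \in X^n R[X]$ (resp.\ $x \in X^n R[[X]]$) forces $n$ to be at most the $X$-order of $x$, giving the bound required by criterion~(c) of that lemma. Alternatively one can bypass this theorem and deduce (d) $\Rightarrow$ (a) and (e) $\Rightarrow$ (a) directly from Proposition~\ref{prop:BFD underrings} applied to $R \subseteq R[X, X^{-1}]$ and $R \subseteq R((X))$, after checking $U(R) = U(B) \cap R$ in each case. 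I expect the only real subtlety to be this last verification for $B = R((X))$: one must confirm, by a leading-coefficient argument, that inverting $X$ (rather than passing to the full fraction field) creates no new units among the constants, so that a constant $c \in R$ is a unit of $R((X))$ only when $c \in U(R)$. With either backward route the four arrows combine with the first paragraph to give the equivalence of (a)--(e).
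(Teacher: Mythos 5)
Your proposal is correct and follows essentially the same route as the paper: the equivalence of (a), (b), (c) by specializing Theorem~\ref{thm:BF in polynomial and power series rings} to $A = B = R$, and the equivalence with (d) and (e) by viewing $R[X,X^{-1}]$ and $R((X))$ as localizations at the prime-generated (splitting) multiplicative set of powers of $X$ and invoking Theorems~\ref{thm:underring localization BFD/FFD} and~\ref{thm:overring localization BFD/FFD}. The only cosmetic difference is that the paper takes the saturated set $S = \{uX^n \mid u \in U(R), \ n \in \nn_0\}$ rather than $\{X^n \mid n \in \nn_0\}$, which changes nothing since both yield the same localization.
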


\begin{proof}
	(a) $\Leftrightarrow$ (b) $\Leftrightarrow$ (c): These equivalences follow by taking $B=A=R$ in Theorem~\ref{thm:BF in polynomial and power series rings}.
	\smallskip
	
	(b) $\Leftrightarrow$ (d): Observe that the ring of Laurent polynomials $R[X,X^{-1}]$ is the localization of $R[X]$ at the multiplicative set $S = \{uX^n \mid u \in U(R) \text{ and } n \in \nn_0\}$ generated by the prime $X$. Then Lemma~\ref{lem:when multiplicative sets generated by primes are SMS} guarantees that $S$ is a splitting multiplicative set, while Lemma~\ref{lem:localization at prime-generated MS are inert} guarantees that the extension $R[X] \subseteq R[X]_S = R[X,X^{-1}]$ is inert. As a consequence, it follows from Theorems~\ref{thm:underring localization BFD/FFD} and~\ref{thm:overring localization BFD/FFD} that $R[X]$ is a BFD if and only if $R[X,X^{-1}]$ is a BFD.
	\smallskip
	
	(c) $\Leftrightarrow$ (e): After observing that $R((X)) = R[[X]]_S$, where $S = \{uX^n \mid u \in U(R) \text{ and } n \in \nn_0\}$, we can simply repeat the argument given in the previous paragraph.
\end{proof}

\begin{cor} \emph(\cite[Proposition~2.6]{AAZ90}\emph)\label{cor:subrings of polynomial rings are BFD}
	Let $R$ be a BFD, and let $\{X_i \mid i \in I\}$ be a family of indeterminates for some set $I$. Then every subring of $R[\{X_i \mid i \in I\}]$ containing $R$ is a BFD.
\end{cor}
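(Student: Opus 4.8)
The plan is to first establish that the polynomial ring $P = R[\{X_i \mid i \in I\}]$ is itself a BFD, and then to descend to an arbitrary subring $S$ with $R \subseteq S \subseteq P$ by checking that units match and invoking Proposition~\ref{prop:BFD underrings}. The descent step is the easy half: since $R$ is an integral domain, every element of $P$ involves only finitely many of the indeterminates, and a standard total-degree computation shows $U(P) = U(R)$ (a factor of a degree-zero element has degree zero). Consequently, for any intermediate subring $S$ we have $U(R) \subseteq U(S) \subseteq U(P) \cap S = U(R)$, so that $U(S) = U(P) \cap S$. Thus, once $P$ is known to be a BFD, Proposition~\ref{prop:BFD underrings} immediately yields that $S$ is a BFD.

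It remains to prove that $P$ is a BFD. When $I$ is finite this is a routine induction on $|I|$: the base case is the hypothesis that $R$ is a BFD, and each inductive step applies the equivalence (a)~$\Leftrightarrow$~(b) of Corollary~\ref{cor:BFD for polynomial and power series rings} to adjoin one more indeterminate. (For finite $I$ this already settles the corollary, since the descent step above requires no additional input.)

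For infinite $I$, I would realize $P$ as the directed union of the subrings $R_F = R[\{X_i \mid i \in F\}]$, where $F$ ranges over the finite subsets of $I$ directed by inclusion; each $R_F$ is a BFD by the finite case. To apply part~(1) of Theorem~\ref{thm:BF/FF for directed unions}, I must verify that every extension $R_F \subseteq R_{F'}$ with $F \subseteq F'$ is inert, and this is where the degree idea reappears: writing $R_{F'} = R_F[\{X_j \mid j \in F' \setminus F\}]$, if $x,y$ are nonzero elements of $R_{F'}$ with $xy \in R_F$, then $xy$ has total degree zero in the new variables, forcing $x$ and $y$ themselves to have degree zero in those variables, i.e.\ $x,y \in R_F$; taking $u=1$ shows the extension is inert. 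Theorem~\ref{thm:BF/FF for directed unions} then gives that $P = \bigcup_F R_F$ is a BFD, completing the argument.

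The only real obstacle is the passage to infinitely many indeterminates, which forces the directed-union detour and the verification of inertness; the finite case and the final descent to subrings are both immediate consequences of results already at hand. As an alternative to the directed-union route, one could instead build a length function on $P$ directly by fixing a monomial order on the (well-ordered) set of indeterminates and setting $\ell(f) = \ell_R(\mathrm{lc}(f)) + \deg f$ for a length function $\ell_R$ on $R^\ast$; here $\mathrm{lc}$ denotes the leading coefficient, which is multiplicative because $R$ is a domain, so condition~(ii) in the definition of a length function follows, and Proposition~\ref{prop:BFD characterizations} applies.
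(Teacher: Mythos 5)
Your argument is correct, and the descent step (showing $U(S) = U(P)\cap S$ and invoking Proposition~\ref{prop:BFD underrings}) together with the finite case (induction via Corollary~\ref{cor:BFD for polynomial and power series rings}) coincide exactly with what the paper does. Where you diverge is the passage to infinitely many indeterminates: the paper handles it pointwise, observing that a nonunit $f$ lies in some $R_J$ with $J$ finite and that $Z_{R_I}(f) = Z_{R_J}(f)$ (so $L_{R_I}(f) = L_{R_J}(f)$ is finite), whereas you route the same input through the directed-union machinery of Theorem~\ref{thm:BF/FF for directed unions}, first verifying that each extension $R_F \subseteq R_{F'}$ is inert. Note that your inertness verification and the paper's equality $Z_{R_I}(f) = Z_{R_J}(f)$ rest on the identical observation --- a product lying in $R_F$ forces both factors to have degree zero in the extra variables --- so the mathematical content is the same; the paper's version is shorter because it extracts exactly what is needed ($L$-sets agree) without the overhead of the general directed-union theorem, while yours has the advantage of slotting the corollary into a framework that would also give the FFD analogue for free. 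Your alternative via a length function $\ell(f) = \ell_R(\mathrm{lc}(f)) + \deg f$ is also sound (each polynomial has finitely many terms, so a total monomial order suffices to define $\mathrm{lc}$, and leading coefficients are multiplicative in a domain); it is arguably the most uniform of the three arguments and mirrors the length-function constructions used elsewhere in the paper, e.g.\ in Theorem~\ref{thm:BF in polynomial and power series rings}.
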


\begin{proof}
	Set $R_I = R[\{X_i \mid i \in I\}]$, and let $T$ be a subring of $R_I$ containing $R$. Take $f$ to be a nonunit of $R_I$, and then take a finite subset $J$ of $I$ such that $f \in R_J = R[\{X_j \mid j \in J\}]$. As $R$ is a BFD and $|J| < \infty$, it follows from Corollary~\ref{cor:BFD for polynomial and power series rings} that $R_J$ is a BFD. The equality $Z_{R_I}(f) = Z_{R_J}(f)$ ensures that $|L_{R_I}(f)| = |L_{R_J}(f)| < \infty$. Hence $R_I$ is a BFD. Since $R \subseteq T \subseteq R_I$, it follows that $U(T) = U(R) = U(R_I)$. Thus, Proposition~\ref{prop:BFD underrings} guarantees that $T$ is a BFD.
\end{proof}

With the notation as in Theorem~\ref{thm:BF in polynomial and power series rings}, the integral domain $A$ is a BFD if $A + XB[X]$ is a BFD. However, the converse of this implication does not hold in general.

\begin{example}
	Consider the integral domain $R = \zz + X \qq[X]$. Clearly, $\zz$ is a BFD. Observe, on the other hand, that $R$ is a particular case of the $D+M$ construction, where $D = \zz$ is not a field. Thus,~$R$ is not a BFD by Proposition~\ref{prop:BFD and D+M construction}.
\end{example}

We would also like to emphasize that even if $A + XB[X]$ and $A + XB[[X]]$ are both BFDs, $B$ may not be a BFD. The following example is \cite[Example~2.7]{AAZ90}.

\begin{example}
	Let $\bar{\zz}$ be the ring of algebraic integers. Since the ascending chain of principal ideals $(2^{1/2^n}\bar{\zz})_{n \in \nn}$ does not stabilize, $\bar{\zz}$ does not satisfy ACCP, and so it is not a BFD. However, the integral domain $R = \zz + X \bar{\zz}[X]$ is a BFD. To verify this, let $\ell \colon \zz^* \to \nn_0$ be a length function, and define $\ell_R \colon R^* \to \nn_0$ by $\ell_R(f) = \ell(f(0)) + \deg f$. It is clear that $\ell_R(f) = 0$ if and only if $\ell(f(0)) = \deg f = 0$, which happens precisely when $f \in \{\pm 1\} = U(R)$. In addition, it is clear that $\ell_R(fg) \ge \ell_R(f) + \ell_R(g)$ when $f,g \in R^*$. Hence $\ell_R$ is a length function, and so Proposition~\ref{prop:BFD characterizations} guarantees that $R$ is a BFD. It follows from Theorem~\ref{thm:BF in polynomial and power series rings} that $\zz + X \bar{\zz}[[X]]$ is also a BFD.
\end{example}

With the notation as in Theorem~\ref{thm:BF in polynomial and power series rings}, if $B$ is taken to be the quotient field of $A$, then the property of being a BFD transfers from $A$ to any intermediate integral domain of the extension $A[X] \subseteq A + XB[X]$ if we impose a certain condition.

\begin{prop}  \emph(\cite[Theorem~7.5]{AAZ91}\emph)
	Let $R$ be an integral domain with quotient field $K$, and let $T$ be an integral domain such that $R[X] \subseteq T \subseteq R + XK[X]$. In addition, assume that for every $n \in \nn_0$, there is an $r_n \in R^*$ such that $r_n f \in R[X]$ for every $f \in T$ with $\deg f \le n$. Then $T$ is a BFD if and only if $R$ is a BFD.
\end{prop}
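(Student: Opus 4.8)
The plan is to handle the two implications separately, leaning on the length-function characterization of a BFD (Proposition~\ref{prop:BFD characterizations}) throughout. The common preliminary step is to compute $U(T)$. Since $T \subseteq R + XK[X] \subseteq K[X]$, any element of $T$ that is a constant has its constant term in $R$, so $T \cap K = R$; as the units of $K[X]$ are exactly $K^\ast$, every unit of $T$ is a nonzero constant and hence lies in $R$ together with its inverse. This gives $U(T) = U(R)$, and in particular $U(R) = U(T) \cap R$.

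For the implication that $T$ being a BFD forces $R$ to be a BFD, I would simply apply Proposition~\ref{prop:BFD underrings} to the extension $R \subseteq T$, the hypothesis $U(R) = U(T) \cap R$ being exactly what was just verified. This direction uses neither the assumption on the $r_n$ nor the inclusion $R[X] \subseteq T$ beyond $R \subseteq T$.

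The substance is the converse. Assume $R$ is a BFD and fix a length function $\ell \colon R^\ast \to \nn_0$. For a nonzero nonunit $f \in T$ set $N = \deg f$ and let $\tau(f) \in K^\ast$ denote the trailing (lowest-degree nonzero) coefficient of $f$; recall that $\tau$ is multiplicative on $K[X]^\ast$ and that $\deg$ is additive. Given any factorization $f = g_1 \cdots g_m$ into nonunits of $T$, I would separate the constant factors $c_1, \dots, c_s$ (each lying in $R \setminus U(R)$, since $T \cap K = R$ and $U(T) = U(R)$) from the nonconstant ones, whose product I call $G \in T$. Because degrees add, the number of nonconstant factors is at most $N$, so it remains to bound $s$ by a quantity depending only on $f$. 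Here the hypothesis enters: since $\deg f = N$ and $\deg G \le N$, both $r_N f$ and $r_N G$ lie in $R[X]$, so their trailing coefficients $r_N \tau(f)$ and $\beta := r_N \tau(G)$ lie in $R^\ast$. From $f = c_1 \cdots c_s\,G$ and multiplicativity of $\tau$ I obtain the factorization $r_N \tau(f) = c_1 \cdots c_s\,\beta$ inside $R$; applying $\ell$ and using $\ell(c_i) \ge 1$ together with $\ell(\beta) \ge 0$ yields $s \le \ell(r_N \tau(f))$. Hence every factorization of $f$ into nonunits of $T$ has length at most $N + \ell(r_N \tau(f))$, a bound independent of the chosen factorization.

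Finally, exactly as in the converse part of the proof of Proposition~\ref{prop:BFM characterization via length functions}, this uniform bound on the number of nonunit factors shows both that $T$ is atomic (a factorization of maximal length must consist of irreducibles) and that $\max L_T(f) < \infty$ for every nonzero nonunit $f$; thus $T$ is a BFD. I expect the bounding of $s$ to be the main obstacle: the nonconstant factors carry coefficients in $K$ with a priori unbounded denominators, and a factor may vanish at $X=0$, so the naive device of evaluating at $X=0$ (which would mimic the $A + XB[X]$ arguments) breaks down. Replacing evaluation by the trailing coefficient and clearing all denominators at once with the single element $r_N$ is what forces the count to take place inside $R$, where the BFD hypothesis on $R$ can be invoked; this is the only point at which the standing assumption on the $r_n$ is used.
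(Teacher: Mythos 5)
Your proof is correct and follows essentially the same route as the paper's: separate the constant factors from the nonconstant ones, bound the latter by $\deg f$, and bound the former by clearing denominators with $r_n$ and reading off a multiplicative coefficient lying in $R$, where the length function of the BFD $R$ applies. The only differences are cosmetic: you use the trailing coefficient where the paper uses the leading coefficient (both are multiplicative on $K[X]^\ast$ over an integral domain, so either works), and you make explicit the verifications of $U(T)=U(R)$ and of the atomicity of $T$ that the paper leaves implicit.
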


\begin{proof}
	For the direct implication, suppose that $T$ is a BFD. It is clear that $U(R) = U(T)$. Therefore~$R$ is a BFD by Proposition~\ref{prop:BFD underrings}.
	
	For the reverse implication, suppose that $R$ is a BFD. Take $f \in T$, and write $f = c_1 \cdots c_k g$, where $c_1, \dots, c_k \in R$ and $\deg g \ge 1$. Let $cX^n$ be the leading term of $f$. Then $r_n g \in R[X]$, and so the leading coefficient $r_n c$ of $r_n f = c_1 \cdots c_k (r_n g) \in R[X]$ must belong to $R$. As $r_n g \in R[X]$, the product $c_1 \cdots c_k$ divides $r_nc$ in $R$. Thus, $k \le \max L_R(r_nc)$, and so $L_T(f)$ is bounded by $n + \max L_{R}(r_nc)$. Hence $T$ is a BFD.
\end{proof}

\begin{cor} \emph(\cite[Corollary~7.6]{AAZ91}\emph)
	Let $R$ be an integral domain with quotient field $K$. Then the ring $I(K,R)$ of $R$-valued polynomials of $K[X]$ is a BFD if and only if $R$ is a BFD.
\end{cor}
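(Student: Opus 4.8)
The plan is to deduce this corollary from the preceding proposition by taking $T = I(K,R)$ and verifying its hypotheses: that $I(K,R)$ is an integral domain, that $R[X] \subseteq I(K,R) \subseteq R + XK[X]$, and that for every $n \in \nn_0$ there is an $r_n \in R^*$ with $r_n f \in R[X]$ for all $f \in I(K,R)$ of degree at most $n$. Since $I(K,R)$ is by definition a subring of $K[X]$, it is an integral domain. The inclusions are immediate: every polynomial with coefficients in $R$ sends $R$ into $R$, so $R[X] \subseteq I(K,R)$; and if $f \in I(K,R)$, then its constant term equals $f(0) \in R$ (because $0 \in R$), so $f \in R + XK[X]$.

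The crux is producing the uniform denominators $r_n$. First I would dispose of the case in which $R$ is finite: a finite integral domain is a field, so then $K = R$ and $I(K,R) = R[X]$, whence both $R$ and $I(K,R)$ are BFDs (the former being a field, the latter a UFD), and the stated equivalence holds trivially. Thus I may assume $R$ is infinite and fix, for each $n \in \nn_0$, distinct elements $a_0, \dots, a_n \in R$. Any $f \in I(K,R)$ with $\deg f \le n$ is recovered from its values at these nodes by Lagrange interpolation,
\[
	f(X) = \sum_{i=0}^n f(a_i) \prod_{j \neq i} \frac{X - a_j}{a_i - a_j},
\]
where each $f(a_i) \in R$. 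Setting $d_i = \prod_{j \neq i}(a_i - a_j) \in R^*$ and $r_n = \prod_{i=0}^n d_i \in R^*$, the quotient $r_n/d_i = \prod_{l \neq i} d_l$ lies in $R$ for every $i$, so
\[
	r_n f(X) = \sum_{i=0}^n f(a_i)\, \frac{r_n}{d_i} \prod_{j \neq i}(X - a_j) \in R[X].
\]
The essential point is that $r_n$ depends only on the fixed nodes $a_0, \dots, a_n$ and not on $f$, so it clears denominators uniformly over all $R$-valued polynomials of degree at most $n$; moreover $r_n \neq 0$ as a product of nonzero differences in an integral domain.

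With these hypotheses in hand, the proposition applies and yields that $I(K,R)$ is a BFD if and only if $R$ is a BFD. I expect the only genuinely subtle point to be the finite case: the Lagrange argument requires at least $n+1$ distinct elements of $R$, so finite $R$ must be separated out and handled by the observation that it is then a field with $I(K,R) = R[X]$. Everything else is a routine check of the proposition's hypotheses.
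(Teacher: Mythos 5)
Your proposal is correct and follows exactly the route the paper intends: the corollary is stated as an immediate consequence of the preceding proposition (with $T = I(K,R)$), and the paper gives no further proof, so your verification of the hypotheses — the inclusions $R[X] \subseteq I(K,R) \subseteq R + XK[X]$ and the uniform denominators $r_n$ via Lagrange interpolation at $n+1$ distinct points of $R$ — supplies precisely the omitted details. The separate treatment of finite $R$ is sound (though one could also just take $r_n = 1$ there, since $I(K,R) = R[X]$ when $R$ is a field).
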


The following example is \cite[Example~2.7(b)]{AAZ90}.

\begin{example}
	Since $\zz$ is a BFD, the ring $R$ of integer-valued polynomials of $\qq[X]$, often denoted by $\text{Int}(\zz)$, is a BFD. In fact, $R$ is a two-dimensional completely integrally closed Pr\"ufer domain that satisfies ACCP. In addition, if $M$ is a height-two maximal ideal of $R$, then the localization $R_M$ is a two-dimensional valuation domain that is not even atomic.
\end{example}
\smallskip

Next we turn our attention to certain integral domains that generalize subdomains of the form $A + B[X]$ and $A + B[[X]]$. For the rest of this subsection, we let $(A_n)_{n \ge 0}$ be an ascending chain of integral domains contained in a field $L$ (that is, $A_n$ is a subdomain of $A_{n+1}$ for every $n \in \nn_0$), and we set $A = \bigcup_{n \in \nn_0} A_n$. Observe that $A$ is a subring of $L$. In addition, we set
\begin{equation} \label{eq:tower domain rings}
	\aaa[X] = \bigoplus_{n \in \nn_0} A_n X^n \quad \text{ and } \quad  \aaa[[X]] = \prod_{n \in \nn_0} A_n X^n.
\end{equation}
It is clear that $\aaa[X]$ and $\aaa[[X]]$ are subrings of $A[X]$ and $A[[X]]$, respectively. Parallel to Theorem~\ref{thm:BF in polynomial and power series rings}, we will give a necessary and sufficient condition for the integral domains $\aaa[X]$ and $\aaa[[X]]$ to be BFDs. The results about $\aaa[X]$ and $\aaa[[X]]$ we have included in this section are from the unpublished Ph.D. dissertation of P.~L. Kiihne~\cite{pK99}.

Before proceeding, we emphasize that even if $A_n$ is a BFD for every $n \in \nn_0$, the integral domain~$A$ may not be a BFD; for this, see Example~\ref{ex:canonical conductive Puiseux algebra} below.

\begin{theorem}  \emph(\cite[Theorem~3.3.5]{pK99}\emph)\label{thm:BFD theorem for infinite tower of domains}
	The following statements are equivalent.
	\begin{enumerate}
		\item[(a)] $\aaa[X]$ is a BFD.
		\smallskip
		
		\item[(b)] $\aaa[[X]]$ is a BFD.
		\smallskip
		
		\item[(c)] $U(A_0) = U(A) \cap A_0$, and $A_n$ is an $A_0$-BFD for every $n \in \nn_0$.
	\end{enumerate}
	In addition, if $A[X]$ (or, equivalently, $A[[X]]$) is a BFD, then $\aaa[X]$ and $\aaa[[X]]$ are BFDs.
\end{theorem}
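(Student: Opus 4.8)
The plan is to mimic the three-way cycle in the proof of Theorem~\ref{thm:BF in polynomial and power series rings}, of which this result is the ``infinite tower'' analogue (the single extension $A\subseteq B$ is the constant tower $A_0=A$, $A_n=B$ for $n\ge1$). A preliminary unit computation is used throughout: a unit of $\aaa[X]$ is a constant lying in $U(A_0)$ (a domain admits no nonconstant polynomial units), while $f\in\aaa[[X]]$ is a unit \emph{exactly} when its constant term lies in $U(A_0)$; the nontrivial half---that $f(0)\in U(A_0)$ already makes $f$ invertible inside $\aaa[[X]]$---follows by inducting on the coefficients of the formal inverse, using only that $(A_n)_{n\ge0}$ is an ascending chain of rings (so $A_0\subseteq A_m$ and $A_iA_j\subseteq A_m$ whenever $i,j\le m$). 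With this, I would prove (a)$\Rightarrow$(b)$\Rightarrow$(c)$\Rightarrow$(a).

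For (a)$\Rightarrow$(b) I would transport a length function: given $\ell$ on $\aaa[X]^*$ from Proposition~\ref{prop:BFD characterizations}, set $\ell^\ast(f)=\ell(a_nX^n)+n$, where $a_nX^n$ is the lowest nonzero term of $f$ (note $a_nX^n\in\aaa[X]$). Condition~(i) holds since $\ell^\ast(f)=0$ means the lowest term is the constant term and it lies in $U(A_0)$, i.e.\ $f\in U(\aaa[[X]])$; condition~(ii) holds because in a domain the lowest term of a product is the product of the lowest terms and $(a_nX^n)(b_mX^m)$ is a genuine product in $\aaa[X]$, so superadditivity of $\ell$ passes to $\ell^\ast$. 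For (b)$\Rightarrow$(c), recall a BFD satisfies ACCP (the domain form of Corollary~\ref{cor:BFMs are ACCP monoids}). Given $u\in A_0$ with $u^{-1}\in A$, choose $k$ with $u^{-1}\in A_k$; since $A_k$ is a ring, $u^{-m}X^k\in\aaa[[X]]$ for every $m$, and $(u^{-m}X^k\,\aaa[[X]])_m$ is an ascending chain of principal ideals (as $u^{-m}X^k=u\cdot u^{-(m+1)}X^k$ with $u\in A_0$); its stabilization forces $u^{-1}\in\aaa[[X]]$, hence, being a constant, $u^{-1}\in A_0$ and $u\in U(A_0)$. To see $A_n$ is an $A_0$-BFD, fix a nonzero nonunit $x\in A_n$ and let $N$ bound the number of nonunit factors of $xX^n$ in the BFD $\aaa[[X]]$; any factorization $x=c_1\cdots c_r$ into nonunits of $A_n$ with $c_1,\dots,c_s\in A_0$ gives $xX^n=c_1\cdots c_s\,(c_{s+1}\cdots c_rX^n)$ with each $c_i\in A_0$ a nonunit of $\aaa[[X]]$, so $s\le N$ (the cases $n=0$ and $s=r$ are handled by using that $A_0$ is a BFD via Proposition~\ref{prop:BFD underrings} and by keeping one $c_i$ attached to $X^n$).

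The delicate direction, and the main obstacle, is (c)$\Rightarrow$(a). Take a nonzero nonunit $f\in\aaa[X]$ of degree $n$ with leading coefficient $a_n\in A_n\setminus\{0\}$, and bound the length of an arbitrary factorization $f=g_1\cdots g_r$ into nonunits of $\aaa[X]$. Degrees add to $n$, so at most $n$ of the $g_j$ are nonconstant. A constant factor $g_j=c_j\in A_0$ is a nonunit of $\aaa[X]$, so $c_j\notin U(A_0)=U(A)\cap A_0$, whence $c_j\notin U(A)\supseteq U(A_n)$ and $c_j$ is a genuine nonunit of $A_n$ lying in $A_0$. Comparing leading coefficients expresses $a_n$ as a product, in $A_n$, of the $c_j$ and the leading coefficients of the nonconstant $g_j$. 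The crux is that the $A_0$-BFD definition counts only \emph{nonunit} factors, yet some of those nonconstant leading coefficients may be units of $A_n$; I would absorb all such units into one constant factor $c_{j_0}$ (possible whenever a constant factor exists, and legitimate because $a_n$ is then a nonunit of $A_n$), producing a factorization of $a_n$ into nonunits of $A_n$ that retains all but at most one $c_j$ among its factors in $A_0$. The $A_0$-BFD property of $A_n$ then bounds the number of constant factors by $n_0(a_n)+1$, so $r\le n+n_0(a_n)+1$, a bound depending only on $f$; Proposition~\ref{prop:BFD characterizations} gives (a). (If $a_n\in U(A_n)$, no nonunit constant factor can occur and $r\le n$ outright.)

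For the supplementary statement I would use the subring inclusions $\aaa[X]\subseteq A[X]$ and $\aaa[[X]]\subseteq A[[X]]$ together with Proposition~\ref{prop:BFD underrings}, noting that $A[X]$ is a BFD if and only if $A[[X]]$ is, by Corollary~\ref{cor:BFD for polynomial and power series rings}. Proposition~\ref{prop:BFD underrings} requires the units to match, i.e.\ $U(\aaa[X])=U(A[X])\cap\aaa[X]$; since $U(\aaa[X])=U(A_0)$ and $U(A[X])=U(A)$, this unwinds precisely to the identity $U(A_0)=U(A)\cap A_0$ from condition~(c). Verifying this unit identity is the operative step; once it is in force, Proposition~\ref{prop:BFD underrings} transfers the bounded factorization property from $A[X]$ (resp.\ $A[[X]]$) down to $\aaa[X]$ (resp.\ $\aaa[[X]]$).
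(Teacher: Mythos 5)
Your proof of the equivalence of (a), (b), and (c) is correct, but it is organized differently from the paper's. The paper proves (a) $\Leftrightarrow$ (c) and (b) $\Leftrightarrow$ (c); in particular, for (c) $\Rightarrow$ (a) it does not argue directly but invokes Theorem~\ref{thm:BF in polynomial and power series rings} to see that $A_0 + XA_d[X]$ is a BFD and then observes that all factorizations in $\aaa[X]$ of a polynomial of degree $d$ take place inside $A_0 + XA_d[X]$, since $U(\aaa[X]) = U(A_0) = U(A_0 + XA_d[X])$. You instead run the cycle (a) $\Rightarrow$ (b) $\Rightarrow$ (c) $\Rightarrow$ (a), transporting a length function through lowest-order terms for (a) $\Rightarrow$ (b) (the same device used in the proof of Theorem~\ref{thm:BF in polynomial and power series rings}), reproducing for (b) $\Rightarrow$ (c) essentially the paper's ACCP and $bX^k$ arguments, and inlining the two-ring counting argument for (c) $\Rightarrow$ (a). Two points in your write-up are actually more careful than the source: the explicit computation of $U(\aaa[[X]])$ as the series with constant term in $U(A_0)$ (the paper uses this silently when it declares the order-zero factors' constant terms to be nonunits of $A_0$), and the observation in (c) $\Rightarrow$ (a) that leading coefficients of nonconstant factors may be units of $A_n$, which the $A_0$-BFD count does not see; your absorption of those units into a single constant factor, at the cost of $+1$ in the bound, patches a step that the corresponding argument in Theorem~\ref{thm:BF in polynomial and power series rings} passes over.

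For the supplementary statement, however, there is a genuine gap: you reduce to Proposition~\ref{prop:BFD underrings} and correctly identify that the required unit condition $U(\aaa[X]) = U(A[X]) \cap \aaa[X]$ unwinds to $U(A_0) = U(A) \cap A_0$, but then write ``once it is in force'' without deriving it --- and it cannot be derived, because it is part of condition (c), which is precisely what is at stake, and it does not follow from $A[X]$ being a BFD. Concretely, take $A_0 = \zz$ and $A_n = \qq$ for all $n \ge 1$: then $A = \qq$ and $A[X] = \qq[X]$ is a BFD, yet $\aaa[X] = \zz + X\qq[X]$ is not a BFD by Proposition~\ref{prop:BFD and D+M construction} (the paper itself uses this very example right after Corollary~\ref{cor:BFD for polynomial and power series rings}), and indeed $U(A) \cap A_0 = \zz \setminus \{0\} \neq \{\pm 1\} = U(A_0)$. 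So the failure is not only in your last step: the addendum as printed is false without the additional hypothesis $U(A_0) = U(A) \cap A_0$, and the paper's displayed proof is silent on it, covering only the four implications among (a), (b), (c). With that hypothesis added, your route does work: $U(\aaa[X]) = U(A_0)$ and $U(A[X]) \cap \aaa[X] = U(A) \cap A_0$ (and the analogous power-series identities via your inverse-coefficient induction), so Proposition~\ref{prop:BFD underrings} transfers the bounded factorization property from $A[X]$ and $A[[X]]$, which are simultaneously BFDs by Corollary~\ref{cor:BFD for polynomial and power series rings}.
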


\begin{proof}
	(a) $\Rightarrow$ (c): Suppose that $\aaa[X]$ is a BFD. It is clear that $U(A_0) \subseteq U(A) \cap A_0$. For the reverse inclusion, take $u \in A_0$ such that $u^{-1} \in A$. Take $m \in \nn_0$ such that $u^{-1} \in A_m$. As $\aaa[X]$ satisfies ACCP by Corollary~\ref{cor:BFMs are ACCP monoids}, the ascending chain of principal ideals $(u^{-n}X^m \aaa[X])_{n \in \nn}$ of $\aaa[X]$ must stabilize. As a result, there is an $n \in \nn$ such that $u^{-n}X^m \aaa[X] = u^{-(n+1)}X^m \aaa[X]$, from which we deduce that $u \in U(\aaa[X]) \cap A_0 = U(A_0)$. Hence $U(A) \cap A_0 \subseteq U(A_0)$.
	
	To prove the second statement, fix $k \in \nn_0$, and then take a nonunit $b \in A_k^*$. Since $\aaa[X]$ is a BFD, there is an $n_0 \in \nn$ such that $bX^k$ cannot be the product of more than~$n_0$ nonunits in $\aaa[X]$. Write $b = a_1 \cdots a_m b_1 \cdots b_n$, where $a_1, \dots, a_m$ are nonunits of $A_0$ and $b_1, \dots, b_n$ are nonunits in $A_k \setminus A_0$. Then $bX^k = a_1 \cdots a_m(b_1 \cdots b_nX^k)$, and since $a_1, \dots, a_m$ are nonunits in $\aaa[X]$, the inequality $m \le n_0 - 1$ holds. Hence $A_k$ is an $A_0$-BFD.
	\smallskip
	
	(c) $\Rightarrow$ (a): Assume that $U(A_0) = U(A) \cap A_0$ and $A_n$ is an $A_0$-BFD for every $n \in \nn_0$. Since $U(A_0) = U(A_d) \cap A_0$, and $A_d$ is an $A_0$-BFD, Theorem~\ref{thm:BF in polynomial and power series rings} guarantees that $A_0 + XA_d[X]$ is a BFD. Since $f \in A_0 + XA_d[X]$ and every pair of non-associate divisors of $f$ in $\aaa[X]$ is also a pair of non-associate divisors of $f$ in $A_0 + XA_d[X]$, the fact that $A_0 + XA_d[X]$ is a BFD implies that $L_{\aaa[X]}(f)$ is finite. Hence $\aaa[X]$ is a BFD.
	\smallskip
	
	(b) $\Rightarrow$ (c): It follows mimicking the argument we use to prove (a) $\Rightarrow$ (c).
	\smallskip
	
	(c) $\Rightarrow$ (b): Assume that $U(A_0) = U(A) \cap A_0$ and $A_n$ is an $A_0$-BFD for every $n \in \nn_0$. Let $f = \sum_{i=m}^\infty b_iX^i \in \aaa[[X]]^*$ be a nonunit, and assume that $b_m \neq 0$. Since $A_m$ is an $A_0$-BFD, there is an $n_0 \in \nn$ such that any factorization of $b_m$ in $A_m$ involves at most $n_0$ factors in $A_0$. Now suppose that $f = f_1 \cdots f_k g_1 \cdots g_\ell$ in $\aaa[[X]]$, where $f_1, \dots, f_k$ are nonunits with order $0$ and $g_1, \dots, g_\ell$ are nonunits of order at least~$1$. It is clear that $\ell \le m$. On the other hand, comparing the coefficients of the degree $m$ monomials in both sides of the equality $f = f_1 \cdots f_k g_1 \cdots g_\ell$, we see that $b_m = c_1 \cdots c_k c$ in $A_m$, where $c_1, \dots, c_k$ are nonunits in $A_0$. Therefore $k \le n_0$, and so $\max L_{\aaa[[X]]}(f) \le m+n_0$. We conclude that $\aaa[[X]]$ is a BFD.
\end{proof}

\begin{cor} \emph(\cite[Corollary~3.3.9]{pK99}\emph) \label{cor:A_0 field implies aaa[X] and aaa[[X]] BFDs}
	If $A_0$ is a field, then $\aaa[X]$ and $\aaa[[X]]$ are BFDs.
\end{cor}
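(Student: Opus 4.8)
The plan is to deduce the corollary directly from Theorem~\ref{thm:BFD theorem for infinite tower of domains} by verifying its condition~(c) under the hypothesis that $A_0$ is a field. I would begin by flagging that one cannot simply invoke the last sentence of that theorem: it requires $A[X]$ (equivalently $A = \bigcup_{n \in \nn_0} A_n$) to be a BFD, and a directed union of integral domains need not be a BFD even when all of the $A_n$ are (indeed, even when each $A_n$ equals the field $A_0$ this route is unavailable in general). So the verification of~(c) is genuinely the right path.

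For the first half of~(c), I would check that $U(A_0) = U(A) \cap A_0$. Since $A_0$ is a field, $U(A_0) = A_0 \setminus \{0\}$. The inclusion $U(A_0) \subseteq U(A) \cap A_0$ is immediate, as every nonzero element of $A_0$ is already invertible in $A_0 \subseteq A$ and hence is a unit of $A$ lying in $A_0$. Conversely, any element of $U(A) \cap A_0$ is a nonzero element of the field $A_0$ and therefore a unit of $A_0$, giving $U(A) \cap A_0 \subseteq U(A_0)$.

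For the second half of~(c), I would show that $A_n$ is an $A_0$-BFD for every $n \in \nn_0$. The key observation is that, because $A_0$ is a field, it contains no nonzero nonunits. Thus, given a nonzero nonunit $x \in A_n$ and any factorization $x = b_1 \cdots b_k$ into nonunits $b_1, \dots, b_k$ of $A_n$, no factor $b_i$ can belong to $A_0$: since $x \neq 0$ each $b_i$ is nonzero, and a nonzero element of $A_0$ is a unit of $A_0$, hence a unit of $A_n$, contradicting that $b_i$ is a nonunit of $A_n$. Therefore the number of factors lying in $A_0$ is always $0$, and the bound demanded by the definition of an $A_0$-BFD holds with, say, $n_0 = 1$. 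This confirms that each $A_n$ is (trivially) an $A_0$-BFD.

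With both parts of~(c) in hand, Theorem~\ref{thm:BFD theorem for infinite tower of domains} yields that $\aaa[X]$ and $\aaa[[X]]$ are BFDs. I do not anticipate a substantive obstacle; the argument is a short verification of~(c). The only subtlety worth emphasizing is the one noted above, namely resisting the temptation to reduce to ``$A[X]$ is a BFD,'' which can fail because the directed union $A$ itself need not be a BFD.
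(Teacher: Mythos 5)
Your proposal is correct and follows exactly the paper's route: the paper's proof of Corollary~\ref{cor:A_0 field implies aaa[X] and aaa[[X]] BFDs} simply observes that both statements of condition~(c) of Theorem~\ref{thm:BFD theorem for infinite tower of domains} hold trivially when $A_0$ is a field, and you have merely written out that (easy) verification, correctly noting that no nonunit factor can lie in the field $A_0$. Your cautionary remark about not reducing to ``$A[X]$ is a BFD'' is also well taken and is consistent with Example~\ref{ex:canonical conductive Puiseux algebra}.
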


\begin{proof}
	It is an immediate consequence of Theorem~\ref{thm:BFD theorem for infinite tower of domains} since when $A_0$ is a field both statements of part~(c) of Theorem~\ref{thm:BFD theorem for infinite tower of domains} trivially hold.
\end{proof}

In the spirit of Theorem~\ref{thm:BFD theorem for infinite tower of domains}, observe that if there is an $N \in \nn_0$ such that $A_n = A_N$ for every $n \ge N$, then $\aaa[X]$ is a BFD (resp., $\aaa[[X]]$ is a BFD) if and only if $A_N$ is an $A_0$-BFD and $U(A_N) \cap A_0 = U(A_0)$. On the other hand, the reverse implication of the last statement of Theorem~\ref{thm:BFD theorem for infinite tower of domains} does not hold, as we proceed to illustrate using \cite[Example~3.3.12]{pK99}.

\begin{example} \label{ex:canonical conductive Puiseux algebra}
	Let $F$ be a field, and for every $n \in \nn$, let $M_n$ be the additive monoid $\{0\} \cup \qq_{\ge 1/n}$. Now set $A_0 = F$ and $A_n = F[M_n]$ for every $n \in \nn$. We have seen in Example~\ref{ex:BFD that is neither an HFD nor an FFD} that the integral domain $A_n$ is a BFD for every $n \in \nn$. However, $A = \bigcup_{n \in \nn_0} A_n = F[\qq_{\ge 0}]$ is not a BFD because it is not even atomic. As $A$ is not a BFD, it follows from Corollary~\ref{cor:BFD for polynomial and power series rings} that neither $A[X]$ nor $A[[X]]$ are BFDs. On the other hand, since $A_0$ is a field, both $\aaa[X]$ and $\aaa[[X]]$ are BFDs by Corollary~\ref{cor:A_0 field implies aaa[X] and aaa[[X]] BFDs}.
\end{example}

\smallskip
\subsection{Finite Factorization Subdomains of $R[X]$ and $R[[X]]$}

The main purpose of this subsection is to characterize when the integral domains $A + XB[X]$ and $A + XB[[X]]$ are FFDs for a given extension of integral domains $A \subseteq B$. 

Unfortunately, the equivalences in Theorem~\ref{thm:BF in polynomial and power series rings} do not hold if we replace BFD by FFD. However, some of the equivalences in Corollary~\ref{cor:BFD for polynomial and power series rings} are still true for the finite factorization property.

\begin{theorem}  \emph(\cite[Proposition~5.3]{AAZ90} and \cite[Corollary~4.2]{hK01}\emph) \label{thm:FFD for polynomial and Laurent ring}
	The following statements are equivalent for an integral domain $R$.
	\begin{enumerate}
		\item[(a)] $R$ is an FFD.
		\smallskip
		
		\item[(b)] $R[X]$ is an FFD.
		\smallskip
		
		\item[(c)] $R[X,X^{-1}]$ is an FFD.
	\end{enumerate}
\end{theorem}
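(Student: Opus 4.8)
The plan is to dispose of (b) $\Leftrightarrow$ (c) and (b) $\Rightarrow$ (a) using the localization and subring machinery already in place, and to reserve the real work for the implication (a) $\Rightarrow$ (b). For (b) $\Leftrightarrow$ (c), I would observe that $R[X,X^{-1}] = R[X]_S$ for the multiplicative set $S = \{uX^n \mid u \in U(R), \ n \in \nn_0\}$, which is generated by the prime $X$. Since $\bigcap_{n \in \nn} X^n R[X] = \{0\}$, Lemma~\ref{lem:when multiplicative sets generated by primes are SMS} shows that $S$ is a splitting multiplicative set, and Lemma~\ref{lem:localization at prime-generated MS are inert} shows that $R[X] \subseteq R[X]_S$ is inert. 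Then part~(2) of Theorem~\ref{thm:overring localization BFD/FFD} yields (b) $\Rightarrow$ (c), while part~(2) of Theorem~\ref{thm:underring localization BFD/FFD} yields (c) $\Rightarrow$ (b). For (b) $\Rightarrow$ (a), I would apply Proposition~\ref{prop:FFD underrings} to the extension $R \subseteq R[X]$: as $R$ is an integral domain, $U(R[X]) = U(R)$, so the group $\big(U(R[X]) \cap \text{qf}(R)^*\big)/U(R)$ is trivial and hence finite, and the proposition gives that $R$ is an FFD.

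The heart of the matter is (a) $\Rightarrow$ (b). By Proposition~\ref{prop:FFD characterizations} it suffices to show that every nonzero $f \in R[X]$ has only finitely many non-associate divisors in $R[X]$. Writing $K = \text{qf}(R)$, the ring $K[X]$ is a UFD, so $f$ admits only finitely many monic divisors $q_1, \dots, q_s$ in $K[X]$, one for each sub-product of its irreducible factorization. Every divisor $g$ of $f$ in $R[X]$ is in particular a divisor of $f$ in $K[X]$, so $g = d\, q_j$ for some $j \in \ldb 1,s \rdb$ and some $d \in K^*$. Since $q_j$ is monic and $g \in R[X]$, the scalar $d = \text{lc}(g)$ actually lies in $R$, and writing $f = gh$ with $h \in R[X]$ gives $\text{lc}(f) = d\,\text{lc}(h)$, so $d \mid \text{lc}(f)$ in $R$. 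For a fixed shape $q_j$, two divisors $d\, q_j$ and $d'q_j$ are associate in $R[X]$ precisely when $d$ and $d'$ are associate in $R$ (because $U(R[X]) = U(R)$); hence the non-associate divisors of $f$ of shape $q_j$ inject, via $g \mapsto \text{lc}(g)$, into the associate classes of divisors of $\text{lc}(f)$ in $R$. As $R$ is an FFD, $\text{lc}(f)$ has only finitely many non-associate divisors, so summing over the $s$ shapes bounds the number of non-associate divisors of $f$; Proposition~\ref{prop:FFD characterizations} then gives that $R[X]$ is an FFD.

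The main obstacle is exactly the control of the scalar $d$ in the step (a) $\Rightarrow$ (b). Passing to $K[X]$ trivializes the combinatorics of \emph{which} polynomial factors can occur (only finitely many monic shapes), but one must verify that the remaining freedom—multiplication by $d \in K^*$—is genuinely constrained. The key observations are that $d$ is forced into $R$ because it is the leading coefficient of an element of $R[X]$, and that $d$ must divide $\text{lc}(f)$ because the complementary factor also lies in $R[X]$; only then can the finite factorization hypothesis on $R$ be brought to bear. Once these two facts are secured, the reduction of the count to the non-associate divisors of $\text{lc}(f)$ in $R$ is immediate, and no appeal to contents, Gauss' Lemma, or any GCD hypothesis is needed.
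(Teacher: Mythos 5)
Your proposal is correct and follows essentially the same route as the paper: the peripheral implications are handled by the same localization/subring machinery (inertness of $R[X] \subseteq R[X,X^{-1}]$ and Proposition~\ref{prop:FFD underrings}, with only a trivial rerouting of (c)~$\Rightarrow$~(a) through (b)), and the core implication (a)~$\Rightarrow$~(b) rests on exactly the two facts the paper uses, namely that $K[X]$ leaves only finitely many divisor shapes up to associates and that the leading coefficient of a divisor in $R[X]$ lies in $R$ and divides $\mathrm{lc}(f)$. The only difference is presentational: you give a direct counting bound (shapes times associate classes of divisors of $\mathrm{lc}(f)$) where the paper extracts a contradiction from a sequence of non-associate divisors after normalizing leading coefficients.
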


\begin{proof}
	(a) $\Rightarrow$ (b):  Assume that $R$ is an FFD, and let $K$ be the quotient field of $R$. Suppose, by way of contradiction, that $R[X]$ is not an FFD. It follows from Proposition~\ref{prop:FFD characterizations} that there is a nonzero nonunit $f \in R[X]$ having infinitely many non-associate divisors in $R[X]$. Take $(f_n)_{n \in \nn}$ to be a sequence of non-associate divisors of $f$ in $R[X]$. Let $c$ be the leading coefficient of $f$, and let $c_n \in R$ be the leading coefficient of $f_n$ for every $n \in \nn$. Since $c_n$ is a divisor of $c$ for every $n \in \nn$, and $R$ is an FFD, after replacing $(f_n)_{n \in \nn}$ by a subsequence, we can assume that $c_1$ and $c_n$ are associates in $R$ for every $n \in \nn$. In addition, after replacing $f_n$ by $c_1 c_n^{-1} f_n$ for every $n \in \nn_{\ge 2}$, we can assume that all polynomials in the sequence $(f_n)_{n \in \nn}$ have the same leading coefficient, namely, $c_1$. Since each $f_n$ divides $f$ in $K[X]$, which is an FFD, there are distinct $i,j \in \nn$ such that $f_i$ and $f_j$ are associates in $K[X]$. As $f_i$ and $f_j$ have the same leading coefficient, they must be equal, which contradicts that they are non-associates in $R[X]$.
	\smallskip
	
	(b) $\Rightarrow$ (c) Suppose that $R[X]$ is an FFD. Since the extension $R[X] \subseteq R[X]_S = R[X,X^{-1}]$ is inert for the multiplicative set $S = \{uX^n \mid u \in U(R) \text{ and } n \in \nn_0\}$ (see the proof of Corollary~\ref{cor:BFD for polynomial and power series rings}), it follows from Theorem~\ref{thm:overring localization BFD/FFD} 
	that $R[X,X^{-1}]$ is an FFD.
	\smallskip
	
	(c) $\Rightarrow$ (a) Suppose that $R[X,X^{-1}]$ is an FFD, and let $K$ be the quotient field of $R$. Because $U(R[X,X^{-1}]) = \{uX^n \mid u \in U(R) \text{ and } n \in \zz\}$, the group $(U(R[X,X^{-1}]) \cap K^*)/U(R)$ is trivial. As a result, it follows from Proposition~\ref{prop:FFD underrings} that $R$ is an FFD.
\end{proof}

\begin{cor} \emph(\cite[Examples~2, 4, and~7]{AM96}\emph) \label{cor:when polynomials in a collection of indeterminates are FFDs}
	For an FFD $R$ and a set $\{X_i \mid i \in I\}$ of indeterminates, the following statements hold.
	\begin{enumerate}
		\item $R[\{X_i \mid i \in I\}]$ is an FFD.
		\smallskip
		
		\item $R[\{X_i, X_i^{-1} \mid i \in I\}]$ is an FFD.
		\smallskip
		
		\item If $R$ is either a finite field or $\zz$, then every subring of $R[\{X_i \mid i \in I\}]$ is an SFFD, and hence an FFD.
	\end{enumerate}
\end{cor}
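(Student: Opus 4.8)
The plan is to dispatch the three parts in order, reducing each to machinery already available in the chapter. Throughout write $R_I = R[\{X_i \mid i \in I\}]$, and for a finite subset $J \subseteq I$ write $R_J = R[\{X_j \mid j \in J\}]$.

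For part~(1) I would mirror the BFD argument of Corollary~\ref{cor:subrings of polynomial rings are BFD}. Iterating Theorem~\ref{thm:FFD for polynomial and Laurent ring} (in the form (a)$\Rightarrow$(b)) shows that $R_J$ is an FFD for every finite $J \subseteq I$, since $R_J$ is obtained from the FFD $R$ by adjoining finitely many indeterminates one at a time. Given a nonzero nonunit $f \in R_I$, there is a finite $J \subseteq I$ with $f \in R_J$, and the crucial observation is that $Z_{R_I}(f) = Z_{R_J}(f)$: if $f = gh$ in $R_I$, then comparing, for each $k \notin J$, the $X_k$-degrees on both sides (which add, because coefficients live in a domain) forces $\deg_{X_k} g = \deg_{X_k} h = 0$, so every factor of $f$ lies in $R_J$; in particular $\ii(R_J) \subseteq \ii(R_I)$, whence $R_I$ is atomic. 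Since $R_J$ is an FFD, $Z_{R_J}(f)$ is finite, so $Z_{R_I}(f)$ is finite for every $f$, and $R_I$ is an FFD. (Alternatively, one can view $\{R_J\}$, ordered by inclusion, as a directed family with directed union $R_I$ whose extensions are inert—a single polynomial extension is inert and inert extensions compose—and apply Theorem~\ref{thm:BF/FF for directed unions}(2).)

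For part~(2) I would realize $R[\{X_i, X_i^{-1} \mid i \in I\}]$ as a localization. Each $X_i$ is prime in $R_I$, so the multiplicative set $S$ generated by $\{X_i \mid i \in I\}$ is generated by primes, and $(R_I)_S = R[\{X_i, X_i^{-1} \mid i \in I\}]$. Since $R_I$ is an FFD by part~(1), Corollary~\ref{cor:overring localization BFD/FFD} (localization at a prime-generated multiplicative set preserves the finite factorization property) immediately gives that $R[\{X_i, X_i^{-1} \mid i \in I\}]$ is an FFD. For part~(3) I would first upgrade $R_I$ from an FFD to an SFFD and then use that the SFFD property passes to subrings: when $R$ is a finite field or $\zz$, the group $U(R_I) = U(R)$ is finite, so Proposition~\ref{prop:SFFD characterizations} together with part~(1) shows that $R_I$ is an SFFD, i.e., every nonzero element of $R_I$ has only finitely many divisors. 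Now if $T$ is any subring of $R_I$ and $x \in T^\ast$, then every divisor of $x$ in $T$ is in particular a divisor of $x$ in $R_I$; hence $x$ has only finitely many divisors in $T$, so $T$ is an SFFD by definition, and therefore an FFD by Proposition~\ref{prop:SFFD characterizations}.

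The main obstacle is the finite-to-infinite reduction in part~(1): one must make precise that no factor of a polynomial can involve a variable absent from that polynomial (the degree comparison establishing $Z_{R_I}(f) = Z_{R_J}(f)$ and $\ii(R_J)\subseteq\ii(R_I)$), or, on the alternative route, that adjoining finitely many variables is an inert extension and that inertness composes. Once part~(1) is secured, parts~(2) and~(3) follow quickly from the localization and subring results, respectively.
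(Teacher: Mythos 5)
Your proposal is correct and follows essentially the same route as the paper: reduce part~(1) to the finitely-many-variables case via a finite subset $J$ with $f \in R_J$ (the paper phrases the key step as "every divisor of $f$ in $R_I$ is also a divisor of $f$ in $R_J$," which is exactly your $X_k$-degree observation), obtain part~(2) by localizing at the prime-generated multiplicative set $\{X_i\}$, and obtain part~(3) by upgrading to an SFFD via finiteness of the unit group and passing to subrings. The only cosmetic differences are that you count factorizations where the paper counts non-associate divisors, and in part~(3) you work with $R_I$ directly rather than descending to $R_J$ again; both variants are sound.
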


\begin{proof}
	(1) Set $R_I = R[\{X_i \mid i \in I\}]$. Take a nonunit $f$ in $R_I^*$, and let $J$ be a finite subset of $I$ such that $f \in R_J = R[\{X_j \mid j \in J\}]$. The integral domain $R_J$ is an FFD by Theorem~\ref{thm:FFD for polynomial and Laurent ring}. In addition, every divisor of $f$ in $R_I$ is also a divisor of $f$ in $R_J$. As $U(R_J) = U(R) = U(R_I)$, the fact that $f$ has only finitely many non-associate divisors in $R_J$ implies that $f$ has only finitely many non-associate divisors in $R_I$. Hence $R_I$ is an FFD.
	\smallskip
	
	(2) The integral domain $R[\{X_i, X_i^{-1} \mid i \in I \}]$ is the localization of $R[\{X_i \mid i \in I\}]$ at the multiplicative set $S$ generated by $\{X_i \mid i \in I\}$. Since $X_i$ is a prime element in $R[\{X_i \mid i \in I\}]$ for every $i \in I$, it follows from Lemma~\ref{lem:localization at prime-generated MS are inert} that the extension $R[\{X_i \mid i \in I\}] \subseteq R[\{X_i, X_i^{-1} \mid i \in I\}]$ is inert. As $R[\{X_i \mid i \in I\}]$ is an FFD by part~(1), part~(2) of Theorem~\ref{thm:overring localization BFD/FFD} guarantees that $R[\{X_i, X_i^{-1} \mid i \in I\}]$ is an FFD.
	\smallskip
	
	(3) Let $R$ be either a finite field or $\zz$. As in part~(1), set $R_I = R[\{X_i \mid i \in I\}]$. Let $T$ be a subring of $R_I$, and then let $f$ be a nonunit in $T^*$. Take a finite subset $J$ of $I$ such that $f$ belongs to $R_J = R[\{X_j \mid j \in J\}]$. By part~(1), $R_J$ is an FFD. Moreover, since $|U(R_J)| = |U(R)| < \infty$, it follows from Proposition~\ref{prop:SFFD characterizations} that $R_J$ is an SFFD. As every divisor of $f$ in $T$ is also a divisor of $f$ in $R_J$, the element $f$ has only finitely many divisors in $T$. Thus, $T$ is an SFFD, and therefore, an FFD.
\end{proof}

In contrast to Theorem~\ref{thm:FFD for polynomial and Laurent ring}, the ring of power series $R[[X]]$ need not be an FFD when $R$ is an FFD. To illustrate this observation with an example, we need the following proposition.

\begin{prop}  \emph(\cite[Corollary~2]{AM96}\emph) \label{prop:if R[[X]] is FFD, R is completely integrally closed}
	Let $R$ be an integral domain. If $R[[X]]$ is an FFD, then $R$ is completely integrally closed. Therefore when $R$ is Noetherian, $R[[X]]$ is an FFD if and only if $R$ is integrally closed.
\end{prop}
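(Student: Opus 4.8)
The plan is to prove the first assertion in contrapositive form: assuming $R$ is \emph{not} completely integrally closed, I would exhibit a single element of $R[[X]]$ admitting infinitely many pairwise non-associate divisors, so that $R[[X]]$ fails to be an FFD by the equivalence (a)$\Leftrightarrow$(c) of Proposition~\ref{prop:FFD characterizations}. Writing $K$ for the quotient field of $R$, the failure of complete integral closedness hands me an element $u \in K \setminus R$ that is almost integral over $R$; that is, there is a nonzero $d \in R$ with $du^n \in R$ for every $n \in \nn_0$ (note $d \notin U(R)$, since otherwise $u = d^{-1}(du) \in R$).

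The heart of the argument is the choice of divisors. For each $k \in \nn$ I would set
\[
	q_k = d(1 - uX^k) = d - (du)X^k \in R[[X]],
\]
which lies in $R[[X]]$ because $d, du \in R$. Each $q_k$ divides $d^2$: the identity $d^2 = q_k \cdot \sum_{m \ge 0} d u^m X^{km}$ holds in $R[[X]]$, and the cofactor $\sum_{m \ge 0} du^m X^{km}$ has all its coefficients $du^m$ in $R$. It then remains to check that the $q_k$ are pairwise non-associate. If $q_k$ and $q_j$ were associates for some $k < j$, then $q_k = v q_j$ for a unit $v \in U(R[[X]])$; comparing constant terms forces $v(0) = 1$, and after cancelling $d$ one finds $v = (1 - uX^k)/(1 - uX^j)$ in $K[[X]]$. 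Expanding this quotient, the coefficient of $X^k$ is exactly $-u$ (no other monomial can contribute, since $0 < k < j$), so $v \notin R[[X]]$, a contradiction. Hence $d^2$ has infinitely many non-associate divisors and $R[[X]]$ is not an FFD.

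For the Noetherian ``therefore'' I would argue both implications. If $R[[X]]$ is an FFD, then the first part makes $R$ completely integrally closed, hence integrally closed. Conversely, if $R$ is Noetherian and integrally closed, I would invoke two standard facts---the Hilbert basis theorem for formal power series (so $R[[X]]$ is again Noetherian) and the ascent of normality to power series rings over a Noetherian normal domain (so $R[[X]]$ is integrally closed)---and then conclude via Corollary~\ref{cor:Dedekind domains are FFD} that the integrally closed Noetherian domain $R[[X]]$ is an FFD.

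The step I expect to be the real obstacle is engineering the divisors $q_k$ so that they are \emph{uniformly} pairwise non-associate. The tempting family $d(1 - u^k X)$ does divide $d^2$, but it collapses to finitely many values when $u$ is a root of unity (e.g.\ $u = i$ over a non-maximal order such as $\zz[2i]$), so distinctness can fail. Varying the exponent of $X$ rather than the power of $u$ is precisely what makes the lowest non-constant coefficient of the associateness quotient equal to $-u$ in all cases, sidestepping the root-of-unity degeneracy; verifying this coefficient computation, and correctly citing the normality-ascent input in the converse, are the two places demanding care.
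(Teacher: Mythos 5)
Your argument is correct, and it reaches the conclusion by a genuinely different (and more self-contained) route than the paper. The paper argues by contradiction through the overring $R[t][[X]]$: since $t$ is almost integral, the conductor $\big[R[[X]] :_{R[[X]]} R[t][[X]]\big]$ is nonzero, so Proposition~\ref{prop:FFD to ring extension} forces $U(R[t][[X]])/U(R[[X]])$ to be finite; pigeonholing the units $1+tX^m$ then puts $(1+tX^m)(1+tX^n)^{-1}$ in $R[[X]]$, whose lowest nonconstant coefficient is $\pm t \notin R$. You instead stay entirely inside $R[[X]]$, exhibiting the explicit family $q_k = d(1-uX^k)$ of divisors of $d^2$ and invoking only the equivalence (a)$\Leftrightarrow$(c) of Proposition~\ref{prop:FFD characterizations}; your verification that the cofactor $\sum_m du^mX^{km}$ has coefficients in $R$ and that the associateness quotient $(1-uX^k)(1-uX^j)^{-1}$ has $-u$ as its coefficient of $X^k$ (for $k<j$) is sound, and the underlying computation is the same one the paper performs on its unit quotient. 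What the paper's route buys is reuse of its general conductor machinery; what yours buys is independence from Proposition~\ref{prop:FFD to ring extension} and an explicit witness element ($d^2$) with infinitely many non-associate divisors. Your observation that varying the exponent of $X$ rather than the power of $u$ sidesteps the root-of-unity degeneracy is exactly the right caution. For the Noetherian converse, you are slightly more careful than the paper: the paper simply notes that a Noetherian integrally closed domain is Krull (hence an FFD), leaving implicit that this must be applied to $R[[X]]$ itself; your explicit appeal to the Hilbert basis theorem for power series and to the ascent of normality to $R[[X]]$ fills that in correctly.
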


\begin{proof}
	Suppose towards a contradiction that $R$ is not completely integrally closed, and then take an almost integral element $t \in \text{qf}(R) \setminus R$ over $R$. As a result, the ideal $[R :_R R[t]]$ is nonzero, and therefore, $\big[ R[[X]] :_{R[[X]]} R[t][[X]] \big] \neq \{0\}$. Since $R[[X]]$ is an FFD, it follows from Proposition~\ref{prop:FFD to ring extension} that the group $U(R[t][[X]])/U(R[[X]])$ is finite. Hence we can choose $m,n \in \nn$ so that $m \neq n$ and $(1 + tX^m) U(R[[X]]) = (1 + tX^n) U(R[[X]])$, which implies that $(1 + tX^m)(1 + tX^n)^{-1} \in R[[X]]$. However, this contradicts that $(1 + tX^m)(1 + tX^n)^{-1} = 1 -tX^m + \cdots$ and $-t \notin R$. Thus, $R$ is completely integrally closed.
	
	The direct implication of the second statement follows directly from the first statement because every completely integrally closed domain is integrally closed. The reverse implication is also immediate because every Noetherian integrally closed domain is a Krull domain, which is an FFD by Theorem~\ref{thm:Krull domains are FFDs}.
\end{proof}

We are now in a position to illustrate that $R[[X]]$ may not be an FFD even if $R$ is an FFD. The following example is \cite[Remark~2]{AM96}.

\begin{example} \label{ex:R being an FFD does not imply that R[[X]] is an FFD}
	Let $F_1 \subsetneq F_2$ be a field extension of finite fields, and consider the integral domain $R = F_1 + YF_2[[Y]]$. Since $YF_2[[Y]]$ is a nonzero maximal ideal of $F_2[[Y]]$, the ring $R$ has the form of a $D+M$ construction, where $T = F_2[[Y]]$. Since $F_2[[Y]]$ is an FFD and $F_2^*/F_1^*$ is a finite group, it follows from Proposition~\ref{prop:FFD and D+M construction} that $R$ is an FFD. On the other hand, note that every element of $F_2 \setminus F_1$ is an almost integral element over $R$. Hence $R$ is not completely integrally closed. Thus, Proposition~\ref{prop:if R[[X]] is FFD, R is completely integrally closed} guarantees that $R[[X]]$ is not an FFD.
\end{example}

Next we characterize when the construction $A + XB[X]$ of an extension $A \subseteq B$ of integral domains yields FFDs. To do this, we need the finiteness of the group $U(B)/U(A)$, which can be easily verified to be stronger than the condition $U(A) = U(B) \cap A$.

\begin{prop} \emph(\cite[Proposition~3.1]{AeA99}\emph) \label{prop:FFD for polynomial-like extensions}
	Let $A \subseteq B$ be an extension of integral domains. Then $A + XB[X]$ is an FFD if and only if $B$ is an FFD and $U(B)/U(A)$ is a finite group.
\end{prop}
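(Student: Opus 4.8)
The plan is to prove both implications by comparing $R := A + XB[X]$ with the full polynomial ring $B[X]$ and then invoking the transfer results already established for ring extensions and for polynomial rings. Two structural observations drive everything. First, $U(R) = U(A)$ and $U(B[X]) = U(B)$: a unit of either ring must have degree zero in $B[X]$, hence reduces to a constant unit. Second, $X$ lies in the conductor $[R :_R B[X]]$, because $X \cdot B[X] = XB[X] \subseteq R$, so $[R :_R B[X]] \neq \{0\}$. I will also use that $B \subseteq \emph{qf}(R)$: since $X \in R$ and $bX \in R$ for every $b \in B$, we get $b = (bX)/X \in \emph{qf}(R)$, and therefore $U(B[X]) = U(B) \subseteq \emph{qf}(R)^\ast$.

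For the forward implication, I would suppose that $R$ is an FFD. Since $X$ is a nonzero element of $[R :_R B[X]]$, I can apply Proposition~\ref{prop:FFD to ring extension} to the extension $R \subseteq B[X]$ to conclude simultaneously that $B[X]$ is an FFD and that the group $U(B[X])/U(R)$ is finite. Rewriting via $U(B[X]) = U(B)$ and $U(R) = U(A)$, the latter says precisely that $U(B)/U(A)$ is finite. Finally, because $B[X]$ is an FFD, Theorem~\ref{thm:FFD for polynomial and Laurent ring} (in the direction (b)~$\Rightarrow$~(a)) gives that $B$ is an FFD, completing this direction.

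For the reverse implication, I would suppose that $B$ is an FFD and that $U(B)/U(A)$ is finite. By Theorem~\ref{thm:FFD for polynomial and Laurent ring}, $B[X]$ is an FFD, and I then apply Proposition~\ref{prop:FFD underrings} to the extension $R \subseteq B[X]$. The hypothesis of that proposition requires $(U(B[X]) \cap \emph{qf}(R)^\ast)/U(R)$ to be finite; but $U(B[X]) = U(B) \subseteq \emph{qf}(R)^\ast$ forces $U(B[X]) \cap \emph{qf}(R)^\ast = U(B)$, so this group is exactly $U(B)/U(A)$, finite by assumption. Proposition~\ref{prop:FFD underrings} then yields that $R$ is an FFD.

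The only genuinely delicate points are the two unit computations and the identification $U(B[X]) \cap \emph{qf}(R)^\ast = U(B)$; the latter is where the containment $B \subseteq \emph{qf}(R)$ is essential, since without it one could not match the abstract hypothesis of Proposition~\ref{prop:FFD underrings} to the concrete group $U(B)/U(A)$. I do not expect any substantial obstacle beyond this bookkeeping: the real content is the recognition that $A + XB[X]$ sits inside $B[X]$ with $X$ in the conductor, which is precisely what lets both the ``descend to a subring'' theorem and the ``nonzero conductor'' theorem apply, reducing the whole statement to two citations.
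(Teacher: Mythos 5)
Your proposal is correct and follows essentially the same route as the paper: both directions hinge on the nonzero conductor $[R :_R B[X]]$ (the paper uses the common ideal $XB[X]$ where you use the single element $X$) together with Proposition~\ref{prop:FFD to ring extension} for the forward direction, and Theorem~\ref{thm:FFD for polynomial and Laurent ring} plus Proposition~\ref{prop:FFD underrings} with the identification $(U(B[X]) \cap \text{qf}(R)^\ast)/U(R) = U(B)/U(A)$ for the converse. Your explicit verification that $B \subseteq \text{qf}(R)$ is a point the paper leaves implicit, but it is the same argument.
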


\begin{proof}
	Set $R = A + XB[X]$. For the direct implication, suppose that $R$ is an FFD. Since $XB[X]$ is a nonzero common ideal of $R$ and $B[X]$, it follows that $[R :_R B[X]] < \infty$. Hence Proposition~\ref{prop:FFD to ring extension} guarantees that the group $U(B[X])/U(R) = U(B)/U(A)$ is finite and $B[X]$ is an FFD. As a consequence,~$B$ is an FFD.
	\smallskip
	
	For the reverse implication, suppose that $B$ is an FFD and $U(B)/U(A)$ is finite. Since $B$ is an FFD, so is $B[X]$ by Theorem~\ref{thm:FFD for polynomial and Laurent ring}. Since $B[X]$ is an FFD and $(U(B[X]) \cap \text{qf}(R))/U(R) = U(B)/U(A)$ is finite, it follows from Proposition~\ref{prop:FFD underrings} that $R$ is an FFD.
\end{proof}

We have characterized SFFDs in Section~\ref{sec:classes and examples of BFDs and FFDs}. We are now in a position to give two more characterizations.

\begin{prop}  \emph(\cite[Theorem~5]{AM96}\emph) \label{prop:SFFD characterizations II}
	The following statements are equivalent for an integral domain~$R$.
	\begin{enumerate}
		\item[(a)] $R$ is an SFFD.
		\smallskip
		
		\item[(b)] For any set of indeterminates $\{X_i \mid i \in I\}$ over $R$, every subring of the polynomial ring $R[\{X_i \mid i \in I\}]$ is an SFFD.
		\smallskip
		
		\item[(c)] For any set of indeterminates $\{X_i \mid i \in I\}$ over $R$, every subring of the polynomial ring $R[\{X_i \mid i \in I\}]$ is an FFD.
		\smallskip
		
		\item[(d)] Every subring of $R[X]$ is an FFD.
	\end{enumerate}
\end{prop}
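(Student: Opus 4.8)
The plan is to establish the cycle of implications (a) $\Rightarrow$ (b) $\Rightarrow$ (c) $\Rightarrow$ (d) $\Rightarrow$ (a), since two of the four links are essentially immediate: every SFFD is an FFD by Proposition~\ref{prop:SFFD characterizations}, which yields (b) $\Rightarrow$ (c), while (d) is merely the special case $|I| = 1$ of (c), giving (c) $\Rightarrow$ (d). Thus all the real content lies in (a) $\Rightarrow$ (b) and, above all, in (d) $\Rightarrow$ (a).

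For (a) $\Rightarrow$ (b), I would mirror the argument proving Corollary~\ref{cor:when polynomials in a collection of indeterminates are FFDs}(3), replacing the hypothesis ``$R$ is a finite field or $\zz$'' by the weaker ``$R$ is an SFFD''. Writing $R_I = R[\{X_i \mid i \in I\}]$, fix a subring $T$ of $R_I$ and a nonunit $f \in T^*$, and choose a finite $J \subseteq I$ with $f \in R_J := R[\{X_j \mid j \in J\}]$. Since an SFFD is an FFD (Proposition~\ref{prop:SFFD characterizations}), Corollary~\ref{cor:when polynomials in a collection of indeterminates are FFDs}(1) makes $R_J$ an FFD; and because $U(R_J) = U(R)$ is finite (an SFFD has finite unit group), Proposition~\ref{prop:SFFD characterizations} upgrades $R_J$ to an SFFD. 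As every divisor of $f$ in $T$ is also a divisor of $f$ in $R_J$, the element $f$ has only finitely many divisors in $T$, so $T$ is an SFFD.

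The substantive step is (d) $\Rightarrow$ (a). Since $R$ is itself a subring of $R[X]$, hypothesis (d) already gives that $R$ is an FFD, so by Proposition~\ref{prop:SFFD characterizations} it remains only to prove that $U(R)$ is finite. I would argue by contradiction: assuming $U(R)$ infinite, I construct a subring of $R[X]$ that fails to be an FFD. Let $P$ denote the prime subring of $R$ (a copy of $\zz$ or of some $\ff_p$, whose unit group is finite in either case), and set $T = P + XR[X]$, which is readily checked to be a subring of $R[X]$. Because $U(R[X]) = U(R)$, every unit of $T$ must be a constant unit of $R$ lying in $P$, so $U(T) = U(P)$ is finite. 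The key computation is that for each $u \in U(R)$ one has $X^2 = (uX)(u^{-1}X)$ with both factors in $T$, so $uX$ divides $X^2$ in $T$; moreover $uX$ and $vX$ are associates in $T$ precisely when $uU(P) = vU(P)$. As $U(R)/U(P)$ is infinite, $X^2$ acquires infinitely many non-associate divisors in $T$, contradicting (d) via Proposition~\ref{prop:FFD characterizations}. Hence $U(R)$ is finite and $R$ is an SFFD.

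I expect the main obstacle to be the design of the witnessing subring in (d) $\Rightarrow$ (a): the whole point is to shrink the group of units, which is too large in $R[X]$ itself (there all the $uX$ are mutually associate), by forcing the constant terms into a subring whose unit group is finite, and the prime subring is the natural minimal choice guaranteeing $U(T)$ finite no matter what $R$ is. Verifying both $U(T) = U(P)$ and that distinct cosets $uU(P)$ yield genuinely non-associate divisors of $X^2$ is the crux of the proof; the remaining implications are formal once Propositions~\ref{prop:SFFD characterizations} and~\ref{prop:FFD characterizations} and Corollary~\ref{cor:when polynomials in a collection of indeterminates are FFDs} are invoked.
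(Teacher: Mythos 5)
Your proof is correct and follows essentially the same route as the paper: the same cycle of implications, the same use of Corollary~\ref{cor:when polynomials in a collection of indeterminates are FFDs} and Proposition~\ref{prop:SFFD characterizations} for (a) $\Rightarrow$ (b), and the same witnessing subring $P + XR[X]$ (with $P$ the prime subring) for (d) $\Rightarrow$ (a). The only difference is cosmetic: where the paper cites Proposition~\ref{prop:FFD to ring extension} to get finiteness of $U(R[X])/U(P+XR[X])$, you inline the identical computation $X^2=(uX)(u^{-1}X)$ directly.
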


\begin{proof}
	(a) $\Rightarrow$ (b): Let $\{X_i \mid i \in I\}$ be a nonempty set of indeterminates over $R$, and let $T$ be a subring of $R_I = R[\{X_i \mid i \in I\}]$. Take $f \in T^*$. Since $R_I$ is an FFD by Corollary~\ref{cor:when polynomials in a collection of indeterminates are FFDs} and $U(R_I) = U(R)$ is finite by Proposition~\ref{prop:SFFD characterizations}, there are only finitely many divisors of $f$ in $R_I$. Therefore $f$ has only finitely many divisors in $T$. Thus, $T$ is an SFFD.
	\smallskip
	
	(b) $\Rightarrow$ (c): This is clear.
	\smallskip
	
	(c) $\Rightarrow$ (d): This is clear.
	\smallskip
	
	(d) $\Rightarrow$ (a): Since $R$ is a subring of $R[X]$, it is an FFD. Set $S = R_0 + XR[X]$, where $R_0$ is the prime subring of $R$. As $S$ is a subring of $R[X]$, it is an FFD. In addition, because $X \in [S :_S R[X]]$, it follows from Proposition~\ref{prop:FFD to ring extension} that $U(R[X])/U(S)$ is finite. Since $U(R[X]) = U(R)$ and $U(S) = U(R_0)$, the group $U(R)/U(R_0)$ is finite. Now the fact that $U(R_0)$ is finite immediately implies that $U(R)$ is finite. Thus, $R$ is an SFFD by Proposition~\ref{prop:SFFD characterizations}.
\end{proof}

The power series analog of Proposition~\ref{prop:FFD for polynomial-like extensions} does not hold, as the following example illustrates.

\begin{example}
	Let $F_1 \subsetneq F_2$ be an extension of finite fields. We have seen in Example~\ref{ex:R being an FFD does not imply that R[[X]] is an FFD} that $F_1 + YF_2[[Y]]$ is an FFD. Take $A = B = F_1 + YF_2[[Y]]$. Although $B$ is an FFD and the group $U(B)/U(A)$ is finite, $A + XB[[X]] = B[[X]]$ is not an FFD, as shown in Example~\ref{ex:R being an FFD does not imply that R[[X]] is an FFD}.
\end{example}

However, we can characterize when $A + XB[[X]]$ is an FFD using the condition that $B[[X]]$ is an FFD, which is stronger than $B$ being an FFD.

\begin{prop}  \emph(\cite[Proposition~3.3]{AeA99}\emph) \label{prop:FFD for power-series-like extensions}
	Let $A \subseteq B$ be an extension of integral domains. Then $A + XB[[X]]$ is an FFD if and only if $B[[X]]$ is an FFD and $U(B)/U(A)$ is a finite group.
\end{prop}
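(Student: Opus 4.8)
The plan is to mirror the proof of Proposition~\ref{prop:FFD for polynomial-like extensions}, replacing $B[X]$ by $B[[X]]$ throughout and invoking the two transfer results Propositions~\ref{prop:FFD to ring extension} and~\ref{prop:FFD underrings}. Write $R = A + XB[[X]]$. The essential computation I would isolate first is that the constant-term map induces a group isomorphism $U(B[[X]])/U(R) \cong U(B)/U(A)$. Indeed, $U(B[[X]])$ consists precisely of the power series with unit constant term, so the map $\varphi \colon U(B[[X]]) \to U(B)/U(A)$ sending $f$ to $f(0)U(A)$ is a surjective group homomorphism. Its kernel is exactly $U(R)$: a unit of $R$ is a power series lying in $R$ whose inverse also lies in $R$, and this happens if and only if its constant term lies in $U(A)$. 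This identification is what allows the finiteness of $U(B)/U(A)$ and that of $U(B[[X]])/U(R)$ to be used interchangeably.

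For the forward implication, I would observe that $XB[[X]]$ is a nonzero ideal of $B[[X]]$ that is contained in $R$ and is an ideal of $R$ as well, so it is a nonzero common ideal and the conductor $[R :_R B[[X]]]$ is nonzero. Assuming that $R$ is an FFD, Proposition~\ref{prop:FFD to ring extension} then yields at once that $B[[X]]$ is an FFD and that the group $U(B[[X]])/U(R)$ is finite; by the isomorphism above, $U(B)/U(A)$ is finite.

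For the reverse implication, I would apply Proposition~\ref{prop:FFD underrings} to the extension $R \subseteq B[[X]]$. Since $U(R) \subseteq U(B[[X]]) \cap \text{qf}(R)^* \subseteq U(B[[X]])$, the quotient $(U(B[[X]]) \cap \text{qf}(R)^*)/U(R)$ is a subgroup of $U(B[[X]])/U(R) \cong U(B)/U(A)$, which is finite by hypothesis; hence that quotient is finite. With $B[[X]]$ assumed to be an FFD, Proposition~\ref{prop:FFD underrings} gives that $R$ is an FFD, completing the equivalence.

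The only genuine subtlety—and the reason the statement is phrased in terms of $B[[X]]$ being an FFD rather than $B$ being an FFD, in contrast to the polynomial case—is that Theorem~\ref{thm:FFD for polynomial and Laurent ring} has no power-series analogue: $B$ being an FFD does not force $B[[X]]$ to be an FFD (see Example~\ref{ex:R being an FFD does not imply that R[[X]] is an FFD}). I expect the main care to go into the kernel computation $\ker\varphi = U(R)$, which rests on the observation that a power series $f$ with $f(0) \in U(A)$ has inverse whose constant term $f(0)^{-1}$ lies back in $A$, so that $f^{-1} \in R$ and hence $f \in U(R)$.
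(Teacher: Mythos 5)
Your proposal is correct and follows essentially the same route as the paper: the forward direction uses the nonzero common ideal $XB[[X]]$ together with Proposition~\ref{prop:FFD to ring extension}, and the reverse direction applies Proposition~\ref{prop:FFD underrings} via the identification $U(B[[X]])/U(R) \cong U(B)/U(A)$. Your explicit constant-term kernel computation just fills in a step the paper leaves implicit, and your observation that $(U(B[[X]]) \cap \text{qf}(R)^*)/U(R)$ need only be a subgroup of a finite group is a harmless (slightly more careful) variant of the paper's equality.
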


\begin{proof}
	For the direct implication, suppose that $R = A + XB[[X]]$ is an FFD. Since $XB[[X]]$ is a nonzero ideal of both $B[[X]]$ and $R$, it follows that $\big[ R :_R B[[X]] \big] \neq \{0\}$. Therefore $B[[X]]$ is an FFD and the group $U(B)/U(A) \cong U(B[[X]])/U(R)$ is finite by Proposition~\ref{prop:FFD to ring extension}.
	\smallskip
	
	For the reverse implication, suppose that $B[[X]]$ is an FFD and the group $U(B)/U(A)$ is finite. Since $B[[X]]$ is an FFD and the group $(U(B[[X]] \cap \text{qf}(R))/U(R) = U(B[[X]])/U(R) \cong U(B)/U(A)$ is finite, it follows from Proposition~\ref{prop:FFD underrings} that $R$ is an FFD.
\end{proof}

Let $A \subseteq B$ be an extension of integral domains. If $R = A + XB[X]$ is an FFD, then it follows from Proposition~\ref{prop:FFD for polynomial-like extensions} that $U(B)/U(A)$ is finite, and so $U(A) = U(B) \cap A$. Then Proposition~\ref{prop:FFD underrings} guarantees that $A$ is also an FFD because $U(A) = U(R) \cap \text{qf}(A)$. Similarly, $A$ is an FFD provided that $A + XB[[X]]$ is an FFD. We record this observation as a corollary.

\begin{cor} \emph(\cite[Remark~3.5]{AeA99}\emph)
	Let $A \subseteq B$ be an extension of integral domains. If either $A + XB[X]$ or $A + XB[[X]]$ is an FFD, then $A$ is an FFD.
\end{cor}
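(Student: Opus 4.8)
The plan is to derive the conclusion from Proposition~\ref{prop:FFD underrings} applied to the extension $A \subseteq R$, where $R$ denotes whichever of the rings $A + XB[X]$ or $A + XB[[X]]$ is assumed to be an FFD. That proposition needs two inputs: that $R$ be an FFD, which is precisely the hypothesis, and that the group $(U(R) \cap \text{qf}(A)^\ast)/U(A)$ be finite. Thus the whole argument reduces to understanding the units of $R$ that lie in $\text{qf}(A)^\ast$, and I expect to prove that this subgroup is exactly $U(A)$, so that the relevant quotient is trivial.

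First I would bring in the characterizations already established for these two constructions. If $A + XB[X]$ is an FFD, then Proposition~\ref{prop:FFD for polynomial-like extensions} shows that $U(B)/U(A)$ is finite; if $A + XB[[X]]$ is an FFD, then Proposition~\ref{prop:FFD for power-series-like extensions} yields the same conclusion. From the finiteness of $U(B)/U(A)$ I would deduce the identity $U(A) = U(B) \cap A$ by a torsion argument valid in any integral domain: if $u \in U(B) \cap A$, then the coset $u\,U(A)$ has some finite order $n$ in $U(B)/U(A)$, so $u^{n} \in U(A)$ and hence $u^{-n} \in A$; as $u \in A$ too, the product $u^{\,n-1} u^{-n} = u^{-1}$ lies in $A$, which forces $u \in U(A)$.

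The crux is then to compute $U(R) \cap \text{qf}(A)^\ast$. Every element of $\text{qf}(A)^\ast$ is a ``constant'', involving no positive power of $X$, so a unit of $R$ lying in $\text{qf}(A)^\ast$ is a constant element of $R$, hence of $A$, which is also a unit of the ambient ring $B[X]$ (resp. $B[[X]]$) and therefore, being constant, a unit of $B$. Such an element lies in $U(B) \cap A$, which equals $U(A)$ by the previous paragraph. This gives $U(R) \cap \text{qf}(A)^\ast = U(A)$, so the quotient $(U(R) \cap \text{qf}(A)^\ast)/U(A)$ is trivial, and Proposition~\ref{prop:FFD underrings} applied to $A \subseteq R$, with $R$ an FFD, concludes that $A$ is an FFD. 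The two cases run along identical lines.

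The step that I expect to demand the most care is precisely the identification $U(R) \cap \text{qf}(A)^\ast = U(A)$ in the power series setting. In the polynomial case one simply has $U(A + XB[X]) = U(A)$, but the unit group $U(A + XB[[X]])$ is strictly larger than $U(A)$ (for instance $1 + bX$ is a unit for every $b \in B$), so one cannot equate $U(R)$ with $U(A)$ outright. What rescues the argument is that intersecting with $\text{qf}(A)^\ast$ discards every unit of positive order in $X$ and retains only the constant units; verifying this reduction carefully, together with the torsion argument of the second paragraph, is where the real content of the proof sits.
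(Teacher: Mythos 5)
Your proposal is correct and follows essentially the same route as the paper: invoke Propositions~\ref{prop:FFD for polynomial-like extensions} and~\ref{prop:FFD for power-series-like extensions} to get $U(B)/U(A)$ finite, deduce $U(A) = U(B) \cap A$, identify $U(R) \cap \text{qf}(A)^\ast$ with $U(A)$, and conclude via Proposition~\ref{prop:FFD underrings}. The extra care you take with the power series case (where $U(R) \supsetneq U(A)$) is exactly the point the paper glosses over with ``similarly,'' and your torsion argument for $U(A) = U(B) \cap A$ is sound.
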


\smallskip
Now we return to study the integral domains $\aaa[X]$ and $\aaa[[X]]$ introduced in~\eqref{eq:tower domain rings}. This time, we focus our attention on the finite factorization property. To begin with, we give two sufficient conditions and one necessary condition for $\aaa[X]$ to be an FFD.

\begin{prop} \emph(\cite[Theorem~3.4.6 and Proposition~3.4.7]{pK99}\emph)  \label{prop:tower of domain; sufficient condition for FFD}
	The following statements hold.
	\begin{enumerate}
		\item The integral domain $A_0 + XA_1 + \dots + X^{n-1}A_{n-1} + X^nA_n[X]$ is an FFD for every $n \in \nn_0$ if and only if $\aaa[X]$ is an FFD.
		\smallskip
		
		\item If $U(A)/U(A_0)$ is finite and $A[X]$ is an FFD, then $\aaa[X]$ is an FFD.
	\end{enumerate}
\end{prop}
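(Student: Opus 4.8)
The plan is to prove both parts by reducing to the subring criterion of Proposition~\ref{prop:FFD underrings} and the divisor characterization of Proposition~\ref{prop:FFD characterizations}. First I would fix notation: for each $n \in \nn_0$ set
\[
	B_n = A_0 + XA_1 + \dots + X^{n-1}A_{n-1} + X^nA_n[X],
\]
so that $B_n$ consists precisely of the polynomials $\sum_i a_iX^i$ with $a_i \in A_i$ for $i < n$ and $a_i \in A_n$ for $i \ge n$. Since $A_n \subseteq A_i$ whenever $i \ge n$, we get $B_n \subseteq \aaa[X]$; moreover the $B_n$ form an ascending chain with $\aaa[X] = \bigcup_{n \in \nn_0} B_n$, because any $f \in \aaa[X]$ of degree $m$ already lies in $B_m$. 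Comparing degrees in $A[X]$ also shows that $U(B_n) = U(A_0) = U(\aaa[X])$ for every $n$, a fact I would record at the outset since it makes every unit-quotient below trivial.

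For the forward implication of part~(1), I would argue that if each $B_n$ is an FFD then so is $\aaa[X]$, verifying condition~(c) of Proposition~\ref{prop:FFD characterizations}. It suffices to treat a nonzero nonunit $f \in \aaa[X]$; put $m = \deg f$, so $f \in B_m$. Every divisor $g$ of $f$ in $\aaa[X]$ has $\deg g \le m$, and both $g$ and the cofactor $f/g$ then lie in $B_m$; hence $g$ divides $f$ already in $B_m$, and conversely every $B_m$-divisor of $f$ is an $\aaa[X]$-divisor. Since $U(B_m) = U(\aaa[X])$, the non-associate divisors of $f$ in $\aaa[X]$ coincide with those in the FFD $B_m$ and so are finite in number, whence $\aaa[X]$ is an FFD. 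For the converse, each inclusion $B_n \subseteq \aaa[X]$ satisfies $(U(\aaa[X]) \cap \text{qf}(B_n)^*)/U(B_n) = U(A_0)/U(A_0)$, which is trivial, so Proposition~\ref{prop:FFD underrings} applied to this extension shows that $B_n$ inherits the finite factorization property from $\aaa[X]$.

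For part~(2), the plan is to apply Proposition~\ref{prop:FFD underrings} to the extension $\aaa[X] \subseteq A[X]$. Here $U(A[X]) = U(A)$ and $U(\aaa[X]) = U(A_0)$. The one point needing care is that $U(A) \subseteq \text{qf}(\aaa[X])^*$: given $a \in U(A)$, choose $m$ with $a \in A_m$; then $aX^m$ and $X^m$ both lie in $\aaa[X]$, whence $a = (aX^m)/X^m \in \text{qf}(\aaa[X])$. Consequently $U(A[X]) \cap \text{qf}(\aaa[X])^* = U(A)$, and the relevant quotient is exactly $U(A)/U(A_0)$, which is finite by hypothesis. Since $A[X]$ is an FFD by assumption, Proposition~\ref{prop:FFD underrings} yields that $\aaa[X]$ is an FFD.

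The main obstacle I anticipate is not conceptual but bookkeeping: verifying the membership and degree facts that pin down $B_n$ inside $\aaa[X]$ and guarantee that divisors and cofactors of a degree-$m$ polynomial stay in $B_m$, together with the unit computations $U(\aaa[X]) = U(A_0)$ and $U(A) \subseteq \text{qf}(\aaa[X])^*$. Once these are in hand, both parts follow immediately from Propositions~\ref{prop:FFD characterizations} and~\ref{prop:FFD underrings}, so the work is concentrated in setting up the subring picture correctly rather than in any delicate factorization argument.
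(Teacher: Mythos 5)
Your proposal is correct and follows essentially the same route as the paper: both directions of part~(1) rest on the observation that divisors of a degree-$m$ element of $\aaa[X]$ live in $B_m$ and that $U(B_m) = U(A_0) = U(\aaa[X])$, and part~(2) is exactly the paper's application of Proposition~\ref{prop:FFD underrings} to $\aaa[X] \subseteq A[X]$ with quotient $U(A)/U(A_0)$. The only cosmetic difference is that for the converse of part~(1) you invoke Proposition~\ref{prop:FFD underrings} for the extension $B_n \subseteq \aaa[X]$, whereas the paper reuses the divisor-correspondence directly; the two are interchangeable here.
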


\begin{proof}
	(1) Set $R_n = A_0 + XA_i + \dots + X^{n-1}A_{n-1} + X^nA_n[X]$ for every $n \in \nn_0$. For the direct implication, assume that $R_n$ is an FFD for every $n \in \nn_0$. Take $f \in \aaa[X]^*$, and let $d$ be the degree of $f$. Clearly, every divisor of $f$ in $\aaa[X]$ belongs to $R_d$. Since $R_d$ is an FFD and $U(R_d) = U(A_0) = U(\aaa[X])$, the element $f$ has only finitely many non-associate divisors in $\aaa[X]$. Thus, $\aaa[X]$ is an FFD by Proposition~\ref{prop:FFD characterizations}.
	
	For the reverse implication, fix $m \in \nn_0$ and take $f \in R_m$. As in the previous paragraph, two polynomials are non-associate divisors of $f$ in $R_m$ if and only if they are non-associate divisors of $f$ in $\aaa[X]$. Because $\aaa[X]$ is an FFD, so is $R_m$ by Proposition~\ref{prop:FFD characterizations}.
	\smallskip
	
	(2) Observe that $\aaa[X]$ is a subring of $A[X]$, and $U(A[X])$ is contained in $\text{qf}(\aaa[X])$. Therefore $(U(A[X]) \cap \text{qf}(\aaa[X]))/U(\aaa[X]) = U(A)/U(A_0)$ is finite. As a result, it follows from Proposition~\ref{prop:FFD underrings} that $\aaa[X]$ is an FFD.
\end{proof}

If the chain of integral domains $(A_n)_{n \ge 0}$ stabilizes, then we can characterize when $\aaa[X]$ (or $\aaa[[X]]$) is an FFD.

\begin{prop}  \emph(\cite[Theorem~3.4.5 and Proposition~3.4.8]{pK99}\emph)
	If there is an $N \in \nn$ such that $A_n = A_N$ for every $n \ge N$, then the following statements hold.
	\begin{enumerate}
		\item $\aaa[X]$ is an FFD if and only if $A_N$ is an FFD and the group $U(A_N)/U(A_0)$ is finite.
		\smallskip
		
		\item $\aaa[[X]]$ is an FFD if and only if $A_N[[X]]$ is an FFD and the group $U(A_N[[X]])/U(\aaa[[X]])$ is finite.
	\end{enumerate}
\end{prop}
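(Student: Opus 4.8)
The plan is to exploit that when the chain $(A_n)_{n\ge 0}$ stabilizes at $A_N$ we have $A=A_N$, so that $\aaa[X]=A_0+XA_1+\cdots+X^{N-1}A_{N-1}+X^NA_N[X]$ is a subring of $A_N[X]$ and, likewise, $\aaa[[X]]=A_0+XA_1+\cdots+X^{N-1}A_{N-1}+X^NA_N[[X]]$ is a subring of $A_N[[X]]$. The crucial observation in both cases is that $X^NA_N[X]$ (resp.\ $X^NA_N[[X]]$) is a nonzero ideal of both the subring and the overring, so the conductor $[\aaa[X]:_{\aaa[X]}A_N[X]]$ (resp.\ $[\aaa[[X]]:_{\aaa[[X]]}A_N[[X]]]$) is nonzero. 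This is exactly the hypothesis needed to invoke Proposition~\ref{prop:FFD to ring extension} for the forward implications and Proposition~\ref{prop:FFD underrings} for the reverse implications; the whole argument then reduces to these two transfer results.

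For the forward implication of part~(1), I would assume $\aaa[X]$ is an FFD and apply Proposition~\ref{prop:FFD to ring extension} to $\aaa[X]\subseteq A_N[X]$ with its nonzero conductor, obtaining at once that $A_N[X]$ is an FFD and that $U(A_N[X])/U(\aaa[X])$ is finite. I would then record the two unit computations $U(A_N[X])=U(A_N)$ and $U(\aaa[X])=U(A_0)$: both are immediate from the fact that in a polynomial ring over a domain the units are the constant units, while a unit of $\aaa[X]$ must be a degree-zero element whose inverse is again degree zero and hence lies in $A_0$. Thus the finite group is precisely $U(A_N)/U(A_0)$, and Theorem~\ref{thm:FFD for polynomial and Laurent ring} upgrades ``$A_N[X]$ is an FFD'' to ``$A_N$ is an FFD''. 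The forward implication of part~(2) is the same application of Proposition~\ref{prop:FFD to ring extension} to $\aaa[[X]]\subseteq A_N[[X]]$, giving directly that $A_N[[X]]$ is an FFD and $U(A_N[[X]])/U(\aaa[[X]])$ is finite; here I would not try to simplify the unit group, since the power-series analogue of Theorem~\ref{thm:FFD for polynomial and Laurent ring} fails, which is exactly why the statement is phrased in terms of $A_N[[X]]$ rather than $A_N$.

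For the reverse implications I would use Proposition~\ref{prop:FFD underrings}. In part~(1), from $A_N$ an FFD I first obtain $A_N[X]$ an FFD by Theorem~\ref{thm:FFD for polynomial and Laurent ring}; to apply Proposition~\ref{prop:FFD underrings} to $\aaa[X]\subseteq A_N[X]$ I need $(U(A_N[X])\cap\text{qf}(\aaa[X])^\ast)/U(\aaa[X])$ to be finite. The key point is that $X^N$ and $A_NX^N$ both lie in $\aaa[X]$, so $A_N=A_NX^N/X^N\subseteq\text{qf}(\aaa[X])$ and hence $U(A_N[X])=U(A_N)\subseteq\text{qf}(\aaa[X])$; the relevant group therefore collapses to $U(A_N)/U(A_0)$, finite by hypothesis, and Proposition~\ref{prop:FFD underrings} yields that $\aaa[X]$ is an FFD. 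Part~(2) is structurally identical: assuming $A_N[[X]]$ is an FFD with $U(A_N[[X]])/U(\aaa[[X]])$ finite, the inclusions $X^NA_N[[X]]\subseteq\aaa[[X]]$ and $X^N\in\aaa[[X]]$ force $A_N[[X]]\subseteq\text{qf}(\aaa[[X]])$, so $U(A_N[[X]])\cap\text{qf}(\aaa[[X]])^\ast=U(A_N[[X]])$ and Proposition~\ref{prop:FFD underrings} applies verbatim.

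I do not expect a genuine obstacle once the common ideal $X^NA_N[\,\cdot\,]$ has been identified, as everything then rests on the two transfer results already established. The only points demanding care are the unit identifications in part~(1) and the verification that the candidate units land in the appropriate quotient field; both are settled by the observation that $X^N$ (and $A_NX^N$) belong to the subring, which is precisely where the hypothesis $N\in\nn$, i.e.\ $N\ge 1$, enters.
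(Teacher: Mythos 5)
Your proposal is correct and follows essentially the same route as the paper: both directions of each part rest on the nonzero conductor $X^N \in \big[\aaa[X] :_{\aaa[X]} A_N[X]\big]$ (resp.\ its power-series analogue) together with Proposition~\ref{prop:FFD to ring extension} for the forward implications and Proposition~\ref{prop:FFD underrings}, via the inclusion $U(A_N[X]) \subseteq \mathrm{qf}(\aaa[X])$, for the reverse ones. The only difference is that you spell out why $A_N \subseteq \mathrm{qf}(\aaa[X])$, a point the paper leaves implicit.
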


\begin{proof}
	(1) For the direct implication, assume that $\aaa[X]$ is an FFD. Because $\aaa[X] \subseteq A_N[X]$ and $X^N \in \big[ \aaa[X] :_{\aaa[X]} A_N[X] \big]$, Proposition~\ref{prop:FFD to ring extension} guarantees that the integral domain $A_N[X]$ is an FFD and the group $U(A_N[X])/U(\aaa[X]) = U(A_N)/U(A_0)$ is finite. Since $A_N[X]$ is an FFD, so is~$A_N$.
	
	Conversely, suppose that $A_N$ is an FFD and $U(A_N)/U(A_0)$ is finite. The ring of polynomials $A_N[X]$ is an FFD by Theorem~\ref{thm:FFD for polynomial and Laurent ring}. On the other hand, $U(A_N[X]) \subseteq \text{qf}(\aaa[X])$, and therefore, $(U(A_N[X]) \cap \text{qf}(\aaa[X]))/U(\aaa[X]) = U(A_N)/U(A_0)$ is finite. Thus, it follows from Proposition~\ref{prop:FFD underrings} that $\aaa[X]$ is an FFD.
	\smallskip
	
	(2) Assume first that $\aaa[[X]]$ is an FFD. Because $\aaa[[X]]$ is an FFD contained in $A_N[X]$ and $X^N \in \big[ \aaa[[X]] :_{\aaa[[X]]} A_N[[X]] \big]$, it follows from Proposition~\ref{prop:FFD to ring extension} that $A_N[[X]]$ is an FFD and also that the group $U(A_N[[X]])/U(\aaa[[X]])$ is finite.
	
	For the reverse implication, suppose that $A_N[[X]]$ is an FFD and $U(A_N[[X]])/U(\aaa[[X]])$ is finite. It is easy to verify that the quotient field of $\aaa[[X]]$ is $\text{qf}(A_N[[X]])$. As a result, we obtain that the group $(U(A_N[[X]]) \cap \text{qf}(\aaa[[X]]))/U(\aaa[[X]]) = U(A_N[[X]])/U(\aaa[[X]])$ is finite. Since $A_N[[X]]$ is an FFD, Proposition~\ref{prop:FFD underrings} guarantees that $\aaa[[X]]$ is an FFD.
\end{proof}

\smallskip
\subsection{Monoid Domains}
\label{subsec:monoid domains}

Let $R$ be an integral domain, and let $M$ be a torsion-free monoid. Since monoids here are assumed to be cancellative and commutative, it follows from \cite[Corollary~3.4]{rG84} that $M$ admits a compatible total order (indeed, every compatible partial order on $M$ extends to a compatible total order on $\gp(M)$ \cite[Theorem~3.1]{rG84}). Hence we tacitly assume that $M$ is a totally ordered monoid. We say that $f = \sum_{i=1}^n c_i X^{m_i} \in R[M]^*$ is represented in \emph{canonical form} if $c_i \neq 0$ for every $i \in  \ldb 1, n \rdb$ and $m_1 > \dots > m_n$. Observe that any element of $R[M]^*$ has a unique representation in canonical form. In this case, $\deg f = m_1$ is called the \emph{degree} of $f$, while $c_1$ and $c_1X^{m_1}$ are called the \emph{leading coefficient} and the \emph{leading term} of $f$, respectively. As it is customary for polynomials, we say that $f$ is a \emph{monomial} if $n = 1$.

Most of the results presented in this subsection were established by H. Kim in \cite{hK98,hK01}, where the interested reader can also find similar results concerning atomicity, the ACCP, and the unique factorization property. We start by discussing the bounded factorization property in the context of monoid domains.

\begin{prop}  \emph(\cite[Propositions~1.4 and~1.5]{hK01}\emph) \label{prop:BF in monoid domains}
	Let $R$ be an integral domain with quotient field~$K$, and let $M$ be a torsion-free monoid. Then the following statements hold.
	\begin{enumerate}
		\item If $R[M]$ is a BFD, then $R$ is a BFD and $M$ is a BFM.
		\smallskip
		
		\item If $R$ and $K[M]$ are both BFDs, then $R[M]$ is a BFD.
	\end{enumerate}
\end{prop}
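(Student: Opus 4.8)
The plan is to invoke the characterization of BFDs via length functions in Proposition~\ref{prop:BFD characterizations}: it suffices to exhibit a length function $\ell \colon R[M]^* \to \nn_0$. Since $R$ and $K[M]$ are both BFDs, that same proposition furnishes length functions $\ell_R \colon R^* \to \nn_0$ and $\ell_{K[M]} \colon K[M]^* \to \nn_0$, and the idea is to combine them. For $f \in R[M]^*$, I would write $f$ in canonical form and let $c(f) \in R^*$ denote its leading coefficient (this is well defined because $M$ is totally ordered). I would then set
\[
	\ell(f) = \ell_{K[M]}(f) + \ell_R(c(f)),
\]
noting that $R[M]^* \subseteq K[M]^*$, so the first summand makes sense after restricting $\ell_{K[M]}$.

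Next I would verify the superadditivity condition~(ii). For $f,g \in R[M]^*$, the compatibility of the total order on $M$ ensures that the degree of $fg$ equals the sum of the degrees of $f$ and $g$, and the leading term of $fg$ is the product of the leading terms of $f$ and $g$; since $R$ is a domain, this gives $c(fg) = c(f)c(g) \neq 0$. Hence $\ell_R(c(fg)) \ge \ell_R(c(f)) + \ell_R(c(g))$, and combining this with the superadditivity $\ell_{K[M]}(fg) \ge \ell_{K[M]}(f) + \ell_{K[M]}(g)$ yields $\ell(fg) \ge \ell(f) + \ell(g)$.

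For condition~(i), I would use the description $U(R[M]) = \{uX^m \mid u \in U(R),\, m \in U(M)\}$ recalled in Section~\ref{sec:prelim}. If $f \in U(R[M])$, then $f = uX^m$ is a unit of $K[M]$ as well, so $\ell_{K[M]}(f) = 0$, and $c(f) = u \in U(R)$ gives $\ell_R(c(f)) = 0$; thus $\ell(f) = 0$. Conversely, if $\ell(f) = 0$, then $\ell_{K[M]}(f) = 0$ forces $f$ to be a unit of $K[M]$, i.e. a monomial $f = aX^m$ with $a \in K^*$ and $m \in U(M)$; but $f \in R[M]$, so in fact $a = c(f) \in R^*$, and then $\ell_R(c(f)) = 0$ forces $a \in U(R)$. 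Hence $f = aX^m \in U(R[M])$, and $\ell$ is a length function, so $R[M]$ is a BFD.

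The step I expect to require the most care is condition~(i), and in particular the reason the correction term $\ell_R(c(f))$ is needed at all. The restriction of $\ell_{K[M]}$ to $R[M]^*$ is already superadditive, but it fails~(i) on its own: a monomial $aX^m$ with $a \in R^* \setminus U(R)$ is a unit of $K[M]$ yet not of $R[M]$, so it would be assigned length $0$ without being a unit. Adding $\ell_R(c(f))$ is exactly what distinguishes such elements from genuine units of $R[M]$, and the multiplicativity of leading coefficients (where $R$ being a domain is used) is what keeps the corrected function superadditive.
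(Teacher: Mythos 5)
Your argument for part~(2) is correct and is essentially the paper's own proof: both define $\ell(f) = \ell_{K[M]}(f) + \ell_R(c(f))$ with $c(f)$ the leading coefficient, and both verify superadditivity via the multiplicativity of leading coefficients under the compatible total order on $M$. If anything, you are more careful than the paper on condition~(i): the paper only checks that units of $R[M]$ get length $0$, whereas you also check the converse direction using the description of $U(K[M])$, which is the direction that actually matters for atomicity. Your closing remark about why the correction term $\ell_R(c(f))$ is needed is exactly the right intuition.

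The genuine gap is that your proposal addresses only part~(2) and says nothing about part~(1), which is half of the statement. Part~(1) requires a separate (short) argument that does not come from a length function on $R[M]$. The paper handles it as follows: since $M$ is torsion-free, $U(R[M]) = \{uX^m \mid u \in U(R),\ m \in U(M)\}$, so $U(R) = U(R[M]) \cap R$ and Proposition~\ref{prop:BFD underrings} (the inert-submonoid transfer of the bounded factorization property) gives that $R$ is a BFD. For the monoid, one uses that $a \in \ii(M)$ if and only if $X^a \in \ii(R[M])$, so that $L_M(b)$ is finite exactly when $L_{R[M]}(X^b)$ is finite, and the latter holds because $R[M]$ is a BFD; hence $M$ is a BFM. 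You would need to supply this (or an equivalent) argument to complete the proof of the proposition.
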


\begin{proof}
	(1) Suppose that the monoid domain $R[M]$ is a BFD. It follows from \cite[Theorem~11.1]{rG84} that $U(R[M]) = \{uX^m \mid u \in U(R) \ \text{and} \ m \in U(M) \}$. Therefore $U(R) = U(R[M]) \cap R$, and it follows from Proposition~\ref{prop:BFD underrings} that $R$ is a BFD. To verify that $M$ is a BFM, first note that by virtue of \cite[Theorem~11.1]{rG84}, $a \in \ii(M)$ if and only if $X^a \in \ii(R[M])$. As a result, for every $b \in M \setminus U(M)$, the set $L_M(b)$ is finite if and only if the set $L_{R[M]}(X^b)$ is finite. This, together with the fact that $R[M]$ is a BFD, implies that $M$ is a BFM.
	\smallskip
	
	(2) Assume that $R$ and $T = K[M]$ are both BFDs. Proposition~\ref{prop:BFD characterizations} guarantees the existence of length functions $\ell_R \colon R^* \to \nn_0$ and $\ell_T \colon T^* \to \nn_0$ of $R^*$ and $T^*$, respectively. Now define the function $\ell \colon R[M]^* \to \nn_0$ by setting $\ell(f) = \ell_T(f)+ \ell_R(c)$, where $c$ is the leading coefficient of $f$. It is clear that every unit $uX^m$ of $R[M]$ is a unit of $T$ with $u \in U(R)$, and so $\ell (uX^m) = \ell_T(uX^m) + \ell_R(u) = 0$. Also, for polynomial expressions $f_1$ and $f_2$ in $R[M]^*$ with leading coefficients $c_1$ and $c_2$, respectively, $\ell(f_1 f_2) = \ell_T(f_1 f_2) + \ell_R(c_1 c_2) \ge (\ell_T(f_1) + \ell_R(c_1)) + (\ell_T(f_2) + \ell_R(c_2)) = \ell(f_1) + \ell (f_2)$. Thus, $\ell$ is a length function, and it follows from Proposition~\ref{prop:BFD characterizations} that $R[M]$ is a BFD.
\end{proof}
\smallskip

We have just seen that for an integral domain $R$ and a torsion-free monoid~$M$, the fact that $R[M]$ is a BFD guarantees that both $R$ and $M$ satisfy the corresponding property. If every nonzero element of $\gp(M)$ has type $(0, 0, \dots)$, then the reverse implication also holds, as part~(3) of the next theorem shows. A nonzero element $b$ of an abelian group $G$ has \emph{type} $(0,0, \dots)$ if there is a largest $n \in \nn$ such that the equation $nx = b$ is solvable in $G$.

\begin{theorem} \emph(\cite[Theorems~3.12, Proposition~3.14, and Theorem~3.15]{hK98}\emph) \label{thm:BF in monoid domains}
	Let $R$ be an integral domain, $F$ a field, $G$ a torsion-free abelian group whose nonzero elements have type $(0,0, \dots)$, and~$M$ a torsion-free monoid whose nonzero elements have type $(0,0, \dots)$ in $\emph{\gp}(M)$. Then the following statements hold.
	\begin{enumerate}
		\item $R[G]$ is a BFD if and only if $R$ is a BFD.
		\smallskip
		
		\item $F[M]$ is a BFD if and only if $M$ is a BFM.
		\smallskip
		
		\item $R[M]$ is a BFD if and only if $R$ is a BFD and $M$ is a BFM.
	\end{enumerate} 
\end{theorem}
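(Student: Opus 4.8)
The plan is to peel off the three forward implications first, since they are uniform. If $R[M]$ (respectively $R[G]$ or $F[M]$) is a BFD, then part~(1) of Proposition~\ref{prop:BF in monoid domains} immediately gives that the coefficient ring is a BFD and the exponent monoid is a BFM, which is all that each forward direction asserts (for part~(2), $F$ is automatically a BFD, and for part~(1) a group, viewed as a monoid, has no nonzero nonunits and so is vacuously a BFM). Thus only the three reverse implications carry content, and I would prove them hierarchically: part~(1) is the technical core, part~(2) is deduced from part~(1) by building a length function, and part~(3) is deduced from part~(2) together with part~(2) of Proposition~\ref{prop:BF in monoid domains}.

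For the reverse implication of part~(1), assume $R$ is a BFD and write $G$ as the directed union of the pure closures $G_\alpha = \{x \in G \mid nx \in A_\alpha \text{ for some } n \in \nn\}$ of its finitely generated subgroups $A_\alpha$. The crucial point is that each $G_\alpha$ is \emph{finitely generated}: a torsion-free abelian group of finite rank in which every nonzero element is divisible by only finitely many integers is free, and the type hypothesis is exactly the assertion that every nonzero element of $G$ is divisible by only finitely many integers. Granting this, $G_\alpha \cong \zz^{n_\alpha}$, so $R[G_\alpha]$ is a Laurent polynomial ring over $R$ in finitely many variables and hence a BFD by iterating the equivalence (a)$\Leftrightarrow$(d) of Corollary~\ref{cor:BFD for polynomial and power series rings}. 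Moreover, for $G_\alpha \subseteq G_\beta$ the purity of $G_\alpha$ in $G$ (hence in $G_\beta$) forces $G_\beta/G_\alpha$ to be torsion-free and finitely generated, thus free; so the extension splits and $R[G_\beta]$ is an iterated Laurent extension of $R[G_\alpha]$, which is inert (a Laurent extension $S \subseteq S[Z,Z^{-1}]$ is the composite of the inert extensions $S \subseteq S[Z]$ and $S[Z] \subseteq S[Z,Z^{-1}]$, the latter by Lemma~\ref{lem:localization at prime-generated MS are inert}, and inertness is transitive). Therefore $R[G] = \bigcup_\alpha R[G_\alpha]$ is a directed union of BFDs along inert extensions, and Theorem~\ref{thm:BF/FF for directed unions} (through Lemma~\ref{lem:inert directed family yield inert directed unions}) shows $R[G]$ is a BFD.

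For the reverse implication of part~(2), let $F$ be a field and $M$ a BFM. Since $\gp(M)$ is a torsion-free abelian group all of whose nonzero elements have type $(0,0,\dots)$, part~(1) applied to the field $F$ shows $F[\gp(M)]$ is a BFD; let $\Lambda \colon F[\gp(M)]^* \to \nn_0$ be a length function from Proposition~\ref{prop:BFD characterizations}, and let $\ell_M \colon M \to \nn_0$ be a length function of $M$ from Proposition~\ref{prop:BFM characterization via length functions}. Writing $o(f)$ for the least element of the support of $f \in F[M]^*$ under the fixed total order on $M$, I would set $\ell(f) = \Lambda(f) + \ell_M(o(f))$. This is a length function on $F[M]^*$: because extreme terms do not cancel over a domain, $o(fg) = o(f) + o(g)$, so both summands are superadditive; and $\ell(f) = 0$ forces $\Lambda(f) = 0$, whence $f$ is a monomial $cX^m$ with $m \in M$, and $\ell_M(o(f)) = \ell_M(m) = 0$, whence $m \in U(M)$, so that $f \in U(F[M])$ (the converse being clear). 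Thus $F[M]$ is a BFD by Proposition~\ref{prop:BFD characterizations}. Finally, for part~(3), assume $R$ is a BFD and $M$ a BFM and put $K = \mathrm{qf}(R)$: part~(2) applied to the field $K$ gives that $K[M]$ is a BFD, and then part~(2) of Proposition~\ref{prop:BF in monoid domains} yields that $R[M]$ is a BFD.

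The main obstacle is the finite generation of the pure closures $G_\alpha$ in part~(1): the length-function bookkeeping of part~(2) and the reduction in part~(3) are routine, but the structural fact that a finite-rank torsion-free group with every element of bounded divisibility is free is precisely what the type hypothesis exists to supply, and it is the step most likely to need a careful self-contained argument—reducing to the rank-one case (a subgroup of $\qq$ containing $1$ is free exactly when $1$ is divisible by finitely many integers) and then controlling how the rank-one pure subgroups along a basis of $G_\alpha \otimes \qq$ assemble.
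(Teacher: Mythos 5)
The reductions you give for the forward directions and for parts (2) and (3) are sound and essentially match the paper's (the paper proves (2) with the length function $\ell_T(f)+\ell_M(\deg f)$ and proves (3) by writing $R[M]=R[\gp(M)]\cap \text{qf}(R)[M]$ as a locally finite intersection, whereas you route (3) through part~(2) of Proposition~\ref{prop:BF in monoid domains}; both are fine). The problem is the reverse direction of part (1), on which your parts (2) and (3) both rest.

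Your ``crucial point'' --- that a torsion-free abelian group of finite rank in which every nonzero element is divisible by only finitely many integers is free --- is false. Pontryagin's classical rank-two example refutes it: fix a prime $p$ and a $p$-adic integer $\alpha=\lim a_n$ with $\alpha\notin\qq$ and $a_{n+1}\equiv a_n \pmod{p^n}$, and let $G\subseteq\qq^2$ be generated by $e_1,e_2$ and the elements $p^{-n}(e_1+a_ne_2)$. Every nonzero $g=\lambda e_1+\mu e_2\in G$ has finite divisibility: infinite $p$-height would force $\mu=\alpha\lambda$ with $\lambda,\mu\in\qq$, impossible unless $g=0$, and divisibility by primes other than $p$ is bounded because all coordinates lie in $\zz[1/p]$. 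So every nonzero element of $G$ has type $(0,0,\dots)$ in the paper's sense. Yet $G$ is exactly the pure closure of the finitely generated subgroup $\langle e_1,e_2\rangle$ and is not finitely generated, hence not free. Thus your pure closures $G_\alpha$ need not be finitely generated, $R[G_\alpha]$ need not be a Laurent polynomial ring, and the directed-union-along-inert-extensions argument collapses at its first step. This is precisely the difficulty the paper sidesteps by quoting the Gilmer--Parker theorem \cite[Theorem~7.12]{GP74} that $K[G]$ is a UFD for a field $K$ when all nonzero elements of $G$ have type $(0,0,\dots)$, and then applying part~(2) of Proposition~\ref{prop:BF in monoid domains} to pass from ``$R$ is a BFD and $K[G]$ is a BFD'' to ``$R[G]$ is a BFD.'' The type hypothesis does real work inside that theorem, but not by forcing finite-rank pure subgroups to be free; if you want a self-contained proof of part (1), the inert/irreducibility analysis has to be carried out at the level of the group rings $K[H]\subseteq K[H']$ rather than deduced from a structure theorem for $G$.
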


\begin{proof}
	(1) If $R[G]$ is a BFD, then it follows from part~(1) of Proposition~\ref{prop:BF in monoid domains} that $R$ is a BFD. For the reverse implication, suppose that $R$ is a BFD, and let $K$ be the quotient field of $R$. The monoid domain $K[G]$ is a UFD by \cite[Theorem~7.12]{GP74}. In particular, $K[G]$ is a BFD, and so part~(2) of Proposition~\ref{prop:BF in monoid domains} ensures that $R[G]$ is a BFD.
	\smallskip
	
	(2) If $F[M]$ is a BFD, then it follows from part~(1) of Proposition~\ref{prop:BF in monoid domains} that $M$ is a BFM. Conversely, suppose that $M$ is a BFM. As every nonzero element of $\gp(M)$ has type $(0,0, \dots)$, the monoid domain $T = F[\gp(M)]$ is a UFD, and so a BFD, by \cite[Theorem~12]{GP74}. Since $M$ is a BFM and $T$ is a BFD, Propositions~\ref{prop:BFM characterization via length functions} and~\ref{prop:BFD characterizations} guarantee the existence of length functions $\ell_M \colon M \to \nn_0$ and $\ell_T \colon T^* \to \nn_0$, respectively. Define $\ell \colon F[M]^* \to \nn_0$ by $\ell(f) = \ell_T(f) + \ell_M(\deg f)$. One can easily verify that $\ell$ is a length function of $F[M]^*$ (see the proof of part~(2) of Proposition~\ref{prop:BF in monoid domains}). Hence $F[M]$ is a BFD by Proposition~\ref{prop:BFD characterizations}.
	\smallskip
	
	(3) The direct implication follows from part~(1) of Proposition~\ref{prop:BF in monoid domains}. For the reverse implication, suppose that $R$ is a BFD and $M$ is a BFM. The monoid domain $R[\gp(M)]$ is a BFD by part~(1), while the monoid domain $\text{qf}(R)[M]$ is a BFD by part~(2). Then it follows from Proposition~ \ref{prop:a locally finite intersection of BFDs is a BFD} that $R[M] = R[\gp(M)] \cap \text{qf}(R)[M]$ is a BFD.
\end{proof}

\begin{cor} \emph(\cite[Corollary~3.17]{hK98}\emph) \label{cor:BFD on monoid domains of fg monoids}
	Let $R$ be an integral domain, and let $M$ be a finitely generated torsion-free monoid. Then $R[M]$ is a BFD if and only if $R$ is a BFD.
\end{cor}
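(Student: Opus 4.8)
The plan is to deduce this corollary directly from part~(3) of Theorem~\ref{thm:BF in monoid domains}, after checking that the hypotheses concerning the type of the elements of $\gp(M)$ are automatically satisfied once $M$ is finitely generated and torsion-free.

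First I would observe that $\gp(M)$ is a finitely generated torsion-free abelian group: it is generated by the finitely many generators of $M$ together with their inverses, and it is torsion-free because $M$ is. By the structure theorem for finitely generated abelian groups, $\gp(M) \cong \zz^k$ for some $k \in \nn_0$. I would then verify that every nonzero element of $\zz^k$ has type $(0,0,\dots)$. Indeed, given a nonzero $b = (b_1, \dots, b_k) \in \zz^k$, the equation $nx = b$ is solvable in $\zz^k$ precisely when $n$ divides each $b_i$ in $\zz$, that is, when $n$ divides $d := \gcd(b_1, \dots, b_k)$. Since $b \neq 0$ we have $d \geq 1$, so the set of valid $n$ consists of the divisors of $d$ and hence has a largest element, namely $d$ itself. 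Thus $b$ has type $(0,0,\dots)$, and consequently every nonzero element of $\gp(M)$, and in particular every nonzero element of $M$, has type $(0,0,\dots)$ in $\gp(M)$.

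Next I would note that $M$, being a finitely generated monoid, is an FFM by Corollary~\ref{cor:finitely generated monoids are FFMs}, and hence a BFM. Therefore the condition ``$M$ is a BFM'' holds automatically and imposes no restriction.

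Finally, having verified the type hypotheses, I would apply part~(3) of Theorem~\ref{thm:BF in monoid domains} to conclude that $R[M]$ is a BFD if and only if $R$ is a BFD and $M$ is a BFM; since the latter condition always holds, $R[M]$ is a BFD if and only if $R$ is a BFD. The only step requiring genuine care is the verification of the type condition, which reduces to the $\gcd$ computation above once the structure theorem identifies $\gp(M)$ with $\zz^k$; every other ingredient is a direct invocation of results already established in the excerpt.
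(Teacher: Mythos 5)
Your proposal is correct and follows essentially the same route as the paper's proof: identify $\gp(M)$ as a free abelian group so that the type condition holds, invoke Corollary~\ref{cor:finitely generated monoids are FFMs} to get that $M$ is a BFM, and then apply part~(3) of Theorem~\ref{thm:BF in monoid domains}. The only difference is that you spell out the $\gcd$ verification of the type condition, which the paper leaves implicit in the remark that free abelian groups have all nonzero elements of type $(0,0,\dots)$.
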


\begin{proof}
	Since $M$ is torsion-free and finitely generated, $\gp(M)$ is a torsion-free finitely generated abelian group, and so a free abelian group. Hence every nonzero element of $\gp(M)$ has type $(0,0,\dots)$. In addition, it follows from Corollary~\ref{cor:finitely generated monoids are FFMs} that $M$ is an FFM, and so a BFM. Hence the corollary is a consequence of part~(3) of Theorem~\ref{thm:BF in monoid domains}.
\end{proof}

In \cite{AJ15}, D.~D. Anderson and J.~R. Juett proved a version of part~(3) of Theorem~\ref{thm:BF in monoid domains}, where they assume that $M$ is reduced, but not that all nonzero elements of $\gp(M)$ have type $(0, 0,  \dots)$.

\begin{theorem} \emph(\cite[Theorem~13]{AJ15}\emph) \label{thm:BF from M and R to R[M] when M is reduced}
	Let $R$ be an integral domain, and let $M$ be a reduced torsion-free monoid. Then $R[M]$ is a BFD if and only if $R$ is a BFD and $M$ is a BFM.
\end{theorem}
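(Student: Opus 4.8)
The forward implication requires no new work: it is exactly part~(1) of Proposition~\ref{prop:BF in monoid domains}, whose proof uses no hypothesis on $M$ beyond torsion-freeness. So the entire content of the theorem sits in the reverse implication, and the plan is to manufacture a length function $\ell \colon R[M]^* \to \nn_0$ and then invoke Proposition~\ref{prop:BFD characterizations}. This is the same overall strategy as in Theorem~\ref{thm:BF in monoid domains}, but the point is that the reducedness of $M$ lets us bypass the type~$(0,0,\dots)$ hypothesis: instead of importing a length function from a group ring $\text{qf}(R)[\gp(M)]$, I would build $\ell$ directly from the degree and the leading coefficient.

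First I would record the structural consequences of $M$ being reduced and torsion-free. Since $M$ is reduced, $M \cap (-M) = U(M) = \{0\}$ inside $\gp(M)$, so the relation $a \preceq b \iff b-a \in M$ is a genuine partial order; extending it to a compatible total order on $\gp(M)$ via \cite[Theorem~3.1]{rG84} we may assume $M \subseteq \gp(M)_{\ge 0}$, i.e.\ $0 = \min M$. With this order fixed, each $f \in R[M]^*$ has a well-defined degree $\deg f \in M$ and leading coefficient $\mathrm{lc}(f) \in R^*$, and because $M$ is totally ordered and $R$ is a domain one has $\deg(fg) = \deg f + \deg g$ and $\mathrm{lc}(fg) = \mathrm{lc}(f)\,\mathrm{lc}(g)$ for all $f,g \in R[M]^*$. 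I would also note $U(R[M]) = U(R)$: by \cite[Theorem~11.1]{rG84} the units of $R[M]$ are the $uX^m$ with $u \in U(R)$ and $m \in U(M) = \{0\}$. Using that $R$ is a BFD and $M$ is a BFM, Propositions~\ref{prop:BFD characterizations} and~\ref{prop:BFM characterization via length functions} supply length functions $\ell_R \colon R^* \to \nn_0$ and $\ell_M \colon M \to \nn_0$, and I would then define
\[
	\ell(f) = \ell_M(\deg f) + \ell_R(\mathrm{lc}(f)) \qquad (f \in R[M]^*).
\]

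It then remains to verify the two axioms of a length function. Axiom~(ii) is routine: combining the multiplicativity of degree and leading coefficient with the superadditivity of $\ell_M$ and $\ell_R$ gives $\ell(fg) = \ell_M(\deg f + \deg g) + \ell_R(\mathrm{lc}(f)\mathrm{lc}(g)) \ge \ell(f) + \ell(g)$. The one delicate point — and the sole place where reducedness is genuinely needed — is axiom~(i), which is where I expect the only real subtlety. Here $\ell(f)=0$ forces $\ell_M(\deg f)=0$ and $\ell_R(\mathrm{lc}(f))=0$, hence $\deg f \in U(M)=\{0\}$ and $\mathrm{lc}(f) \in U(R)$; since $0=\min M$, having $\deg f = 0$ forces $f$ to be a constant monomial, so $f = \mathrm{lc}(f) \in U(R) = U(R[M])$, and the converse is clear. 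Without reducedness an element of degree $0$ need not be constant, so this step would break down, which is precisely why the weaker hypothesis of Theorem~\ref{thm:BF in monoid domains} had to be compensated by the type condition. Once both axioms are checked, $\ell$ is a length function and Proposition~\ref{prop:BFD characterizations} yields that $R[M]$ is a BFD, completing the argument.
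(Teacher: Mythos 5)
Your proposal is correct and takes essentially the same route as the paper: the forward direction is delegated to part~(1) of Proposition~\ref{prop:BF in monoid domains}, and the reverse direction defines the length function $\ell(f) = \ell_M(\deg f) + \ell_R(c)$ (with $c$ the leading coefficient) and invokes Proposition~\ref{prop:BFD characterizations}. If anything, you are more careful than the paper on axiom~(i): the paper only records that units have $\ell$-value $0$, while you also verify the converse by fixing a compatible total order extending divisibility so that $0 = \min M$, whence $\deg f = 0$ forces $f$ to be a constant monomial and hence a unit --- a detail worth making explicit.
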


\begin{proof}
	The direct implication follows from part~(1) of Proposition~\ref{prop:BF in monoid domains}. To argue the reverse implication, suppose that $R$ is a BFD and $M$ is a BFM. Propositions~\ref{prop:BFM characterization via length functions} and~\ref{prop:BFD characterizations} guarantee the existence of length functions $\ell_M \colon M \to \nn_0$ and $\ell_R \colon R^* \to \nn_0$, respectively. Define $\ell \colon R[M]^* \to \nn_0$ by $\ell(f) = \ell_M(\deg f) + \ell_R(c)$, where $c$ is the leading coefficient of~$f$. As $M$ is reduced, $U(R[M]) = U(R)$ and so $\ell(f) = \ell_R(f) = 0$ when $f \in U(R[M])$. In addition, if $f_1$ and $f_2$ in $R[M]^*$ have leading coefficients $c_1$ and $c_2$, respectively, then $\ell(f_1 f_2) = \ell_M(\deg f_1 + \deg f_2) + \ell_R(c_1 c_2) \ge \ell(f_1) + \ell(f_2)$. Hence $\ell$ is a length function, and $R[M]$ is a BFD by Proposition~\ref{prop:BFD characterizations}.
\end{proof}

With the notation as in Theorem~\ref{thm:BF from M and R to R[M] when M is reduced}, the monoid domain $R[M]$ may be a BFD (in fact, an SFFD) even when not every nonzero element of $\gp(M)$ has type $(0,0, \dots)$; see, for instance, Example~\ref{ex:FFD with a non-atomic localization} and \cite[Example~5.4]{AAZ90}.
\bigskip

Now we turn to discuss the finite factorization property in the context of monoid domains. The following result is parallel to Proposition~\ref{prop:BF in monoid domains}.

\begin{prop}  \emph(\cite[Propositions~1.4 and~1.5]{hK01}\emph) \label{prop:FFD monoid domains}
	Let $R$ be an integral domain with quotient field~$K$, and let $M$ be a torsion-free monoid. Then the following statements hold.
	\begin{enumerate}
		\item If $R[M]$ is an FFD, then $R$ is an FFD and $M$ is an FFM.
		\smallskip
		
		\item If $R$ and $K[M]$ are both FFDs, then $R[M]$ is an FFD.
	\end{enumerate}
\end{prop}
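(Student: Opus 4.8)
The plan is to prove the two implications separately, leaning on the factorization-theoretic characterizations already available for domains (Proposition~\ref{prop:FFD characterizations}) and for monoids (Proposition~\ref{prop:FFM characterization via idf-monoids}), together with Gilmer's description $U(R[M]) = \{uX^m \mid u \in U(R),\ m \in U(M)\}$. Throughout I would work with the canonical form set up just above, so that every $f \in R[M]^\ast$ has a well-defined degree in $\gp(M)$ and a leading coefficient $c_f$, with $\deg(fg) = \deg f + \deg g$ and $c_{fg} = c_f c_g$.

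\textbf{Part (1).} For $R$: I note that $U(R[M]) \cap \text{qf}(R)^\ast = U(R)$, since a unit $uX^m$ with $m \neq 0$ has nonzero degree and so cannot equal a ratio of degree-zero elements of $R$. Thus $(U(R[M]) \cap \text{qf}(R)^\ast)/U(R)$ is trivial, and Proposition~\ref{prop:FFD underrings} yields that $R$ is an FFD. For $M$: I would verify condition~(b) of Proposition~\ref{prop:FFM characterization via idf-monoids}, that every $b \in M$ has finitely many non-associate divisors. Each divisor $d$ of $b$ in $M$ gives a divisor $X^d$ of $X^b$ in $R[M]$, and $X^d \sim X^{d'}$ in $R[M]$ exactly when $d \sim d'$ in $M$ (comparing monic monomials); since $R[M]$ is an FFD, $X^b$ has finitely many non-associate divisors, hence so does $b$. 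Therefore $M$ is an FFM.

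\textbf{Part (2)} is the substantive direction. By Proposition~\ref{prop:FFD characterizations} it suffices to show that an arbitrary $f \in R[M]^\ast$ has only finitely many non-associate divisors in $R[M]$ (atomicity of $R[M]$ then follows from that same characterization). I would bound the $R[M]$-associate classes of divisors of $f$ by combining two finiteness inputs. First, because $K[M]$ is an FFD and every divisor of $f$ in $R[M]$ is a fortiori a divisor of $f$ in the overring $K[M]$, all such divisors fall into the finitely many $K[M]$-associate classes of divisors of $f$, say those of $f_1, \dots, f_s$. Second, because $R$ is an FFD and any divisor $g \mid f$ in $R[M]$ satisfies $c_g \mid c_f$ in $R$, the leading coefficients of these divisors lie in only finitely many $U(R)$-classes.

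The crux — which mirrors the leading-coefficient argument in the proof of Theorem~\ref{thm:FFD for polynomial and Laurent ring} — is that a pair consisting of a $K[M]$-associate class together with a $U(R)$-class of leading coefficient pins down a divisor up to associates in $R[M]$ \emph{itself}. Indeed, suppose $g, g'$ divide $f$ in $R[M]$ with $g \sim g'$ in $K[M]$ and $c_g \sim c_{g'}$ in $R$. Then $g = \mu X^m g'$ for some $\mu \in K^\ast$ and $m \in U(M)$; comparing leading coefficients forces $\mu = c_g/c_{g'} \in U(R)$, so $\mu X^m \in U(R[M])$ and already $g \sim g'$ in $R[M]$. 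Hence $f$ has at most $s$ times the (finite) number of $U(R)$-classes of divisors of $c_f$ non-associate divisors in $R[M]$, and Proposition~\ref{prop:FFD characterizations} gives that $R[M]$ is an FFD. I expect this last passage — from $K[M]$-associates with matching leading coefficient back to $R[M]$-associates — to be the only genuinely delicate point, since everything else is bookkeeping around the finiteness already packaged into the FFD hypotheses on $R$ and $K[M]$.
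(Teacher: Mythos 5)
Your proposal is correct and follows essentially the same route as the paper: part (1) via Proposition~\ref{prop:FFD underrings} and the correspondence between divisors of $b$ in $M$ and monomial divisors of $X^b$, and part (2) via the same two finiteness inputs (finitely many $K[M]$-associate classes of divisors of $f$, finitely many $U(R)$-classes of divisors of the leading coefficient $c_f$) with the same crux, namely that $g \sim g'$ in $K[M]$ together with $c_g \sim c_{g'}$ in $R$ forces $g \sim g'$ in $R[M]$ because the unit $\mu X^m \in U(K[M])$ relating them must have $\mu \in U(R)$. The only differences are cosmetic: you count directly where the paper argues by contradiction along a sequence, and your use of the explicit form of $U(K[M])$ lets you skip the paper's intermediate degree-normalization step that invokes $M$ being an FFM.
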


\begin{proof}
	(1) Suppose that $R[M]$ is an FFD. Since $U(R[M]) \cap K^* = U(R)$, it follows from Proposition~\ref{prop:FFD underrings} that $R$ is an FFD. On the other hand, since $a \in \ii(M)$ if and only if $X^a \in \ii(R[M])$ by \cite[Theorem~11.1]{rG84}, we find that $|Z_M(m)| = |Z_{R[M]}(X^m)| < \infty$ for every $m \in M$. As a consequence, $M$ is an FFM.
	\smallskip
	
	(2) Now assume that $R$ and $K[M]$ are both FFDs. Suppose, by way of contradiction, that there is an $f \in R[M]^*$ with infinitely many non-associate divisors in $R[M]$. Let $cX^m$ be the leading term of $f$. Since every divisor of $f$ in $R[M]$ is also a divisor of $f$ in $K[M]$ and $f$ has only finitely many non-associate divisors in $K[M]$ by Proposition~\ref{prop:FFD characterizations}, there must be a sequence $(f_n)_{n \in \nn}$ consisting of non-associate divisors of~$f$ in $R[M]$ such that $f_i K[M] = f_j K[M]$ for all $i, j \in \nn$. For every $n \in \nn$, let $c_n X^{m_n}$ be the leading term of~$f_n$. As $K[M]$ is an FFD, it follows from part~(1) that $M$ is an FFM. Because $m \in m_n + M$ for every $n \in \nn$ and~$M$ is an FFM, after replacing $(f_n)_{n \in \nn}$ by a suitable subsequence, one can assume that $\deg f_i + M = \deg f_j + M$ for all $i,j \in \nn$. Furthermore, after replacing $f_n$ by $X^{u_n}f_n$, where $u_n = \deg f_1 - \deg f_n$, one can assume that for every $n \in \nn$ there is a $k_n \in K$ such that $f_n = k_n f_1$. Clearly, $c_n$ divides $c$ in $R$ for every $n \in \nn$. In addition, if $c_i$ and $c_j$ are associates in~$R$, then  $k_i/k_j \in U(R)$ and so $f_i$ and $f_j$ are associates in $R[M]$, which implies that $i = j$. Thus, $c$ has infinitely many non-associate divisors in $R$, contradicting that $R$ is an FFD.
\end{proof}

As for the bounded factorization property, the converse of part~(1) of Proposition~\ref{prop:FFD monoid domains} holds provided that every nonzero element of $\gp(M)$ has type $(0, 0, \dots)$.

\begin{theorem} \emph(\cite[Theorems~3.21, Proposition~3.24, and Theorem~3.25]{hK98}\emph) \label{thm:FF in monoid domains}
	Let $R$ be an integral domain, $F$ a field, $G$ a torsion-free abelian group whose nonzero elements have type $(0,0, \dots)$, and~$M$ a torsion-free monoid whose nonzero elements have type $(0,0, \dots)$ in $\emph{\gp}(M)$. Then the following statements hold.
	\begin{enumerate}
		\item $R[G]$ is an FFD if and only if $R$ is an FFD.
		\smallskip
		
		\item $F[M]$ is an FFD if and only if $M$ is an FFM.
		\smallskip
		
		\item $R[M]$ is an FFD if and only if $R$ is an FFD and $M$ is an FFM.
	\end{enumerate} 
\end{theorem}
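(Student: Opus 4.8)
The plan is to follow the architecture of the bounded-factorization analogue (Theorem~\ref{thm:BF in monoid domains}), but with length functions replaced throughout by the divisor-counting criterion of Proposition~\ref{prop:FFD characterizations}. In each of the three parts the forward implication is immediate from part~(1) of Proposition~\ref{prop:FFD monoid domains}, so I would only need to treat the reverse implications. As in the bounded case, the type-$(0,0,\dots)$ hypothesis on the nonzero elements of $M$ (and of $G$) is exactly what guarantees, via~\cite{GP74}, that the relevant full group rings $K[G]$ and $F[\gp(M)]$ are UFDs, hence FFDs; I would invoke this precisely where the bounded-factorization proof does.

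For part~(1), assuming $R$ is an FFD, set $K=\text{qf}(R)$. Since $G$ is torsion-free with all nonzero elements of type $(0,0,\dots)$, the group ring $K[G]$ is a UFD by~\cite{GP74}, and so an FFD; then part~(2) of Proposition~\ref{prop:FFD monoid domains}, applied with $M=G$, gives that $R[G]$ is an FFD. Part~(3) then reduces to parts~(1) and~(2) exactly as in the bounded case: if $R$ is an FFD and $M$ is an FFM, then $R[\gp(M)]$ is an FFD by part~(1) and $\text{qf}(R)[M]$ is an FFD by part~(2), and since $R[M]=R[\gp(M)]\cap\text{qf}(R)[M]$ is a \emph{finite} (hence trivially locally finite) intersection of FFDs, Proposition~\ref{prop:a locally finite intersection of FFDs is an FFD} yields that $R[M]$ is an FFD.

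The substance is the reverse implication of part~(2), where neither the length-function trick of the bounded case nor part~(2) of Proposition~\ref{prop:FFD monoid domains} (which would be circular, as $\text{qf}(F)=F$) is available. Here I would work directly from Proposition~\ref{prop:FFD characterizations} inside the inclusion $F[M]\subseteq T:=F[\gp(M)]$, where $T$ is a UFD, hence an FFD, by~\cite{GP74}. Fix $f\in F[M]^*$. Any divisor $g$ of $f$ in $F[M]$ is also a divisor of $f$ in $T$, and $T$ being an FFD gives only finitely many such divisors up to associates \emph{in $T$}. The difficulty is that $U(T)=\{aX^h\mid a\in F^*,\ h\in\gp(M)\}$ is generally far larger than $U(F[M])=\{aX^u\mid a\in F^*,\ u\in U(M)\}$, so a single $T$-associate class can split into many $F[M]$-classes. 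To control this I would track leading exponents: from $f=gh$ and the compatible total order on $M$ one gets $\deg f=\deg g+\deg h$ with $\deg h\in M$, so $\deg g$ divides $\deg f$ in $M$; since $M$ is an FFM, Proposition~\ref{prop:FFM characterization via idf-monoids} provides only finitely many $U(M)$-classes of such $\deg g$. Multiplying $g$ by a suitable unit $X^{-u}$ with $u\in U(M)$, I may assume $\deg g$ equals one of finitely many fixed representatives. Finally, if two divisors $g_1,g_2$ of $f$ in $F[M]$ have the \emph{same} leading exponent and are $T$-associates, writing $g_2=aX^hg_1$ and comparing leading exponents forces $h=0$, whence $g_2=ag_1$ with $a\in F^*$, so $g_1$ and $g_2$ are $F[M]$-associates. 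Thus each leading-exponent representative accounts for at most as many $F[M]$-associate classes as there are $T$-associate classes of divisors of $f$, and $f$ has only finitely many non-associate divisors in $F[M]$; Proposition~\ref{prop:FFD characterizations} then gives that $F[M]$ is an FFD.

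I expect the main obstacle to be exactly this collapse from $T$-associates to $F[M]$-associates in part~(2): passing from ``finitely many divisors up to the coarse equivalence of $T$'' to ``finitely many divisors up to the finer equivalence of $F[M]$.'' The leading-exponent bookkeeping, fed by the FFM hypothesis on $M$ through Proposition~\ref{prop:FFM characterization via idf-monoids}, is the device that bridges this gap, and it is the one point at which the type hypothesis (securing $T$ as a UFD) and the finite-factorization hypothesis on $M$ must genuinely be used together.
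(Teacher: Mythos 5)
Your proposal is correct and follows essentially the same route as the paper: parts (1) and (3) are handled identically (via $K[G]$ being a UFD by \cite{GP74} plus part~(2) of Proposition~\ref{prop:FFD monoid domains}, and via the locally finite intersection $R[M]=R[\gp(M)]\cap\text{qf}(R)[M]$), and your part~(2) argument --- normalizing leading exponents using the FFM hypothesis and then observing that divisors of $f$ with equal leading exponent which are associates in $F[\gp(M)]$ differ only by a scalar in $F^*$ --- is exactly the paper's argument, merely phrased as a direct count rather than as a contradiction with an infinite sequence of non-associate divisors.
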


\begin{proof}
	(1) It follows from part~(1) of Proposition~\ref{prop:FFD monoid domains} that $R$ is an FFD when the monoid domain $R[G]$ is an FFD. Conversely, assume that $R$ is an FFD, and let $K$ be the quotient field of $R$. Since the monoid domain $K[G]$ is a UFD by \cite[Theorem~7.12]{GP74}, it is an FFD. As a result, $R[G]$ is an FFD by part~(2) of Proposition~\ref{prop:FFD monoid domains}.
	\smallskip
	
	(2) By part~(1) of Proposition~\ref{prop:FFD monoid domains}, $M$ is an FFM provided that $F[M]$ is an FFD. For the reverse implication, suppose that $M$ is an FFM and assume, by way of contradiction, that $F[M]$ is not an FFD. Take an $f \in F[M]^*$ having infinitely many non-associate divisors, and let $(f_n)_{n \in \nn}$ be a sequence of non-associate divisors of $f$ in $F[M]$. Since $M$ is an FFM and $\deg f_n$ is a divisor of $\deg f$ in $M$ for every $n \in \nn$, by virtue of Proposition~\ref{prop:FFM characterization via idf-monoids} we can assume that $\deg f_n = \deg f_1$ for every $n \in \nn$. The monoid domain $F[\gp(M)]$ is an FFM by \cite[Theorem~7.12]{GP74}. As $f_n$ is a divisor of $f$ in $F[\gp(M)]$ for every $n \in \nn$, Proposition~\ref{prop:FFD characterizations} guarantees the existence of distinct $i,j \in \nn$ such that $f_iF[\gp(M)] = f_j F[\gp(M)]$. Since $\deg f_i = \deg f_j$, it follows that $f_j = \alpha f_i$ for some $\alpha \in F$. Hence $f_i$ and $f_j$ are associates in $F[M]$, which is a contradiction.
	\smallskip
	
	(3) In light of part~(1) of Proposition~\ref{prop:FFD monoid domains}, $R$ is an FFD and $M$ is an FFM provided that $R[M]$ is an FFD. To argue the reverse implication, suppose that $R$ is an FFD and $M$ is an FFM. Note that $R[\gp(M)]$ is an FFD by part~(1) and $\text{qf}(R)[M]$ is an FFD by part~(2). Therefore Proposition~ \ref{prop:a locally finite intersection of FFDs is an FFD} guarantees that $R[M] = R[\gp(M)] \cap \text{qf}(R)[M]$ is an FFD.
\end{proof}

Parallel to Corollary~\ref{cor:BFD on monoid domains of fg monoids}, we obtain the following corollary, whose proof follows similarly.

\begin{cor} 
	Let $R$ be an integral domain, and let $M$ be a finitely generated torsion-free monoid. Then $R[M]$ is an FFD if and only if $R$ is an FFD.
\end{cor}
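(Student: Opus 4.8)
The plan is to mimic the proof of Corollary~\ref{cor:BFD on monoid domains of fg monoids} almost verbatim, replacing the bounded factorization machinery by its finite factorization analogue. The corollary should follow as a direct specialization of part~(3) of Theorem~\ref{thm:FF in monoid domains}, so the only real work is to verify that the hypotheses of that theorem are met and that the monoid side of the equivalence is automatic here.

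First I would observe that, since $M$ is torsion-free and finitely generated, its quotient group $\gp(M)$ is a torsion-free finitely generated abelian group, hence (by the structure theorem for finitely generated abelian groups) a free abelian group. In a free abelian group every nonzero element $b$ has type $(0,0,\dots)$: the equation $nx = b$ fails to be solvable once $n$ exceeds the largest integer dividing all coordinates of $b$ in a chosen basis, so there is a largest such $n$. In particular, every nonzero element of $M$ has type $(0,0,\dots)$ in $\gp(M)$, which is precisely the standing hypothesis on $M$ required by Theorem~\ref{thm:FF in monoid domains}.

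Next I would record that $M$ is an FFM: this is immediate from Corollary~\ref{cor:finitely generated monoids are FFMs}, which asserts that every finitely generated monoid is an FFM. With this in hand, part~(3) of Theorem~\ref{thm:FF in monoid domains} says that $R[M]$ is an FFD if and only if $R$ is an FFD \emph{and} $M$ is an FFM; but the latter conjunct holds unconditionally, so the biconditional collapses to ``$R[M]$ is an FFD if and only if $R$ is an FFD,'' which is exactly the assertion. I expect no genuine obstacle in this argument: the entire content is packaged into Theorem~\ref{thm:FF in monoid domains} and Corollary~\ref{cor:finitely generated monoids are FFMs}, and the only point requiring a sentence of justification is that a torsion-free finitely generated group is free and therefore has all nonzero elements of type $(0,0,\dots)$. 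The proof is thus essentially a one-line invocation once these two ingredients are assembled, exactly paralleling the bounded case.
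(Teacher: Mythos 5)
Your proposal is correct and matches the paper's intended argument exactly: the paper states that the proof "follows similarly" to Corollary~\ref{cor:BFD on monoid domains of fg monoids}, i.e., one notes that $\gp(M)$ is free abelian so all nonzero elements have type $(0,0,\dots)$, that $M$ is an FFM by Corollary~\ref{cor:finitely generated monoids are FFMs}, and then invokes part~(3) of Theorem~\ref{thm:FF in monoid domains}. No issues.
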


One can naturally generalize the notion of an SFFD to monoids. A monoid $M$ is called a \emph{strong finite factorization monoid} (or an \emph{SFFM}) if every element of $M$ has only finitely many divisors. Clearly, a reduced monoid is an SFFM if and only if it is an FFM.

\begin{prop}  \emph(\cite[Propositions~1.4 and~1.5]{hK01}\emph) \label{prop:SFFD monoid domains}
	Let $R$ be an integral domain with quotient field~$K$, and let $M$ be a torsion-free monoid. Then the following statements hold.
	\begin{enumerate}
		\item If $R[M]$ is an SFFD, then $R$ is an SFFD and $M$ is an SFFM.
		\smallskip
		
		\item If $R$ is an SFFD, $M$ is an SFFM, and $K[M]$ is an FFD, then $R[M]$ is an SFFD.
	\end{enumerate}
\end{prop}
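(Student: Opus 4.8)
The plan is to reduce both implications to the finite-factorization results already established for monoid domains (Proposition~\ref{prop:FFD monoid domains}) together with the characterization of SFFDs as FFDs with finite unit group (Proposition~\ref{prop:SFFD characterizations}). The only genuinely new ingredient I need is the translation between the \emph{strong} finite-factorization conditions on $R$, $M$, and $R[M]$ and the finiteness of the relevant unit groups, which I would handle via Gilmer's description $U(R[M]) = \{uX^m \mid u \in U(R),\ m \in U(M)\}$ from \cite[Theorem~11.1]{rG84}. Throughout, I would use that the map $(u,m) \mapsto uX^m$ is a bijection from $U(R) \times U(M)$ onto $U(R[M])$ (injectivity follows by comparing canonical forms), so that $|U(R[M])| = |U(R)|\,|U(M)|$ and hence $U(R[M])$ is finite if and only if both $U(R)$ and $U(M)$ are finite.

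For part~(1), I would start from $R[M]$ being an SFFD and invoke Proposition~\ref{prop:SFFD characterizations} to record two facts: $R[M]$ is an FFD, and $U(R[M])$ is finite. Feeding the first fact into part~(1) of Proposition~\ref{prop:FFD monoid domains} yields that $R$ is an FFD and $M$ is an FFM. From the second fact and the bijection above, $U(R)$ and $U(M)$ are both finite. Combining ``$R$ is an FFD'' with ``$U(R)$ is finite'' gives that $R$ is an SFFD by Proposition~\ref{prop:SFFD characterizations}. For $M$, since $M$ is an FFM each element has only finitely many non-associate divisors by Proposition~\ref{prop:FFM characterization via idf-monoids}, and each such associate class is a coset $dU(M)$ of the finite group $U(M)$ (of size $|U(M)|$ by cancellativity); thus each element has only finitely many divisors, i.e.\ $M$ is an SFFM.

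For part~(2), I would again verify the two conditions of Proposition~\ref{prop:SFFD characterizations} for $R[M]$. Since $R$ is an SFFD it is in particular an FFD, and $K[M]$ is an FFD by hypothesis, so part~(2) of Proposition~\ref{prop:FFD monoid domains} gives that $R[M]$ is an FFD. For the unit group, an SFFD has finite unit group, and an SFFM has finite unit group as well (the divisors of the monoid identity are exactly the units), so $U(R)$ and $U(M)$ are finite; by the same bijection, $U(R[M])$ is finite. Proposition~\ref{prop:SFFD characterizations} then upgrades ``$R[M]$ is an FFD with finite unit group'' to ``$R[M]$ is an SFFD.''

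Neither implication presents a substantial obstacle: the argument is essentially a bookkeeping assembly of Propositions~\ref{prop:SFFD characterizations}, \ref{prop:FFD monoid domains}, and \ref{prop:FFM characterization via idf-monoids}, paralleling the proofs of Propositions~\ref{prop:BF in monoid domains} and \ref{prop:FFD monoid domains}. The step requiring the most care is the passage between finiteness of $U(R[M])$ and simultaneous finiteness of $U(R)$ and $U(M)$, where one must invoke the explicit form of $U(R[M])$, together with the short counting argument showing that an FFM with finite unit group is automatically an SFFM.
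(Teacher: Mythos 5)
Your proposal is correct and follows essentially the same route as the paper: both directions are reduced to Proposition~\ref{prop:SFFD characterizations} (SFFD $=$ FFD with finite unit group) together with Proposition~\ref{prop:FFD monoid domains} and Gilmer's description of $U(R[M])$. The only minor divergence is in part~(1), where the paper deduces that $M$ is an SFFM directly from the observation that $m \in d + M$ if and only if $X^m \in X^d R[M]$ (so divisors of $m$ in $M$ inject into divisors of $X^m$ in $R[M]$), whereas you pass through ``FFM with finite $U(M)$ implies SFFM'' via a coset count; both are valid.
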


\begin{proof}
	(1) Assume that $R[M]$ is an SFFD. By Proposition~\ref{prop:SFFD characterizations}, $U(R[M])$ is finite, so $U(R) \subseteq U(R[M])$ implies that $U(R)$ is also finite. On the other hand, $R$ is an FFD by Proposition~\ref{prop:FFD monoid domains}. Hence $R$ is an SFFD by Proposition~\ref{prop:SFFD characterizations}. As $R[M]$ is an SFFD, to verify that every element $m \in M$ has finitely many divisors in $M$, it suffices to observe that $m \in d + M$ if and only if $X^m \in X^d R[M]$.
	\smallskip
	
	(2) Assume that $R$ is an SFFD, $M$ is an SFFM, and $K[M]$ is an FFD. In particular, $R$ and $K[M]$ are FFDs, and so it follows from part~(2) of Proposition~\ref{prop:FFD monoid domains} that $R[M]$ is an FFD. On the other hand, $U(M)$ is finite because $M$ is an SFFM, and $U(R)$ is finite by Proposition~\ref{prop:SFFD characterizations}. Hence $U(R[M])$ must be finite. Thus, Proposition~\ref{prop:SFFD characterizations} ensures that $R[M]$ is an SFFD.
\end{proof}

As in the case of the bounded and finite factorization properties, we have the following result.

\begin{theorem} \emph(\cite[Propositions~3.28 and~3.30]{hK98}\emph) \label{thm:SFF in monoid domains}
	Let $R$ be an integral domain, $F$ a field, $G$ a torsion-free abelian group, and~$M$ a torsion-free monoid whose nonzero elements have type $(0,0, \dots)$ in $\emph{\gp}(M)$. Then the following statements hold.
	\begin{enumerate}
		\item $R[G]$ is an SFFD if and only if $R$ is an SFFD and $G$ is the trivial group.
		\smallskip
		
		\item $F[M]$ is an SFFD if and only if $F$ is a finite field and $M$ is an SFFM.
		\smallskip
		
		\item $R[M]$ is an SFFD if and only if $R$ is an SFFD and $M$ is an SFFM.
	\end{enumerate} 
\end{theorem}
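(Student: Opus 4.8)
The plan is to deduce all three equivalences by combining the two transfer results already recorded in Proposition~\ref{prop:SFFD monoid domains} with the characterization of SFFDs as FFDs having finitely many units (Proposition~\ref{prop:SFFD characterizations}) and the finite-factorization transfer of Theorem~\ref{thm:FF in monoid domains}. In all three parts the forward implication is immediate: if the relevant monoid domain is an SFFD, then part~(1) of Proposition~\ref{prop:SFFD monoid domains} at once yields that the coefficient domain is an SFFD and the exponent monoid is an SFFM. The work therefore lies in extracting the sharper conclusions in parts~(1) and~(2) and in proving the reverse implications.

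For part~(1), I would first argue that a group, regarded as a monoid, is an SFFM only when it is trivial. Indeed, in any group $G$ every element divides every other (for $g,d \in G$ one has $g = d(d^{-1}g)$ with $d^{-1}g \in G$), so each element has exactly $|G|$ divisors; hence $G$ being an SFFM forces $|G| < \infty$, and since $G$ is torsion-free this gives $G = \{0\}$. Combined with the forward implication above, this settles the direct direction. The reverse direction is trivial, because when $G$ is the trivial group the monoid ring $R[G]$ is canonically $R$, so $R[G]$ is an SFFD exactly when $R$ is.

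For parts~(2) and~(3) the reverse implications are where the hypothesis on $\gp(M)$ enters, and this is the main point. Suppose $R$ is an SFFD (a finite field in part~(2)) and $M$ is an SFFM. Since an SFFM has in particular only finitely many non-associate divisors of each element, $M$ is an FFM by Proposition~\ref{prop:FFM characterization via idf-monoids}; because every nonzero element of $\gp(M)$ has type $(0,0,\dots)$, Theorem~\ref{thm:FF in monoid domains}(2) applied over the field $K = \text{qf}(R)$ (which equals $F$ in part~(2)) shows that $K[M]$ is an FFD. This is precisely the extra hypothesis needed to invoke part~(2) of Proposition~\ref{prop:SFFD monoid domains}, which then delivers that $R[M]$ is an SFFD. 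For the sharper forward conclusion in part~(2), I would note that a field is an SFFD exactly when its multiplicative group is finite (Proposition~\ref{prop:SFFD characterizations}), i.e.\ exactly when the field is finite, so $F[M]$ being an SFFD forces $F$ to be finite and $M$ to be an SFFM.

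The main obstacle I anticipate is not a deep one but a bookkeeping point: Proposition~\ref{prop:SFFD monoid domains}(2) requires $K[M]$ to be an FFD as an input, and the only tool that produces this from the weaker information that $M$ is an FFM is Theorem~\ref{thm:FF in monoid domains}(2), which itself rests on the type hypothesis on $\gp(M)$. Thus the type condition is exactly what bridges the gap between the SFFM property of $M$ and the FFD property of $K[M]$, and checking that this chain of implications is available—together with the observation that the finiteness of $U(R[M]) = \{uX^m \mid u \in U(R),\, m \in U(M)\}$ follows from $U(R)$ and $U(M)$ being finite—is the crux of the argument.
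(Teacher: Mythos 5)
Your proposal is correct and follows essentially the same route as the paper: the forward implications come from Proposition~\ref{prop:SFFD monoid domains}(1) together with the unit-finiteness characterization of an SFFD in Proposition~\ref{prop:SFFD characterizations} (the paper obtains part~(1) slightly more directly by noting that $U(R[G])$ finite forces the torsion-free group $G$ to be trivial, while you pass through the equivalent observation that a group is an SFFM only if it is finite), and the reverse implications come from feeding the FFD transfer of Theorem~\ref{thm:FF in monoid domains} into the finiteness of $U(R[M])$. The only cosmetic difference is that in part~(3) the paper invokes Theorem~\ref{thm:FF in monoid domains}(3) and then counts units directly, whereas you route through Theorem~\ref{thm:FF in monoid domains}(2) and Proposition~\ref{prop:SFFD monoid domains}(2); both are valid and rest on the same type hypothesis.
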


\begin{proof}
	(1) The reverse implication follows immediately. For the direct implication, assume that $R[G]$ is an SFFD. By Proposition~\ref{prop:SFFD characterizations}, the set $U(R[G])$ is finite, and so $G$ must be a finite group. This, along with the fact that $G$ is torsion-free, ensures that $G$ is the trivial group. Hence $R = R[G]$ is an SFFD.
	\smallskip
	
	(2) This is an immediate consequence of part~(3) below.
	\smallskip 
	
	(3) It follows from part~(1) of Proposition~\ref{prop:SFFD monoid domains} that if $R[M]$ is an SFFD, then $R$ is an SFFD and~$M$ is an SFFM. For the reverse implication, suppose that $R$ is an SFFD and $M$ is an SFFM. Since $M$ is an SFFM, $U(M)$ must be finite. On the other hand, $R$ is an FFD and $U(R)$ is finite by Proposition~\ref{prop:SFFD characterizations}. Therefore $R[M]$ is an FFD by part~(3) of Theorem~\ref{thm:FF in monoid domains}. In addition, as $U(R)$ and $U(M)$ are finite, so is $U(R[M])$. Thus, $R[M]$ is an SFFD by virtue of Proposition~\ref{prop:SFFD characterizations}.
\end{proof}
\smallskip

In general, there seems to be no characterization (in terms of $R$ and $M$) for the monoid domains $R[M]$ that are BFDs (FFDs or SFFDs). In the same direction, the question of whether $R[M]$ satisfies ACCP provided that both $R$ and $M$ satisfy the same condition seems to remain open, although it has been positively answered in~\cite[Theorem~13]{AJ15} for the case when $M$ is reduced (a result parallel to Theorem~\ref{thm:BF from M and R to R[M] when M is reduced}). By contrast, it is known that $R[M]$ need not be atomic provided that both $R$ and~$M$ are atomic, even if $R$ is a field or $M = \nn_0$ (i.e., $R[M] = R[X]$); for more details about this last observation, see~\cite{CG19} and~\cite{mR93}.

\smallskip
A partially ordered set is \emph{Artinian} if it satisfies the descending chain condition, and it is \emph{narrow} if it does not contain infinitely many incomparable elements. For a ring $R$, a monoid $M$, and a partial order~$\le$ compatible with $M$, the \emph{generalized power series ring} $R[[X;M^{\le}]]$ is the ring comprising all formal sums $f = \sum_{m \in M} c_mX^m$ whose support $\{m \in M \mid c_m \neq 0\}$ is Artinian and narrow. D.~D. Anderson and J.~R. Juett have also investigated in \cite{AJ15} when the generalized power series ring $R[[X;M^{\le}]]$ is a BFD (or satisfies ACCP), obtaining in \cite[Theorem~17]{AJ15} a result analogous to Theorem~\ref{thm:BF from M and R to R[M] when M is reduced} but in the context of generalized power series rings.

\smallskip
\subsection{Graded Integral Domains}

We conclude this section by saying a few words about the bounded and finite factorization properties in graded integral domains.

Recall that an integral domain $R$ is $M$-graded for a torsion-free monoid $M$ provided that for every $m \in M$, there is a subgroup $R_m$ of the underlying additive group of $R$ such that the following conditions hold:
\begin{enumerate}
	\item $R = \bigoplus_{m \in M} R_m$ is a direct sum of abelian groups, and
	\smallskip
	
	\item $R_m R_n \subseteq R_{m+n}$ for all $m,n \in M$.
\end{enumerate}

The following proposition generalizes parts~(1) of Propositions~\ref{prop:BF in monoid domains} and~\ref{prop:FFD monoid domains} and can be proved in a similar manner.

\begin{prop} \emph(\cite[Proposition~2.1]{KKP04}\emph)
	Let $M$ be a torsion-free monoid and $R = \bigoplus_{m \in M} R_m$ be an $M$-graded integral domain. Then $R_0$ is a BFD (resp., an FFD, an SFFD) if $R$ is a BFD (resp., an FFD, an SFFD).
\end{prop}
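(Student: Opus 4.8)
The plan is to realize $R_0$ as a subring of $R$ to which the underring transfer results of Section~\ref{sec:extensions and localization} apply. First I would note that $R_0$ is indeed a subring of $R$: it is closed under multiplication because $R_0 R_0 \subseteq R_0$, and it contains $1$ since the identity of any $M$-graded ring is homogeneous of degree~$0$. Thus $R_0 \subseteq R$ is an extension of integral domains with $\text{qf}(R_0) \subseteq \text{qf}(R)$. The whole argument then hinges on a single structural fact: every unit of $R$ is homogeneous. Granting this, one obtains $U(R_0) = U(R) \cap R_0$ and, more strongly, $U(R) \cap \text{qf}(R_0)^\ast = U(R_0)$, and each of the three cases follows by quoting the appropriate earlier proposition.

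To prove that units are homogeneous I would use that $M$ is torsion-free, hence (as recalled in Subsection~\ref{subsec:monoid domains}) admits a total order compatible with its operation. For nonzero $f \in R$ with homogeneous decomposition $f = \sum_{m \in M} f_m$, let $\deg f$ and $\text{ord } f$ denote the largest and smallest $m$ with $f_m \neq 0$. Because $R$ is a domain and the order is compatible, the top (resp.\ bottom) homogeneous component of a product $fg$ is exactly $f_{\deg f}\, g_{\deg g}$ (resp.\ $f_{\text{ord } f}\, g_{\text{ord } g}$), and these are nonzero; hence $\deg(fg) = \deg f + \deg g$ and $\text{ord}(fg) = \text{ord } f + \text{ord } g$. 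Applying this to $uv = 1$ gives $\deg u + \deg v = 0 = \text{ord } u + \text{ord } v$, and subtracting shows $(\deg u - \text{ord } u) + (\deg v - \text{ord } v) = 0$ with both summands nonnegative, which forces $\deg u = \text{ord } u$; thus $u$ is homogeneous. Now if $u \in U(R) \cap R_0$ then $\deg u = 0$, so its inverse also has degree $0$ and lies in $R_0$, giving $U(R_0) = U(R) \cap R_0$. Likewise, if $w \in U(R) \cap \text{qf}(R_0)^\ast$, then $w$ is homogeneous and $wb \in R_0$ for some nonzero $b \in R_0$; comparing degrees of the nonzero element $wb$ forces $\deg w = 0$, so $w \in R_0$ and hence $w \in U(R_0)$.

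With these identities the three transfers are immediate. For the BFD case, $U(R_0) = U(R) \cap R_0$ together with Proposition~\ref{prop:BFD underrings} yields that $R_0$ is a BFD. For the FFD case, the equality $U(R) \cap \text{qf}(R_0)^\ast = U(R_0)$ makes the group $(U(R) \cap \text{qf}(R_0)^\ast)/U(R_0)$ trivial, so Proposition~\ref{prop:FFD underrings} applies and $R_0$ is an FFD. For the SFFD case I would invoke Proposition~\ref{prop:SFFD characterizations}: since $R$ is an SFFD it is an FFD with $U(R)$ finite, so $R_0$ is an FFD by the previous case and $U(R_0) \subseteq U(R)$ is finite, whence $R_0$ is an SFFD. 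The hard part will be the homogeneity-of-units lemma of the second paragraph; everything else is bookkeeping against the cited propositions. The delicate point there is to use both the leading and the trailing term—equivalently, compatibility of the total order together with the domain hypothesis—to rule out a unit having more than one nonzero homogeneous component.
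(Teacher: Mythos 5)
Your proposal is correct and follows essentially the route the paper intends: the paper only remarks that the statement ``can be proved in a similar manner'' to parts~(1) of Propositions~\ref{prop:BF in monoid domains} and~\ref{prop:FFD monoid domains}, whose proofs likewise identify the units of the graded ring and then invoke Propositions~\ref{prop:BFD underrings}, \ref{prop:FFD underrings}, and~\ref{prop:SFFD characterizations}. Your homogeneity-of-units lemma (via a compatible total order on the torsion-free grading monoid and the $\deg$/$\mathrm{ord}$ comparison) is exactly the graded analogue of the description of $U(R[M])$ from \cite[Theorem~11.1]{rG84} used there, so the argument is complete.
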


Let $D$ be an integral domain with quotient field $K$, and let $I$ be a proper ideal of $D$. If $t$ is transcendental over $D$, then $R = D[It, t^{-1}]$ is called the (\emph{generalized}) \emph{Rees ring} of $D$ with respect to~$I$. Observe that the (generalized) Rees ring $R$ is a $\zz$-graded integral domain with quotient field $K(t)$. Various factorization properties of $R$ when $I$ is principal were studied by D.~D. Anderson and the first author in~\cite{AA95}. In order to generalize some of the results obtained in~\cite{AA95}, H. Kim, T. I. Keon, and Y. S. Park introduced in~\cite{KKP04} the notions of graded atomic domain, graded BFD, and graded FFD.

\begin{definition}
	Let $R$ be a graded integral domain.
		\begin{enumerate}
		\item $R$ is \emph{graded atomic} if every nonunit homogeneous element of $R^*$ is a product of finitely many homogeneous irreducibles in $R$.
		\smallskip
		
		\item $R$ is a \emph{graded BFD} if $R$ is graded atomic, and for every nonunit homogeneous element of $R^*$, there is a bound on the length of factorizations into homogeneous irreducibles.
		\smallskip
		
		\item $R$ is a \emph{graded FFD} if every nonunit homogeneous element of $R^*$ has only finitely many non-associate homogeneous irreducible divisors.
	\end{enumerate}
\end{definition}

We are in a position to characterize when a (generalized) Rees ring is a BFD (or an FFD).

\begin{prop} \emph(\cite[Proposition~2.5]{KKP04}\emph) \label{prop:BF and FF for generalized Rees rings}
	For an integral domain $D$ with a proper ideal $I$, assume that the (generalized) Rees ring $R = D[It, t^{-1}]$ is atomic and $t^{-1} \in \mathcal{P}(R)$. Then the following statements are equivalent.
	\begin{enumerate}
		\item[(a)] $R$ is a BFD (resp., an FFD).
		\smallskip
		
		\item[(b)] $R$ is a graded BFD (resp., a graded FFD).
		\smallskip
		
		\item[(c)] $D$ is a BFD (resp., an FFD).
	\end{enumerate}
\end{prop}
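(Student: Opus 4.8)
The plan is to prove the three‑way equivalence by separating the graded structure from a localization argument, and to keep (b) on the side until the end. First I would record the structural facts about $R=\bigoplus_{n\in\zz}R_n$: here $R_0=D$, and since $\zz$ is totally ordered while $I$ is a proper ideal, every unit of $R$ is homogeneous of degree $0$, whence $U(R)=U(D)$. The crucial elementary observation is that, in a domain graded by a torsion‑free (hence orderable) group, any factorization of a homogeneous element consists of homogeneous factors; this follows by comparing the lowest‑ and highest‑degree terms of a product. Consequently the standing hypothesis that $R$ is atomic already makes $R$ graded atomic, and the degree‑$0$ homogeneous irreducibles of $R$ are exactly the irreducibles of $D$. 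I would also identify the localization at $S=\{(t^{-1})^n\mid n\in\nn_0\}$: inverting $t^{-1}$ adjoins $t$, and since $It\subseteq Dt$ we obtain $R_S=D[t,t^{-1}]$.

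For the equivalence (a) $\Leftrightarrow$ (c) I would argue entirely through this localization, uniformly for the BFD and FFD cases. Because $t^{-1}$ is prime and $R$ is atomic, Corollary~\ref{cor:MS generated by primes in atomic monoids are SMS} shows that $S$ is a splitting multiplicative set, while Lemma~\ref{lem:localization at prime-generated MS are inert} shows that $R\subseteq R_S$ is inert. Thus Theorem~\ref{thm:overring localization BFD/FFD} forces $R_S=D[t,t^{-1}]$ to be a BFD (resp.\ FFD) whenever $R$ is, and Theorem~\ref{thm:underring localization BFD/FFD} gives the converse. Finally Corollary~\ref{cor:BFD for polynomial and power series rings} (resp.\ Theorem~\ref{thm:FFD for polynomial and Laurent ring}) says that $D[t,t^{-1}]$ is a BFD (resp.\ FFD) precisely when $D$ is. Chaining these yields (a) $\Leftrightarrow$ (c).

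It then remains to weave (b) into the picture. The implication (a) $\Rightarrow$ (b) is immediate from the homogeneity observation: a factorization of a homogeneous element into irreducibles of $R$ is automatically a factorization into homogeneous irreducibles, so the BFD length bound (resp.\ the FFD finiteness of non‑associate divisors) restricts to the required graded statement. For the reverse I would treat the two properties slightly differently. For BFD I would prove (b) $\Rightarrow$ (a) directly, by exhibiting a length function on all of $R^*$ in the spirit of Proposition~\ref{prop:BFD characterizations}: set $\ell(f)=\ell_{\mathrm{gr}}(\mathrm{lt}(f))+(\deg_{\mathrm{top}}f-\deg_{\mathrm{bot}}f)$, where $\mathrm{lt}(f)$ is the top‑degree homogeneous component of $f$ and $\ell_{\mathrm{gr}}(h)$ is the maximal length of a homogeneous factorization of $h$ (finite by graded BFD). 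Since top and bottom components multiply and $\ell_{\mathrm{gr}}$ is superadditive, $\ell$ is a length function, so $R$ is a BFD. For FFD I would instead prove (b) $\Rightarrow$ (c): the divisors of $a\in D^*$ in $D$ are exactly the degree‑$0$ homogeneous divisors of $a$ in $R$, and graded atomicity together with the graded FFD hypothesis bounds these via Dickson's Lemma (as in Proposition~\ref{prop:FFM characterization via idf-monoids}); hence $a$ has finitely many non‑associate divisors in $D$, and Proposition~\ref{prop:FFD characterizations} makes $D$ an FFD, after which (c) $\Rightarrow$ (a) closes the loop.

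The main obstacle, and the place demanding genuine care, is precisely this bridge between (b) and the global properties. For BFD the subtlety is that the length function must control non‑homogeneous elements; the leading‑term construction succeeds exactly because the grading group is totally ordered, and one must verify that $\ell_{\mathrm{gr}}(h)=0$ holds iff $h$ is a unit, which is where $U(R)=U(D)$ and graded atomicity enter. For FFD the delicate point is that graded atomicity is genuinely required to run the Dickson argument and is supplied only by the standing hypothesis that $R$ is atomic, so I would take care to invoke that hypothesis rather than attempt to extract atomicity from (b) alone.
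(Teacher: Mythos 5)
Your argument is correct, and for the one genuinely nontrivial implication it coincides with the paper's: both proofs obtain (c) $\Rightarrow$ (a) by observing that $R_S = D[t,t^{-1}]$ for the saturated multiplicative set $S$ generated by the prime $t^{-1}$, that $S$ is a splitting multiplicative set (Corollary~\ref{cor:MS generated by primes in atomic monoids are SMS}, using the standing atomicity of $R$), and then applying Corollary~\ref{cor:BFD for polynomial and power series rings} (resp.\ Theorem~\ref{thm:FFD for polynomial and Laurent ring}) together with Theorem~\ref{thm:underring localization BFD/FFD}. Where you diverge is in the easy directions: the paper simply notes that (a) $\Rightarrow$ (b) $\Rightarrow$ (c) are immediate (for (a) $\Rightarrow$ (c) one can also just quote Propositions~\ref{prop:BFD underrings} and~\ref{prop:FFD underrings}, since $U(R)=U(D)$), whereas you re-derive (a) $\Rightarrow$ (c) through the inert extension $R \subseteq R_S$ and Theorem~\ref{thm:overring localization BFD/FFD}, and you close the cycle through (b) with a direct length-function proof of (b) $\Rightarrow$ (a) in the BFD case and a Dickson-style proof of (b) $\Rightarrow$ (c) in the FFD case. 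All of this is sound --- the leading-term length function $\ell(f)=\ell_{\mathrm{gr}}(\mathrm{lt}(f))+(\deg_{\mathrm{top}}f-\deg_{\mathrm{bot}}f)$ does satisfy both axioms because units of $R$ are homogeneous of degree $0$ (here the properness of $I$ is used) and leading terms are multiplicative in a $\zz$-graded domain --- but it is more machinery than the statement requires; the payoff is that your (b) $\Rightarrow$ (a) for BFDs does not pass through $D$ at all, which is a mildly stronger structural observation about graded BFDs.

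One structural claim in your first paragraph is false, although you never actually use it: the degree-$0$ homogeneous irreducibles of $R$ are \emph{not} exactly the irreducibles of $D$, but rather the irreducibles of $D$ lying outside $I$. Indeed, for $a \in I \setminus \{0\}$ one has $a=(at)(t^{-1})$ with both factors nonunits of $R$ (recall $t^{-1}\in\mathcal{P}(R)$), so no irreducible of $D$ belonging to $I$ remains irreducible in $R$; for instance, $2$ is irreducible in $\zz$ but not in $\zz[2t,t^{-1}]$. The statement you do rely on later --- that the degree-$0$ homogeneous \emph{divisors} in $R$ of an element of $D^*$ are exactly its divisors in $D$ --- is correct, so the slip is harmless, but it should be removed or corrected.
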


\begin{proof}
	(a) $\Rightarrow$ (b) $\Rightarrow$ (c): These implications follow immediately.
	\smallskip
	
	(c) $\Rightarrow$ (a): We will only prove the BFD part, as the FFD part follows similarly. Assume that~$D$ is a BFD. It follows from Corollary~\ref{cor:BFD for polynomial and power series rings} that $D[t,t^{-1}]$ is also a BFD. Because $t^{-1}$ is a prime in $R$, the multiplicative set $S$ it generates in $R$ is a splitting multiplicative set by Lemma~\ref{lem:when multiplicative sets generated by primes are SMS}. It is clear that $R_S = D[t,t^{-1}]$. Since $D[t,t^{-1}]$ is a BFD, part~(1) of Theorem~\ref{thm:underring localization BFD/FFD} guarantees that $R$ is also a BFD.
\end{proof}

\begin{remark}
	The statement of Proposition~\ref{prop:BF and FF for generalized Rees rings} still holds if one replaces being a BFD by satisfying ACCP and being a graded BFD by satisfying ACC on homogeneous principal ideals (see \cite[Proposition~2.5]{KKP04}).
\end{remark}

\bigskip
\section{Generalized Bounded and Finite Factorization Domains}
\label{sec:generalized BFDs and FFDs}

In this section, we present an abstraction of the unique and finite factorization properties based on an extended notion of a factorization. These ideas were introduced by D. D. Anderson and the first author in \cite{AA10}. In the same paper, they considered a similar abstraction for half-factoriality and other-half-factoriality (called quasi-factoriality in~\cite{AA10}) that we will not consider here.

Let $R$ be an integral domain, and let $r$ be a nonunit of $R^*$. An \emph{atomic factorization} of $r$ in~$R$ is an element $a_1 \cdots a_n$ of the free commutative monoid on $\ii(R)$ (i.e., a formal product of irreducibles up to order) such that $a_1 \cdots a_n = r$ in $R$. Note that, by definition, two atomic factorizations are \emph{not} identified up to associates. 

\begin{definition} \label{def:generalized FFDs}
	Let $R$ be an integral domain, and let $\approx$ be an equivalence relation on $\ii(R)$. Then we say that two atomic factorizations $a_1 \cdots a_m$ and $b_1 \cdots b_n$ in $R$ are $\approx$-\emph{equivalent} if $m=n$ and there is a permutation $\sigma$ of $\ldb 1,m \rdb$ such that $b_i \approx a_{\sigma(i)}$ for every $i \in \ldb 1,m \rdb$. 
	\begin{enumerate}
		\item $R$ is a $\approx$-\emph{CKD} if $R$ is atomic and has only finitely many irreducible elements up to $\approx$-equivalence.
		\smallskip
		
		\item $R$ is a $\approx$-\emph{FFD} if $R$ is atomic and every nonunit $r \in R^*$ has only finitely many factorizations in $R$ up to $\approx$-equivalence.
		\smallskip
		
		\item $R$ is a $\approx$-\emph{UFD} if $R$ is atomic and for every nonunit $r \in R^*$, any two factorizations of $r$ in $R$ are $\approx$-equivalent.
	\end{enumerate}
\end{definition}

With the notation as in Definition~\ref{def:generalized FFDs}, observe that when $\approx$ is the associate relation on $\ii(R)$, we recover the standard definitions of a CKD, an FFD, and a UFD from those of a $\approx$-CKD, a $\approx$-FFD, and a $\approx$-UFD, respectively. The following example is \cite[Example~2.6(a)]{AA10}.

\begin{example} \label{ex:initial example of a generalized FFD}
	Let $R$ be the ring of power series $\qq[[X]]$, and define the equivalence relation $\approx$ on $\ii(R) = \{\sum_{i=1}^\infty b_iX^i \in R \mid b_1 \neq 0 \}$ by setting $\sum_{i=1}^\infty b_iX^i \approx \sum_{i=1}^\infty c_iX^i$ whenever $b_1 c_1 > 0$. It can be readily verified that $R$ is a $\approx$-FFD and a $\approx$-CKD. In addition, $R$ is a UFD that is not a $\approx$-UFD. Note that the relation $\approx$ is strictly contained in the associate relation on $\ii(R)$.
\end{example}

 It is clear that if $R$ is a $\approx$-FFD, then $R$ is a BFD. We record this observation for future reference.

\begin{remark} \label{rem:generalized FFDs are BFDs}
		Let $R$ be an integral domain, and let $\approx$ be an equivalence relation on $\ii(R)$. If $R$ is a $\approx$-FFD, then $R$ is a BFD.
\end{remark}

Although when $\approx$ is the associate relation, the definitions of a $\approx$-FFD and a BFD are not equivalent, they may be equivalent for other choices of $\approx$. The next example is \cite[Example~2.1(c)]{AA10}.

\begin{example} \label{ex:generalized FFD for full relation}
		Let $R$ be an integral domain, and let $\approx$ be the full equivalence relation on $\ii(R)$, that is, $r \approx s$ for all $r,s \in \ii(R)$. Observe that two atomic factorizations of a nonunit in $R^*$ are $\approx$-equivalent if and only if they involve the same number of irreducibles. As a consequence, $R$ is a $\approx$-FFD if and only if $R$ is a BFD, and $R$ is a $\approx$-CKD if and only if $R$ is atomic.
\end{example}

A CKD (resp., an FFD, a UFD) may not be a $\approx$-CKD (resp., $\approx$-FFD, $\approx$-UFD). To illustrate this, we use \cite[Example~2.1(b)]{AA10}.

\begin{example} \label{ex:diagonal relation on I(R)}
		Let $R$ be an integral domain, and let $\approx$ be the diagonal relation on $\ii(R)$, that is, $a \approx b$ if and only if $a = b$ for all $a,b \in \ii(R)$.
	\begin{enumerate}
		\item Suppose that $R$ is a $\approx$-CKD. Because $R$ is atomic and $\ii(R)$ is finite, the multiplicative monoid~$R^*$ is finitely generated, and it follows from \cite{jI59} that $R^*$ is finite. In this case, $R$ is a field. Thus, a CKD containing an irreducible cannot be a $\approx$-CKD.
		\smallskip
		
		\item Suppose now that $R$ contains at least one irreducible. Then it is clear that $R$ is a $\approx$-UFD if and only if $R$ is a UFD and $U(R) = \{1\}$. Similarly, $R$ is a $\approx$-FFD if and only if $R$ is an FFD and $U(R)$ is finite (i.e., $R$ is an SFFD).
	\end{enumerate}
\end{example}

If $R$ is an integral domain and $\approx$ is an equivalence relation on $\ii(R)$, then every implication in Diagram~\eqref{diag:generalized UFD, FFD, and CKD} holds.

\begin{equation} \label{diag:generalized UFD, FFD, and CKD}
	\begin{tikzcd}[cramped]
															  & \approx \! \textbf{-UFD } \arrow[r, Rightarrow] & \ \approx \! \textbf{-FFD } \arrow[d, Rightarrow]  & \approx \! \textbf{-CKD } \arrow[d, Rightarrow]  \\
		\textbf{UFD } \arrow[r, Rightarrow] &\textbf{ \ FFD } \ \arrow[r, Rightarrow] 				 & \ \textbf{ BFD } \arrow[r, Rightarrow]   & \textbf{ atomic domain}
	\end{tikzcd}
\end{equation}
\smallskip

For an integral domain $R$, we let $\sim$ be the associate relation on $\ii(R)$.

\begin{prop}  \emph(\cite[Theorem~2.5]{AA10}\emph) \label{prop:generalized FFD sufficient conditions}
	Let $R$ be an integral domain, and let $\approx$, $\approx_1$, and $\approx_2$ be equivalence relations on $\ii(R)$. Then the following statements hold.
	\begin{enumerate}
		\item If $\approx_1 \, \subseteq \, \approx_2$ and $R$ is a $\approx_1$-FFD, then $R$ is a $\approx_2$-FFD. In particular, if $R$ is an FFD and $\sim \, \subseteq \, \approx$, then $R$ is a $\approx$-FFD.
		\smallskip
		
		\item If $R$ is a $\approx$-CKD and a BFD, then $R$ is a $\approx$-FFD.
	\end{enumerate}
\end{prop}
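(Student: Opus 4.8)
The plan is to handle both parts by reducing $\approx$-equivalence of atomic factorizations to a purely combinatorial statement about multisets of $\approx$-equivalence classes of irreducibles. The basic observation I would record first is that the $\approx$-equivalence class of an atomic factorization $a_1 \cdots a_n$ is completely determined by the multiset $\{[a_1]_\approx, \dots, [a_n]_\approx\}$ of $\approx$-classes of its factors: directly from Definition~\ref{def:generalized FFDs}, two factorizations are $\approx$-equivalent precisely when they have the same length and the same multiset of $\approx$-classes. This reformulation is what makes the counting in both parts transparent.

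For part~(1), first I would note that the containment $\approx_1 \, \subseteq \, \approx_2$ lifts to factorizations: if $a_1 \cdots a_m$ and $b_1 \cdots b_n$ are $\approx_1$-equivalent, then $m = n$ and some permutation $\sigma$ satisfies $b_i \approx_1 a_{\sigma(i)}$ for all $i$, and since $\approx_1 \, \subseteq \, \approx_2$, the same $\sigma$ witnesses $\approx_2$-equivalence. Hence each $\approx_2$-class of factorizations of a nonunit $r$ is a union of $\approx_1$-classes, so the number of $\approx_2$-classes of factorizations of $r$ is at most the number of $\approx_1$-classes. As $R$ is a $\approx_1$-FFD, the latter is finite for every nonunit $r$, and $R$ is atomic by hypothesis; therefore $R$ is a $\approx_2$-FFD. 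The ``in particular'' clause would then follow by taking $\approx_1$ to be $\sim$ and $\approx_2$ to be $\approx$, after noting that being an FFD is exactly being a $\sim$-FFD, since identifying factorizations up to order and associates is identifying them up to $\sim$-equivalence.

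For part~(2), I would fix a nonunit $r \in R^*$. Since $R$ is a BFD, there is a bound $N \in \nn$ with $|z| \le N$ for every factorization $z$ of $r$; since $R$ is a $\approx$-CKD, there are only finitely many $\approx$-classes of irreducibles, say $k$ of them. By the reformulation above, each $\approx$-class of factorizations of $r$ corresponds to a multiset of $\approx$-classes of irreducibles of cardinality at most $N$. The number of such multisets over $k$ classes is at most $\sum_{n=0}^{N} \binom{n+k-1}{k-1} < \infty$, so $r$ has only finitely many factorizations up to $\approx$-equivalence. As this holds for every nonunit $r$ and $R$ is atomic, $R$ is a $\approx$-FFD.

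The two counting bounds are routine; the only point requiring care, and the conceptual crux of the whole argument, is the initial reformulation, namely checking carefully from the definition that a $\approx$-equivalence class of factorizations both determines and is determined by the multiset of $\approx$-classes of its factors. Once this is established, part~(1) is just monotonicity of the induced equivalence, and part~(2) is the finiteness of bounded multisets over a finite alphabet.
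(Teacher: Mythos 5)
Your proof is correct and follows essentially the same route as the paper: part~(1) is the observation that $\approx_1$-equivalence of factorizations refines $\approx_2$-equivalence (the paper simply calls this a direct consequence of the definition), and part~(2) is the same counting argument, bounding lengths via the BFD hypothesis and counting bounded multisets over the finitely many $\approx$-classes of irreducibles. The multiset reformulation you spell out is implicit in the paper's terser argument; no gaps.
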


\begin{proof}
	(1) The first statement is a direct consequence of part~(2) of Definition~\ref{def:generalized FFDs}, while the second statement is a special case of the first statement.
	\smallskip
	
	(2) Let $r$ be a nonunit of $R^*$. Since $R$ is a $\approx$-CKD, for every $\ell \in \nn$, the element $r$ has only finitely many atomic factorizations that are non-equivalent with respect to $\approx$ and involve exactly $\ell$ irreducibles. This, along with the fact that $R$ is a BFD, implies that $r$ has only finitely many atomic factorizations up to $\approx$-equivalence. Thus, $R$ is a $\approx$-FFD. 
\end{proof}

In part~(1) of Proposition~\ref{prop:generalized FFD sufficient conditions}, we observe that an FFD $R$ can also be a $\approx$-FFD for an equivalence relation on $\ii(R)$ satisfying $\approx \, \subsetneq \, \sim$ (see, for instance, Example~\ref{ex:initial example of a generalized FFD}). On the other hand, none of the conditions in the hypothesis of part~(2) of Proposition~\ref{prop:generalized FFD sufficient conditions} is superfluous. In addition, although every $\approx$-FFD is a BFD (for any relation $\approx$ on the set of irreducibles), the reverse implication of part~(2) of Proposition~\ref{prop:generalized FFD sufficient conditions} does not hold. The following examples, which are part of \cite[Example~2.6]{AA10}, illustrate these observations.

\begin{example} \hfill 
	\begin{enumerate}
		\item Since every CKD is an FFD, it follows from part~(1) of Example~\ref{ex:diagonal relation on I(R)} that any CKD $R$ containing at least one irreducible is a BFD that is not a $\approx$-CKD when $\approx$ is taken to be the diagonal relation on $\ii(R)$.
		\smallskip
		
		\item Consider the additive submonoid $M = \langle 1/p \mid p \in \pp \rangle$ of $\qq_{\ge 0}$, and let $R$ be the monoid domain $\qq[M]$. We have seen in Example~\ref{ex:ACCP domain that is not a BFD} that $R$ satisfies ACCP but is not a BFD. In addition, we have seen in Example~\ref{ex:generalized FFD for full relation} that when $\approx$ is the full relation $\ii(R)^2$, the integral domain $R$ is a BFD if and only if it is a $\approx$-FFD and also that $R$ is atomic if and only if it is a $\approx$-CKD. As a result, $R$ is a $\approx$-CKD that is not a $\approx$-FFD.
		\smallskip
		
		\item To see that the converse of part~(2) of Proposition~\ref{prop:generalized FFD sufficient conditions} does not hold, it suffices to take an FFD that is not a CKD, for instance, the ring of integers $\zz$.
	\end{enumerate}
\end{example}

The following theorem describes how the extended notion of a $\approx$-FFD behaves with respect to the $D+M$ construction.

\begin{theorem}  \emph(\cite[Theorem~2.10]{AA10}\emph)
	Let $T$ be an integral domain, and let $K$ and $M$ be a subfield of~$T$ and a nonzero maximal ideal of $T$, respectively, such that $T = K + M$. For a proper subfield $k$ of~$K$, set $R = k + M$. Let $\approx$ be an equivalence relation on $\ii(T)$, and set $\approx' \, = \, \approx \cap \ii(R)^2$. Then the following statements hold.
	\begin{enumerate}
		\item If $T$ is quasilocal, then $R$ is a $\approx$-FFD if and only if $T$ is a $\approx$-FFD.
		\smallskip
		
		\item If $T$ is not quasilocal, then $R$ is a $\approx'$-FFD if $T$ is a $\approx$-FFD.
	\end{enumerate}
\end{theorem}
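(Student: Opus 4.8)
The plan is to build, for each nonzero nonunit $r \in R^*$, an injection from the atomic factorizations of $r$ in $R$ into the atomic factorizations of $r$ in $T$, and then to verify that this injection \emph{reflects} the relevant factorization equivalence, so that it descends to an injection between sets of equivalence classes. Since $k$ is a field, part~(1) of Lemma~\ref{lem:units of the D+M construction} gives $U(R) = U(T) \cap R$, part~(1) of Lemma~\ref{lem:irreducibles of the D+M construction} gives $\ii(R) \subseteq \ii(T)$, and Remark~\ref{rem:atomicity/ACCP in D+M construction} ensures that $R$ is atomic because $T$ is (being a $\approx$-FFD, $T$ is atomic by definition). First I would fix a nonzero nonunit $r \in R^*$; from $U(R) = U(T) \cap R$ it follows that $r$ is also a nonzero nonunit of $T$. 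Because $\ii(R) \subseteq \ii(T)$, the inclusion induces an injective homomorphism of free commutative monoids, and any atomic factorization $a_1 \cdots a_n$ of $r$ in $R$ (so $a_i \in \ii(R)$ and $a_1 \cdots a_n = r$ in $R$) is carried to the same formal product read in $\ii(T)$, which is an atomic factorization of $r$ in $T$ since the identity $a_1 \cdots a_n = r$ persists in $T \supseteq R$ and each $a_i$ is an irreducible of $T$.

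The key compatibility step is that, for two atomic factorizations $z_1, z_2$ of $r$ in $R$, the factorizations $z_1$ and $z_2$ are $\approx'$-equivalent in $R$ precisely when their images are $\approx$-equivalent in $T$. This is immediate from $\approx' \, = \, \approx \cap \ii(R)^2$: every irreducible factor occurring in $z_1$ or $z_2$ lies in $\ii(R)$, so on such factors the relations $\approx$ and $\approx'$ agree, and hence the length-plus-permutation criterion of Definition~\ref{def:generalized FFDs} reads identically in both rings. Consequently the map descends to an injection on equivalence classes. Since $T$ is a $\approx$-FFD, $r$ has only finitely many atomic factorizations up to $\approx$-equivalence, whence $r$ has only finitely many atomic factorizations in $R$ up to $\approx'$-equivalence. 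As this holds for every nonzero nonunit of $R$ and $R$ is atomic, $R$ is a $\approx'$-FFD, which proves part~(2).

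For part~(1), the quasilocal hypothesis upgrades the inclusion $\ii(R) \subseteq \ii(T)$ to the equality $\ii(R) = \ii(T) \subseteq M$ by part~(2) of Lemma~\ref{lem:irreducibles of the D+M construction}; in particular $\approx' \, = \, \approx$, and the injection of atomic factorizations constructed above becomes a bijection. Moreover, using part~(2) of Lemma~\ref{lem:units of the D+M construction} together with the fact that $k$ and $K$ are fields, the nonzero nonunits of both $R$ and $T$ coincide with $M \setminus \{0\}$, so for a fixed $r \in M \setminus \{0\}$ the atomic factorizations of $r$ in $R$ and in $T$ are literally the same objects carrying the same equivalence relation. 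Combining this with the atomicity equivalence of Remark~\ref{rem:atomicity/ACCP in D+M construction}, the counts of equivalence classes agree in both directions, yielding the biconditional. The only genuinely load-bearing observation — and the sole place where the definition of $\approx'$ is actually used — is the verification that the class map is injective, that is, that the inclusion \emph{reflects} rather than merely preserves equivalence; everything else is bookkeeping supported by the $D+M$ lemmas already established, so I do not expect a serious obstacle beyond keeping the nonunit/irreducible identifications straight.
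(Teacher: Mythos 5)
Your proof is correct and follows essentially the same route as the paper: both arguments rest on the $D+M$ lemmas giving $U(R)=U(T)\cap R$, $\ii(R)\subseteq\ii(T)$ (with equality $\ii(R)=\ii(T)\subseteq M$ in the quasilocal case), and the transfer of atomicity, and then compare atomic factorizations of a fixed nonunit in $R$ and in $T$. Your write-up merely makes explicit the injection of factorizations and the fact that it reflects $\approx$-equivalence, details the paper's (rather terse) proof leaves implicit.
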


\begin{proof}
	(1) Since $T$ is quasilocal, $R$ is quasilocal and $\ii(R) = \ii(T) \subseteq M$ by Lemma~\ref{lem:irreducibles of the D+M construction}, and then one can easily see that $\mathcal{P}(R)$ is empty. In addition, we have seen in the proof of Proposition~\ref{prop:pullback quasilocal BF} that~$R$ is atomic if and only if $T$ is atomic. As a consequence, $R$ is a $\approx$-FFD if and only if~$T$ is a $\approx$-FFD. This, together with the fact that $\ii(R) = \ii(T)$, guarantees that $R$ is $\approx'$-FFD if and only if~$T$ is a $\approx$-FFD. 
	\smallskip
	
	(2) Suppose now that $T$ is not quasilocal. In this case, $R$ is not quasilocal. Once again, it follows from Lemma~\ref{lem:irreducibles of the D+M construction} that $\ii(R) = \ii(T) \cap R$, and one can check that $\mathcal{P}(R) = (\mathcal{P}(T) \cap R) \setminus M$ (in this case, $\mathcal{P}(R)$ may be nonempty). Then $R$ is a $\approx'$-FFD if $T$ is a $\approx$-FFD.
\end{proof}

There are integral domains $R$ with a relation $\approx$ on $\ii(R)$ such that $R$ is a $\approx$-FFD, but $R$ is neither a $\approx$-UFD nor an FFD.

\begin{example}
	Consider the monoid domain $R = \qq[M]$, where $M$ is the additive monoid $\{0\} \cup \qq_{\ge 1}$. We have already seen in Example~\ref{ex:BFD that is neither an HFD nor an FFD} that $R$ is a BFD that is neither an FFD nor an HFD. Observe that the monoid domain $R[Y]$ is a BFD by Corollary~\ref{cor:BFD for polynomial and power series rings} and that $R[Y]$ is not an FFD (resp., an HFD) because $R$ is not an FFD (resp., an HFD). Finally, note that if $T$ is the DVR we obtain by localizing $R[Y]$ at the maximal ideal $YR[Y]$ and $\approx$ denotes the equivalence relation on $R[Y]$ defined by being associates in $T$, then $R[Y]$ is a $\approx$-FFD that is not a $\approx$-UFD.
\end{example}

Lastly, we determine when the polynomial ring $R[X]$ is a $\sim_{K[X]}$-FFD, where two elements of $R[X]$ are related with respect to $\sim_{K[X]}$ whenever they are associates in $K[X]$ (here $K$ is the quotient field of $R$).

\begin{theorem}  \emph(\cite[Theorem~2.14]{AA10}\emph)
	Let $R$ be an atomic integral domain with quotient field $K$. Then $R[X]$ is a $\sim_{K[X]}$-FFD if and only if $R$ is a BFD.
\end{theorem}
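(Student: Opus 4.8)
The plan is to treat the two implications separately, with essentially all of the work going into the sufficiency of $R$ being a BFD.

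For the forward implication I would argue purely formally. If $R[X]$ is a $\sim_{K[X]}$-FFD, then Remark~\ref{rem:generalized FFDs are BFDs} (applied to the domain $R[X]$ with the relation $\sim_{K[X]}$) shows that $R[X]$ is a BFD, and hence $R$ is a BFD by the equivalence (a)$\,\Leftrightarrow\,$(b) of Corollary~\ref{cor:BFD for polynomial and power series rings}. No computation is needed here.

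For the reverse implication, suppose $R$ is a BFD. Then $R[X]$ is a BFD by Corollary~\ref{cor:BFD for polynomial and power series rings}; in particular $R[X]$ is atomic, and for each nonunit $f \in R[X]^*$ there is a bound $N = \max L_{R[X]}(f)$ on the number of irreducible factors in any factorization of $f$. The structural fact I would isolate first is how $\sim_{K[X]}$ partitions $\ii(R[X])$. Since $U(R[X]) = U(R)$ and $U(K[X]) = K^*$, a degree-$0$ irreducible of $R[X]$ is exactly an irreducible element of $R$, and any two such elements are associates in $K[X]$; hence all degree-$0$ irreducibles of $R[X]$ collapse into a single $\sim_{K[X]}$-class. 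For a positive-degree irreducible $a$, its $\sim_{K[X]}$-class is precisely its associate class in the UFD $K[X]$, i.e.\ its monic part. Now fix $f$ and write its factorization in $K[X]$ as $f = c\prod_{j=1}^k p_j^{e_j}$ with $c \in K^*$ and $p_1,\dots,p_k$ distinct monic irreducibles. Given any factorization $f = a_1 \cdots a_n$ in $R[X]$, I would separate its $s$ constant factors from the positive-degree ones. Passing to $K[X]$ annihilates the constants (they become units), so the multiset of monic parts of the positive-degree factors is an unordered factorization of $\prod_j p_j^{e_j}$ into non-constant monic polynomials; there are only finitely many such, because $K[X]$ is a UFD with finitely many prime factors counted with multiplicity. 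Denote this finite set of positive-degree profiles by $\mathcal{F}$. All constant factors lie in the single constant class, so their contribution to the multiset of $\sim_{K[X]}$-classes is recorded by the single number $s$, and the BFD bound forces $0 \le s \le N$.

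To conclude, I would recall from Definition~\ref{def:generalized FFDs} that two atomic factorizations are $\sim_{K[X]}$-equivalent exactly when they have the same multiset of $\sim_{K[X]}$-classes of factors. By the previous paragraph, this multiset is completely determined by a pair consisting of an element of $\mathcal{F}$ together with the value of $s$, and there are at most $|\mathcal{F}|\,(N+1)$ such pairs. Hence $f$ admits only finitely many factorizations up to $\sim_{K[X]}$-equivalence, so $R[X]$ is a $\sim_{K[X]}$-FFD. The main obstacle, and the only place where the BFD hypothesis is genuinely used, is controlling the constant factors: the positive-degree factors are automatically tame (their monic parts come from the UFD $K[X]$ and their number is bounded by $\deg f$), whereas the constant irreducibles all collapse to one class and differ only in multiplicity $s$. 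Without a length bound on factorizations of constants—that is, without $R$ being a BFD—this multiplicity would be unbounded, yielding infinitely many inequivalent factorizations. I would therefore present the boundedness of $s$, inherited from $R$ being a BFD through Corollary~\ref{cor:BFD for polynomial and power series rings}, as the crux of the argument.
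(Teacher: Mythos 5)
Your proof is correct and follows essentially the same route as the paper: the forward direction is the same formal reduction, and the reverse direction rests on the same two ingredients, namely the BFD bound controlling the number of constant irreducible factors and the finiteness of $K[X]$-associate classes of positive-degree divisors of $f$. The only cosmetic difference is that you count unordered monic factorizations of the monic part of $f$ in the UFD $K[X]$ directly, whereas the paper invokes the finiteness of the set $\{hK[X] \mid f \in hR[X]\}$; these amount to the same thing.
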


\begin{proof}
	Let $\approx$ denote $\sim_{K[X]}$. For the direct implication, it suffices to note that if $R[X]$ is a $\approx$-FFD, then it is a BFD, and so $R$ must be a BFD.
	
	Conversely, suppose that $R$ is a BFD. It follows from Theorem~\ref{thm:BF in polynomial and power series rings} that $R[X]$ is also a BFD. Take a nonunit $f \in R[X]$, and take $\ell \in \nn$ such that $\max L_{R[X]}(f) < \ell$. Observe that if two atomic factorizations of $f$ are $\approx$-equivalent, then they must contain the same number of irreducibles in $R$ and the same number of irreducibles in $R[X] \setminus R$. For $m,n \in \nn_0$ such that $m+n \le \ell$, suppose that $c_1 \dots c_m f_1 \dots f_n$ and $d_1 \dots d_m g_1 \dots g_n$ with $c_i, d_i \in R$ and $f_j, g_j \in R[X] \setminus R$, are two atomic factorizations of $f$ in $R[X]$. If these factorizations are $\approx$-equivalent, then, after a possible reordering, $f_i K[X] = g_i K[X]$. Since both $f_i$ and $g_i$ divide $f$ for every $i \in \ldb 1, n \rdb$ and the set $\{h K[X] \mid f \in h R[X] \}$ is finite, we can conclude that $f$ has only finitely many factorizations up to $\approx$-equivalence. Thus, $R[X]$ is a $\approx$-FFD.
\end{proof}

\bigskip
\section*{Acknowledgments}

While working on this paper, the second author was supported by the NSF award DMS-1903069.

\bigskip

\end{document}